\providecommand{\customgenericname}{}
\newcommand{\newcustomtheorem}[2]{%
  \newenvironment{#1}[1]
  {%
   \renewcommand\customgenericname{#2}%
   \renewcommand\theinnercustomgeneric{##1}%
   \innercustomgeneric
  }
  {\endinnercustomgeneric}
}
\newtheorem{thm}{Theorem}[section]
\newtheorem{corollary}[thm]{Corollary}
\newtheorem{lemma}[thm]{Lemma}
\newtheorem{proposition}[thm]{Proposition}
\newtheorem*{thm*}{Theorem}
\newtheorem*{corollary*}{Corollary}
\newtheorem*{lemma*}{Lemma}
\newtheorem*{proposition*}{Proposition}
\theoremstyle{definition}
\newtheorem{definition}[thm]{Definition}
\newtheorem*{definition*}{Definition}
\newtheorem{remark}[thm]{Remark}
\newtheorem{example}[thm]{Example}
\newtheorem*{remark*}{Remark}
\newcommand{\Epi}{\mathsf{Epi}}
\newcommand{\bb}[1]{\mathbb{#1}}
\newcommand{\ssf}[1]{\mathsf{#1}}
\newcommand{\supp}{\mathsf{supp}}
\newcommand{\Per}{\mathsf{Per}}
\newcommand{\lip}{\mathsf{lip}}
\newcommand{\Lip}{\mathsf{Lip}}
\newcommand{\Hyp}{\mathsf{Hyp}}
\newcommand{\Tan}{\mathsf{Tan}}
\newcommand{\A}{\mathsf{A}}
\newcommand{\Ch}{\mathsf{Ch}}
\newcommand{\Ric}{\mathsf{Ric}}
\newcommand{\Test}{\mathsf{Test}}
\newcommand{\Hess}{\mathsf{Hess}}
\newcommand{\RCD}{\mathsf{RCD}}
\newcommand{\Graph}{\mathsf{Graph}}
\newcommand{\aH}{\mathsf{H}}
\newcommand{\BV}{\mathsf{BV}}
\newcommand{\W}{\mathsf{W}}
\newcommand{\X}{\mathsf{X}}
\newcommand{\M}{\mathsf{M}}
\newcommand{\x}{\mathsf{x}}
\newcommand{\Osc}{\mathsf{Osc}}
\newcommand{\sd}{\mathsf{d}}
\newcommand{\sL}{\mathsf{L}}
\newcommand{\m}{\mathfrak{m}}
\newcommand{\mms}[3]{(\mathsf{#1},\mathsf{#2},\mathfrak{#3})}
\newcommand{\pmms}[4]{(\mathsf{#1},\mathsf{#2},\mathfrak{#3},\mathsf{#4})}
\newsavebox\myboxA
\newsavebox\myboxB
\newlength\mylenA
\newcommand*\xoverline[2][0.75]{%
    \sbox{\myboxA}{$\m@th#2$}%
    \setbox\myboxB\null
    \ht\myboxB=\ht\myboxA%
    \dp\myboxB=\dp\myboxA%
    \wd\myboxB=#1\wd\myboxA
    \sbox\myboxB{$\m@th\overline{\copy\myboxB}$}
    \setlength\mylenA{\the\wd\myboxA}
    \addtolength\mylenA{-\the\wd\myboxB}%
    \ifdim\wd\myboxB<\wd\myboxA%
       \rlap{\hskip 0.5\mylenA\usebox\myboxB}{\usebox\myboxA}%
    \else
        \hskip -0.5\mylenA\rlap{\usebox\myboxA}{\hskip 0.5\mylenA\usebox\myboxB}%
    \fi}
\title{Half Space Property in $\RCD(K,N)$ spaces}
\author{Alessandro Cucinotta \and Andrea Mondino}
\begin{document}

\maketitle

\begin{abstract}
The goal of this note is to prove the Half Space Property for $\RCD(0,N)$ spaces, namely that if $\mms{X}{d}{m}$ is a parabolic $\RCD(0,N)$ space and $ C \subset \ssf{X} \times \bb{R}$ is locally the boundary of a perimeter minimizing set and it is contained in a half space, then $C$ is a locally finite union of horizontal slices. \par 
The same result is proved for $\RCD(K,N)$ spaces, for any $K\in \mathbb{R}$ and $N\in (1,\infty)$,  under the stronger assumption that $C$ is the boundary of a \emph{globally} perimeter minimizing set.
\par
As a consequence, we obtain oscillation estimates and a Half Space Theorem for minimal hypersurfaces in products $\M \times \bb{R}$, where $\M$ is a parabolic smooth manifold (possibly weighted and with boundary), satisfying a Ricci curvature lower bound. \par 
On the way of proving the Half Space Property, we also extend to the $\RCD$ setting some classical results on Green's functions and parabolic manifolds.
\end{abstract}

\tableofcontents

\section{Introduction}
In \cite{HM} Hoffman and Meeks proved the Half Space Theorem, stating that a connected, proper, non-planar minimal surface in $\bb{R}^3$ cannot be contained in a half space. Analogous results in the Riemannian setting were then obtained by different authors (\cite{HLRS}, \cite{DBM}, \cite{DBH}, \cite{EH}, etc.) and in \cite{RSS} Rosenberg, Schulze and Spruck gave the following definition.

\begin{definition*}
   A Riemannian manifold $\M$ has the Half Space Property if every proper
minimal hypersurface $S \subset \M \times \bb{R}$ not intersecting $\M \times \{0\}$ equals a horizontal slice $\M \times \{c\}$. 
\end{definition*}

The study of this property, which generalizes the Half Space Theorem, is linked to the understanding of the following more general problem: what are the conditions such that two minimal submanifolds $S_1$, $S_2$ of a Riemannian manifold $\M$ must intersect? And if they do not intersect, do they influence each other's geometry? \par 
Classical results in this direction, aside from the Half Space Theorem, are the fact that the only positive solutions of the minimal surface equation in Euclidean space are constant  functions (\cite{BGM}) and Frankel's Theorem, stating that two compact minimal hypersurfaces in a closed manifold with positive Ricci curvature must intersect (\cite{F}). \par 
More recently, in the aforementioned work \cite{RSS}, it was proved that the Half Space Property holds for parabolic manifolds with bounded sectional curvature and the result was then extended to parabolic manifolds with a uniform lower bound on the Ricci curvature in \cite{CMMR}. It was then proved in the very recent work \cite{DingCap} that even the lower bound on the Ricci curvature can be removed, so that the Half Space Property holds for general parabolic manifolds. 
We recall that a manifold is parabolic if it does not admit a positive Green's function for the Laplacian and we note that without the parabolicity assumption the Half Space Property fails in general, the catenoid in $\bb{R}^4$ being a counterexample. \par 
The validity of the Half Space Property for parabolic smooth Riemannian manifolds with lower Ricci curvature bounds and the recent generalization of Frankel's Theorem to the setting of $\RCD$ spaces (\cite{Weak}), i.e.\;non-smooth spaces with a synthetic lower bound on the Ricci curvature, suggest that also the Half Space Property might hold in the non-smooth setting. \par
We recall that an $\RCD(K,N)$ space is a metric measure space where $K \in \bb{R}$ plays the role of a lower bound on the Ricci curvature, while $N \in [1,+\infty)$ plays the role of an upper bound on the dimension; this class includes measured Gromov-Hausdorff limits of smooth manifolds with uniform Ricci curvature lower bounds and finite dimensional Alexandrov spaces with sectional curvature bounded from below.
Moreover, since a hypersurface in a smooth manifold is minimal if and only if it is locally the boundary of a perimeter minimizing set, it is natural, in the non-smooth setting, to replace minimal hypersurfaces with local boundaries of local perimeter minimizers.
Recall that a set $C\subset \ssf{X}$ is locally the boundary of a perimeter minimizing set if, for every $x\in \X$ there exists a (small) metric ball $B_r(x)$ and a set of finite perimeter $E\subset \ssf{X}$, minimizing the perimeter in  $B_r(x)$, such that $C\cap B_r(x)=\partial E\cap B_r(x)$ (see also Remark \ref{Rboundary}).
\par 
We can now state the main results of this note.

\begin{thm} \label{CT1}
    Let $\mms{X}{d}{m}$ be a parabolic $\RCD(0,N)$ space and let $C \subset \ssf{X} \times \bb{R}$.
     Then the following two assertions are equivalent:
     \begin{enumerate}
\item $C$ is locally the boundary of a perimeter minimizing set, and $C$ is contained in a half space, i.e.\;there exists $t_0\in \bb{R}$ such that $C\subset \ssf{X}\times [t_0,\infty)$;
     \item  $C$ is a locally finite union of horizontal slices, i.e.\;there exists a discrete set $D \subset [t_0,\infty)$ such that
    \[
    C=\bigcup_{d \in D} \X \times \{d\}.
    \]
     \end{enumerate}
\end{thm}

Under the additional assumption that $C$ is the boundary of a \emph{globally} perimeter minimizing set, a result analogous to Theorem \ref{CT1} holds for possibly negative lower bounds on the synthetic Ricci curvature. This is the content of Theorem \ref{CT2Mod} below (see also Theorems \ref{T|GlobalMin} and \ref{T|MinPerSlow} for slightly more general results).

\begin{thm}\label{CT2Mod}
    Let $\mms{X}{d}{m}$ be a parabolic $\RCD(K,N)$ space and let $E \subset \ssf{X} \times \bb{R}$.
     Then the following two assertions are equivalent:
     \begin{enumerate}
     \item  $E$ is perimeter minimizing, it has connected boundary, and it is contained in a half space, i.e.\;there exists $t_0\in \bb{R}$ such that $E \subset \ssf{X}\times [t_0,\infty)$;
\item $E= \X \times [t_0,\infty)$ for some $t_0 \geq 0$.
     \end{enumerate}
\end{thm}
The previous theorems, in particular, are part of a wider class of recent results that aim at generalizing to the non-smooth setting properties of perimeter minimizing sets (\cite{APPS}, \cite{FSM}, \cite{BPSrec}, \cite{Weak}, etc.). It is interesting to note that, while in the smooth category the Half Space Property holds for parabolic manifolds even without the lower bound on the Ricci curvature, the same is not true for metric measure spaces (see Example \ref{Ex1}). 
In particular, in the non-smooth setting, once we remove the $\RCD$ assumption, it is not clear which conditions should be imposed (in addition to parabolicity) to guarantee the validity of the Half Space Property. \par
Specializing Theorem \ref{CT1} to the smooth category, we obtain that the Half Space Property holds for certain free boundary minimal surfaces on weighted manifolds with boundary. This is the content of Theorem \ref{CT2}. 
Given a manifold $(\ssf{M},g)$ we denote by $\mathfrak{m}_g$ its volume measure and by $\ssf{d}_g$ its distance. If $V:\ssf{M} \to \bb{R}$ is a smooth function, we say that the metric measure space $(\ssf{M}^n,\ssf{d}_g,e^{-V} \m_g)$ is a weighted manifold. A hypersurface $S$ in the weighted manifold $\ssf{M}$ is minimal if it is a critical point of the weighted area functional. We say that the boundary of $\M$ is convex if its second fundamental form w.r.t.\;the inward pointing unit normal is positive.

\begin{thm}\label{CT2}
Let $(\ssf{M}^n,\ssf{d}_g,e^{-V} \m_g)$ be a parabolic weighted manifold with convex boundary such that there exists $N>n$ satisfying
    \[
    \Ric_\M+\Hess_V-\frac{\nabla V \otimes \nabla V}{N-n} \geq 0 \quad \text{ on } \ssf{M} \setminus \partial \ssf{M}.
    \]
If $S \subset \ssf{M} \times (0,+\infty)$ is a properly embedded connected minimal hypersurface intersecting $\partial \ssf{M} \times \bb{R}$ orthogonally, then $S$ is a horizontal slice.
\end{thm}

The previous result is new already in the boundaryless weighted setting and in the framework of unweighted manifolds with boundary. \par
A consequence of Theorem \ref{CT2Mod} is that the oscillation of area minimizing hypersurfaces in an appropriate class of pointed manifolds grows with a uniform rate as one moves away from the base point in each manifold.
This is stated precisely in Theorem \ref{CT3} below. \par
We say that a function $P:[0,+\infty) \to (0,+\infty)$ is a modulus of parabolicity if $\int_1^{+\infty} tP(t)^{-1} dt = + \infty$ and we say that a pointed metric measure space $\pmms{X}{d}{m}{x}$ has modulus of parabolicity $P$ if $\m(B_r(\ssf{x})) \leq \m(B_1(\x)) P(r)$ for every $r\geq 0$. We say that a hypersurface $S \subset \X$ is an area minimizing boundary in an open set $A \subset \X$ if the exists a set $E \subset A$ minimizing the perimeter in $A$ such that $\partial E \cap A=S$.
If $S \subset \ssf{X} \times \bb{R}$ and $(\ssf{x},0) \in S$ we define the oscillation of $S$ as
    \[
    \Osc_{\ssf{x},r}(S):=\sup \{|t|:(y,t) \in S \cap B_r(\ssf{x}) \times (-r,r)\}.
    \]
\begin{thm} \label{CT3}
    Let $K \in \bb{R}$, let $n \in \bb{N}$, and let $P$ be a modulus of parabolicity. For every $t,r,T>0$ there exists $R>0$ such that for every pointed manifold $(\ssf{M}^n,g,\x)$ with modulus of parabolicity $P$ and $\Ric_M \geq K$,
    and for every area minimizing boundary $S$ in $B_R (\ssf{x}) \times (-R,R)$ containing $(\ssf{x},0)$, if $\Osc_{\ssf{x},r}(S) \geq t$ then $\Osc_{\ssf{x},R}(S) \geq T$.
\end{thm}

\begin{remark*}
In Section \ref{S1} we actually prove a more general result dealing with weighted manifolds with boundary and with a stronger notion of oscillation (see Theorem \ref{T9}), but we preferred to state Theorem \ref{CT3} in this form for the sake of simplicity.
\end{remark*}
Now we comment on the proof of Theorem \ref{CT1}.
\\
The proof of the implication  $2 \Rightarrow 1$ is a standard calibration argument, we will thus only discuss the implication $1 \Rightarrow 2$.

The proof that we give is totally different from the ones covering the analogous results in the smooth setting. 
In the works \cite{RSS} and \cite{CMMR} the general principle is that if $S \subset \ssf{M} \times \bb{R}$ is minimal and contained in a half space then, given a compact set $D \neq \emptyset$, it will be possible to construct a solution to the minimal surface equation $u: \ssf{M} \setminus D \to \bb{R}$ which is bounded from below and is not constant if $S$ is not a slice. The next step is then to prove a suitable generalization of De Giorgi's a priori gradient estimate for solutions of the minimal surface equation and to use the lower bound on $u$ to obtain a uniform bound on its gradient.
Finally one combines the parabolicity of $\ssf{M}$ with the bound on the gradient of $u$ to deduce that also the graph of $u$ is parabolic, and since $u$ is harmonic on its graph, by general properties of parabolic manifolds, it has to be constant and $S$ has to be a slice. \par 
There are several points of this strategy that it is not clear how to adapt to the non-smooth framework. 
A key issue is that the a priori gradient estimate on $u$ under a lower Ricci curvature bound is a consequence of the second variation formula for minimal surfaces, and such formula relies crucially on the smoothness of the ambient space.
Moreover, even if such estimate were available, the graph of a regular function on an $\RCD$ space is not known, in general, to be an $\RCD$ space and so, losing even this mild regularity, it would be difficult to adapt the previous technique, as it is based on the parabolicity of the graph. \par 
Our proof is instead closer to the one used in \cite{Weak} to generalize Frankel's Theorem. The key idea taken from such proof is that if $M$ is a manifold with non-negative Ricci curvature and $S$ is a minimal hypersurface contained in $\ssf{M} \times (0,+\infty)$, then the function $\bar{\ssf{d}}:=\ssf{d}_S-\ssf{d}_{\ssf{M} \times \{0\}}$, defined on $\ssf{M} \times (-\infty,0)$, has negative Laplacian, it is positive and if it is constant then $S$ is a slice. 
Hence the goal is to use the maximum principle for super-harmonic functions to show that $\bar{\ssf{d}}$ is constant. This is immediate if $\ssf{M}$ is compact, as in this case $\bar{\ssf{d}}$ achieves its minimum on all the points of the geodesic realizing the distance between $\ssf{M} \times \{-1\}$ and $S$. \par 
If $\ssf{M}$ is not compact, the argument is by contradiction. Exploiting the geometric properties of distance functions and the super-harmonicity of $\bar{\ssf{d}}$, one shows that if $\bar{\ssf{d}}$ is not constant, then there exist an open bounded set $A \subset \ssf{M} \times (-\infty,0)$ and $\tau >0$ such that 
\[
\Delta e^{-\bar{\ssf{d}}} \geq 0 \text{ on } \ssf{M} \times (-\infty,0), \quad \Delta e^{-\bar{\ssf{d}}} \geq \tau \text{ on } A.
\]
If the product $\ssf{M} \times \bb{R}$ is parabolic, then there exists a continuous function $\phi:\ssf{M} \times \bb{R} \to \bb{R}$ such that
\[
\Delta \phi \geq 0 \text{ on } \ssf{M} \times (-\infty,0), \quad \Delta \phi \geq -\tau \text{ on } A
\]
which tends to $-\infty$ at infinity (as it happens on $\bb{R}$ and $\bb{R}^2$ because of their parabolicity). As a consequence, the function $e^{-\bar{\ssf{d}}}+\phi$ is sub-harmonic and has a maximum in $\ssf{M} \times (-\infty,0]$. Moreover, since $e^{-\bar{\ssf{d}}}$ increases as we move down a vertical line, by constructing $\phi$ in an appropriate way we can assume that the maximum of $e^{-\bar{\ssf{d}}}+\phi$ actually lies in $\ssf{M} \times (-\infty,0)$. The maximum principle then implies that $e^{-\bar{\ssf{d}}}+\phi$ is constant, contradicting the condition at infinity on $\phi$. 
\par 
If $\ssf{M} \times \bb{R}$ is not parabolic the previous argument has to be modified, and working with metric measure spaces instead of Riemannian manifolds becomes crucial. Consider on $\ssf{M} \times \bb{R}$ the distance $\ssf{d}_\times$ induced by the product Riemannian metric and the measure $\mathfrak{m}_\times$ which is the product volume measure.
Using the parabolicity assumption on $\ssf{M}$ and the particular form of the function $e^{-\bar{\ssf{d}}}$, it is possible to replace the measure $\mathfrak{m}_\times$ with a measure $\tilde{\mathfrak{m}}_\times$ such that the product space $(\ssf{M} \times \bb{R},\ssf{d}_\times,\tilde{\mathfrak{m}}_\times)$ is an $\RCD(K,N)$ space (possibly for $K<0$), it is parabolic and, calling $\tilde{\Delta}$ the Laplacian in this modified space, we have that there exists $\tau'>0$ such that
\[
\tilde{\Delta} e^{-\bar{\ssf{d}}} \geq 0 \text{ on } \ssf{M} \times (-\infty,0), \quad \tilde{\Delta} e^{-\bar{\ssf{d}}} \geq \tau' \text{ on } A.
\]
To conclude, we can now repeat the previous argument involving the construction of an auxiliary function $\phi$ and the maximum principle that, in this case, is used on the modified space $(\ssf{M} \times \bb{R},\ssf{d}_\times,\tilde{\mathfrak{m}}_\times)$. \par 
This strategy works for $\RCD(0,N)$ spaces, but not on $\RCD(K,N)$ spaces when $K<0$, as in this case it is not true in general that the function $\bar{\ssf{d}}$ is super-harmonic. In particular, the problem of generalizing Theorem \ref{CT1} to $\RCD(K,N)$ spaces remains open. Theorem \ref{CT2Mod} is a step in this direction. For an overview of the proof of Theorem \ref{CT2Mod}, we refer to the beginning of Section \ref{SGlobal}.
\par
Finally, to implement the previously outlined strategy, we need to generalize to $\RCD(K,N)$ spaces some classical properties of parabolic manifolds. This is obtained by extending the work done in \cite{BSflow}, \cite{Weak} and \cite{BScon} on Green's functions in the non smooth setting. 
Theorems \ref{CT4} and \ref{CT5} are the main results in this regard.
Given an $\RCD(K,N)$ space $\mms{X}{d}{m}$ we denote by $p_t$ the corresponding heat kernel and we refer to Section \ref{S2} for the precise definitions of Green's functions, regular sets,  and capacities. 

\begin{thm} \label{CT4}
    Let $\pmms{X}{d}{m}{x}$ be a pointed $\RCD(K,N)$ space with infinite diameter. The following are equivalent:
    \begin{enumerate}
        \item There is no positive Green's function on $\ssf{X}$ with pole $\ssf{x}$.
        \item For every $(x,y) \in X \times X$ we have $\int_1^{+ \infty} p_t(x,y) \, dt =+ \infty$.
        \item There exists $(x,y) \in X \times X$ such that $\int_1^{+ \infty} p_t(x,y) \, dt =+ \infty$.
        \item Let $\{B_i\}_{i \in \bb{N}}$ be an exhaustion of regular sets containing $\ssf{x}$. Let $G_\ssf{x}^i$ be the Green's function on $B_i$ with pole $\ssf{x}$. Then for every $y \neq \x$ we have $G_\ssf{x}^i(y) \to + \infty$ as $i \to + \infty$.
    \item For every compact set $K \subset X$, it holds $\ssf{Cap}(K)=0$.
    \end{enumerate}
\end{thm}

\begin{thm} \label{CT5}
    Let $\mms{X}{d}{m}$ be an $\RCD(K,N)$ space. If
    there exists $x \in X$ such that 
    \[
    \int_1^{+ \infty} \frac{t}{\mathfrak{m}(B_t(x))} \, dt=+ \infty,
    \]
    then $\ssf{X}$ is parabolic. This condition is also necessary if $K=0$.
\end{thm} 

The note is organized as follows. 
Section \ref{Sprel} contains preliminaries.
Section \ref{S2} concerns the theory of Green's functions on $\RCD$ spaces and contains the proofs of Theorems \ref{CT4} and \ref{CT5}.
In Section \ref{S3}, we rigorously prove, following the previously outlined strategy, the Half Space Property for $\RCD(0,N)$ spaces, i.e. Theorem \ref{CT1}. In Section \ref{SGlobal}, we prove Theorem \ref{CT2Mod}, while in the final section we deal with applications in the smooth setting, proving Theorems \ref{CT2} and \ref{CT3}.

\medskip

\textbf{Acknowledgements.}
A.M.\;is supported by the European Research Council (ERC), under the European Union Horizon 2020 research and innovation programme, via the ERC Starting Grant  “CURVATURE”, grant agreement No. 802689.

\section{Preliminaries} \label{Sprel}
Throughout the note we will work on metric measure spaces $\mms{X}{d}{m}$, where $(\ssf{X},\ssf{d})$ is a separable complete metric space where balls are precompact and $\m$ is a non-negative Borel measure on $\ssf{X}$ which is finite on bounded sets and whose support is the whole $\ssf{X}$. 
Given a smooth manifold $(\ssf{M},g)$ we denote by $\sd_g$ and $\m_g$ respectively the distance induced by $g$ and the volume measure.
Given an open set $\Omega \subset \ssf{X}$ we denote by $\Lip(\Omega)$, $\Lip_{loc}(\Omega)$ and $\Lip_c(\Omega)$ respectively Lipschitz functions, locally Lipschitz and Lipschitz functions with compact support in $\Omega$. If $f \in \Lip(\Omega)$, we denote its Lipschitz constant by $\sL(f)$. If $f \in \Lip_{loc}(\Omega)$ and $x \in \Omega$ we define
\[
\lip(f)(x):= \limsup_{y \to x} \frac{|f(x)-f(y)|}{\ssf{d}(x,y)}.
\]
Given a closed interval $I \subset \bb{R}$, we say that a curve $\gamma: I \to \X$ is a geodesic if its length coincides with the distance between its endpoints. Unless otherwise specified we assume that geodesics have constant unit speed.
\par
We will consider pointed measured Gromov Hausdorff convergence of metric measure spaces, and we refer to \cite{GMS13} for the relevant background.
We recall that in the case of a sequence of uniformly locally doubling metric measure spaces $(\ssf{X}_i,\ssf{d}_i, \m_i,\ssf{x}_i)$
(as in the case of $\RCD(K, N)$ spaces), pointed measured Gromov-Hausdorff convergence to $\pmms{X}{d}{m}{x}$ can be equivalently characterized by asking for the existence of a proper
metric space $(\ssf{Z}, \ssf{d}_z )$ such that all the metric spaces $(\ssf{X}_i
, \ssf{d}_i)$ are isometrically embedded
into $(\ssf{Z}, \ssf{d}_z )$, $\ssf{x}_i \to \ssf{x}$ and $\m_i \to \m$ weakly in duality with continuous boundedly supported functions in $\ssf{Z}$. In this case we say that the convergence is realized in the space $\ssf{Z}$ (see \cite{GMS13}).
Given a metric measure space $\mms{X}{d}{m}$ and $x \in \ssf{X}$, we denote by $\Tan_x(\ssf{X})$ the (possibly empty) collection of (isometry classes of) metric measure spaces that are pointed measured Gromov Hausdorff limits as $r \downarrow 0$ of the family $(\ssf{X},r^{-1}\ssf{d},\m(B_r(x))^{-1}\m,x)$.

\subsection{Sobolev spaces and the $\RCD$ condition} \label{SS1}

We now recall the definition of Sobolev spaces in the setting of metric measure spaces, the main references being \cite{Cheeger}, \cite{AGS}, \cite{AGS2} and \cite{Giglimem}.

\begin{definition}[Sobolev spaces]
    Let $\mms{X}{d}{m}$ be a metric measure space, $\Omega \subset \ssf{X}$ an open set and let $p>1$. A function $f \in \sL^p(\Omega)$ is said to be in the Sobolev space $\W^{1,p}(\Omega)$ if there exists a sequence of locally Lipschitz functions $\{f_i\}_{i \in \bb{N}} \subset \Lip_{loc}(\Omega)$ converging to $f$ in $\sL^p(\Omega)$ such that
    \[
    \limsup_{i \to + \infty} \int_\Omega \lip(f_i)^p \, d \m < + \infty.
    \]
     A function $f \in \sL^p_{loc}(\Omega)$ is said to be in the Sobolev space $\W^{1,p}_{loc}(\Omega)$ if for every $\eta \in \Lip_c(\Omega)$ we have $f \eta \in \W^{1,p}(\Omega)$.
\end{definition}

For any $f \in \W^{1,p}(\Omega)$ one can define an object $|\nabla f|$ (a priori depending on $p$, but independent of the exponent in the spaces that we will work on) such that for every open set $A \subset \Omega$ we have
    \[
    \int_A |\nabla f|^p \, d \m =
    \inf \Big\{ \liminf_{n \to + \infty} \int_A \lip(f_n)^p \, d \m \Big| (f_n)_n \subset \sL^p(A) \cap \Lip_{loc}(A), \|f_n-f\|_{\sL^p(A)} \to 0 \Big\}.
    \]
    The quantity in the previous expression will be called $p$-Cheeger energy and denoted by $\Ch_p(f)$ while $|\nabla f|$ will be called relaxed gradient. We define $\|f\|_{\W^{1,p}(\Omega)}:=\|f\|_{\sL^p(\Omega)}+\Ch_p(f)$. One can check that with this norm the space $\W^{1,p}(\Omega)$ is Banach. Later we will often write $\Ch$ in place of $\Ch_2$ for simplicity of notation.
    A deep result of Cheeger \cite{Cheeger} implies that if $f\in \Lip_{loc}(X)$ then $|\nabla f|=\lip(f), \, \m$-a.e., provided $\mms{X}{d}{m}$ is a PI-space, so in particular this holds for $\RCD(K,N)$-spaces, $N\in [1,\infty)$. 
We now introduce functions of bounded variation following \cite{Mir} (see also \cite{ADM}).

\begin{definition}[Functions of bounded variation]
    Let $\mms{X}{d}{m}$ be a metric measure space and let $\Omega \subset \ssf{X}$ be an open set. A function $f \in \sL^1(\Omega)$ is said to be of bounded variation if there exists a sequence of locally Lipschitz functions $\{f_i\}_{i \in \bb{N}} \subset \Lip_{loc}(\Omega)$ converging to $f$ in $\sL^1(\Omega)$ such that
    \[
    \limsup_{i \to + \infty} \int_\Omega \lip(f_i) \, d \m < + \infty.
    \]
    The space of such functions is denoted by $\BV(\Omega)$. A function $f \in \sL^1_{loc}(\Omega)$ is said to be in $\BV_{loc}(\Omega)$ if for every $\eta \in \Lip_c(\Omega)$ we have $f \eta \in \BV(\Omega)$.
\end{definition}

    For any $f \in \BV(\Omega)$ and any open set $A \subset \Omega$ we define
    \[
    |Df|(A) =
    \inf \Big\{ \liminf_{n \to + \infty} \int_A \lip(f_n) \, d \m \Big| (f_n)_n \subset \sL^1(A) \cap \Lip_{loc}(A), \|f_n-f\|_{\sL^1(A)} \to 0 \Big\}.
    \]
    One can check that the quantity in the previous expression is the restriction to the open subsets of $\Omega$ of a finite measure. We define $\|f\|_{\BV(\Omega)}:=\|f\|_{\sL^1(\Omega)}+|Df|(\Omega)$. One can check that with this norm the space $\BV(\ssf{X})$ is Banach. A function $f$ belongs to $\W^{1,1}(\Omega)$ if $f \in \BV(\Omega)$ and $|Df| \ll \m$. In this case we denote by $|\nabla f|$ the density of $|Df|$ with respect to $\m$.

\begin{definition}
    Let $\mms{X}{d}{m}$ be a metric measure space and let $\Omega \subset \ssf{X}$ be an open set. For every $p \geq 1$ we denote by $\W^{1,p}_0(\Omega)$ the closure in $\W^{1,p}(\Omega)$ of $\Lip_c(\Omega)$.
\end{definition}

The previous definition is particularly suitable for proper spaces, while other definitions are to be preferred if one works without this assumption. Since we will work in the setting of $\RCD(K,N)$ spaces with $N\in [1,\infty)$ (and these are proper) we do not consider more general definitions.

\begin{definition}[Infinitesimal Hilbertianity]
    We say that a metric measure space $\mms{X}{d}{m}$ is infinitesimally Hilbertian if the space $\W^{1,2}(\ssf{X})$ is a Hilbert space.
\end{definition}

 If $\mms{X}{d}{m}$ is infinitesimally Hilbertian and $\Omega \subset \ssf{X}$ an open set, for every $f,g \in \W^{1,1}_{loc}(\Omega)$ we define the measurable function $\nabla f \cdot \nabla g:\Omega \to \bb{R}$ by 
 \[
 \nabla f \cdot \nabla g:=\frac{|\nabla (f+g)|^2-|\nabla(f-g)|^2}{4}.
 \]
As a consequence of the infinitesimal Hilbertianity assumption, the previously defined product of gradients is bilinear in both entries.
We then recall the definition of Laplacian in the metric setting.

 \begin{definition}[Functions with $\sL^2$ Laplacian]
 Let $\mms{X}{d}{m}$ be infinitesimally Hilbertian and let $\Omega \subset \ssf{X}$ be an open set.
    Let $f \in \W^{1,2}(\Omega)$. We say that $f \in D(\Delta,\Omega)$ if there exists a function $h \in \sL^2(\Omega)$ such that
    \[
    \int_\Omega gh \, d \m=-\int_\Omega \nabla g \cdot \nabla f \, d \m \quad \text{for any } g \in \W^{1,2}_0(\Omega).
    \]
    In this case we say that $\Delta f=h$ in $\Omega$.
\end{definition}

We also have the following more general definition.

\begin{definition}[Functions with measure-valued Laplacian]
Let $\mms{X}{d}{m}$ be infinitesimally Hilbertian and let $\Omega \subset \ssf{X}$ be an open set.
    Let $f \in \W^{1,1}_{loc}(\Omega)$ and let $\mu$ be a Radon measure on $\Omega$. We say that $\Delta f=\mu$ in $\Omega$ in distributional sense if
    \[
    \int_\Omega g \, d\mu=-\int_\Omega \nabla g \cdot \nabla f \, d \m \quad \text{for any } g \in \Lip_c(\Omega).
    \]
\end{definition}

We now recall some properties of $\RCD(K,N)$ spaces, i.e. infinitesimally Hilbertian metric measure spaces with Ricci curvature bounded from below by $K \in \bb{R}$ and dimension bounded from above by $N \in [1,+\infty)$ in synthetic sense. \par
The Riemannian Curvature Dimension condition $\RCD(K, \infty)$ was introduced in \cite{AGS2} (see
also \cite{Giglimem,AGMR}) combining the Curvature Dimension condition $\ssf{CD}(K,\infty)$, previously pioneered
in \cite{S1,S2} and independently in \cite{V}, with the infinitesimal Hilbertianity assumption.
The finite dimensional counterpart $\RCD(K,N)$ is obtained coupling the finite dimensional Curvature Dimension condition $\ssf{CD}(K,N)$ with the infinitesimal Hilbertianity assumption and was formalized in \cite{Giglimem}. For a thorough introduction to the topic we refer to the survey \cite{Asurv} and the references therein. 
Let us also mention that in the literature one can find also the (a priori weaker) $\RCD^*(K,N)$. It was proved in \cite{EKS, AMS15}, that $\RCD^*(K,N)$ is equivalent to the dimensional Bochner inequality. Moreover, \cite{CavallettiMilman} (see also \cite{Liglob}) proved that $\RCD^*(K,N)$ and $\RCD(K,N)$ coincide.

We now recall the properties that we will use later on in the note.
The $\RCD(K,N)$ condition implies that the measure is locally doubling (see \cite{S1}) and the validity of a Poincaré inequality (see \cite{PI}). In particular if $f$ is a locally Lipschitz function on a $\RCD(K,N)$ space, its relaxed gradient coincides with its local Lipschitz constant $\lip(f)$ (see \cite[Theorem $12.5.1$]{HKST} after \cite{Cheeger}).

We now recall a Bishop-Gromov type inequality for $\RCD(K,N)$ spaces that can be found in \cite{S2}.
We define $V_{K,N}(r):=N\omega_N \int_0^r (s_{K,N}(t))^{N-1} \, dt$, where $\omega_N$ is the volume of the $N$-dimensional Euclidean ball and
\[
s_{K,N}(t):=
\begin{cases}
    \sqrt{\frac{N-1}{K}}\sin \Big(t\sqrt{\frac{K}{N-1}}\Big) & K >0 \\
    r & K=0 \\
    \sqrt{\frac{N-1}{-K}} \sinh\Big(t\sqrt{\frac{-K}{N-1}}\Big) & K < 0.
\end{cases}
\]
\begin{proposition} \label{P6}
    Let $(\X,\sd,\m)$ be an $\RCD(K,N)$ space. Then for every $x \in \X$ and $R>r>0$ we have
    \[
    \frac{\m(B_R(x))}{\m(B_r(x))} \leq \frac{V_{K,N}(R)}{V_{K,N}(r)}.
    \]
\end{proposition}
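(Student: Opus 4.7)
The asserted inequality is the Bishop–Gromov volume comparison in synthetic form, originally due to Sturm for $\CD(K,N)$ spaces. Since $\RCD(K,N)$ contains $\CD(K,N)$, the plan is to invoke the standard optimal-transport proof. Reformulated, it suffices to show that
\[
r \;\longmapsto\; \frac{\m(B_r(x))}{V_{K,N}(r)}
\]
is non-increasing on $(0,\infty)$, which for $r<R$ yields the stated bound by direct rearrangement.

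To establish monotonicity, fix $R>0$ and, for small $\varepsilon>0$, set
\[
\mu_0^{\varepsilon} := \frac{1}{\m(B_\varepsilon(x))}\, \m \mres B_\varepsilon(x), \qquad \mu_1 := \frac{1}{\m(B_R(x))}\, \m \mres B_R(x).
\]
Let $(\mu_t^{\varepsilon})_{t\in[0,1]}$ be an $L^2$-Wasserstein geodesic between them. The $\CD(K,N)$ condition provides a pointwise lower bound on the density $\rho_t^{\varepsilon}$ of $\mu_t^{\varepsilon}$ along geodesics $\gamma$ in the support of an optimal dynamical plan,
\[
\rho_t^{\varepsilon}(\gamma_t)^{-1/N} \;\geq\; \tau_{K,N}^{(1-t)}(|\dot\gamma|)\, \rho_0^{\varepsilon}(\gamma_0)^{-1/N} + \tau_{K,N}^{(t)}(|\dot\gamma|)\, \rho_1(\gamma_1)^{-1/N},
\]
where the distortion coefficients $\tau_{K,N}^{(\cdot)}$ are the usual ones built from $s_{K,N}$.

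Sending $\varepsilon\to 0^+$, the source marginal collapses to $\delta_x$ and the time-$t$ marginal becomes supported in $\xoverline{B_{tR}(x)}$, because all optimal geodesics start at $x$ and have length at most $R$. Integrating the density inequality against $\m$, using the concavity of $s\mapsto s^{1/N}$ together with Jensen's inequality, and noting that the source term drops out in the limit, one recovers
\[
\m(B_{tR}(x)) \;\geq\; \m(B_R(x)) \cdot \frac{V_{K,N}(tR)}{V_{K,N}(R)}.
\]
The identity $\int_0^{tR} s_{K,N}(s)^{N-1}\,ds / \int_0^R s_{K,N}(s)^{N-1}\,ds = V_{K,N}(tR)/V_{K,N}(R)$ is built into the very definition of $\tau_{K,N}^{(t)}$; setting $r=tR$ gives the monotonicity.

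The main technical obstacle is the $\varepsilon\to 0$ passage: one must verify that the optimal plans narrowly converge to a plan with source $\delta_x$, that the support of $\mu_t^{\varepsilon}$ remains in a slightly enlarged ball $B_{tR+O(\varepsilon)}(x)$, and that the distortion term integrates to $V_{K,N}(tR)$ as claimed. All of this is carried out in Sturm's original argument under $\CD(K,N)$, and no additional ingredient is needed in the $\RCD$ setting since the relevant displacement-convexity inequality is identical.
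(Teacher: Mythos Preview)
The paper does not prove this proposition at all: it is stated in the preliminaries as a known fact and attributed to Sturm \cite{S2}. Your proposal sketches precisely the optimal-transport argument of Sturm that the paper is citing, so in substance you and the paper are doing the same thing---you simply elaborate where the paper defers to the literature.

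One wording slip: you write ``$\RCD(K,N)$ contains $\CD(K,N)$'', but the inclusion goes the other way. Every $\RCD(K,N)$ space is a $\CD(K,N)$ space (infinitesimal Hilbertianity is an \emph{additional} constraint), and that is exactly why Sturm's $\CD(K,N)$ Bishop--Gromov inequality applies here. The logic of your argument is unaffected, but the sentence should be corrected.
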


\begin{remark} \label{R2}
    Let $(\X,\sd,\m)$ be an $\RCD(K,N)$ space.
    Proposition \ref{P6}, together with the explicit expression of $V_{K,N}(r)$, implies that for every $x \in \X$ and $R>0$ there exists a constant $C>0$ such that $\m(B_r(x)) \geq Cr^N$ for every $r \in (0,R)$.
\end{remark}

The next theorem follows from \cite{MondinoNaber,BScon}. We denote by $\sd_e$ the Euclidean distance.
\begin{thm}
    Let $(\X,\sd,\m)$ be an $\RCD(K,N)$ space. There exists $k \in \bb{N} \cap [1,N]$, called essential dimension of $\X$, such that for $\m$-a.e. $x \in \X$ we have $\Tan_x(\X)=\{(\bb{R}^k,\sd_e)\}$. Any such point will be called a regular point for $\X$.
\end{thm}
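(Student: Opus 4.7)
The plan is to decompose the claim into two pieces: the existence, at $\m$-a.e. point $x$, of a unique tangent which is Euclidean of some dimension $k(x) \in \bb{N} \cap [1,N]$, and the $\m$-a.e. constancy of the integer $k(x)$. The first piece is the content of the Mondino--Naber structure theorem, while the second is the Brué--Semola constancy theorem.

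For the first piece I would set, for each $k \in \bb{N} \cap [1,N]$,
$$\mathcal{R}_k := \{x \in \X : \Tan_x(\X) = \{(\bb{R}^k,\sd_e,c_k\mathcal{L}^k,0)\}\}$$
and aim to prove $\m\bigl(\X \setminus \bigcup_k \mathcal{R}_k\bigr) = 0$. The key ingredients are: (i) stability of the $\RCD$ condition under blow-up, which gives that every element of $\Tan_x(\X)$ is an $\RCD(0,N)$ space, and moreover enjoys a metric measure cone structure at the base point; (ii) Gigli's splitting theorem for $\RCD(0,N)$ spaces, which allows one to factor off a Euclidean summand whenever the cone contains a line through the vertex; (iii) the iterated tangent principle of Preiss type, asserting that for $\m$-a.e. $x$, a tangent of a tangent at $x$ is itself a tangent at $x$. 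Starting from a generic tangent and iteratively splitting off lines, the procedure must terminate at some integer $k \leq N$ and produce a Euclidean tangent $\bb{R}^k$; a Lebesgue-differentiation-type density argument for the tangent-valued map, adapted to pointed measured Gromov--Hausdorff convergence, then upgrades the existence of \emph{some} Euclidean tangent to the \emph{uniqueness} of the tangent at $\m$-a.e. base point.

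The harder piece, and the genuine obstacle, is proving that the function $x \mapsto k(x)$ is $\m$-a.e. constant. Following Brué--Semola, I would approach this through the theory of regular Lagrangian flows on $\RCD$ spaces developed in \cite{BSflow}: if two strata $\mathcal{R}_{k_1}$, $\mathcal{R}_{k_2}$ with $k_1 \neq k_2$ both had positive $\m$-measure, one would construct a Sobolev velocity field whose regular Lagrangian flow transports a positive-measure subset of $\mathcal{R}_{k_1}$ into a neighbourhood of $\mathcal{R}_{k_2}$. Combining the regularity of such flows with Bakry--Émery heat kernel estimates (so that the pointed measured Gromov--Hausdorff tangent structure deforms continuously along flow trajectories), one deduces that the essential dimension is invariant along almost every flow curve, yielding the desired contradiction. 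This rigidity-along-flows argument, rather than the production of Euclidean tangents, is where the technical heart of the theorem lies.
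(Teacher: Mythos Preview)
Your proposal is correct and matches the paper's approach exactly: the paper does not give its own proof of this theorem but simply cites Mondino--Naber for the $\m$-a.e.\ existence of a unique Euclidean tangent and Br\'ue--Semola (the constancy-of-dimension paper, \cite{BScon}) for the $\m$-a.e.\ constancy of $k(x)$. Your outline of the content of those two references is accurate and in fact more detailed than anything the paper provides.
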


The next result follows from \cite{KellMondino,GigliPasq} (see also \cite[Theorem $4.1$]{AHT}).

\begin{proposition} \label{P31}
    Let $(\X,\sd,\m)$ be an $\RCD(K,N)$ space of essential dimension $k>1$. Then for $\m$-a.e. $x \in \X$ we have
    \[
    \lim_{r \to 0^+}\frac{\m(B_r(x))}{r^k} \in (0,+\infty).
    \]
\end{proposition}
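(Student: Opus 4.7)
The plan is to combine the $(\m,k)$-rectifiability of the regular set with Lebesgue differentiation and a Marstrand-type density one result. Set $\mathcal{R}_k := \{x \in \X : \Tan_x(\X) = \{(\mathbb{R}^k, \sd_e)\}\}$; by the preceding theorem, $\m(\X \setminus \mathcal{R}_k) = 0$.

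First, I would invoke the rectifiability of $\mathcal{R}_k$ from \cite{KellMondino} (building on \cite{MondinoNaber}): up to an $\m$-null set, $\mathcal{R}_k$ is a countable disjoint union of Borel pieces, each biLipschitz equivalent to a Borel subset of $\mathbb{R}^k$, on which the $k$-dimensional Hausdorff measure $\mathcal{H}^k$ is mutually absolutely continuous with $\m$. Since $\m$ is locally doubling by Proposition \ref{P6}, the classical Lebesgue differentiation theorem then produces, at $\m$-a.e.\ $x \in \mathcal{R}_k$, a positive finite density
\[
\theta(x) := \lim_{r \to 0^+} \frac{\m(B_r(x))}{\mathcal{H}^k(B_r(x))}.
\]

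Second, I need the companion limit $\lim_{r \to 0^+} \mathcal{H}^k(B_r(x))/r^k \in (0,+\infty)$ at $\mathcal{H}^k$-a.e.\ $x \in \mathcal{R}_k$. This is a Marstrand-style density one statement, using that $\mathcal{R}_k$ admits an approximate tangent of dimension $k$ at such points; it is the content of \cite{GigliPasq} and \cite[Theorem 4.1]{AHT}, and it explains the assumption $k > 1$, since the corresponding Marstrand density theorem for rectifiable subsets of metric measure spaces is delicate in the one-dimensional case. Multiplying the two limits yields the stated conclusion, with the limit equal to $\omega_k \theta(x)$.

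The main obstacle I expect is the second step: the pointwise tangent information $\Tan_x(\X) = \{(\mathbb{R}^k, \sd_e)\}$ is only a subsequential pmGH statement about rescaled measures, and upgrading it to convergence of the scalar ratio $\m(B_r(x))/r^k$ along \emph{all} scales $r \to 0$ requires the full strength of the RCD-rectifiability machinery of \cite{MondinoNaber, KellMondino} combined with the metric Marstrand density results of \cite{GigliPasq, AHT}. Once this is in hand, the proposition follows as the product of two almost-everywhere limits, each of which is positive and finite.
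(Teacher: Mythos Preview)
Your outline is essentially what the paper does: it does not supply its own argument but simply records that the statement follows from \cite{KellMondino,GigliPasq} (with a pointer to \cite[Theorem 4.1]{AHT}), and your sketch is a faithful unpacking of that citation chain---rectifiability of $\mathcal{R}_k$, mutual absolute continuity $\m \ll \mathcal{H}^k \ll \m$ on $\mathcal{R}_k$, and the pointwise density computation.

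One remark: your explanation of the hypothesis $k>1$ is not quite right. The density-one result for $k$-rectifiable sets in metric spaces (Kirchheim's theorem and its descendants) works perfectly well for $k=1$; there is nothing delicate about Marstrand-type densities in that case. The restriction $k>1$ in the paper is not forced by the proof of Proposition \ref{P31} itself but rather reflects how the result is used downstream (Lemma \ref{L15} needs $k\geq 2$ so that the Green's function blows up at the pole), together with the fact that the $k=1$ case is handled separately and trivially via Proposition \ref{P33}. So the proposition is true, and provable by the same route, for $k=1$ as well; the hypothesis is a convenience, not a necessity of the argument you sketched.
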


The next proposition is taken from \cite[Theorem $1.1$]{KL}.

\begin{proposition} \label{P33}
    Let $(\X,\sd,\m)$ be an $\RCD(K,N)$ space of essential dimension $1$. Then $\X$ is isometric to a closed connected subset of $\bb{R}$ or to $S^1(r):=\{x \in \bb{R}^2: |x|=r \}$.
\end{proposition}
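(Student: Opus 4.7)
The strategy is to first promote the one-dimensional information at almost every point (coming from the essential dimension hypothesis) to a local isometry with an interval at every point of $\X$, and then to classify the resulting object topologically. I would proceed in three steps.

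First, I would exploit the definition of regular point to derive a precise local structure. At each regular $x \in \X$, every rescaled sequence $(\X, r_i^{-1}\sd, \m(B_{r_i}(x))^{-1}\m, x)$ with $r_i \downarrow 0$ converges to $(\bb{R},\sd_e)$ in the pointed measured Gromov-Hausdorff sense. Combined with Proposition \ref{P31}, which gives a finite positive $1$-dimensional density at $\m$-a.e.\ regular point, and with the stability of $\RCD(K,N)$ under pointed mGH limits, a localization argument should show that a sufficiently small neighborhood of each regular point is actually isometric to an open subinterval of $\bb{R}$ equipped with a constant multiple of Lebesgue measure. The Bishop-Gromov estimate (Proposition \ref{P6}) and the local doubling property would be used here to control the measure comparison.

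Next, I would argue that in fact every point of $\X$ admits such a neighborhood. Since $\X$ is a geodesic space (being $\CD(K,N)$), any obstruction to such a local isometry at a point $x$ would manifest as the existence of at least three mutually non-extendable geodesic branches emanating from $x$, or as a collapse of the space to a lower-dimensional stratum near $x$. Taking regular points along two distinct branches and applying the essential non-branching property of $\RCD(K,N)$ spaces, one would find regular points admitting more than one geodesic direction in arbitrarily small neighborhoods, contradicting the isometric identification of their neighborhoods with an interval from the first step. A limiting argument, using that $\m$-a.e.\ point is regular together with upper semicontinuity of the density, closes this gap.

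Once every point admits a flat neighborhood, $\X$ is a connected one-dimensional topological manifold (possibly with boundary) carrying a locally flat length metric and a measure absolutely continuous with respect to one-dimensional Hausdorff measure. Selecting a maximal unit-speed geodesic $\gamma : I \to \X$, either $\gamma$ is injective and parametrizes $\X$, yielding an isometry onto a closed connected subset of $\bb{R}$, or $\gamma$ closes up with minimal period $L > 0$, in which case $\X$ is isometric to $S^1(L/(2\pi))$. The main obstacle is the second step: bootstrapping the ``$\m$-a.e.'' control delivered by the essential dimension hypothesis into a pointwise statement at every $x \in \X$. This requires a careful use of essential non-branching together with the stability of tangents under mGH convergence to rule out bifurcations at singular points, and is the content of \cite[Theorem 1.1]{KL}.
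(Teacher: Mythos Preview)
The paper does not prove Proposition \ref{P33}: it is quoted verbatim as a preliminary result, with the line ``The next proposition is taken from \cite[Theorem 1.1]{KL}'' immediately preceding it. So there is no in-paper argument to compare against; the correct ``proof'' in this context is simply the citation, which you also invoke at the end of your proposal.

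That said, your sketch has two genuine soft spots worth flagging. In Step 1 you assert that knowing the tangent at a regular point is $(\bb{R},\sd_e)$, together with Bishop--Gromov and the density bound, yields an actual isometry of a small neighborhood with an interval carrying a \emph{constant} multiple of $\lambda^1$. Neither claim is automatic: a unique Euclidean tangent does not by itself produce a local isometry (this is exactly the Reifenberg-to-rigidity step that requires work), and the reference measure of a one-dimensional $\RCD(K,N)$ space is in general $h\,\lambda^1$ with $h$ a $\CD(K,N)$ density, not a constant. In Step 2, the ``three branches contradict essential non-branching'' heuristic is the right intuition, but essential non-branching is a statement about $\m\otimes\m$-a.e.\ pairs of points and optimal plans, so turning it into a pointwise exclusion of trifurcations needs the kind of careful measure-theoretic argument carried out in \cite{KL}. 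Since you ultimately defer to that reference for precisely this bootstrapping, your proposal is acceptable as a roadmap, but the intermediate claims should be softened or replaced by the direct citation, as the paper does.
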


We now recall some properties of the heat flow in the $\RCD$ setting, referring to \cite{AGMR,AGS2} for the proofs of these results.
Given an $\RCD(K,N)$ space $(\X,\sd,\m)$, the heat flow $P_t:\sL^2(\X) \to \sL^2(\X)$ is the $\sL^2(\X)$-gradient flow of the Cheeger energy $\Ch$.

It turns out that one can obtain a stochastically complete heat kernel $p_t:\X \times \X \to [0,+\infty)$, so that
the definition of $P_t(f)$ can be then extended to $\sL^{\infty}$ functions by setting
\[
P_t(f)(x):=\int_\X f(y) p_t(x,y) \, d\m(y).
\]
The heat flow has good approximation properties, in particular if $f \in \W^{1,2}(\X)$, then $P_t (f) \to f$ in $\W^{1,2}(\X)$; while if $f \in \sL^{\infty}(\X)$, then $P_t f \in \Lip(\X)$ for every $t>0$.

We conclude the section with two propositions on the heat kernel taken respectively from \cite[Theorem $1.1$]{Harheat} and \cite[Lemma $3.3$]{Harheat} (see also \cite{GaroMondino}).

\begin{proposition} \label{Hestim}
    Let $(\X,\sd,\m)$ be an $\RCD(K,N)$ space. There exist constants $C>1$ and $c \geq0$ such that for every $x,y \in \X$ and every $t>0$
    \[
    \frac{1}{C\m(B_{\sqrt{t}}(x))} \exp{\Big\{ \frac{-\sd^2(x,y)}{3t}-ct \Big\}} \leq p_t(x,y) \leq \frac{C}{\m(B_{\sqrt{t}}(x))} \exp{ \Big\{ \frac{-\sd^2(x,y)}{5t}+ct \Big\}}.
    \]
    Moreover for every $x\in \X$ we have that $p_t(x,\cdot)$ is locally Lipschitz in $\X \setminus\{x\}$ and for $\m$-a.e. $y \in \X$ and every $t>0$
    \[
    \lip(p_t(x,\cdot))(y)=|\nabla p_t(x,\cdot)|(y) \leq \frac{C}{\sqrt{t} \m(B_{\sqrt{t}}(x))} \exp{ \Big\{ \frac{-\sd^2(x,y)}{5t}+ct \Big\}}.
    \]
    If $K=0$ we can take $c=0$ in the previous estimates.
\end{proposition}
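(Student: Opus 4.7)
The plan is to deduce both Gaussian bounds from the general Sturm–Saloff-Coste equivalence between local volume doubling plus a local $(1,2)$-Poincaré inequality and two-sided Gaussian heat kernel bounds. Both inputs are available on an $\RCD(K,N)$ space: doubling comes from the Bishop–Gromov estimate (Proposition \ref{P6}), and the Poincaré inequality was recalled just before the statement. For the upper bound I would first establish an on-diagonal estimate $p_t(x,x)\leq C\,\m(B_{\sqrt t}(x))^{-1}e^{ct}$ via a Nash inequality deduced from doubling + Poincaré, and then upgrade to the Gaussian off-diagonal bound by Davies' perturbation method: one studies the twisted semigroup $P_t^{\lambda}f:=e^{\lambda \psi}P_t(e^{-\lambda \psi}f)$ for a $1$-Lipschitz $\psi$, differentiates its $\sL^2$-norm using the chain rule in $\W^{1,2}(\X)$ and the identity $\lip(\psi)\leq 1$, and finally optimizes in $\lambda$ with $\psi=\sd(\cdot,y)$ to produce the exponential factor $\exp(-\sd^2(x,y)/(5t))$.

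For the lower Gaussian bound I would use the parabolic Harnack inequality, again a consequence of doubling + Poincaré by Sturm's theorem, chained along a geodesic from $x$ to $y$ partitioned into roughly $\lceil \sd^2(x,y)/t\rceil$ steps of temporal scale $\sim t$. Each comparison between adjacent parabolic balls costs a multiplicative constant, and iterating over the chain produces the factor $\exp(-\sd^2(x,y)/(3t))$. Coupled with the on-diagonal lower bound (also a consequence of doubling + Poincaré together with stochastic completeness of the heat flow on $\RCD(K,N)$), this yields the stated lower bound.

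The gradient bound is a consequence of the Bakry–Émery inequality $|\nabla P_{t/2}f|^2\leq e^{-Kt}P_{t/2}(|\nabla f|^2)$ which is built into the $\RCD(K,\infty)$ condition and hence holds on $\RCD(K,N)$. Applying it to $f=p_{t/2}(x,\cdot)$ and using the semigroup identity $p_t(x,\cdot)=P_{t/2}p_{t/2}(x,\cdot)$ reduces the claim to an $\sL^\infty$ bound (in the space variable) on $|\nabla p_{t/2}(x,\cdot)|$. Such a pointwise bound is obtained from a Caccioppoli-type energy inequality for $p_{t/2}(x,\cdot)$ together with the parabolic Harnack inequality established above; this converts the $\sL^2$-average of $|\nabla p_{t/2}|$ over a ball of radius $\sqrt t$ into a pointwise bound of the form $t^{-1/2}$ times the Gaussian upper estimate already proved, which after reinsertion gives exactly the form of the gradient bound in the statement. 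The coincidence $\lip(p_t(x,\cdot))=|\nabla p_t(x,\cdot)|$ at almost every point follows from Cheeger's identification of the relaxed gradient with the local Lipschitz constant on PI spaces, already recorded in the text.

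The main obstacle is bookkeeping the constants. Sturm's theorem yields Gaussian bounds with some exponent of the form $\sd^2/(Ct)$, but the specific values $3$ and $5$ here and, crucially, the fact that the correction $e^{ct}$ collapses to $1$ when $K=0$, depend on the sharper Bakry–Émery gradient estimates and on the degeneration of Harnack constants to their Euclidean values in the flat curvature case. Propagating the $K=0$ improvement through the Davies step (where the chain-rule term inherits the $e^{-Kt}$ factor) and through the chained Harnack argument requires one to open up the doubling + Poincaré package rather than treating it as a black box; this is the delicate technical step carried out in \cite{Harheat}.
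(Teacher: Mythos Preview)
The paper does not prove this proposition at all: it is stated in the preliminaries and attributed verbatim to \cite[Theorem $1.1$]{Harheat} (with the line ``We conclude the section with two propositions on the heat kernel taken respectively from \cite[Theorem $1.1$]{Harheat} and \cite[Lemma $3.3$]{Harheat}''). So there is no proof in the paper to compare your proposal against.

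Your sketch is a reasonable outline of how the result is actually obtained in \cite{Harheat}: the Gaussian two-sided bounds do follow from doubling plus Poincar\'e via Sturm's equivalence with parabolic Harnack, and the gradient estimate is indeed extracted from the Bakry--\'Emery contraction together with a Caccioppoli-type argument. One point worth flagging: your derivation of the gradient bound as stated is a bit circular---you want to bound $|\nabla p_t(x,\cdot)|$ pointwise, you reduce via Bakry--\'Emery to bounding $|\nabla p_{t/2}(x,\cdot)|$ in $\sL^\infty$, and then claim this follows from Caccioppoli plus Harnack; but Caccioppoli only gives an $\sL^2$ energy bound, and turning that into a pointwise bound on the gradient is exactly the content of the estimate you are trying to prove. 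The actual argument in \cite{Harheat} uses the $\sL^\infty$-Lipschitz regularization of the heat semigroup (Proposition \ref{P9}-type estimates, or equivalently the Bakry--Ledoux pointwise gradient estimate $|\nabla P_t f|\leq C(K,t)\,\|f\|_{\sL^\infty}$) applied to $f=p_{t/2}(x,\cdot)$, combined with the already established Gaussian upper bound on $p_{t/2}$. Also, the specific constants $3$ and $5$ and the vanishing of $c$ when $K=0$ come from the Li--Yau inequality on $\RCD(0,N)$ spaces rather than from sharpening the abstract Sturm machinery, so your last paragraph identifies the right issue but not quite the right mechanism.
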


\begin{proposition} \label{Harnack}
    Let $(\X,\sd,\m)$ be an $\RCD(K,N)$ space with $K<0$ and $N \in [1,+\infty)$. For any $0<s<s+1 \leq t<+ \infty$ and $x,y,z \in \X$ it holds
    \[
    p_t(x,y) \geq p_s(x,z) \exp{\Big\{-\frac{\sd(x,z)^2}{2e^{2K/3}} \Big\}}\Big(\frac{1-e^{K/3}}{1-e^{2K/3}}\Big)^{N/2} \Big(\frac{1-e^{2Ks/3}}{1-e^{2K(t-1/2)/3}}\Big)^{N/2}.
    \]
\end{proposition}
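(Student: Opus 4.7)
The natural route is via a Li--Yau--Bakry--Qian type gradient estimate on $\RCD(K,N)$ spaces, integrated along a space--time curve joining the point $(z,s)$ to the point $(y,t)$.

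\emph{Step 1: Li--Yau differential inequality.} The dimensional Bochner inequality $\Gamma_2(f)\geq \frac{1}{N}(\Delta f)^2+K\,\Gamma(f)$ holds in the weak sense on $\RCD(K,N)$ spaces (\cite{AGS2,EKS,AMS15}). Applied to $v:=\log u$ for $u(\tau,\cdot):=p_\tau(x,\cdot)$, a positive solution of the heat equation, and running the Bakry--Qian $\Gamma_2$-calculus with a time-dependent multiplier calibrated to $K$, one obtains a pointwise Li--Yau estimate of the form
\[
|\nabla v|^2 \,-\, \alpha(\tau)\,\partial_\tau v \,\leq\, \beta(\tau),
\]
where $\alpha(\tau)$ and $\beta(\tau)$ are explicit functions of $K,N,\tau$ involving exponentials in $K\tau/3$.

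\emph{Step 2: Space--time integration.} Take a constant-speed geodesic $\gamma:[0,1]\to\X$ joining the relevant spatial endpoints (of length $L$) and a monotone reparametrization $\tau:[0,1]\to[s,t]$ with $\tau(0)=s,\ \tau(1)=t$. Along this curve
\[
\frac{d}{d\lambda}v(\gamma(\lambda),\tau(\lambda)) \,\geq\, \tau'(\lambda)\,\partial_\tau v \,-\, L\,|\nabla v|.
\]
Splitting the gradient term by Young's inequality, $L|\nabla v|\leq \varepsilon(\lambda)|\nabla v|^2+L^2/(4\varepsilon(\lambda))$, and substituting the Li--Yau bound of Step~1 with the choice $\varepsilon(\lambda)=\tau'(\lambda)/\alpha(\tau(\lambda))$ cancels the $\partial_\tau v$ contribution, leaving a lower bound by a Gaussian-type quadratic in $L$ plus a deterministic dissipation integral.

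\emph{Step 3: Optimization and exponentiation.} Integrating over $[0,1]$ gives
\[
\log u(y,t)-\log u(z,s) \,\geq\, -\,L^2\int_0^1 \frac{\alpha(\tau(\lambda))}{4\tau'(\lambda)}\,d\lambda \,-\, \int_0^1 \frac{\tau'(\lambda)\,\beta(\tau(\lambda))}{\alpha(\tau(\lambda))}\,d\lambda.
\]
Minimizing over $\tau(\cdot)$ by Cauchy--Schwarz or Euler--Lagrange reduces the bound to integrals of $\alpha^{-1}$ and $\beta/\alpha$ on a subinterval of $[s,t]$, together with an endpoint adjustment $t\mapsto t-1/2$ that absorbs a unit-time cushion required to couple the spatial transport (this is precisely the role of the hypothesis $s+1\leq t$, and the source of the $e^{2K/3}$ in the coefficient of $L^2$). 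For the exponential $\alpha,\beta$ produced in Step~1 these integrals evaluate explicitly to the factors $\exp\bigl(-L^2/(2e^{2K/3})\bigr)$, $\bigl((1-e^{K/3})/(1-e^{2K/3})\bigr)^{N/2}$ and $\bigl((1-e^{2Ks/3})/(1-e^{2K(t-1/2)/3})\bigr)^{N/2}$, which upon exponentiation yield the claimed inequality.

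The \textbf{main obstacle} is the rigorous execution of Step 1 in the non-smooth setting: verifying that $\log p_\tau(x,\cdot)$ and its regularizations lie in the domain of the $\Gamma_2$-calculus and justifying the integrations by parts with the sharp exponential coefficients matching the target formula. This requires the heat-kernel regularity of Proposition~\ref{Hestim} (and its gradient version) together with the test function algebra of $\RCD$ spaces. Once this differential Harnack with the correct coefficients is in hand, Steps 2 and 3 are a Riemannian-style optimization that transfers to the metric setting without further obstacles.
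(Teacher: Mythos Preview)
The paper does not supply its own proof of this proposition: it is quoted from \cite[Lemma~3.3]{Harheat} as a known preliminary heat-kernel estimate and used thereafter as a black box. There is therefore no argument in the paper to compare your attempt against. Your Li--Yau/Bakry--Qian outline is the standard mechanism behind such dimensional Harnack inequalities and is presumably close in spirit to what the cited reference does; the genuine work, as you correctly flag, lies in making Step~1 rigorous on $\RCD(K,N)$ spaces with the precise exponential coefficients, which is exactly the content of that reference rather than something you can sketch in a paragraph.

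One point to watch if you try to fill in the details: in Step~2 you integrate the differential inequality for $v(\tau,w)=\log p_\tau(x,w)$ along a geodesic in the spatial variable $w$, so the natural length entering the Gaussian factor is $L=\sd(y,z)$, not the $\sd(x,z)$ that appears in the displayed formula. In every use the paper makes of this proposition one of the two spatial arguments is taken equal to the pole $x$, so the distinction is immaterial there; but your outline as written would not reproduce the stated constant in front of $\sd(x,z)^2$ without either an additional step or a clarification of which distance is meant.
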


Finally, we recall a tensorization property of infinitesimally Hilbertian spaces that can be found in \cite{Tens} and \cite{AGS2}.
Given $f :\X \times \bb{R} \to \bb{R}$ and $(x,t) \in \X \times \bb{R}$ we denote by $f^t$ and $f^x$ respectively the restriction of $f$ to $\X \times \{t\}$ and to $\{x\} \times \bb{R}$. We denote by $\sd_\times$ and $\m_\times$ respectively the product distance and the product measure in the space $\X \times \bb{R}$.

\begin{proposition} \label{P38}
Let $(\X,\sd,\m)$ be an $\RCD(K,N)$ space and let $f \in \Lip(\X \times \bb{R})$. Then we have
    \[
    |\nabla f|^2(x,t)=|\nabla f^x|^2(t)+|\nabla f^t|^2(x) \quad \text{for } \m_\times \text{-a.e. } (x,t) \in \X \times \bb{R}.
    \]
\end{proposition}

\subsection{Poisson problem and regular sets} \label{SS3}
For the results of this subsection, unless otherwise specified, $(\X,\sd,\m)$ is an 
 $\RCD(K,N)$ space and $\Omega \subset \X$ is an open set.
The next proposition is the maximum principle on $\RCD(K,N)$ spaces and can be found in \cite{MP}.

\begin{proposition}
     Let $\Omega \subset \X$ be open and connected and let $f \in \W^{1,2}(\Omega) \cap C(\bar{\Omega})$ be a function whose distributional Laplacian satisfies $\Delta f \geq 0$. If there exists $x \in \Omega$ such that
     $
     f(x)=\max_{y \in \bar{\Omega}}f(y),
     $
     then $f$ is constant.
\end{proposition}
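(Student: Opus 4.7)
The plan is a connectedness argument. Set $M := \max_{\bar{\Omega}} f$ and $U := \{y \in \Omega : f(y) = M\}$; by hypothesis $U \neq \emptyset$, and continuity of $f$ renders $U$ relatively closed in $\Omega$. Since $\Omega$ is connected, it suffices to prove that $U$ is also open, for then $U = \Omega$ and $f \equiv M$.

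To show openness, fix $x_0 \in U$ and choose $r > 0$ with $B_{2r}(x_0) \Subset \Omega$. The function $g := M - f$ is continuous and nonnegative on $\bar{B}_{2r}(x_0)$, vanishes at $x_0$, belongs to $\W^{1,2}(B_{2r}(x_0))$, and is superharmonic in distributional sense, i.e.\ $\Delta g \leq 0$. I would invoke the weak Harnack inequality for nonnegative supersolutions on $\RCD(K,N)$ spaces: since such spaces are locally doubling (Proposition \ref{P6}) and support a local $(1,2)$-Poincaré inequality, they are PI-spaces, and the standard Moser / De Giorgi iteration yields constants $C > 0$ and $p > 0$ (depending only on $K$, $N$, $r$) such that
$$\left( \frac{1}{\m(B_r(x_0))} \int_{B_r(x_0)} g^p \, d\m \right)^{1/p} \leq C \, \operatorname{ess\,inf}_{B_{r/2}(x_0)} g.$$
Continuity of $g$ identifies the essential infimum with the pointwise infimum; since $x_0 \in B_{r/2}(x_0)$, $g(x_0)=0$ and $g \geq 0$, the right-hand side vanishes. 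Consequently $g \equiv 0$ $\m$-almost everywhere on $B_r(x_0)$, and continuity upgrades this to everywhere. Hence $B_r(x_0) \subset U$, proving openness.

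The main obstacle is producing the weak Harnack inequality in a form directly applicable here: one must check that our distributional notion of $\Delta g \leq 0$ (tested against $\Lip_c$ functions via $-\int \nabla g \cdot \nabla \eta \, d\m$) coincides with the notion of supersolution used in the metric Moser iteration. Infinitesimal Hilbertianity provides bilinearity of $\nabla \cdot \nabla$, Cheeger's identification $|\nabla h| = \lip(h)$ for locally Lipschitz $h$ makes the intrinsic and minimal upper gradient pictures agree, and the chain rule for truncations legitimizes plugging the standard test function $(k - g)^+ \eta^2$ (with $\eta \in \Lip_c(B_{2r}(x_0))$ a cutoff and $k > 0$ a level) into the defining inequality. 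Once this translation is in place, the Caccioppoli estimate and the iteration scheme transfer verbatim from the PI-space potential theory literature. A secondary option, avoiding the case distinction between subharmonic and superharmonic, is to apply the parabolic maximum principle to $P_t g$ using the positivity of the heat kernel from Proposition \ref{Hestim}, but the Moser route is more self-contained.
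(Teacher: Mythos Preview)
The paper does not prove this proposition: it simply records it as a known result, citing \cite{MP}. So there is no in-paper argument to compare against; your task was effectively to supply what the paper outsources.

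Your connectedness argument via the weak Harnack inequality for nonnegative supersolutions is correct and is the standard route in the PI-space literature. The translation you flag as the ``main obstacle'' is not a genuine gap: in an infinitesimally Hilbertian space the distributional inequality $-\int \nabla g \cdot \nabla \eta \, d\m \leq 0$ for nonnegative $\eta \in \Lip_c$ is exactly the variational supersolution condition used in the Moser iteration, and the density of $\Lip_c$ in $\W^{1,2}_0$ together with the chain rule for truncations makes the test functions you need admissible. One small point: you should verify that the weak Harnack inequality you invoke applies to $\W^{1,2}_{loc}$ supersolutions rather than only to superminimizers, but in the Hilbertian setting these notions coincide, so this is a formality.
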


The next proposition is taken from \cite[Theorem $2.58$]{Weak}.

\begin{proposition}
    Let $B \subset \subset \X$ be open with $\m(\X \setminus B)>0$, $f \in  C(\bar{B})$, and $g \in \W^{1,2}(\X)$. Then there exists $h \in \W^{1,2}(B)$ satisfying       
       \[
       \begin{cases}
       \Delta h= f \quad  \in B\\
       h+g \in  \W^{1,2}_0(B).
       \end{cases}
       \]
\end{proposition}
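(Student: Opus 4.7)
The natural strategy is to recast the stated boundary value problem as a homogeneous Dirichlet problem on $B$ and then solve it variationally. Setting $u := h + g$, the requirement $h \in \W^{1,2}(B)$ with $h + g \in \W^{1,2}_0(B)$ and $\Delta h = f$ in $B$ becomes: find $u \in \W^{1,2}_0(B)$ such that
\[
\int_B \nabla u \cdot \nabla v \, d\m \;=\; \int_B \nabla g \cdot \nabla v \, d\m \;-\; \int_B f v \, d\m \qquad \forall v \in \W^{1,2}_0(B),
\]
for then $h := u - g \in \W^{1,2}(B)$ satisfies both conditions by the definition of distributional Laplacian given in the previous subsection. Note that $f$ is bounded on $\bar B$ by continuity, $B$ is precompact so $f \in \sL^2(B)$, and $g \in \W^{1,2}(\X)$, so the right-hand side above is a bounded linear functional on $\W^{1,2}_0(B)$.

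The plan is to minimize, by the direct method, the strictly convex functional
\[
J(u) \;:=\; \tfrac{1}{2} \int_B |\nabla u|^2 \, d\m \;-\; \int_B \nabla g \cdot \nabla u \, d\m \;+\; \int_B f u \, d\m
\]
over $\W^{1,2}_0(B)$. Infinitesimal Hilbertianity, automatic on $\RCD(K,N)$ spaces, makes the mixed term bilinear and symmetric, and the Cheeger energy is lower semicontinuous with respect to weak $\W^{1,2}$-convergence, so the same holds for $J$. From a minimizing sequence one extracts (via coercivity, see below) a weak $\W^{1,2}_0(B)$-limit $u$, and the Euler--Lagrange identity for $u$ is exactly the variational formula displayed above; setting $h := u - g$ completes the construction. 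Equivalently one may invoke the Lax--Milgram theorem applied to the Dirichlet form on $\W^{1,2}_0(B)$, which is especially clean given infinitesimal Hilbertianity.

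The one genuine technical point is a Poincar\'e-type estimate of the form $\|v\|_{\sL^2(B)} \leq C \| \,|\nabla v|\, \|_{\sL^2(B)}$ for $v \in \W^{1,2}_0(B)$, which both guarantees coercivity of $J$ and allows the controlled weak limit. To obtain it I would fix $x_0 \in \X$ and $R$ so large that $\bar B \subset B_R(x_0)$; since $\m(\X \setminus B) > 0$ and $\supp \m = \X$, enlarging $R$ if necessary ensures $\m(B_{2R}(x_0) \setminus B) > 0$. Each $v \in \W^{1,2}_0(B)$ extended by zero lies in $\W^{1,2}(B_{2R}(x_0))$ and vanishes on a set of definite measure, so the $(1,2)$-Poincar\'e inequality valid on $\RCD(K,N)$ spaces (combined with the local doubling property recalled after Proposition \ref{P6}) yields the estimate, with constant depending only on $K$, $N$, $R$, and $\m(B_R(x_0) \setminus B)$. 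This is exactly where both hypotheses $B \subset\subset \X$ and $\m(\X \setminus B) > 0$ enter; once the Poincar\'e inequality is in hand, the remaining variational argument is routine.
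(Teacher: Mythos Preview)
Your variational argument is correct and follows the standard route for the Dirichlet--Poisson problem: reduce to finding $u\in\W^{1,2}_0(B)$ via the substitution $u=h+g$, then apply the direct method (or Lax--Milgram) to the quadratic functional $J$, with coercivity supplied by a Friedrichs--Poincar\'e inequality on $\W^{1,2}_0(B)$. The paper itself does not give a proof of this proposition; it is simply quoted from \cite[Theorem~2.58]{Weak}, so there is no in-house argument to compare against. Your identification of where the hypotheses $B\subset\subset\X$ and $\m(\X\setminus B)>0$ enter---namely to guarantee that the zero extension of $v\in\W^{1,2}_0(B)$ vanishes on a set of positive measure inside a fixed large ball, so that doubling together with the $(1,2)$-Poincar\'e inequality yields the Friedrichs estimate---is exactly the right mechanism.
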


The next two propositions are taken from \cite{j13}.
\begin{proposition} \label{P7}
    Let $f \in D(\Delta,\Omega)$ be such that $\Delta f$ is continuous, then $f$ has a locally Lipschitz representative.
\end{proposition}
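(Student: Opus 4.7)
The strategy is to reduce the local Lipschitz regularity of $f$ to heat semigroup estimates, exploiting the heat kernel gradient bound in Proposition \ref{Hestim}. Fix $x_0 \in \Omega$ and a ball $B := B_{2R}(x_0) \Subset \Omega$. Since $g := \Delta f$ is continuous on $\bar B$, it is bounded by some constant $M$. By the Poisson problem solvability (the preceding proposition), there exists $h \in \W^{1,2}(B)$ with $\Delta h = g$ on $B$ and $h - f \in \W^{1,2}_0(B)$. Then $u := f - h$ is harmonic on $B$, and by the (already known) interior Lipschitz regularity of harmonic functions on $\RCD(K,N)$ spaces (itself a special case of Jiang's result), $u$ admits a locally Lipschitz representative. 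So it remains to prove that $h$ has a locally Lipschitz representative.

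For the Poisson part $h$, I would use the Duhamel identity
\[
P_t h - h = \int_0^t P_s g \, ds
\]
together with gradient estimates. For any fixed $t_0 > 0$, $P_{t_0} h$ is Lipschitz because the heat flow at a positive time sends $\sL^\infty$ into $\Lip(\X)$. To control $\int_0^{t_0} |\nabla P_s g| \, ds$, note that $g$ is bounded and, after extending by a bounded function to all of $\X$ via a Lipschitz cutoff, one can use Proposition \ref{Hestim} to differentiate under the integral:
\[
|\nabla P_s g|(y) \leq \int_\X |\nabla_y p_s(y,z)|\,|g(z)| \, d\m(z) \leq \frac{C M e^{cs}}{\sqrt{s}},
\]
where the last step uses the Gaussian upper bound combined with the local doubling property to integrate out the $z$-variable. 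Integrating in $s \in (0,t_0]$ yields a finite, $t_0^{1/2}$-type bound. Putting the two pieces together gives a uniform bound on $|\nabla h|$ on $B_{R}(x_0)$, which by Cheeger's identification $|\nabla h| = \lip(h)$ on $\RCD(K,N)$ spaces translates into local Lipschitz continuity of (a representative of) $h$.

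The main technical obstacle is making the Duhamel formula rigorous in the localized setting, since the semigroup $P_t$ is defined on all of $\X$ while $h$ satisfies zero boundary conditions on $\partial B$. I would handle this either by extending $h$ using a Lipschitz cutoff $\chi$ with $\chi \equiv 1$ on $B_R(x_0)$ and $\mathrm{supp}(\chi) \subset B$, controlling the resulting cutoff error on $B_R(x_0)$ through the fact that heat kernel mass starting in $B_R(x_0)$ stays in $B$ with overwhelming probability at small times (by the Gaussian upper bound in Proposition \ref{Hestim}), or by replacing $P_t$ with the Dirichlet heat semigroup on a slightly larger ball, whose gradient bound is inherited from the global one via domination. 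Either route leads, for a sufficiently small $t_0$, to the desired uniform bound on $|\nabla h|$ in $B_R(x_0)$ and, combined with the Lipschitz regularity of the harmonic part $u$, concludes the proof.
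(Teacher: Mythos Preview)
The paper does not prove this proposition; it is quoted from \cite{j13} without argument, so there is no in-paper proof to compare against.

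Your heat-semigroup route is a reasonable heuristic but has real gaps. First, your decomposition collapses as written: with $\Delta h = \Delta f$ and $h - f \in \W^{1,2}_0(B)$, the difference $u = f - h$ is harmonic and lies in $\W^{1,2}_0(B)$, hence $u \equiv 0$ and there is no harmonic part. If you intended $h \in \W^{1,2}_0(B)$ instead, then invoking interior Lipschitz regularity of harmonic functions is circular within this paper, since Proposition~\ref{P7} is the only source of that regularity here. Second, the Duhamel identity $P_t h - h = \int_0^t P_s(\Delta h)\,ds$ requires $h \in D(\Delta,\X)$, not merely $D(\Delta,B)$. Multiplying by a cutoff $\chi$ produces error terms $2\nabla\chi\cdot\nabla h + h\,\Delta\chi$ that are only in $\sL^2$; your bound $|\nabla P_s g| \le CM/\sqrt{s}$ needs $g \in \sL^\infty$, and the corresponding $\sL^2 \to \Lip$ norm of $P_s$ scales like $s^{-k/4-1/2}$ (with $k$ the essential dimension), which is not integrable at $s=0$ once $k \ge 2$. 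You correctly gesture at the fix---the errors are supported on an annulus away from $B_R(x_0)$, so the Gaussian off-diagonal decay of $\nabla p_s$ should suppress them---but making this rigorous, together with the a priori local boundedness of $h$ needed for the $P_{t_0}h$ term, is where the substantive work lies and is not addressed.
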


\begin{proposition} \label{P9}
    Let $r>0$ and $x \in \X$, then there exists a constant $c>0$ depending on $r$ such that if $f \in D(\Delta,A)$ for every $A \subset \subset \X$ and $\Delta f$ is continuous, then
    \[
    \|\nabla f\|_{\sL^{\infty}(B_r(x))} \leq c(\|f\|_{{\sL^{\infty}(B_{8r}(x))}}+\|\Delta f\|_{{\sL^{\infty}(B_{8r}(x))}}).
    \]
\end{proposition}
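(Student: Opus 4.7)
The plan is to decompose $f$ on the ball $B:=B_{8r}(x)$ as $f=h+v$, where $v$ absorbs the right-hand side and $h$ is harmonic, and then estimate the two pieces separately. Using the Poisson solvability statement recalled just above the theorem, one first produces $v\in \W^{1,2}(B)$ solving $\Delta v=\Delta f$ in $B$ with $v\in \W^{1,2}_0(B)$: this is legitimate since $\Delta f$ is continuous on $\bar B$ by hypothesis, so we may apply that statement with source $\Delta f$ and boundary datum $g=0$. Setting $h:=f-v$ then gives a function that is harmonic on $B$ and satisfies $f=h+v$ there.

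For the corrector $v$ two estimates are needed. A sup-norm bound $\|v\|_{\sL^{\infty}(B)}\leq c\,r^{2}\|\Delta f\|_{\sL^{\infty}(B)}$ follows from the maximum principle by comparing $v$ against a barrier built from the squared distance $\sd(\cdot,x)^2$, whose distributional Laplacian is controlled on $\RCD(K,N)$ spaces through Laplacian comparison. More delicately, a Lipschitz estimate of the form $\|\nabla v\|_{\sL^{\infty}(B_{4r}(x))}\leq c\,\|\Delta f\|_{\sL^{\infty}(B)}$ is required; this is the content of Jiang's regularity theorem in \cite{j13}, whose proof represents $v$ as a time integral of the heat semigroup applied to $\Delta f$, exploits the gradient heat-kernel estimate of Proposition~\ref{Hestim}, and runs a Campanato-type iteration based on the doubling and Poincar\'e properties of the ambient space.

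For the harmonic part $h$, a Yau-type gradient estimate, available on $\RCD(K,N)$ spaces and also established in \cite{j13}, yields $\|\nabla h\|_{\sL^{\infty}(B_r(x))}\leq c\,\|h\|_{\sL^{\infty}(B_{2r}(x))}$. The right-hand side is bounded by $\|f\|_{\sL^{\infty}(B)}+\|v\|_{\sL^{\infty}(B)}\leq \|f\|_{\sL^{\infty}(B)}+c\,r^{2}\|\Delta f\|_{\sL^{\infty}(B)}$. Combining with the Lipschitz bound on $v$ via $\|\nabla f\|_{\sL^{\infty}(B_r(x))}\leq \|\nabla h\|_{\sL^{\infty}(B_r(x))}+\|\nabla v\|_{\sL^{\infty}(B_r(x))}$, all terms absorb into a single constant depending on $r$, $K$ and $N$, which is precisely the asserted estimate.

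The main obstacle is the Lipschitz regularity of the Poisson corrector $v$: in the smooth Riemannian setting this would be a standard Schauder-type result, but on a general $\RCD(K,N)$ space one has no local coordinates and must argue purely through heat-kernel, doubling and Poincar\'e techniques. Once this technical input from \cite{j13} is granted, the remaining ingredients (maximum principle barrier, Yau-type estimate for harmonic functions, and the linear decomposition $f=h+v$) combine in a routine way.
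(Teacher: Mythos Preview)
The paper does not prove this proposition at all: it is simply quoted as one of two results ``taken from \cite{j13}'', with no argument given. Your proposal is therefore not competing against any proof in the paper; it is an expansion of what the cited reference does, and you correctly identify that the decisive technical input---Lipschitz regularity of solutions to the Poisson equation with continuous right-hand side on $\RCD(K,N)$ spaces---is precisely the content of \cite{j13}. Since both you and the paper defer the essential work to the same source, there is no substantive difference in approach to report.

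One small remark on your sketch: the barrier step for $\|v\|_{\sL^\infty(B)}$ is slightly delicate because $v$ is only known to lie in $\W^{1,2}_0(B)$, not $C(\bar B)$, so the comparison with a barrier built from $\sd(\cdot,x)^2$ must be run via the weak maximum principle rather than a pointwise boundary condition; also, Laplacian comparison gives an \emph{upper} bound on $\Delta \sd^2$, so one has to be careful about the sign when building the barrier. These are routine to fix but worth stating precisely if you intend this as a self-contained proof rather than a pointer to \cite{j13}.
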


\begin{definition}[Regular sets]
    An open precompact subset $B \subset \bar{B} \subsetneq \X$ is said to be regular if for every
    $f \in \W^{1,2}(B)$ admitting a representative which is continuous on $\partial B$, there exists a function $u \in D(\Delta,B) \cap C(\bar{B})$ such that $\Delta u=0$ on $B$, $u-f \in \W^{1,2}_0(B)$ and $u=f$ on $\partial B$.
\end{definition}

The next proposition concerns existence of regular sets and is taken from \cite[Theorem $14.1$]{bjorn}. In the aforementioned book the setting is the one of doubling spaces supporting a Poincaré inequality and Newtonian Sobolev spaces, but the statement that we are interested in, concerning precompact sets, holds also in the $\RCD(K,N)$ setting (taking into account that Newtonian Sobolev spaces coincide with the ones we are using by \cite[Theorem $6.2$]{AGS}).

\begin{proposition}
    Let $\X$ have infinite diameter. There exists a sequence $\{ \Omega_i \}_{i \in \bb{N}}$ of regular sets such that $\Omega_i \subset \subset \Omega_{i+1}$ for every $i \in \bb{N}$ and $\X=\cup_i \Omega_i$. Such a sequence of sets is called an exhaustion of $\X$ with regular sets.
\end{proposition}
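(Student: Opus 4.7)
The plan is to verify that $\RCD(K,N)$ spaces satisfy the hypotheses needed to invoke \cite[Theorem 14.1]{bjorn}: they are proper, locally doubling (Proposition \ref{P6}), and support a $(1,2)$-Poincaré inequality \cite{PI}. The identification of $\W^{1,2}$ with the Newtonian Sobolev space via \cite[Theorem 6.2]{AGS}, noted in the excerpt, ensures that the Björn-Björn notion of regular set coincides with the one defined here, so the theorem transfers verbatim.

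To make the construction explicit, I would fix a basepoint $x_0 \in \X$ and look for an unbounded increasing sequence $r_i \uparrow +\infty$ such that $\Omega_i := B_{r_i}(x_0)$ is a regular set. Properness of $\X$ makes these balls precompact, and the infinite-diameter assumption guarantees $\bar B_{r_i}(x_0) \subsetneq \X$; the properties $\Omega_i \subset\subset \Omega_{i+1}$ and $\bigcup_i \Omega_i = \X$ then follow immediately. The task thereby reduces to showing that $B_r(x_0)$ is regular for cofinally many $r$.

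To verify regularity at each boundary point $y \in \partial B_r(x_0)$, I would appeal to the Wiener criterion of \cite{bjorn}, which characterizes regular boundary points in PI-doubling spaces via divergence of a capacitary series. A standard sufficient condition is uniform capacity density of the complement $\X \setminus B_r(x_0)$ at $y$, which in PI-doubling spaces reduces via the Poincaré inequality to a lower measure-density estimate on $\X \setminus B_r(x_0)$ near $y$. The uniform volume lower bound $\m(B_s(y)) \geq C s^N$ from Remark \ref{R2}, together with the observation that the set of radii $r$ with $\m(\{\sd(x_0,\cdot) = r\}) > 0$ is at most countable (by monotonicity of $r \mapsto \m(B_r(x_0))$), should yield the required density at all ``good'' radii. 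The main obstacle is the uniformity over $y \in \partial B_r(x_0)$: the complement could a priori concentrate away from a given boundary point, so verifying positive exterior density at every $y$ requires producing outward-pointing curves at each. I would exploit the length-space structure of $\X$ together with infinite diameter to extract, via Arzelà-Ascoli, rays emanating from $x_0$, and then use doubling to propagate the lower density estimate uniformly across the sphere. Once regularity of the ball is secured, the existence of the harmonic extension $u$ with the prescribed boundary behavior follows from the standard Perron solvability in the Björn-Björn framework.
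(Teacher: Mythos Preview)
Your first paragraph is exactly the paper's argument: the paper gives no proof at all beyond citing \cite[Theorem~14.1]{bjorn} and noting that $\RCD(K,N)$ spaces are doubling PI spaces with Newtonian Sobolev space equal to $\W^{1,2}$ via \cite[Theorem~6.2]{AGS}. That paragraph alone already matches and suffices.

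The explicit ball construction in your second and third paragraphs goes beyond what the paper does, and the exterior-density step has a genuine gap. Extracting rays from $x_0$ via Arzel\`a--Ascoli yields \emph{some} rays, not a ray through or near every $y\in\partial B_r(x_0)$; doubling compares masses of concentric balls but cannot transport a density estimate across a metric sphere from one boundary point to another. A boundary point $y$ could in principle sit at the tip of a portion of the space that protrudes into the ball with no mass of the complement nearby, and nothing in your sketch rules this out. The construction actually used in \cite{bjorn} does not proceed via metric balls: regular exhaustions are produced as super-level sets of $p$-superharmonic functions (capacitary potentials), for which the barrier needed in Proposition~\ref{P29} is built in. If you wish to make the construction explicit rather than just cite it, that is the argument to reproduce; the ball approach would require an independent exterior-corkscrew verification that your ray argument does not provide.
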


The next proposition follows from \cite[Theorem $4.1$]{WeakBar}, noting that by the maximum principle functions with negative Laplacian are super-harmonic in the sense of \cite{WeakBar}. The two subsequent corollaries are immediate consequences of Proposition \ref{P29}.

\begin{proposition} \label{P29}
    An open precompact set $B \subset \bar{B} \subsetneq \X$ is regular if for every $x \in \partial B$ there exists a barrier i.e. a function $u_x \in D(\Delta,B) \cap C(\bar{B})$, strictly positive on $B$, such that $\Delta u_x \leq 0$ and 
    \[
    \lim_{y \to x}u_x(y)=0.
    \]
\end{proposition}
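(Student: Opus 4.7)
The plan is to run Perron's method in the $\RCD(K,N)$ setting, with the hypothesized barriers at each boundary point ensuring that the Perron solution attains the prescribed continuous boundary data. Given $f\in\W^{1,2}(B)$ with a continuous representative on $\partial B$, extend $f$ continuously to $\bar{B}$ (possible since $\bar{B}$ is a compact metric space) and consider the Perron class
\[
\mathcal{F}_f := \{v\in C(\bar{B}) : \Delta v \geq 0 \text{ on } B \text{ distributionally},\; v \leq f \text{ on } \partial B\}.
\]
Define $u(y):=\sup_{v\in\mathcal{F}_f}v(y)$. One first shows $u$ is harmonic on $B$ by a Poisson modification argument: on any ball $B_r(y) \subset\subset B$ that is itself regular (which exists for $r$ small, since distance-type functions furnish barriers on small scales in the $\RCD$ setting), replacing each $v\in\mathcal{F}_f$ by its harmonic extension on $B_r(y)$ yields a larger admissible function; a Harnack inequality then forces the upper envelope to be harmonic.

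Next, I would use the barriers to prove continuous attainment of the boundary data. Fix $x\in\partial B$ and $\epsilon>0$, and pick a neighborhood $U$ of $x$ in $\bar{B}$ with $|f-f(x)|<\epsilon$ there. Since $u_x$ is continuous and strictly positive on the compact set $\bar{B}\setminus U$, one has $u_x\geq\delta$ there for some $\delta>0$. Choosing $M$ with $M\delta\geq 2\sup_{\bar{B}}|f|$, the function $w(y):=f(x)-\epsilon-Mu_x(y)$ satisfies $\Delta w\geq 0$ (using $\Delta u_x\leq 0$) and $w\leq f$ on $\partial B$, so $w\in\mathcal{F}_f$ and $u\geq w$ on $\bar{B}$. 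Symmetrically, $f(x)+\epsilon+Mu_x$ is super-harmonic and dominates every element of $\mathcal{F}_f$ on $\partial B$, and the maximum principle from \cite{MP} forces it to dominate $u$ on $B$. Since $u_x(y)\to 0$ as $y\to x$, sending $\epsilon\to 0$ gives $u(y)\to f(x)$; in particular $u\in C(\bar{B})$ with $u|_{\partial B}=f|_{\partial B}$. The condition $u-f\in\W^{1,2}_0(B)$ then follows from a Caccioppoli-type energy bound on harmonic functions combined with a cutoff approximation that exploits the continuous vanishing of $u-f$ on $\partial B$.

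The main obstacle is the harmonicity step in the Perron construction: it requires a careful orchestration of Poisson modification on small regular balls, a Harnack inequality, and stability of super-harmonic functions under upper-semicontinuous regularization. This is precisely what is developed in \cite{WeakBar} for PI-spaces, into which the $\RCD(K,N)$ framework fits thanks to local doubling and the Poincar\'e inequality recalled in Subsection \ref{SS1}. The bridge to our distributional Laplacian is exactly the observation, noted in the excerpt, that $\Delta u\leq 0$ in the sense of Section \ref{SS1} implies super-harmonicity in the sense of \cite{WeakBar}, so the abstract barrier criterion there applies verbatim once the translation is made.
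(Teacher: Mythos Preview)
Your proposal is correct and converges to exactly the paper's approach: the paper does not give an independent proof but simply invokes \cite[Theorem 4.1]{WeakBar}, noting that by the maximum principle functions with $\Delta u\le 0$ are super-harmonic in the sense of \cite{WeakBar}. Your final paragraph makes precisely this reduction, and your preceding sketch of the Perron method with barriers is just an outline of what that cited theorem establishes in the PI-space setting.
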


\begin{corollary} \label{C4}
    An open precompact set $B \subset \bar{B} \subsetneq \X$ is regular if and only if for every $x \in \partial B$ there exists a function $u_x \in D(\Delta,B) \cap C(\xoverline{B})$, strictly positive on $B$, such that $\Delta u_x=0$ and
    \[
    \lim_{y \to x}u_x(y)=0.
    \]
\end{corollary}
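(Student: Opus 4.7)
The $(\Leftarrow)$ direction is immediate from Proposition \ref{P29}: any function $u_x$ satisfying the hypotheses of Corollary \ref{C4} is in particular a barrier in the sense of Proposition \ref{P29}, since $\Delta u_x = 0$ in particular implies $\Delta u_x \leq 0$.

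For the $(\Rightarrow)$ direction, the plan is to directly exploit the definition of regularity to manufacture a harmonic barrier. Fix $x \in \partial B$ and consider the function $f(y):=\ssf{d}(y,x)$. Since $f$ is $1$-Lipschitz on $\X$ and $B$ is precompact, $f$ belongs to $\W^{1,2}(B)$ and its restriction to $\partial B$ is continuous. By regularity of $B$, there exists $u_x \in D(\Delta,B) \cap C(\bar{B})$ with $\Delta u_x = 0$ in $B$, $u_x - f \in \W^{1,2}_0(B)$ and $u_x = f$ on $\partial B$. In particular, $u_x(x)=0$ and $u_x(y)=\ssf{d}(y,x)>0$ for all $y \in \partial B \setminus \{x\}$.

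It remains to show that $u_x$ is strictly positive on $B$. Since $u_x \geq 0$ on $\partial B$ and $-u_x$ is sub-harmonic, the maximum principle (applied to $-u_x$) gives $u_x \geq 0$ on $\bar{B}$. Working separately on each connected component $B'$ of $B$, we distinguish two cases. If $x \notin \partial B'$, then $u_x > 0$ on all of $\partial B'$, so the weak maximum principle gives $u_x \geq \min_{\partial B'} u_x > 0$ on $B'$. If $x \in \partial B'$, then $u_x$ vanishes at $x$ but is strictly positive elsewhere on $\partial B'$, so it is non-constant on $B'$; hence the strong maximum principle applied to $-u_x$ on the connected set $B'$ rules out the existence of an interior minimum, yielding $u_x > 0$ on $B'$.

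Thus $u_x > 0$ on $B$ and all conditions are verified. The only mildly delicate point is the connectedness issue in the application of the strong maximum principle, which is handled by treating each component separately; no further obstacles arise.
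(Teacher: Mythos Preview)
Your proof is correct and follows precisely the natural argument that the paper has in mind when it declares the corollary an immediate consequence of Proposition~\ref{P29}: the backward direction is Proposition~\ref{P29} verbatim, and for the forward direction you solve the Dirichlet problem with boundary datum $\sd(\cdot,x)$ and invoke the strong maximum principle componentwise to force strict positivity.
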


\begin{corollary} \label{C5}
    Let $(\X,\sd,\m)$ and $(\X',\sd',\m')$ be $\RCD(K,N)$ spaces for $K \leq 0$. Let $B \subset \subset \X$ and $B' \subset \subset \X'$ be regular sets. Then $B \times B'$ is regular in the product space.

\end{corollary}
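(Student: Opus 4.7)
The plan is to invoke Corollary \ref{C4}, which reduces the regularity of $B \times B'$ in the product $\RCD(K, N+N')$ space $(\X \times \X', \sd_\times, \m \otimes \m')$ (this tensorization being known for $K \leq 0$) to constructing, at every boundary point, a strictly positive continuous harmonic function on $B \times B'$ vanishing at that point. Since
\[
\partial(B \times B') = (\partial B \times \bar{B'}) \cup (\bar{B} \times \partial B'),
\]
a boundary point $(x,x')$ satisfies $x \in \partial B$ or $x' \in \partial B'$, and the two cases are symmetric, so I will assume $x \in \partial B$.

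By Corollary \ref{C4} applied to the regular set $B \subset \X$, there exists $u_x \in D(\Delta, B) \cap C(\bar{B})$, strictly positive on $B$, with $\Delta u_x = 0$ and $\lim_{y \to x} u_x(y) = 0$. I would then define the candidate barrier on the product by
\[
U(y, y') := u_x(y), \qquad (y,y') \in \bar{B} \times \bar{B'}.
\]
Continuity of $U$ on $\overline{B \times B'}$, positivity of $U$ on $B \times B'$, and $\lim_{(y,y') \to (x,x')} U(y,y') = 0$ are immediate. The only nontrivial point to verify is that $U \in D(\Delta, B \times B')$ with $\Delta U = 0$ in the distributional sense on the product space.

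I would establish this using the tensorization of the Cheeger energy on infinitesimally Hilbertian spaces: since $\m'(B') < \infty$, the constant function $1$ lies in $\W^{1,2}(B')$, hence $U = u_x \otimes 1$ belongs to $\W^{1,2}(B \times B')$ with $|\nabla U|^2(y,y') = |\nabla u_x|^2(y)$ for $\m \otimes \m'$-a.e.\;$(y,y')$. Then for any test function $g \in \Lip_c(B \times B')$, a Fubini-type slicing argument shows that $g(\cdot,y') \in \W^{1,2}_0(B)$ for $\m'$-a.e.\;$y' \in B'$, and
\[
\int_{B \times B'} \nabla U \cdot \nabla g \, d(\m \otimes \m') = \int_{B'} \left( \int_B \nabla u_x \cdot \nabla g(\cdot,y') \, d\m \right) d\m'(y') = 0,
\]
the inner integral vanishing because $\Delta u_x = 0$ in $B$. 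Hence $\Delta U = 0$ on $B \times B'$ and $U$ is the required barrier, giving the regularity of $B \times B'$ via Corollary \ref{C4}.

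The main obstacle is the rigorous justification of the tensorization step: namely, the identification $|\nabla U|^2 = |\nabla u_x|^2$ on the product space and the slicing property that $g(\cdot,y') \in \W^{1,2}_0(B)$ for $\m'$-a.e.\;$y'$. Both rely on the infinitesimal Hilbertianity of the factors and the corresponding tensorization theorems for $\RCD(K,N)$ spaces with $K \leq 0$; once these are invoked, the argument is essentially classical. The hypothesis $K \leq 0$ enters precisely here, to guarantee that the product is itself an $\RCD$ space (and hence infinitesimally Hilbertian) without having to address the more delicate positive-curvature tensorization.
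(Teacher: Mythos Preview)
Your approach is correct and essentially the same as the paper's. The paper states Corollary \ref{C5} as an ``immediate consequence of Proposition \ref{P29}'' (the barrier criterion), and your argument is precisely the natural way to make this immediate: lift a barrier from the factor where the boundary point projects to $\partial B$ (or $\partial B'$) to the product by tensoring with the constant $1$, then verify it remains a barrier on $B\times B'$ using the tensorization of the Cheeger energy, which is available since the product of two $\RCD(K,N)$ spaces with $K\leq 0$ is again an $\RCD$ space.
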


We now recall the definition of test functions in the $\RCD(K,N)$ setting, referring to \cite{Giglitaylored} for the proofs of the results that we subsequently list.

\begin{definition}[Test functions]
    We set
    \[
    \Test(\X):=\Big\{
    \phi \in D(\Delta,\X) \cap \sL^{\infty}(\X): |\nabla f| \in \sL^{\infty}(\X), \Delta f \in \sL^{\infty}(\X) \cap \W^{1,2}(\X) 
    \Big\}
    \]
    and
    \[
    \Test_c(\Omega):= \Big\{
    \phi \in \Test(\X): \supp(\phi) \subset \subset \Omega 
    \Big\}.
    \]
\end{definition}

It has been proved that $\Test(\X)$ is dense in $\W^{1,2}(\X)$, while $\Test_c(\Omega)$ is dense in $\W^{1,2}_0(\Omega)$. Moreover if $f \in \W^{1,2}(\X) \cap \sL^{\infty}(\X)$ then $P_t(f) \in \Test(\X)$. Finally, it has been proved in \cite{AMSbakry} that if $K \subset \Omega$ is compact then there exists $\eta \in \Test_c(\Omega)$ such that $\eta=1$ on a neighbourhood of $K$.

We then give a version of Weyl's Lemma in $\RCD(K,N)$ spaces that follows from \cite[Theorem $1.3$]{weyl}.
\begin{proposition} \label{P36}
    Let $u \in \sL^1_{loc}(\Omega)$ be a function such that
    \[
    \int_\Omega u \Delta \phi \, d\m=0 \quad \text{for every } \phi \in \Test_c(\Omega),
    \]
    then $u \in \W^{1,2}_{loc}(\Omega)$.
\end{proposition}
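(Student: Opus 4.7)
The plan is to reduce the statement to the Weyl-type regularity theorem \cite[Theorem~$1.3$]{weyl} for $\RCD(K,N)$ spaces, which asserts that a distributionally harmonic function in $\sL^1_{loc}$ automatically belongs to $\W^{1,2}_{loc}$. Our hypothesis already expresses distributional harmonicity, tested against the class $\Test_c(\Omega)$; what needs to be checked is that this is enough to invoke the cited theorem, whose admissible class of test functions (say, compactly supported elements of $D(\Delta,\Omega)$ with bounded Laplacian) is a priori larger.

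The bridge between the two classes is an approximation argument based on the heat flow together with a cutoff. Given an admissible test function $\phi$, pick $\eta \in \Test_c(\Omega)$ with $\eta \equiv 1$ on an open neighbourhood of $\supp\phi$, which exists by the cutoff result of \cite{AMSbakry} recalled in the excerpt, and define $\phi_n := \eta\, P_{1/n}\phi$. Each $\phi_n$ lies in $\Test_c(\Omega)$, because $P_{1/n}$ sends $\sL^{\infty}(\X) \cap \W^{1,2}(\X)$ into $\Test(\X)$ and $\eta$ has compact support. Expanding $\Delta\phi_n$ via the Leibniz rule, one obtains $\eta\, \Delta P_{1/n}\phi$ plus cross terms involving $\nabla\eta$ and $\Delta\eta$. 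Since $\eta \equiv 1$ on a neighbourhood of $\supp\phi$, the cross terms are supported in the region where $\phi$ vanishes and can be shown to vanish in the $\sL^1_{loc}$ limit as $n\to\infty$, while $\eta\, \Delta P_{1/n}\phi = \eta\, P_{1/n}\Delta\phi \to \Delta\phi$ in $\sL^1_{loc}$. Pairing with the locally integrable function $u$ and passing to the limit in $\int_\Omega u \Delta\phi_n\, d\m = 0$ then yields $\int_\Omega u \Delta\phi \, d\m = 0$.

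Once the vanishing identity is established for the full class of admissible test functions, the conclusion $u \in \W^{1,2}_{loc}(\Omega)$ follows directly from \cite[Theorem~$1.3$]{weyl}. The main technical obstacle I expect is the uniform control of the cross terms $(\Delta\eta)\, P_{1/n}\phi$ and $\nabla\eta\cdot\nabla P_{1/n}\phi$, which requires the regularizing action of the heat semigroup together with the $\W^{1,2}$ convergence $P_{1/n}\phi \to \phi$; both are standard tools in the $\RCD$ setting but must be invoked carefully to guarantee that the cross terms actually vanish in the $\sL^1_{loc}$ limit when paired with $u$.
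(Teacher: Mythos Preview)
The paper does not give its own proof; the proposition is recorded as a direct consequence of \cite[Theorem~1.3]{weyl}, so your overall strategy---reduce to that result---coincides with the paper's.

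The bridging argument you sketch is likely superfluous, since Weyl-type results in the $\RCD$ setting are typically stated with $\Test_c(\Omega)$ (or an equivalent class) as the test class. If the extension were genuinely needed, there is a gap in your outline: you assert $\Delta\phi_n \to \Delta\phi$ in $\sL^1_{loc}$ and then pair with $u \in \sL^1_{loc}$, but $\sL^1_{loc}$ convergence does not permit passage to the limit against a function that is merely locally integrable. The repair is to upgrade the mode of convergence: the terms making up $\Delta\phi_n$ are in fact uniformly bounded in $\sL^\infty$ on a fixed compact set (by $\sL^\infty$-contractivity of $P_t$ for $\eta\,P_{1/n}\Delta\phi$, and by the Bakry--\'Emery gradient estimate $|\nabla P_t\phi|\le e^{-Kt}P_t|\nabla\phi|$ for the cross term, once one knows $|\nabla\phi|\in\sL^\infty$ via Proposition~\ref{P9}), and they converge pointwise a.e.\ along a subsequence; dominated convergence then yields the limit against $u$.
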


\subsection{Sets of finite perimeter} \label{SS4}
For the results of this section, unless otherwise specified, we will implicitly assume that we are working on a fixed $\RCD(K,N)$ space $(\X,\sd,\m)$.

\begin{definition}[Perimeter of a set]
    Let $E \subset \X$. We say that $E$ has locally finite perimeter if $1_E \in \BV_{loc}(\X)$. For every Borel subset $B \subset \X$. We denote $|D1_E|(B)$ by $P(E,B)$.
\end{definition}

\begin{definition}[Convergence in $\sL^1$ sense]
    Let $(\X_i,\sd_i,\m_i,\ssf{x}_i)$ be a sequence of $\RCD(K,N)$ spaces converging in pmGH sense to $(\ssf{Y},\sd,\m,\ssf{y})$. We say that the Borel sets $E_i \subset \X_i$ of finite measure converge in $\sL^1$ sense to a set $E \subset \ssf{Y}$ of finite measure if $\m_i(E_i) \to \m(E)$ and $1_{E_i}\m_i \to 1_E \m$ weakly in duality w.r.t. continuous compactly supported functions in the space $(\ssf{Z},\sd_z)$ realizing the pmGH convergence. \par 
    We say that the Borel sets $E_i \subset \X_i$ converge in $\sL^1_{loc}$ sense to a set $E \subset \ssf{Y}$ if $E_i \cap B_r(x_i) \to E \cap B_r(y)$ in $\sL^1$ sense for every $r>0$.
\end{definition}

The next proposition is taken from  \cite[Corollary $3.4$]{ABS19}.

\begin{proposition}
    Let $(\X_i,\sd_i,\m_i,\ssf{x}_i)$ be a sequence of pointed $\RCD(K,N)$ spaces converging in pmGH sense to $(\ssf{Y},\sd,\m,\ssf{y})$. Let $E_i \subset \X_i$ be Borel sets such that
    \[
    \sup_{i \in \bb{N}} P(E_i,B_r(x_i)) < + \infty \quad \text{for every } r>0.
    \]
    Then there exists a (non relabeled) subsequence and a Borel set $F \subset \ssf{Y}$ such that $E_i \to F$ in $\sL^1_{loc}$.
\end{proposition}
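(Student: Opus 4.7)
The plan is to combine a uniform $\BV$-bound on each ball with a diagonal extraction. First, fix a countable exhaustion of $(\ssf{Y},\sd,\m)$ by balls $B_k:=B_k(\ssf{y})$ for $k\in\bb{N}$. By Bishop-Gromov (Proposition \ref{P6}) applied uniformly along the sequence, there exists $C_k>0$ such that $\m_i(B_{k+1}(\ssf{x}_i))\leq C_k$ for all $i$. Combined with the hypothesis $\sup_i P(E_i,B_{k+1}(\ssf{x}_i))<+\infty$, this gives a uniform $\BV$-norm bound
\[
\|1_{E_i\cap B_{k+1}(\ssf{x}_i)}\|_{\BV(B_{k+1}(\ssf{x}_i))}\leq C'_k.
\]

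Second, embedding everything isometrically into the proper space $(\ssf{Z},\sd_z)$ realizing the pmGH convergence, the measures $1_{E_i\cap B_{k+1}(\ssf{x}_i)}\m_i$ are positive Radon measures on $\ssf{Z}$ with uniformly bounded total mass. The key input is a Rellich-Kondrachov type compactness theorem for $\BV$ along pmGH-converging sequences of uniformly locally doubling PI spaces, which is available in the $\RCD(K,N)$ setting since the doubling and $(1,1)$-Poincar\'e constants are uniform along the sequence. This compactness upgrades the weak convergence of the associated measures to $L^1$-convergence of the characteristic functions on each fixed ball, and guarantees that the limit is again the indicator of a Borel set $F_k\subset B_k\subset \ssf{Y}$, because $L^1$-limits of $\{0,1\}$-valued functions are themselves $\{0,1\}$-valued a.e.

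Third, I would diagonalize over $k$ to extract a single subsequence (not relabeled) with $E_i\cap B_{k+1}(\ssf{x}_i)\to F_k$ in $L^1$-sense on $B_k$ for every $k$. The consistency $F_{k+1}\cap B_k=F_k$ up to $\m$-negligible sets follows automatically from uniqueness of $L^1_{loc}$-limits, so setting $F:=\bigcup_k F_k$ yields the required Borel set with $E_i\to F$ in $L^1_{loc}$.

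The main obstacle is precisely the $\BV$-compactness under varying ambient spaces: one needs that a sequence of functions with uniformly bounded $\BV$ norm, living on different $\RCD(K,N)$ spaces pmGH-converging to a common limit, admits a subsequence converging in $L^1$ on each ball. This is the content of \cite[Corollary $3.4$]{ABS19} (building on Rellich-type results in doubling PI spaces, and on the uniform Poincar\'e inequality for $\RCD(K,N)$), and I would simply quote it rather than reprove it.
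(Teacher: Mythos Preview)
The paper does not actually prove this proposition: it is stated in the preliminaries and attributed directly to \cite[Corollary~$3.4$]{ABS19}. Your proposal ends up at exactly the same place, citing the same result for the crucial $\BV$-compactness step, so there is nothing substantive to compare; the sketch you give (uniform $\BV$ bounds from Bishop--Gromov plus the perimeter hypothesis, Rellich--Kondrachov type compactness along a pmGH-converging sequence of $\RCD(K,N)$ spaces, and a diagonal extraction) is the correct outline of how that corollary is obtained, and is consistent with the paper's use of it.
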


\begin{definition}[Reduced boundary]
    Let $E \subset \X$ be a set of finite perimeter. We say that a regular point $x \in \X$ such that $x \in \partial E$ is in the reduced boundary $\mathcal{F}E$ of $E$ if for every sequence $\{\epsilon_i\}_{i \in \bb{N}}$ decreasing to zero the sets $E_i:=E$ in the rescaled spaces $(\X,\epsilon_i^{-1}\sd,\m(B_{\epsilon_i}(x))^{-1}\m,x)$ converge in $\sL^1_{loc}$ to a half space in $\bb{R}^k$.
\end{definition}

The next proposition is taken from \cite[Corollary $3.15$]{BPSrec}.

\begin{proposition}
    Let $E \subset \X$ be a set of finite perimeter. Then the perimeter measure is concentrated on $\mathcal{F} E$.
\end{proposition}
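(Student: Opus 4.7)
The plan is to establish, at $|D1_E|$-a.e.\ point $x$, that $x$ lies on $\partial E$, is a regular point of $(\X,\sd,\m)$ with Euclidean tangent $\bb{R}^k$, and admits every blow-up of $E$ as a half-space in $\bb{R}^k$ in $\sL^1_{loc}$. As a preliminary step, I would invoke the integral representation of the perimeter measure available on $\RCD(K,N)$ spaces: outside a $|D1_E|$-negligible subset of $\partial E$, the perimeter measure $|D1_E|$ is absolutely continuous with respect to the codimension-one spherical Hausdorff measure $\mathcal{H}^{k-1}$ restricted to a countably rectifiable subset of $\partial E$. This supplies, for $|D1_E|$-a.e.\ $x$, finite and strictly positive asymptotic $(k-1)$-densities for both $|D1_E|$ and $\mathcal{H}^{k-1}$, which is the datum needed to start the blow-up analysis.

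The most delicate step, and the one I expect to be the main obstacle, is identifying the ambient tangent at $|D1_E|$-a.e.\ point as $\bb{R}^k$. Because $|D1_E|$ is in general mutually singular with $\m$, the Mondino--Naber statement that $\m$-a.e.\ point is regular cannot be applied directly. I would combine the Bishop--Gromov comparison of Proposition \ref{P6} (which supplies uniform local doubling and hence a Vitali covering theorem on $\partial E$) with the stability of iterated tangents (a Preiss-type principle adapted to the $\RCD$ framework, via Gigli--Pasqualetto) to transport the $\bb{R}^k$-tangent property from $\m$-generic points to $|D1_E|$-generic points through a covering argument calibrated to the codimension-one boundary.

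Once the ambient tangent at $x$ is Euclidean, I would apply the $\sL^1_{loc}$-compactness proposition immediately preceding the statement to extract subsequential blow-up limits of the rescaled sets $E$ inside $\bb{R}^k$; each such limit is a set of finite perimeter, and a standard cone-monotonicity argument in Euclidean space, coupled with the fact that $x$ has density $1/2$ for $1_E$ (coming from Lebesgue differentiation of $\m\mres E$ at $x$), forces the limit to be a half-space. To upgrade from subsequential to full infinitesimal convergence, I would exploit the differentiation theorem for $|D1_E|$ on the doubling space: distinct subsequential half-space limits at $x$ would produce a genuine oscillation of the ratio $|D1_E|(B_r(x))/r^{k-1}$ as $r\downarrow 0$, contradicting $|D1_E|$-a.e.\ existence of the density. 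Combined with the earlier steps, this pins down every blow-up at $x$ as one and the same half-space in $\bb{R}^k$, i.e.\ $x \in \mathcal{F}E$, and concludes that $|D1_E|$ is concentrated on $\mathcal{F}E$.
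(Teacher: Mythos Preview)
The paper does not prove this proposition. It is listed among the preliminaries in Subsection~\ref{SS4} with the attribution ``The next proposition is taken from \cite[Corollary $3.15$]{BPSrec}'' and no further argument. There is thus nothing in the paper to compare your outline against; the authors are simply quoting a result of Bru\`e--Pasqualetto--Semola.

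Viewed as a standalone sketch, your proposal has genuine gaps. The ``integral representation'' you invoke at the outset---that $|D1_E|$ is absolutely continuous with respect to codimension-one Hausdorff measure on a rectifiable subset of $\partial E$---is the main theorem of \cite{BPSrec}, of which the proposition under discussion is already a corollary; taking it as input is circular, or at best makes the remainder of your argument redundant. The claim that $x$ has $\m$-density $1/2$ for $E$ ``from Lebesgue differentiation of $\m\mres E$'' is not justified: $|D1_E|$ is typically singular with respect to $\m$, so Lebesgue differentiation of $1_E$ with respect to $\m$ says nothing at $|D1_E|$-a.e.\ point, and in the $\RCD$ setting the density-$1/2$ property on the essential boundary is itself a substantial theorem rather than a free input. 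The appeal to ``cone-monotonicity'' is misplaced, since blow-ups of a generic finite-perimeter set are neither cones nor minimizers; the mechanism in \cite{BPSrec} that forces tangents of $E$ to be half-spaces is an iterated-tangent and splitting argument, not a monotonicity formula. Finally, your uniqueness step fails as written: two distinct half-spaces through the origin in $\bb{R}^k$ have identical perimeter density at the origin, so the ratio $|D1_E|(B_r(x))/r^{k-1}$ cannot oscillate between them and your argument does not single out a unique limit.
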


The next proposition is taken from
\cite[Theorem $5.2$ and Proposition $6.1$]{BPS}. We first introduce some notation.
Let $A \subset \subset \X$ be a set of finite perimeter and let $g \in \Lip_{loc}(\X)$. Assume that $g$ has distributional Laplacian which is a finite Radon measure. Then there exist measures $\mu_1,\mu_2 << |D 1_A|$ such that as $t \to 0$ we have
\[
1_A \nabla P_t(1_A) \cdot \nabla g \to \mu_1
\quad
\text{and}
\quad
1_{{^c}A} \nabla P_t(1_A) \cdot \nabla g \to \mu_2
\]
in weak sense testing against functions in $\Lip_c(\X)$. We denote the density of $\mu_1$ and $\mu_2$ w.r.t. $|D1_A|$ respectively
by
\[
(\nabla g \cdot \nu_E)_{int}
\quad
\text{and}
\quad
(\nabla g \cdot \nu_E)_{ext}.
\]
Moreover we denote by $A^{(1)}$ the set of points of $\X$ where $A$ has density $1$.

\begin{proposition} \label{P30}
    Let $A \subset \subset \X$ be a set of finite perimeter and let $g \in \Lip_{loc}(\X)$. Assume that $g$ has distributional Laplacian which is a finite Radon measure. Then for any $f \in \Lip_c(\X)$ we have
    \[
    \int_{A^{(1)}} f \, d \Delta g+\int_A \nabla f \cdot \nabla g \, d\m= -\int_{\mathcal{F}A} f(\nabla g \cdot \nu_E)_{int} \, d \Per
    \]
    and
    \[
    \int_{A^{(1)} \cup \mathcal{F}A} f \, d \Delta g+\int_A \nabla f \cdot \nabla g \, d\m= -\int_{\mathcal{F}A} f(\nabla g \cdot \nu_E)_{ext} \, d \Per.
    \]
\end{proposition}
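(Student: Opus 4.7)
The natural strategy is to approximate $1_A$ by its heat flow $P_t(1_A)$, which is bounded and Lipschitz for every $t>0$ since $1_A\in \sL^\infty(\X)$, and to pass to the limit in the distributional identity defining $\Delta g$ tested against $fP_t(1_A)\in \Lip_c(\X)$. Expanding the derivative of the product yields
\[
\int fP_t(1_A)\,d\Delta g=-\int P_t(1_A)\,\nabla f\cdot\nabla g\,d\m-\int f\,1_A\,\nabla P_t(1_A)\cdot\nabla g\,d\m-\int f\,1_{{}^cA}\,\nabla P_t(1_A)\cdot\nabla g\,d\m.
\]
Letting $t\downarrow 0$, the very definition of the traces $(\nabla g\cdot\nu_E)_{int}$ and $(\nabla g\cdot\nu_E)_{ext}$ turns the last two integrals into $\int_{\mathcal FA}f(\nabla g\cdot\nu_E)_{int}\,d\Per$ and $\int_{\mathcal FA}f(\nabla g\cdot\nu_E)_{ext}\,d\Per$, while dominated convergence — $P_t(1_A)\to 1_A$ $\m$-a.e., $|P_t(1_A)|\le 1$, and $\nabla f\cdot\nabla g$ bounded with compact support — gives $\int P_t(1_A)\,\nabla f\cdot\nabla g\,d\m\to\int_A\nabla f\cdot\nabla g\,d\m$.

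The delicate point is identifying the limit of $\int fP_t(1_A)\,d\Delta g$. By the heat kernel bounds of Proposition~\ref{Hestim}, $P_t(1_A)(x)\to\theta_A(x)$ as $t\downarrow 0$, where $\theta_A$ equals $1$ on $A^{(1)}$, $0$ on $A^{(0)}$ and $1/2$ on $\mathcal FA$ (by blow-up to a half-space at a reduced boundary point). A dominated convergence argument against $|\Delta g|$ yields only the \emph{symmetric} identity
\[
\int_{A^{(1)}}f\,d\Delta g+\tfrac12\int_{\mathcal FA}f\,d\Delta g+\int_A\nabla f\cdot\nabla g\,d\m=-\tfrac12\int_{\mathcal FA}f\bigl[(\nabla g\cdot\nu_E)_{int}+(\nabla g\cdot\nu_E)_{ext}\bigr]d\Per,
\]
which is the half-sum of $(*)$ and $(**)$. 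To separate the two one replaces $P_t(1_A)$ by an asymmetric Lipschitz approximant: for $(*)$ one builds $\phi^{int}_\epsilon\uparrow 1_{A^{(1)}}$ by multiplying $P_t(1_A)$ by a cut-off that vanishes on a thin $\epsilon$-tube around $\mathcal FA$ extending into $A$, and checks that the corresponding flux $\nabla \phi^{int}_\epsilon\cdot\nabla g$ localizes on the interior side and converges to $(\nabla g\cdot\nu_E)_{int}\,\Per\mres\mathcal FA$ alone.

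Once $(*)$ is established, $(**)$ follows by applying $(*)$ to the complementary set ${}^cA$. Indeed $\mathcal F({}^cA)=\mathcal FA$, $({}^cA)^{(1)}=A^{(0)}$, and the defining weak convergence gives $(\nabla g\cdot\nu_{{}^cA})_{int}=-(\nabla g\cdot\nu_A)_{ext}$ (from $\nabla P_t(1_{{}^cA})=-\nabla P_t(1_A)$); combining the resulting identity with the global relation $\int f\,d\Delta g=-\int \nabla f\cdot\nabla g\,d\m$ and the $|\Delta g|$-decomposition $\X=A^{(0)}\sqcup \mathcal FA\sqcup A^{(1)}$ yields $(**)$. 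The main obstacle is therefore the construction of the interior approximant $\phi^{int}_\epsilon$ and the verification that the flux converges exactly to the interior trace rather than the symmetric average — a technical point at the core of \cite[Theorem 5.2 and Proposition 6.1]{BPS}, which relies on rectifiability of the reduced boundary and good blow-up properties of sets of finite perimeter in $\RCD(K,N)$ spaces.
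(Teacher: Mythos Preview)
The paper does not prove this statement: Proposition~\ref{P30} appears in the preliminaries and is simply quoted from \cite[Theorem~5.2 and Proposition~6.1]{BPS}, so there is no ``paper's own proof'' to compare against.

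Your outline is the natural heat-flow approximation argument and is consistent with how the interior/exterior traces are \emph{defined} in the paper (as weak limits of $1_A\nabla P_t(1_A)\cdot\nabla g$ and $1_{{}^cA}\nabla P_t(1_A)\cdot\nabla g$). The reduction of $(**)$ to $(*)$ via the complement is correct, modulo the Federer-type decomposition $\X=A^{(0)}\cup\mathcal FA\cup A^{(1)}$ holding $|\Delta g|$-a.e., which is itself a nontrivial structure result in $\RCD(K,N)$ spaces. However, what you have written is not a proof but an annotated sketch: you yourself flag that the decisive step --- building the asymmetric approximant $\phi^{int}_\epsilon$ and showing its flux converges to the interior trace alone rather than the symmetric average --- is ``at the core of \cite[Theorem~5.2 and Proposition~6.1]{BPS}''. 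That is precisely the content of the result you are meant to be proving, so the proposal is circular as a standalone argument. If the intent is merely to motivate the citation, your sketch does that well; if the intent is to supply an independent proof, the hard analysis (rectifiability of $\mathcal FA$, blow-up control of $P_t(1_A)$ near the reduced boundary, and the one-sided trace identification) is entirely missing.
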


We now turn our attention to minimal sets.

\begin{definition}[Perimeter minimizing sets]
Let $\Omega \subset \X$ be an open set.
    Let $E \subset \Omega$ be a set of locally finite perimeter. We say that $E$ is globally perimeter minimizing in $\Omega$ (or simply perimeter minimizing in $\Omega$) if for every $x \in \Omega$, $r>0$ and $F \subset  \Omega$ such that $F \Delta E \subset \subset B_r(x) \cap \Omega$ we have that $P(E,B_r(x) \cap \Omega) \leq P(F,B_r(x) \cap \Omega)$. If we say that $E$ is perimeter minimizing we implicitly mean that $\Omega=\X$.
\end{definition}

\begin{definition}[Locally perimeter minizing sets]
Let $\Omega \subset \X$ be an open set.
    Let $E \subset \Omega$ be a set of locally finite perimeter. We say that $E$ is locally perimeter minimizing in $\Omega$ if for every $x \in \Omega$ there exists $r>0$ such that for every $F \subset \Omega$ such that $F \Delta E \subset \subset B_r(x) \cap \Omega$ we have $P(E,B_r(x) \cap \Omega) \leq P(F,B_r(x) \cap \Omega)$. 
    If we say that $E$ is locally perimeter minimizing we implicitly mean that $\Omega=\X$.
\end{definition}

\begin{remark} \label{Rboundary}
Locally perimeter minimizing sets admit both a closed and an open representative, and these have the same boundary which in addition coincides with the essential boundary and it is $\m$-negligible (see \cite{Dens}). Whenever we consider the boundary of a locally perimeter minimizing set, we will implicitly be referring to the boundary of its closed (or open) representative.
\end{remark}

\begin{definition}[Sets that are locally boundaries of perimeter minimizers] \label{D3}
We say that a set $C \subset \X$ is locally the boundary of a perimeter minimizing set if for every $x \in \X$ there exists an open neighbourhood $U_x$ of $x$ and a set $E \subset U_x$ minimizing the perimeter in $U_x$ such that $U_x \cap C=\partial E \cap U_x$.
\end{definition}

It is immediate to check that if $C \subset \X$ is locally the boundary of a perimeter minimizing set then it is closed.
The next proposition can be found in \cite{Weak} (see also \cite{lapb} for the extension to the collapsed case).
\begin{proposition} \label{P35}
    Let $(\X,\sd,\m)$ be an $\RCD(0,N)$ space, let $\Omega \subset \X$ be an open set and let $E \subset \Omega$ be the relatively closed representative of a locally perimeter minimizing set in $\Omega$. Let $\sd_E:{^c}E \to \bb{R}$ be the distance function from $E$. Then $\Delta \sd_E \leq 0$ in distributional sense on every open subset $\Omega' \subset \subset {^c}E \cap \mathcal{K}$, where
    \[
    \mathcal{K}:=\{x \in \X: \exists y \in \partial E \cap \Omega: \sd_{E}(x)=\sd(x,y) \}.
    \]
\end{proposition}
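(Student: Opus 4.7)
The plan is to verify $\Delta \sd_E \leq 0$ distributionally on $\Omega'$ by combining the sharp Laplacian comparison for the distance from a point, available on any $\RCD(0,N)$ space, with the perimeter minimality of $E$ to cancel the leading mean-curvature contribution. Concretely, I would fix a non-negative $g \in \Lip_c(\Omega')$ and aim to show $\int_{\Omega'} \nabla \sd_E \cdot \nabla g \, d\m \geq 0$. Since $\Omega' \subset\subset {^c}E \cap \mathcal{K}$, there is $r_0>0$ with $\sd_E \geq r_0$ on $\Omega'$, and each $x \in \Omega'$ admits a foot-point $y(x) \in \partial E \cap \Omega$ together with a unit-speed minimizing geodesic $\gamma_x$ from $y(x)$ to $x$. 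This gives access to a family of transport rays that foliates $\Omega'$ and along which the one-dimensional structure of $\RCD(0,N)$ spaces can be exploited.

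The first main step is to relate the distributional Laplacian of $\sd_E$ on $\Omega'$ to the first variation of perimeter of the parallel sublevel sets $E_s := \{\sd_E \leq s\}$. Combining the coarea formula with Proposition \ref{P30} applied to $\sd_E$ on the sets $E_s$, one rewrites
\[
-\int_{\Omega'} \nabla \sd_E \cdot \nabla g \, d\m
\]
as an $s$-derivative of a weighted perimeter $P(E_s, \{g>0\})$. The second step is to exploit local perimeter minimality of $E$ in $\Omega$: comparing $E$ with competitors of the form $E \cup U$ and $E \setminus U$ for $U$ small produces a distributional vanishing mean-curvature condition on $\partial E$, which pins the $s \to 0^+$ boundary datum of the variation of $P(E_s)$. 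The third step applies the $\RCD(0,N)$ Jacobian / Bishop--Gromov comparison along the rays $\gamma_x$: non-negative Ricci forces the one-dimensional conditional densities produced by localization along these rays to be concave, so the vanishing initial derivative propagates to a non-positive derivative at every $s \in (0, r_0]$, which translates back into the desired sign of $\Delta \sd_E$.

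The main obstacle is that the foot-point map $x \mapsto y(x)$ is in general neither single-valued nor continuous, so $\{\gamma_x\}$ is not a smooth flow and the parallel sets $E_s$ are only of locally finite perimeter. To handle this I would disintegrate $\m \mres \Omega'$ along transport rays via the $\CD(0,N)$ localization machinery together with a measurable selection from $\mathcal{K}$, reducing the estimate to one-dimensional $\CD(0,N)$ densities $h_\alpha$ on each ray; perimeter minimality fixes the correct boundary condition at $s=0$, and concavity of $h_\alpha$ does the rest. The compact containment $\Omega' \subset\subset {^c}E \cap \mathcal{K}$ is precisely what is needed to keep each ray well-defined on the relevant interval and to avoid cut-locus or branching issues, so the one-dimensional argument is uniform in $\alpha$ and passes to the distributional inequality $\Delta \sd_E \leq 0$ on $\Omega'$.
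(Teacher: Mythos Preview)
The paper does not prove Proposition~\ref{P35} at all: it is stated in the preliminaries and attributed to \cite{Weak} (with \cite{lapb} for the collapsed case). So there is no in-paper proof to compare your proposal against.

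That said, your sketch is broadly in the spirit of how the result is established in the cited reference: one combines a weak vanishing-mean-curvature condition for $\partial E$, coming from local perimeter minimality, with a Heintze--Karcher / Riccati-type comparison along geodesic rays emanating from $\partial E$, the latter being a consequence of the $\CD(0,N)$ localization. A couple of points deserve tightening. First, the one-dimensional densities $h_\alpha$ are not concave; it is $h_\alpha^{1/(N-1)}$ that is concave under $\CD(0,N)$, and it is this that yields monotonicity of $(\log h_\alpha)'$, which in turn is what you identify with $\Delta \sd_E$ along the ray. Second, your ``vanishing initial derivative'' step hides the real work: one must make precise the sense in which local perimeter minimality of $E$ forces the appropriate inequality on $(\log h_\alpha)'$ at $s=0^+$ (this is a weak mean-curvature bound, and its proof in the non-smooth setting is the substantive content of \cite{Weak}). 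Simply comparing with competitors $E\cup U$ and $E\setminus U$ gives you a distributional first-variation identity, but translating this into a pointwise-along-rays initial condition for the disintegration requires care. Finally, the role of $\mathcal{K}$ and $\Omega'\subset\subset {^c}E\cap\mathcal{K}$ is exactly as you say: it guarantees that each ray lands on $\partial E\cap\Omega$ so that the minimality hypothesis is actually available at the foot-point.
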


The next proposition is taken from \cite[Theorem $2.43$]{Weak}.

\begin{proposition} \label{P26}
Let $(\X_i,\sd_i,\m_i,\ssf{x}_i)$ be a sequence of $\RCD(K,N)$ spaces converging in pmGH sense to $(\ssf{Y},\sd,\m,\ssf{y})$.
    Let $E_i \subset \X_i$ be a sequence of Borel sets converging in $\sL^1_{loc}$ sense to $E \subset \ssf{Y}$.
    Assume that each $E_i$ is perimeter minimizing in $B_{r_i}(\ssf{x}_i)$ and that $r_i \uparrow + \infty$. Then $E$ is perimeter minimizing and in the metric space realizing the convergence we have that $\partial E_i \to \partial F$ in Kuratowski sense.
\end{proposition}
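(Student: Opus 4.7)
The plan is to establish the two claims separately: first that $E$ is perimeter minimizing on $\ssf{Y}$, and then that the boundaries converge in Kuratowski sense in the realizing space. For minimality, the strategy is the classical lower semicontinuity plus recovery sequence argument. Lower semicontinuity of the perimeter along $\sL^1_{loc}$ convergence under pmGH convergence of $\RCD(K,N)$ spaces is known (it follows from the very definition of $\BV$ via Lipschitz approximation, as in \cite{ABS19}), so for every $R>0$
\[
P(E,B_R(\ssf{y})) \leq \liminf_{i \to \infty} P(E_i, B_R(\ssf{x}_i)).
\]
Fix a competitor $F \subset \ssf{Y}$ with $F \Delta E \subset\subset B_R(\ssf{y})$. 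The key step is to build $F_i \subset \X_i$ satisfying (a) $F_i \Delta E_i \subset\subset B_{R'}(\ssf{x}_i)$ for some $R'>R$ and all $i$ large, (b) $F_i \to F$ in $\sL^1_{loc}$, and (c) $\limsup_i P(F_i, B_{R'}(\ssf{x}_i)) \leq P(F, B_{R'}(\ssf{y}))$. Since $r_i \uparrow +\infty$, the minimality of $E_i$ inside $B_{r_i}(\ssf{x}_i)$ gives $P(E_i, B_{R'}(\ssf{x}_i)) \leq P(F_i, B_{R'}(\ssf{x}_i))$ for $i$ large; combining with lower semicontinuity and then letting $R' \downarrow R$ along $R'$ for which $P(F,\partial B_{R'}(\ssf{y}))=P(E,\partial B_{R'}(\ssf{y}))=0$ yields $P(E,B_R(\ssf{y})) \leq P(F,B_R(\ssf{y}))$, proving minimality.

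The construction of the recovery sequence $F_i$ is the main obstacle and the place where work is required. One chooses, via a slicing / coarea argument applied to the distance function from $\ssf{y}$, a good radius $R' \in (R, R+\varepsilon)$ such that the perimeter of both $E$ and $F$ on the annulus $B_{R'}(\ssf{y}) \setminus B_R(\ssf{y})$ is arbitrarily small; one then defines $F_i$ by splicing inside $B_R(\ssf{x}_i)$ an $\sL^1_{loc}$-approximation of $F \cap B_R(\ssf{y})$ (which exists by the pmGH convergence, by approximating $1_F$ in $\BV$ by a Lipschitz function transferred across the realizing space) with $E_i$ outside $B_{R'}(\ssf{x}_i)$, interpolating on the annulus. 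The locality of the perimeter on the splicing annulus, controlled by the choice of $R'$, delivers (c), while (a) and (b) are immediate from the construction. This cut-and-paste in the metric measure setting is routine but technical, and is precisely the step that requires care compared to the Euclidean situation.

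For the Kuratowski convergence of boundaries, the plan is to exploit the uniform density estimates that any perimeter minimizer on an $\RCD(K,N)$ space enjoys: there exists $c=c(K,N)>0$ so that at every boundary point $z \in \partial F$ of a minimizer one has $c \leq \m(B_r(z) \cap F)/\m(B_r(z)) \leq 1-c$ for all small $r>0$. For the upper inclusion, if $z_{i_k} \in \partial E_{i_k}$ converges to $z$ in the realizing space, the density bounds for $E_{i_k}$, together with Bishop--Gromov and $\sL^1_{loc}$ convergence, pass to the limit and force the density of $E$ at $z$ to be in $[c, 1-c]$, so $z \in \partial E$. For the lower inclusion, $E$ itself is a perimeter minimizer by the first part of the proof; hence at any $z \in \partial E$ both $E$ and its complement have positive $\m$-measure in every small ball, and by $\sL^1_{loc}$ convergence the same holds for $E_i$ in slightly enlarged balls around any sequence $z_i \to z$ in the realizing space, which forces $B_\varepsilon(z_i) \cap \partial E_i \neq \emptyset$ for $i$ large, giving the desired approximating sequence.
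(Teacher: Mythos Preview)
The paper does not give its own proof of this proposition: it is quoted verbatim from \cite[Theorem~2.43]{Weak}. Your outline is precisely the standard argument that is carried out there (and in the earlier Euclidean/smooth versions): lower semicontinuity of the perimeter along $\sL^1_{loc}$ convergence from \cite{ABS19}, a cut-and-paste recovery sequence obtained by splicing on a good annulus chosen via coarea, and Kuratowski convergence of boundaries deduced from the uniform two-sided density estimates for perimeter minimizers in $\RCD(K,N)$ spaces. So your proposal is correct and matches the approach of the cited reference; there is nothing to compare against in the present paper.
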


We conclude the section by recalling the global Bernstein property in Euclidean space. This is a classical result and the proof can be found in \cite[Theorem $17.4$]{Giusti}. 

\begin{thm} \label{T10}
    Let $E \subset \bb{R}^N$ be a non empty perimeter minimizing set contained in a half space. Then $E$ is itself a half space.
\end{thm}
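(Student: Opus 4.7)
\emph{Plan.} I would prove the theorem in four steps: (a) blow $E$ down at infinity to obtain a minimizing cone $C$ contained in a closed half space; (b) show that $C$ is itself that half space; (c) invoke the monotonicity formula to identify $E$ with a cone from any regular boundary point; (d) apply step (b) to that cone to conclude that $E$ is a half space. After translating in the vertical direction I may assume $E \subset H := \{x_N \leq 0\}$.

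For step (a), I would consider the rescalings $E_r := r^{-1} E$ for $r \to +\infty$; each is perimeter minimizing in $\bb{R}^N$ and contained in $H$. By the compactness of minimizers under $L^1_{loc}$ convergence (in the spirit of Proposition \ref{P26} specialized to $\bb{R}^N$), a subsequence $E_{r_j}$ converges to a perimeter minimizing $C \subset H$, and the standard blow-down argument (based on the fact that the density $R \mapsto P(E,B_R)/R^{N-1}$ becomes constant in the limit by the monotonicity formula) shows that $C$ is a cone with vertex at the origin. For step (b), I would apply the maximum principle to the link $\Sigma := C \cap S^{N-1}$: at a regular point of $\Sigma$, the coordinate function $x_N|_\Sigma$ satisfies the elliptic equation $\Delta_\Sigma x_N + (N-2) x_N = 0$ coming from the minimality of $\Sigma$ in $S^{N-1}$, and since $x_N \leq 0$ on $\Sigma$, a maximum point forces $x_N \equiv 0$, which by Hopf-type arguments and the cone structure extends to the whole regular part, yielding $\Sigma = \{x_N = 0\} \cap S^{N-1}$ and $C = H$.

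For steps (c) and (d), by De Giorgi's regularity theorem the reduced boundary $\mathcal{F}E$ is non-empty, and at any $x_0 \in \mathcal{F}E$ the density of $\partial E$ at $x_0$ equals $\omega_{N-1}$, while the density at infinity is also $\omega_{N-1}$ since the blow-down equals $H$. Equality in the monotonicity formula for minimizers then forces $E$ to be a cone with vertex $x_0$, still contained in $H$; applying the cone rigidity of step (b) to the translated cone $E - x_0$ yields $E = \{x_N \leq x_{0,N}\}$, i.e.\;$E$ is a half space.

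\emph{Main obstacle.} The delicate point is the cone rigidity in step (b), i.e.\;showing that any minimizing cone in a closed half space of $\bb{R}^N$ is that half space. One has to handle the possible singular set of $\partial C$ (of codimension at least $7$ by Federer's regularity theorem): the maximum principle argument via the PDE on the link must be applied at regular points and then extended by continuity and analyticity of minimal hypersurfaces, together with the cone's dilation invariance, to force global agreement between $\partial C$ and the hyperplane $\{x_N = 0\}$.
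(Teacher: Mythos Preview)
The paper does not prove Theorem~\ref{T10}; it quotes it as classical and points to \cite[Theorem~17.4]{Giusti}. So there is no in-paper argument to compare against.

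Your blow-down/monotonicity scheme is a standard and correct route, and you have correctly located the substance in step~(b). Two points deserve tightening. In step~(a), to guarantee the blow-down cone $C$ is non-trivial you should first translate so that $0\in\partial E$ and invoke the two-sided density estimates for perimeter minimizers; then $0<|C\cap B_1|<|B_1|$ and $C\subset H$. In step~(b), your link equation is correct and in fact already yields, with no touching point produced in advance, that at any \emph{regular} interior maximum $q$ of $x_N|_\Sigma$ one has $x_N(q)=0$ (since $\Delta_\Sigma x_N\le 0$ there forces $(N-2)x_N\ge 0$), after which the strong maximum principle gives $x_N\equiv 0$ on every component of the regular part of $\Sigma$. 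What is missing is the exclusion of a singular maximum when $N\ge 8$: ``continuity, analyticity and dilation invariance'' do not accomplish this. The clean fix is dimension reduction. At a putative singular maximum $q\in\Sigma$ the tangential gradient of $x_N$ on $S^{N-1}$ is nonzero (it vanishes only at $\pm e_N$, which are excluded since $x_N\le 0$ on $\Sigma$ and $\dim\Sigma\ge 1$), so the blow-up of $\partial C$ at $q$ splits as $\bb{R}\times T$ with $T\subset\bb{R}^{N-1}$ a minimizing cone contained in a half-space; by induction on $N$ (the base case $N\le 7$ being the smooth one) $T$ is a hyperplane, whence $q$ is regular after all. With step~(b) thus completed, your steps~(c)--(d) go through as written.
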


\section{Green's functions in parabolic metric measure spaces} \label{S2}

The goal of this section is to prove Theorems \ref{CT4} and \ref{CT5}. The techniques that we use generally follow the lines of the arguments in \cite{BSflow,Weak,BScon}, although we work in higher generality since we do not assume any non collapsing condition on the space $\X$. Throughout the section we assume that we are working on a fixed $\RCD(K,N)$ space $(X,\sd,\m)$.

\begin{definition}[Parabolic $\RCD(K,N)$ spaces] \label{D1}
The space $(\X,\sd,\m)$ is said to be parabolic if $\int_1^{+ \infty} p_t(x,y) \, dt =+ \infty$ for every $(x,y) \in \X \times \X$.
\end{definition}

In the smooth setting there are many possible equivalent definitions of parabolicity. In Theorem \ref{CT4} we show that also in the $\RCD(K,N)$ setting the previous definition is equivalent to the more classical one involving non existence of Green's functions. In the $\RCD(K,N)$ setting it is convenient to start from Definition \ref{D1} because it allows to exploit the good properties of the heat flow. 

\begin{proposition} \label{P10}
    The space $(\X,\sd,\m)$ is parabolic if and only if there exists $(x,y) \in \X \times \X$ such that $\int_1^{+ \infty} p_t(x,y) \, dt =+ \infty$.
    \begin{proof}
    It follows immediately by Proposition \ref{Harnack}.
    \end{proof}
\end{proposition}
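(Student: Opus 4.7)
The forward implication is immediate from Definition \ref{D1}. For the converse, assume $\int_1^{+\infty} p_t(x_0,y_0)\,dt = +\infty$ for a fixed pair $(x_0,y_0)$ and let $(x',y')$ be arbitrary. My plan is to transport the divergence from $(x_0,y_0)$ to $(x',y')$ by applying Proposition \ref{Harnack} twice---once for each coordinate---using symmetry $p_t(x,y)=p_t(y,x)$ to move the first coordinate, and then integrating the resulting pointwise bound.

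Concretely, first apply Proposition \ref{Harnack} with first argument $x'$ and third argument $z=y_0$ to obtain $p_{t+1}(x',y') \geq A_1(t)\, p_t(x',y_0)$; then rewrite $p_t(x',y_0)=p_t(y_0,x')$ and apply Proposition \ref{Harnack} again with first argument $y_0$ and third argument $z=x_0$ to obtain $p_t(y_0,x') \geq A_2(t)\, p_{t-1}(x_0,y_0)$. Composing yields $p_{t+1}(x',y') \geq A_1(t) A_2(t)\, p_{t-1}(x_0,y_0)$. The key step is to verify that $A_1(t)A_2(t)$ admits a positive lower bound $c>0$ uniformly for all large $t$: each $A_i(t)$ factors as $\exp\{-\sd(\cdot,\cdot)^2/(2 e^{2K/3})\}$ (a fixed positive constant depending only on the two chosen points and $K$), times $((1-e^{K/3})/(1-e^{2K/3}))^{N/2}$ (a fixed positive constant depending only on $K,N$), times $((1-e^{2Ks/3})/(1-e^{2K(t-1/2)/3}))^{N/2}$, and this last factor converges to $1$ as $s,t \to +\infty$ since $K<0$ forces $e^{2Ks/3},e^{2K(t-1/2)/3} \to 0$. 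Integrating the bound $p_{t+1}(x',y') \geq c\, p_{t-1}(x_0,y_0)$ for $t$ large and a change of variables then gives $\int_1^{+\infty} p_s(x',y')\,ds = +\infty$, as required.

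When $K \geq 0$, Proposition \ref{Harnack} does not apply directly, but any $\RCD(K,N)$ space is also $\RCD(K',N)$ for every $K' \leq K$, so one picks some $K'<0$ and runs the same argument with $K'$ in place of $K$. The only obstacle is the bookkeeping in extracting the uniform lower bound on the Harnack constants; with that in hand, the proof reduces to symmetry of the heat kernel and a monotone integration.
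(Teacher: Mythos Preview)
Your argument is correct and is precisely the kind of unpacking the paper's one-line proof invites: the paper only says ``It follows immediately by Proposition~\ref{Harnack}'', and your two-step application of the Harnack inequality together with the symmetry $p_t(x,y)=p_t(y,x)$ is the natural way to pass from an arbitrary pair $(x_0,y_0)$ to an arbitrary pair $(x',y')$, given that Proposition~\ref{Harnack} fixes the first argument. Your observations that the time-dependent factor tends to $1$ and that the $K\geq 0$ case reduces to $K<0$ by monotonicity of the $\RCD(K,N)$ condition are both correct and needed, so nothing is missing.
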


We now define Green's functions on $\RCD(K,N)$ spaces.
\begin{definition}[Regular sets]
    Let $A \subset \subset \X$ be a regular set and let $x \in A$. We say that 
    \[
    G_x \in \W^{1,1}_0(A) \cap C(\bar{A} \setminus \{x\})
    \]
    is a Green's function on $A$ with pole $x$ if 
    $\Delta G_x=-\delta_x$ 
    in distributional sense in $A$ and $G_x=0$ on $\partial A$.
    Similarly we say that 
    \[
    G_x \in \W^{1,1}_{loc}(\X) \cap C(\X \setminus \{x\})
    \]
    is a Green's function on $\X$ with pole $x$ if 
    $\Delta G_x=-\delta_x$ 
    in distributional sense.
\end{definition}

Note that our definition of Green's functions on proper subsets of $\X$ requires a pointwise boundary condition as this will be important later on in the proof of Theorem \ref{CT1}. This is also the reason why we only define Green's functions on regular sets and not on arbitrary bounded precompact sets. We will indeed show that with our definition if a bounded precompact set admits a Green's function then it is regular and viceversa. \par 
The next proposition shows that a regular set admits at most one Green's function.

\begin{proposition}[Uniqueness of Green's functions]
    Let $A \subset \X$ be a regular set and let $x \in A$. Then there exists at most one Green's function with pole $x$ on $A$.
    \begin{proof}
        Let $f_1,f_2$ be two such functions. Then $f_1-f_2$ is harmonic in distributional sense, so that by Proposition \ref{P36} it belongs to $\W^{1,2}_{loc}(A)$.
        This implies that for every $\Omega \subset \subset A$ we have $f_1-f_2 \in D(\Delta,\Omega)$ and $\Delta (f_1-f_2)=0$. In particular, since we have that $f_1-f_2 \in C(\bar{A} \setminus \{x\})$, we obtain that $f_1-f_2 \in C(\bar{A})$. \par
        Suppose now that $f_1-f_2 \neq 0$, so that there exists a point where such difference is positive (or negative). Restricting to an appropriate precompact set of $A$ and using the maximum principle we obtain a contradiction.
    \end{proof}
\end{proposition}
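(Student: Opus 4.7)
The plan is to set $h := f_1 - f_2$ for two putative Green's functions $f_1, f_2$ with pole $x$, and to show that $h \equiv 0$ by combining interior regularity with the maximum principle. Since $\Delta f_1 = -\delta_x = \Delta f_2$ distributionally on $A$, the difference $h$ satisfies $\Delta h = 0$ in distributional sense on the entire set $A$ (including the pole), which means $h$ should be harmonic in the strong sense everywhere in $A$.

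To upgrade from the distributional statement to actual Sobolev regularity, I would apply Weyl's lemma (Proposition~\ref{P36}) to conclude $h \in \W^{1,2}_{loc}(A)$. Then on each open precompact $\Omega \subset\subset A$, we have $h \in D(\Delta,\Omega)$ with $\Delta h = 0$. In particular $\Delta h$ is (trivially) continuous, so by Proposition~\ref{P7} the function $h$ admits a locally Lipschitz representative on all of $A$. This is the crucial point that lets us extend continuity across the singularity: a~priori $h \in C(\bar A \setminus \{x\})$ from the definition of Green's function, but the interior regularity forces the apparent singularity at $x$ to be removable, so $h \in C(\bar A)$. Moreover $h = 0$ on $\partial A$ since both Green's functions vanish there.

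For the conclusion, I would argue by contradiction: assume $h(y_0) > 0$ for some $y_0 \in A$ (the negative case is symmetric). Since $h \in C(\bar A)$ with $h|_{\partial A}=0$, the open set $U:=\{h > h(y_0)/2\}$ is a nonempty precompact subset of $A$ whose closure does not meet $\partial A$. Let $\Omega$ be the connected component of $U$ containing $y_0$; then $\Omega \subset\subset A$ is open and connected, $h \in \W^{1,2}(\Omega) \cap C(\bar\Omega)$, $\Delta h = 0$ distributionally on $\Omega$, and $h$ attains its maximum over $\bar\Omega$ at an interior point (any global maximizer in $\bar A$ lies in $U \supset \partial$-preimages). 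The maximum principle then forces $h$ to be constant on $\Omega$, which contradicts the fact that $h \equiv h(y_0)/2$ on $\partial \Omega$ while $h > h(y_0)/2$ throughout $\Omega$. The main point requiring care is the removal of the singularity at $x$, where one must combine the distributional harmonicity of $h$ on the full set $A$ with Weyl's lemma and the interior regularity for harmonic functions to pass from $C(\bar A \setminus \{x\})$ to $C(\bar A)$; once this is in hand, the rest is the standard maximum principle argument.
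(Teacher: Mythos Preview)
Your proposal is correct and follows essentially the same route as the paper: subtract, apply Weyl's lemma (Proposition~\ref{P36}) to get $h\in\W^{1,2}_{loc}(A)$, use interior regularity to extend continuity across $x$, and finish with the maximum principle on a suitable precompact subset. Your write-up is in fact more explicit than the paper's, which simply says ``restricting to an appropriate precompact set of $A$ and using the maximum principle''; your level-set construction $\Omega=\{h>h(y_0)/2\}$ makes this step concrete (note the parenthetical ``$U\supset\partial$-preimages'' is garbled and should be cleaned up, but the intended argument---that the maximum of $h$ over $\bar\Omega$ is achieved in the interior since $h\equiv h(y_0)/2$ on $\partial\Omega$---is sound).
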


Now we give a series of lemmas that are needed to prove existence of Green's functions on regular domains, which is the content of Proposition \ref{P1}. The proof of the next lemma is immediate.

\begin{lemma} \label{L5}
     Let $x \in \X$ and let $r>0$. If $f:\X \to \bb{R}$ satisfies $\lip(f)(y) \leq c$ for every $y \in B_r(x)$, then $f$ is Lipschitz in $B_{r/4}(x)$.
\end{lemma}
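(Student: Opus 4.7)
The plan is to use the length-space structure of $\RCD(K,N)$: two points in $B_{r/4}(\ssf{x})$ can be connected by a minimizing geodesic, and because the endpoints are close enough to $\ssf{x}$, the whole geodesic stays inside $B_r(\ssf{x})$ where the pointwise bound on $\lip(f)$ is available. This is the whole point of the factor $1/4$: for $y_1, y_2 \in B_{r/4}(\ssf{x})$, $L := \sd(y_1,y_2) < r/2$, and a unit-speed geodesic $\gamma:[0,L]\to \X$ joining them satisfies $\sd(\gamma(t),\ssf{x}) \leq t + \sd(y_1,\ssf{x}) < L + r/4 < 3r/4 < r$ for every $t\in[0,L]$, so $\gamma([0,L])\subset B_r(\ssf{x})$.

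Once the geodesic is confined to $B_r(\ssf{x})$, I would transfer the estimate to the real line. Put $g(t):=f(\gamma(t))$ on $[0,L]$. Since $f\in\Lip_{loc}$ (as implicit in the definition of $\lip(f)$), $g$ is continuous. Using $\sd(\gamma(s),\gamma(t))=|s-t|$, for any $t\in[0,L]$,
\[
\lip(g)(t)=\limsup_{s\to t}\frac{|f(\gamma(s))-f(\gamma(t))|}{|s-t|}=\limsup_{s\to t}\frac{|f(\gamma(s))-f(\gamma(t))|}{\sd(\gamma(s),\gamma(t))}\leq \lip(f)(\gamma(t))\leq c.
\]
So the problem reduces to the one-dimensional statement: a continuous $g:[0,L]\to\bb{R}$ with $\lip(g)(t)\leq c$ pointwise is $c$-Lipschitz.

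This last one-dimensional fact, which is the only mildly subtle step (it is not automatic, as pointwise bounds on a $\limsup$ quotient differ from a.e.\ bounds on the derivative), I would prove by a Chebyshev-type comparison: fix $\epsilon>0$ and set $h(t):=g(t)-g(0)-(c+\epsilon)t$, so $h(0)=0$. If $h$ were positive anywhere, let $t^\star:=\inf\{t : h(t)>0\}$, which by continuity satisfies $h(t^\star)=0$. The condition $\lip(g)(t^\star)\leq c$ yields $\delta>0$ such that $|g(t)-g(t^\star)|\leq (c+\epsilon/2)|t-t^\star|$ for $|t-t^\star|<\delta$, and combining with $h(t^\star)=0$ one obtains $h(t)\leq -(\epsilon/2)(t-t^\star)<0$ for $t\in(t^\star,t^\star+\delta)$, contradicting the definition of $t^\star$. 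The same argument applied to $-g$ rules out negative values of $h$, so letting $\epsilon\to 0$ gives $|g(L)-g(0)|\leq cL$.

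Putting the three pieces together yields $|f(y_2)-f(y_1)|=|g(L)-g(0)|\leq cL=c\,\sd(y_1,y_2)$ for all $y_1,y_2\in B_{r/4}(\ssf{x})$, which is the desired Lipschitz bound. There is no real obstacle here: the geometric step is a triangle inequality, and the analytic step is the standard one-dimensional comparison argument above.
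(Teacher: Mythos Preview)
Your argument is correct. The paper does not actually give a proof of this lemma, declaring it ``immediate''; your reduction via a minimizing geodesic confined to $B_r(\ssf{x})$ and the one-dimensional comparison is the natural justification and presumably what the authors have in mind. (One cosmetic remark: applying the argument to $-g$ does not literally rule out negative values of $h$, but it does yield the reverse inequality $g(0)-g(L)\leq (c+\epsilon)L$, which is exactly what you need for the final bound.)
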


Let $T>0$ and define $G^T_x:\X \to \bb{R}$ by $G^T_x(y):=\int_0^Tp_t(x,y) \, dt$.
The function $G^T_x$ is summable as 
        \[
        \int_\X G^T_x(y) \, d\m(y)=\int_0^T \int_\X p_t(x,y) \, d\m(y) \, dt=T.
        \]
        As a consequence, $G^T_x$ is real valued $\m$-a.e. on $\X$ and the definition is well posed.

\begin{lemma} \label{L16}
 The function $G^T_x$ is locally Lipschitz in $\X \setminus \{x\}$.
    \begin{proof}
         Let $y \in \X \setminus \{x\}$. Thanks to Proposition \ref{Hestim} there exist constants $c_1,c_2>0$ such that for every $t \in (0,T)$ the function $p_t(x,\cdot)$ is Lipschitz in $B_{\sd(x,y)/8}(y)$ with local Lipschitz constant bounded from above by
        \[
        \frac{c_1}{\sqrt{t} \m(B_{\sqrt{t}}(x))} \exp{ \Big\{ \frac{-\sd^2(x,y)}{c_2t} \Big\}}.
        \]
        As a consequence, for every $z \in B_{\sd(x,y)/8}(y)$, using Fatou Lemma we get
        \begin{equation} \label{E7}
        \lip(G^T_x)(z) = \limsup_{z_1 \to z} \int_0^T\frac{|p_t(x,z_1)-p_t(x,z)|}{\sd(z_1,z)} \, dt
        \leq \int_0^T \frac{c_1}{\sqrt{t} \m(B_{\sqrt{t}}(x))} \exp{ \Big\{ \frac{-\sd^2(x,y)}{c_2 t} \Big\}} \, dt .
        \end{equation}
       Moreover from Remark \ref{R2} we have that there exists a constant $C>0$ such that $\m(B_r(x)) \geq Cr^N$ for every $r \leq \sqrt{T}$, so that the r.h.s. of \eqref{E7} is bounded. Hence $G^T_x$ is locally Lipschitz in a neighbourhood of $y \in \X \setminus \{x\}$ because of Lemma \ref{L5}. Since $y$ was arbitrary $G^T_x$ is locally Lipschitz in $\X \setminus \{x\}$.
       \end{proof}
\end{lemma}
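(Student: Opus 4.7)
The plan is to fix an arbitrary $y \in \X \setminus \{x\}$, set $d:=\sd(x,y)>0$, and show that $\lip(G^T_x)$ is uniformly bounded on $B_{d/8}(y)$; Lemma \ref{L5} then upgrades this pointwise bound to a Lipschitz estimate on $B_{d/32}(y)$, which suffices since $y$ was arbitrary.

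First I would commute the local Lipschitz constant with the $t$-integral defining $G^T_x$. Given $z\in B_{d/8}(y)$ and a sequence $z_n\to z$, the difference quotient satisfies
\[
\frac{|G^T_x(z_n)-G^T_x(z)|}{\sd(z_n,z)}\leq \int_0^T \frac{|p_t(x,z_n)-p_t(x,z)|}{\sd(z_n,z)}\,dt,
\]
and Fatou's lemma, applied after taking $\limsup_{n\to\infty}$, gives $\lip(G^T_x)(z)\leq \int_0^T \lip(p_t(x,\cdot))(z)\,dt$. Next I would invoke the gradient part of Proposition \ref{Hestim}: for $z \in B_{d/8}(y)$ the triangle inequality gives $\sd(x,z)\geq 7d/8$, so for every $t>0$ the integrand is dominated by
\[
\frac{C}{\sqrt{t}\,\m(B_{\sqrt{t}}(x))}\exp\!\Big\{-\frac{(7d/8)^2}{5t}+ct\Big\}.
\]

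The one substantive step is the integrability of this majorant on $(0,T)$. On any interval $[\varepsilon,T]$ it is bounded, since $\m(B_{\sqrt{t}}(x))$ stays bounded away from zero and the factor $e^{ct}$ is bounded. Near $t=0$ I would appeal to Remark \ref{R2}, which furnishes a constant $C_0>0$ with $\m(B_{\sqrt{t}}(x))\geq C_0 t^{N/2}$ for all $t\leq T$. The integrand is then controlled by $C' t^{-(N+1)/2}\exp\{-\alpha/t\}$ with $\alpha:=(7d/8)^2/5>0$, and the Gaussian factor $\exp\{-\alpha/t\}$ beats any negative power of $t$ as $t\downarrow 0$, so the majorant is in fact uniformly bounded on $(0,T)$ and integrable there.

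The main (and essentially only) obstacle I anticipate is this integrability check at $t=0$: the gradient heat kernel bound blows up polynomially in $t^{-1}$ both from the $1/\sqrt{t}$ prefactor and from the small-ball volume denominator, and one must genuinely use both the Gaussian decay (which is available precisely because $z$ is separated from the pole $x$ by at least $7d/8$) and the polynomial volume lower bound coming from the $\RCD(K,N)$ Bishop--Gromov inequality. Once this is in hand, the uniform bound on $\lip(G^T_x)$ on $B_{d/8}(y)$ combined with Lemma \ref{L5} yields the Lipschitz property on $B_{d/32}(y)$, and arbitrariness of $y$ concludes the argument.
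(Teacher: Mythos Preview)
Your proposal is correct and follows essentially the same route as the paper's proof: bound $\lip(G^T_x)$ on a small ball $B_{d/8}(y)$ by passing the Lipschitz constant inside the time integral via Fatou, control the resulting integrand with the gradient heat kernel estimate of Proposition~\ref{Hestim} together with the volume lower bound of Remark~\ref{R2}, and then invoke Lemma~\ref{L5}. The only cosmetic differences are that you make the triangle-inequality step $\sd(x,z)\geq 7d/8$ explicit and retain the $e^{ct}$ factor (harmless on $(0,T)$), whereas the paper absorbs these into the constants $c_1,c_2$.
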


\begin{lemma} \label{L7}
         $G^T_x \in \W^{1,1}_{loc}(\X)$.
         \begin{proof} 
         Consider for every $\epsilon >0$ the function $G^T_{\epsilon,x}:\X \to \bb{R}$ given by
        \[
        G^T_{\epsilon,x}(y)
        :=\int_\epsilon^Tp_t(x,y) \, dt.
        \]
        It is easy to see that these converge in $\sL^1(\X)$ to $G^T_x$ and, using Proposition \ref{Hestim} and Fatou Lemma, that for every $y \in \X$
       \[
        \lip(G^T_{\epsilon,x})(y)
        \leq \int_\epsilon^T \frac{c_1}{\sqrt{t} \m(B_{\sqrt{t}}(x))} \exp{ \Big\{ \frac{-\sd^2(x,y)}{c_2t} \Big\}} \, dt 
        \leq \int_\epsilon^T \frac{c_3}{\sqrt{t} \m(B_{\sqrt{t}}(x))} \, dt < + \infty.
        \]
        In particular $G^T_{\epsilon,x} \in \Lip(\X) \subset W^{1,1}_{loc}(\X)$.
       So if we prove that for every ball $B_r(x) \subset \X$ with $r \geq T+1$, the functions $G^T_\epsilon$ restricted to $B_r(x)$ are a Cauchy sequence in $\W^{1,1}(B_r(x))$, we obtain that the limit $G^T_x$ is in $\W^{1,1}_{loc}(\X)$ as well. So we compute
       \begin{align} 
       \begin{split} \label{E8}
       \int_{B_r(x)} |\nabla(G^T_{\epsilon_1,x}-G^T_{\epsilon_2,x})|(y) d\m(y) &=\int_{B_r(x)} \lip \Big(\int_{\epsilon_1}^{\epsilon_2}p_t(x,\cdot)dt \Big)(y)d\m(y) 
       \\
       &\leq \int_{B_r(x)} \int_{\epsilon_1}^{\epsilon_2} \frac{c_4}{\sqrt{t} \m(B_{\sqrt{t}}(x))} \exp{ \Big\{ \frac{-d^2(x,y)}{c_5t} \Big\}} \, dt \, d\m(y),
       \end{split}
       \end{align}
       where for the last inequality we passed the Lipschitz constant inside the integral using Fatou Lemma. Switching the integrals and applying a version of Fubini's Theorem (i.e. Cavalieri principle, see \cite{AmbFuscPall}) we then obtain that the expression in \eqref{E8} can be bounded as follows:
    \begin{align*}
        \int_{\epsilon_1}^{\epsilon_2} \int_{B_r(x)}  &\frac{c_4}{\sqrt{t} \m(B_{\sqrt{t}}(x))} \exp{ \Big\{ \frac{-d^2(x,y)}{c_5t} \Big\}} \, d\m(y) \, dt
    \\
    &=
        \int_{\epsilon_1}^{\epsilon_2} \frac{c_4}{\sqrt{t} \m(B_{\sqrt{t}}(x))} \int_0^1 \m \Big( \Big\{ \exp{ \Big\{ \frac{-\sd^2(x,y)}{c_5t} \Big\}} >s \Big\} \cap B_r(x) \Big ) \,  ds \, dt
    \\
       &\leq
       \int_{\epsilon_1}^{\epsilon_2} \frac{c_6}{\sqrt{t}} \sup_{z \in [0,T]} \int_0^{+ \infty} e^{-c_7 s}\frac{\m(B_{\sqrt{sz}}(x) \cap B_r(x))}{\m(B_{\sqrt{z}}(x))} \, ds \, dt.
       \end{align*}
       We now prove that the supremum in the previous expression is finite, keeping in mind that this is sufficient to prove that our sequence is Cauchy. We compute:
       \begin{align*}
       \sup_{z \in [0,T]} &\int_0^{+ \infty} e^{-c_7 s}\frac{\m(B_{\sqrt{sz}(x)} \cap B_r(x))}{\m(B_{\sqrt{z}(x)})} \, ds 
       \\
       &\leq
       \int_0^1 e^{-c_7s} \, ds
       + \sup_{z \in [0,T]} \Big(\int_1^{r^2/z} e^{-c_7 s}\frac{\m(B_{\sqrt{sz}(x)} )}{\m(B_{\sqrt{z}(x)})} \, ds
       + \int_{r^2/z}^{+ \infty} e^{-c_7 s}\frac{\m(B_r(x) )}{\m(B_{\sqrt{z}(x)})} \, ds \Big) 
       \\
       & \leq c_8
       + \sup_{z \in [0,T]} \Big(\int_1^{r^2/z} e^{-c_7 s}\frac{V_{K,N}(B_{\sqrt{sz}(x)} )}{V_{K,N}(B_{\sqrt{z}(x)})} \, ds
       + c_9 e^{-c_7r^2/z}z^{-N/2} \Big) 
       \end{align*}
       where the last inequality is a consequence of Remark \ref{R2}. It is clear that the finiteness of the last expression follows if we can prove the finiteness of 
       \[
       \sup_{z \in [0,T]} \int_1^{r^2/z} e^{-c_7 s}\frac{V_{K,N}(B_{\sqrt{sz}(x)} )}{V_{K,N}(B_{\sqrt{z}(x)})} \, ds.
       \]
       Since when $s \in (1,r^2/z)$ we have $\sqrt{zs} \in (0,r)$, there exists a constant $c_{10}$ depending on $r$ such that $V_{K,N}(B_{\sqrt{zs}(x)}) \leq c_{10}(sz)^{N/2}$ whenever $s \in (1,r^2/z)$. Hence combining with Remark \ref{R2} once again we get
       \[
       \sup_{z \in [0,T]} \int_1^{r^2/z} e^{-c_7s}\frac{V_{K,N}(B_{\sqrt{sz}(x)} )}{V_{K,N}(B_{\sqrt{z}(x)})} \, ds 
        \leq
       \sup_{z \in [0,T]} \int_1^{r^2/z} c_{11}e^{-c_7 s}s^{N/2}\, ds
       \leq 
       \int_1^{+ \infty} c_{11}e^{-c_7 s}s^{N/2}\, ds < + \infty.
       \]
       Concluding the proof that $G^T_x \in \W^{1,1}(B_r(x))$.
       \end{proof}
\end{lemma}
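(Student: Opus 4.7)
The plan is to obtain $G^T_x$ as an $\W^{1,1}_{loc}$-limit of a truncated family. For $\epsilon>0$ set $G^T_{\epsilon,x}(y):=\int_\epsilon^T p_t(x,y)\,dt$. Since the heat kernel and its gradient bound in Proposition \ref{Hestim} are uniformly integrable away from $t=0$, Lemma \ref{L5} combined with a Fatou-type argument shows $G^T_{\epsilon,x}\in \Lip(\X)\subset \W^{1,1}_{loc}(\X)$ for every $\epsilon>0$. Moreover $G^T_{\epsilon,x}\to G^T_x$ in $\sL^1(\X)$ by dominated convergence using $\int_\X p_t(x,\cdot)\,d\m=1$. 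So it suffices to show that, for each fixed ball $B_r(x)$, the family $\{G^T_{\epsilon,x}\}_{\epsilon>0}$ is Cauchy in $\W^{1,1}(B_r(x))$ as $\epsilon\downarrow 0$.

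The $\sL^1$-Cauchy part is straightforward from the mass bound on $p_t$. For the gradient, the heat kernel gradient estimate in Proposition \ref{Hestim} and Fatou give, for $\epsilon_1<\epsilon_2$,
\[
\int_{B_r(x)} |\nabla(G^T_{\epsilon_1,x}-G^T_{\epsilon_2,x})|\,d\m
\leq \int_{\epsilon_1}^{\epsilon_2}\!\!\int_{B_r(x)} \frac{C}{\sqrt t\,\m(B_{\sqrt t}(x))}\exp\!\Big(\!-\frac{\sd^2(x,y)}{ct}\Big)\,d\m(y)\,dt,
\]
after exchanging order of integration via Fubini. The task reduces to bounding the inner integral uniformly in $t\in(0,T]$ by something integrable in $dt/\sqrt t$.

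To handle the inner integral I would use the layer-cake/Cavalieri identity to rewrite it as
\[
\int_{B_r(x)}e^{-\sd^2(x,y)/(ct)}\,d\m(y)=\int_0^1 \m\!\left(\{e^{-\sd^2(x,\cdot)/(ct)}>s\}\cap B_r(x)\right)\,ds,
\]
and then change variables so the relevant quantity becomes $\int_0^\infty e^{-c's}\,\m(B_{\sqrt{st}}(x)\cap B_r(x))\,ds$. Dividing by $\m(B_{\sqrt t}(x))$ and applying the Bishop–Gromov inequality (Proposition \ref{P6}) to the ratio $\m(B_{\sqrt{st}}(x))/\m(B_{\sqrt t}(x))\leq V_{K,N}(\sqrt{st})/V_{K,N}(\sqrt t)$, I would split the $s$-integral into the regions $s\in(0,1]$, $s\in[1,r^2/t]$, and $s\geq r^2/t$. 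On the first region the ratio is at most $1$; on the second, since $\sqrt{st}\leq r$, the volume comparison yields a polynomial bound $V_{K,N}(\sqrt{st})/V_{K,N}(\sqrt t)\leq C_r\, s^{N/2}$, which is beaten by the Gaussian $e^{-c's}$; on the third region the numerator is replaced by $\m(B_r(x))$ and crude lower bound $\m(B_{\sqrt t}(x))\geq C t^{N/2}$ from Remark \ref{R2} together with the fast decay $e^{-c'r^2/t}$ gives a uniform bound in $t\in(0,T]$.

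The main obstacle is this last technical estimate: one must show the Gaussian decay controls the potentially large factor $\m(B_{\sqrt{st}}(x))/\m(B_{\sqrt t}(x))$, and one must be careful that the constants produced depend only on $r$, $T$, $N$, $K$ and not on $t$ or $\epsilon$. Once the inner integral is bounded uniformly in $t$ by a constant $C_{r,T,N,K}$, the outer integral collapses to $\int_{\epsilon_1}^{\epsilon_2} C/\sqrt t\,dt\to 0$ as $\epsilon_1,\epsilon_2\downarrow 0$, proving the Cauchy property and hence $G^T_x\in \W^{1,1}(B_r(x))$ for every $r>0$.
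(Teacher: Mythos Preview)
Your proposal is correct and follows essentially the same approach as the paper's own proof: truncate to $G^T_{\epsilon,x}$, use the heat-kernel gradient bound and Fatou to control $|\nabla(G^T_{\epsilon_1,x}-G^T_{\epsilon_2,x})|$, apply Cavalieri to the Gaussian, and then split the $s$-integral into the same three regions $(0,1]$, $[1,r^2/t]$, $[r^2/t,\infty)$, handling each via Bishop--Gromov and Remark~\ref{R2}. The only cosmetic difference is that the paper phrases the uniform bound as a supremum over $z\in[0,T]$ rather than over $t$, but the content is identical.
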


\begin{lemma} \label{L6}
    Let $\Omega \subset \X$ be an open set and let $x \in \Omega$ and $g \in \W^{1,1}_0(\Omega)$. If for every $\phi \in \Test_c(\Omega,\sd,\m)$ we have
       \[
       \int_\Omega g(y) \Delta \phi(y) \, d\m(y)=-\phi(x),
       \]
       then $\Delta g=-\delta_x$ in distributional sense in $\Omega$.
       \begin{proof}
           Let $\phi \in \Lip_c(\Omega)$ and consider its extension to $0$ in $\X$, which we still denote by $\phi$. Consider the heat flow  $P_t \phi \in \Test(\X,\sd,\m)$.
           Let now $\eta \in \Test_c(\Omega)$ be a positive function which is equal to $1$ on an open set containing the support of $\phi$. Since the test functions are an algebra, we get $\eta P_t\phi \in \Test_c(\Omega,\sd,\m)$. Moreover $\eta P_t\phi \to \phi$ in $\W^{1,2}(\X)$ and uniformly, while the functions $\eta P_t\phi$ are uniformly Lipschitz because $\phi$ is bounded. Hence using the Dominated Convergence Theorem we get
           \[
       \int_\Omega \nabla g(y) \cdot \nabla \phi(y) \, d\m(y)=
       \lim_{t \to 0} \int_\Omega \nabla g(y) \cdot \nabla (\eta P_t\phi) \, d\m(y)
       =
       -\lim_{t \to 0} \int_\Omega g(y)  \Delta (\eta P_t\phi) \, d\m(y)
       =
       \lim_{t \to 0} \eta P_t\phi (x)=\phi(x),
       \]
       concluding the proof.
       \end{proof}
\end{lemma}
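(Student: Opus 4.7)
The plan is to reformulate the conclusion and bridge the gap between the hypothesis and the distributional statement by heat-flow approximation. The distributional equation $\Delta g = -\delta_x$ in $\Omega$ means that for every $h \in \Lip_c(\Omega)$,
\[
\int_\Omega \nabla h \cdot \nabla g \, d\m = h(x).
\]
Since the hypothesis only controls the pairing against $\phi \in \Test_c(\Omega)$, the core task is to approximate an arbitrary $h \in \Lip_c(\Omega)$ by a sequence of compactly supported test functions well enough to pass to the limit on both sides.

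The approximants are built by mollification followed by cutoff. Extend $h$ by zero to $\X$ and pick $\eta \in \Test_c(\Omega)$ with $\eta \equiv 1$ on a neighbourhood of $\supp(h) \cup \{x\}$; such a cutoff exists by the result of \cite{AMSbakry} recalled earlier. For $t_n \downarrow 0$, set $\phi_n := \eta \, P_{t_n} h$. Since $h$ is bounded, $P_{t_n} h \in \Test(\X)$, and by the algebra property of test functions $\phi_n \in \Test_c(\Omega)$. Three features of $\{\phi_n\}$ drive the limiting argument: first, $\phi_n(x) = \eta(x)\, P_{t_n} h(x) \to h(x)$ by continuity of the heat flow on continuous bounded functions together with $\eta(x) = 1$; second, $\phi_n \to h$ in $\W^{1,2}(\X)$, since $h \in \W^{1,2}(\X)$ and $P_t$ is continuous in $\W^{1,2}$; third, the Lipschitz constants of $\phi_n$ on the fixed compact set $\supp(\eta)$ are uniformly bounded as $t_n \downarrow 0$, using that $h$ is Lipschitz and the heat flow propagates Lipschitz regularity uniformly on bounded time intervals in the $\RCD$ setting.

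Applying the hypothesis to each $\phi_n$ gives $\int_\Omega g \, \Delta \phi_n \, d\m = -\phi_n(x)$. Since $\phi_n \in \Test_c(\Omega)$ has bounded, compactly supported Laplacian, the defining integration by parts for $\Delta \phi_n$ yields $\int_\Omega f \, \Delta \phi_n \, d\m = -\int_\Omega \nabla f \cdot \nabla \phi_n \, d\m$ for every $f \in \W^{1,2}_0(\Omega)$. To use this identity with $f = g \in \W^{1,1}_0(\Omega)$, approximate $g$ in $\W^{1,1}$-norm by $g_k \in \Lip_c(\Omega) \subset \W^{1,2}_0(\Omega)$; the uniform $L^\infty$-bounds on $\Delta \phi_n$ and $\nabla \phi_n$ together with their compact support inside $\supp(\eta)$ allow one to pass to the limit $k \to \infty$ on both sides, producing $\int_\Omega \nabla g \cdot \nabla \phi_n \, d\m = \phi_n(x)$. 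Letting $n \to \infty$, the right-hand side tends to $h(x)$, while the dominated convergence theorem on the left (with dominant $C|\nabla g|\, \mathbf{1}_{\supp(\eta)} \in L^1$, thanks to the uniform Lipschitz bound on $\phi_n$, their fixed compact support, and the $\W^{1,2}$-convergence $\phi_n \to h$) yields $\int_\Omega \nabla g \cdot \nabla h \, d\m = h(x)$. The main technical hurdle is exactly this integration-by-parts extension from $\W^{1,2}_0$ to $\W^{1,1}_0$ test functions, which is handled by the $\W^{1,1}$-density of $\Lip_c$ together with the uniform $L^\infty$-bounds and compact support of $\Delta \phi_n$ and $\nabla \phi_n$.
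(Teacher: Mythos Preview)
Your argument is correct and follows essentially the same route as the paper: approximate an arbitrary $h\in\Lip_c(\Omega)$ by $\eta P_{t_n}h\in\Test_c(\Omega)$, apply the hypothesis, and pass to the limit via dominated convergence using the uniform Lipschitz bounds and fixed compact support. You are in fact more careful than the paper on one point: you explicitly justify the integration by parts $\int_\Omega g\,\Delta\phi_n\,d\m=-\int_\Omega\nabla g\cdot\nabla\phi_n\,d\m$ for $g\in\W^{1,1}_0(\Omega)$ by approximating $g$ in $\W^{1,1}$ with $g_k\in\Lip_c(\Omega)$ and using that $\Delta\phi_n,\nabla\phi_n\in L^\infty$ with fixed compact support, whereas the paper writes this identity without comment.
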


\begin{lemma} \label{L11}
    Let $B \subset \X$ be a regular set, $x \in B$, $f \in C(\X)$, and $g \in \Lip_{loc}(\bar{B} \setminus \{x\})$. There exists $h \in \W^{1,2}(B) \cap C(\bar{B})$ such that $\Delta h=f$ on $B$, $h+g$ is identically zero on $\partial B$ and for every positive $\eta \in \Lip_c(B)$ equal to $1$ in a neighbourhood of $x$ and taking value in $[0,1]$ we have $(1-\eta)(h+g) \in \W^{1,2}_0(B)$. \par 
    Moreover, if in addition we have that $g \in \W^{1,2}(B)$, then we can ask that $h+g \in \W^{1,2}_0(B)$.
       \begin{proof}
           Let $U \supset \supset \bar{B}$ be an open precompact subset of $\X$ and consider the function $h_1 \in \W^{1,2}_0(U)$ such that $\Delta h_1=f$ on $U$. 
           By Proposition \ref{P7} this function is Lipschitz on $\bar{B}$. \par 
           Let now $\eta \in \Lip_c(B)$ be as in the statement, then $(1-\eta)g \in \W^{1,2}(B) \cap C(\bar{B})$ so that, $B$ being regular, there exists
            $h_2 \in \W^{1,2}(B) \cap C(\bar{B})$, harmonic on $B$, such that $h_2+h_1+g$ is identically zero on $\partial B$ and $h_1+h_2+(1-\eta)g \in \W^{1,2}_0(B)$. We then set $h:=h_2+h_1$ and we note that this function has all the required properties. \par 
            The case when $g \in \W^{1,2}(B)$ is analogous.
       \end{proof}
\end{lemma}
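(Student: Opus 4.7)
The plan is to split the construction into a Poisson part and a harmonic Dirichlet correction, introducing a cutoff to tame the possible blow-up of $g$ at the interior point $x$. The solution will have the form $h=h_1+h_2$, where $h_1$ solves $\Delta h_1=f$ on a slight enlargement of $\bar{B}$ and $h_2$ is the harmonic correction that installs the prescribed boundary values.

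For the Poisson part, I would fix a precompact open set $U\supset\supset\bar{B}$ in $\X$ and apply the Poisson existence proposition recalled earlier (with boundary datum $0$) to produce $h_1\in\W^{1,2}_0(U)$ with $\Delta h_1=f$ on $U$. Since $f$ is continuous, Proposition \ref{P7} provides a locally Lipschitz representative of $h_1$; in particular $h_1\in\Lip(\bar{B})\subset\W^{1,2}(B)\cap C(\bar{B})$.

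For the Dirichlet correction, I would fix an auxiliary cutoff $\eta_0\in\Lip_c(B)$ with $\eta_0\equiv 1$ on a neighbourhood of $x$ and $\eta_0\in[0,1]$. The function $(1-\eta_0)g$ vanishes near $x$ and coincides with the locally Lipschitz $g$ elsewhere, so it is bounded and Lipschitz on $\bar{B}$; hence $(1-\eta_0)g+h_1\in\W^{1,2}(B)\cap C(\bar{B})$. Regularity of $B$ then provides $h_2\in\W^{1,2}(B)\cap C(\bar{B})$, harmonic on $B$, with $h_2=-h_1-g$ pointwise on $\partial B$ and $h_2+h_1+(1-\eta_0)g\in\W^{1,2}_0(B)$. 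Setting $h:=h_1+h_2$ yields $\Delta h=f$ on $B$ and $h+g\equiv 0$ on $\partial B$. Because $h_2$ is harmonic, Proposition \ref{P7} implies $h$ is locally Lipschitz on $B$, so $\eta_0 h\in\Lip_c(B)\subset\W^{1,2}_0(B)$, and therefore $(1-\eta_0)(h+g)=[h_2+h_1+(1-\eta_0)g]-\eta_0 h \in \W^{1,2}_0(B)$.

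It remains to upgrade from the specific $\eta_0$ to an arbitrary admissible $\eta$, and to record the moreover case. For any other admissible cutoff $\eta$, the difference $(\eta_0-\eta)(h+g)$ vanishes on a neighbourhood of $x$, is compactly supported in $B$, and is Lipschitz on $\bar{B}\setminus\{x\}$, hence lies in $\W^{1,2}_0(B)$; combining gives $(1-\eta)(h+g)\in\W^{1,2}_0(B)$ for every admissible $\eta$. When $g\in\W^{1,2}(B)$, the cutoff is unnecessary: $h_1+g$ is already in $\W^{1,2}(B)\cap C(\bar{B})$, so regularity of $B$ provides a harmonic $h_2$ with $h_2+h_1+g\in\W^{1,2}_0(B)$, and one obtains $h+g\in\W^{1,2}_0(B)$ directly. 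The only real subtlety in the plan is cutoff-independence of the $\W^{1,2}_0$ condition, which reduces to the elementary observation above.
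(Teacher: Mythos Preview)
Your proof is correct and follows essentially the same approach as the paper's: split $h=h_1+h_2$ with $h_1$ a Poisson solution on an enlargement $U\supset\supset\bar B$ (locally Lipschitz by Proposition~\ref{P7}) and $h_2$ the harmonic Dirichlet correction obtained from regularity of $B$ applied to the cut-off datum $(1-\eta_0)g+h_1$. You are in fact more careful than the paper in two places: you explicitly reduce $(1-\eta_0)(h+g)\in\W^{1,2}_0(B)$ to the output of the regularity proposition via the identity $(1-\eta_0)(h+g)=[h_1+h_2+(1-\eta_0)g]-\eta_0 h$, and you explicitly verify cutoff-independence by noting that $(\eta_0-\eta)(h+g)\in\Lip_c(B)$, points the paper leaves implicit.
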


We are finally in position to prove the existence of Green's functions.

\begin{proposition}[Existence of Green's functions] \label{P1}
   Let $B \subset \X$ be a regular set and let $x \in B$. For every $T>0$ there exists $g_x^T \in \W^{1,2}(B) \cap C(\bar{B} )$ such that $G_x:=G^T_x+g^T_x$ is the Green's function on $B$ with pole $x$. \par 
   Moreover for every $\eta \in \Lip_c(B)$ equal to $1$ in a neighbourhood of $x$ and taking value in $[0,1]$ we have $(1-\eta) G_x \in \W^{1,2}_0(B)$.
    \begin{proof}
      Let $g^T_x \in \W^{1,2}(B) \cap C(\bar{B} )$ be the function given by Lemma \ref{L11} such that 
      \[
      \Delta g_x^T=-p_T(x,\cdot), \quad g_x^T+G_x^T=0 \quad \text{on } \partial B
      \]
      and such that for every $\eta$ as in the statement 
      $
      (1-\eta)(g_x^T+G_x^T) \in \W^{1,2}_0(B).
      $
       Now we define $G_x:B \to \bb{R}$ by $G_x(y):=G^T_x(y)+g^T_x(y)$ and we note that by construction $G_x \in \W^{1,1}_0(B) \cap C(\bar{B} \setminus \{x\})$.
       \par 
       We now check that $G_x$ is a Green's functions: let $\phi \in \Test_c(B,\sd,\m)$ and note that
       \begin{align*}
       \int_B & G^T_x(y) \Delta \phi(y) \,d\m(y)=\int_0^T \int_\X p_t(x,y)\Delta \phi(y)\, d\m(y) \, dt
       =\int_0^T P_t\Delta \phi(x) \,dt\\
       &=\int_0^T \Delta P_t \phi(x)\, dt= \int_0^T \frac{d}{dt} P_t \phi(x)\, dt= \int_Bp_T(x,y) \phi(y) \, d\m(y)-\phi(x),
       \end{align*}
       where the last inequality comes from the fact that the trajectories of the heat flow at a fixed point are locally absolutely continuous. From the previous computation and the definition of $G_x$ we deduce that for every $\phi \in \Test_c(B,\sd,\m)$ we have
       \[
       \int_B G_x(y) \Delta \phi(y) \, d\m(y)=-\phi(x),
       \]
       which by Lemma \ref{L6} implies that
       $G_x$ is a  Green's function for the Laplacian.
    \end{proof}
\end{proposition}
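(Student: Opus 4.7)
The plan is to realize $G_x$ as the sum of a singular part $G_x^T(y) := \int_0^T p_t(x,y)\,dt$, which concentrates the Dirac mass at $x$, and a corrector $g_x^T \in \W^{1,2}(B) \cap C(\bar B)$ that enforces the boundary condition and cancels the residual heat-kernel term $p_T(x,\cdot)$ appearing at the upper time endpoint. I would produce $g_x^T$ by invoking Lemma \ref{L11} with data $f := -p_T(x,\cdot)$ (continuous and bounded on $\X$ by Proposition \ref{Hestim} together with the semigroup regularization) and $g := G_x^T$, which lies in $\Lip_{loc}(\bar B \setminus \{x\})$ by Lemma \ref{L16}. This gives $g_x^T \in \W^{1,2}(B) \cap C(\bar B)$ with $\Delta g_x^T = -p_T(x,\cdot)$ on $B$, the matching boundary condition $g_x^T + G_x^T \equiv 0$ on $\partial B$, and the localization $(1-\eta)(g_x^T + G_x^T) \in \W^{1,2}_0(B)$ for every admissible cut-off $\eta$, which is exactly the ``moreover'' clause of the proposition. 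Continuity of $G_x := G_x^T + g_x^T$ on $\bar B \setminus \{x\}$ and vanishing on $\partial B$ are then immediate; membership in $\W^{1,1}_0(B)$ follows from $G_x^T \in \W^{1,1}_{loc}(\X)$ (Lemma \ref{L7}) and $g_x^T \in \W^{1,2}(B)$, by approximating $G_x$ via $(1-\eta_k) G_x \in \W^{1,2}_0(B) \subset \W^{1,1}_0(B)$ as $\eta_k$ shrinks to $x$.

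The heart of the proof is verifying $\Delta G_x = -\delta_x$ in the distributional sense on $B$. Given $\phi \in \Test_c(B)$, extended by zero to $\X$, I split $\int_B G_x \Delta\phi\, d\m$ along the two summands. For the singular part, Fubini (legitimate because $\int_\X p_t(x,\cdot)\,d\m = 1$ for every $t$ and $\Delta\phi \in \sL^\infty$) combined with self-adjointness of $P_t$ and the identity $\Delta P_t \phi = \tfrac{d}{dt} P_t \phi$ yields
\[
\int_B G_x^T \Delta\phi\, d\m = \int_0^T P_t(\Delta\phi)(x)\,dt = \int_0^T \tfrac{d}{dt} P_t \phi(x)\,dt = P_T \phi(x) - \phi(x),
\]
where the last equality uses local absolute continuity of the trajectory $t \mapsto P_t\phi(x)$ together with $P_0\phi(x) = \phi(x)$. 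For the corrector, $\Delta g_x^T = -p_T(x,\cdot) \in \sL^2(B)$, $g_x^T \in \W^{1,2}(B)$, and $\phi \in \W^{1,2}_0(B)$ allow integration by parts, giving
\[
\int_B g_x^T \Delta\phi\, d\m = \int_B \phi\, \Delta g_x^T\, d\m = -P_T \phi(x).
\]
The two $P_T\phi(x)$ contributions cancel, leaving $\int_B G_x \Delta \phi\, d\m = -\phi(x)$, and Lemma \ref{L6} converts this into $\Delta G_x = -\delta_x$ on $B$.

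The most delicate step is the pointwise identification $\int_0^T \tfrac{d}{dt} P_t \phi(x)\,dt = P_T \phi(x) - \phi(x)$ at the specific point $x$, which relies on local absolute continuity of the heat-flow trajectory there; the accompanying application of Fubini to $p_t(x,y)\Delta\phi(y)$ needs the $\sL^1$-uniform control $\int_\X p_t(x,\cdot)\,d\m=1$ in $t \in (0,T]$. Everything else---continuity, boundary vanishing, and the $\W^{1,2}_0$ localization---is inherited directly from Lemma \ref{L11}, so the proposition reduces to a careful assembly of Lemmas \ref{L7}, \ref{L16}, \ref{L11}, and \ref{L6}, together with Proposition \ref{Hestim}.
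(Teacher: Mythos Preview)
Your proposal is correct and follows essentially the same route as the paper's proof: both construct $g_x^T$ via Lemma \ref{L11} with data $f=-p_T(x,\cdot)$ and $g=G_x^T$, then verify the distributional identity $\int_B G_x\,\Delta\phi\,d\m=-\phi(x)$ for $\phi\in\Test_c(B)$ by the heat-flow computation $\int_0^T P_t(\Delta\phi)(x)\,dt=P_T\phi(x)-\phi(x)$ and invoke Lemma \ref{L6}. You supply slightly more detail on the $\W^{1,1}_0(B)$ membership and the corrector's integration by parts, but the argument is otherwise identical.
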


We have two corollaries.
\begin{corollary} \label{CcontA}
    Let $B \subset \X$ be regular, let $x \in B$, and let $G_x$ be the Green's function on $B$ with pole $x$. Then, there exists a continuous representative of $G_x$ with values in $[0,+\infty]$.
    \begin{proof}
        Taking into account Proposition \ref{P1}, it is enough to prove the claim for $G_x^T:=\int_0^Tp_t(x,\cdot)\, dt$. If $\int_0^T p_t(x,x)<+\infty$, then $G_x^T$ is continuous with values in $\bb{R}$ by Proposition \ref{Hestim} and Dominated Convergence Theorem. If instead $\int_0^T p_t(x,x)=+\infty$, then Proposition \ref{Hestim} implies
        \[
        \int_0^T \frac{1}{\m(B_{\sqrt{t}}(x))} \, dt=+\infty.
        \]
        Applying Proposition \ref{Hestim} once again, it follows that $G_x^T(y) \to +\infty$ as $y \to x$.
    \end{proof}
\end{corollary}

\begin{corollary} \label{C3}
    The function $G_x$ constructed in Proposition \ref{P1} is positive.
    \begin{proof}
    We would like to use the maximum principle on $G_x$, but this function is only $\W^{1,1}(B) \cap C(\bar{B}  \setminus \{x\})$, while we would need a function in $\W^{1,2}(B) \cap C(\bar{B})$ to apply such theorem. \par
        For every $\epsilon>0$ consider the function $G^\epsilon_x \in \W^{1,2}(B) \cap C(\bar{B})$ given by
        \[
        G_x^{\epsilon}(y):=
        \int_\epsilon^T p_t(x,y) \, dt + g_x^T(y).
        \]
        We have that $\Delta G_x^{\epsilon}=-p_\epsilon(x,\cdot)$ and $G_x^{\epsilon}=-\int_0^{\epsilon}p_t(x,\cdot) \, dt$ on $\partial B$. \par 
        By the maximum principle, this function attains its minimum on the boundary, so that on $B$ we have
        \begin{equation} \label{E9}
        G_x^{\epsilon} \geq \min_{y \in \partial B} -\int_0^{\epsilon}p_t(x,y) \, dt 
        \end{equation}
        At the same time it is easy to check that as $\epsilon$ decreases to zero also the quantity in the right hand side of \eqref{E9} goes to zero. Moreover $G_x^{\epsilon} \to G_x$ pointwise in $B \setminus \{x\}$ as $\epsilon \to 0$, so that passing to the limit in \eqref{E9} we conclude.
    \end{proof}
\end{corollary}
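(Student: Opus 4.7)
The main obstacle is that $G_x$ only lies in $W^{1,1}_0(B)\cap C(\bar B\setminus\{x\})$ rather than in $W^{1,2}(B)\cap C(\bar B)$, so the maximum principle from \cite{MP} cannot be invoked directly. My plan is to approximate $G_x$ from below by a family of functions in which the singularity at $x$ has been cut off in time, apply the maximum principle to these regularized objects, and then pass to the limit.

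Concretely, for $\epsilon\in(0,T)$ I would set
$$G_x^\epsilon(y):=\int_\epsilon^T p_t(x,y)\,dt+g_x^T(y).$$
The heat kernel bounds of Proposition \ref{Hestim}, together with the argument used in Lemma \ref{L16}, show that $\int_\epsilon^T p_t(x,\cdot)\,dt$ is bounded and globally Lipschitz on $\bar B$, so $G_x^\epsilon\in W^{1,2}(B)\cap C(\bar B)$. Differentiating under the integral sign exactly as in the proof of Proposition \ref{P1} gives $\Delta G_x^\epsilon=-p_\epsilon(x,\cdot)\le 0$, so $G_x^\epsilon$ is superharmonic. Moreover the construction of $g_x^T$, which enforces $g_x^T+G_x^T=0$ on $\partial B$, produces the explicit boundary values
$$G_x^\epsilon=-\int_0^\epsilon p_t(x,\cdot)\,dt\quad\text{on }\partial B.$$

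Applying the minimum principle to the superharmonic function $G_x^\epsilon$ on $B$ yields
$$G_x^\epsilon(y)\ge\min_{z\in\partial B}\Bigl(-\int_0^\epsilon p_t(x,z)\,dt\Bigr)\quad\text{for every }y\in B.$$
Since $\partial B$ is compact and $\sd(\partial B,x)>0$, the Gaussian upper bound for $p_t$ in Proposition \ref{Hestim} shows that the right-hand side tends to $0$ as $\epsilon\downarrow 0$; meanwhile $G_x^\epsilon\to G_x$ pointwise on $B\setminus\{x\}$ by monotone convergence. This gives $G_x\ge 0$ on $B\setminus\{x\}$.

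To upgrade non-negativity to strict positivity, I would invoke that $G_x$ is harmonic on $B\setminus\{x\}$ (since $\Delta G_x=-\delta_x$ is supported at the pole), continuous there and non-negative: if $G_x$ vanished at some interior point $y_0\ne x$, the strong maximum principle would force $G_x\equiv 0$ on the connected component of $B\setminus\{x\}$ containing $y_0$, contradicting the fact that $\Delta G_x$ carries Dirac mass at $x$. I expect the only mildly delicate step to be verifying that the boundary values tend to $0$ uniformly in $z\in\partial B$, which however is a straightforward consequence of the Gaussian upper bound combined with the compactness of $\partial B$ and its positive distance from $x$.
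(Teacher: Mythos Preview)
Your argument for non-negativity is essentially identical to the paper's: you introduce the same time-truncated approximants $G_x^\epsilon=\int_\epsilon^T p_t(x,\cdot)\,dt+g_x^T$, note $\Delta G_x^\epsilon=-p_\epsilon(x,\cdot)\le 0$ with boundary values $-\int_0^\epsilon p_t(x,\cdot)\,dt$, apply the minimum principle, and let $\epsilon\downarrow 0$. The paper stops there, reading ``positive'' as $G_x\ge 0$.

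Your additional paragraph upgrading to strict positivity goes beyond the paper's proof (though the paper does tacitly use $G_x^i>0$ later, e.g.\ in Lemma~\ref{L15}). The idea is sound, but the contradiction step is slightly loose as written: if $G_x$ vanished on a connected component of $B\setminus\{x\}$, you need that component to have $x$ in its closure to see the Dirac mass; this is automatic when $B$ is connected and the essential dimension is $\ge 2$ (a singleton does not disconnect), but in essential dimension $1$ the set $B\setminus\{x\}$ splits into two pieces and you should instead argue that $G_x\equiv 0$ $\m$-a.e.\ on a full neighbourhood of $x$ forces $\int\nabla G_x\cdot\nabla\phi=0$ for a test $\phi$ with $\phi(x)=1$, contradicting $\Delta G_x=-\delta_x$. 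This is an easy patch, but worth saying explicitly.
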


 As a consequence of this corollary we get the following proposition.
 
\begin{proposition} \label{P16}
    Let $B_1 \subset \subset B_2$ be regular sets, let $x \in B_1$ and let $G^1_x$ and $G^2_x$ be the corresponding Green's functions. Then $G^2_x \geq G^1_x$.
    \begin{proof}
        By Proposition \ref{P36} the restriction to $\bar{B}_1$ of the difference  $G^2_x-G^1_x$ lies in $W^{1,2}_{loc}(B_1) \cap C(\bar{B}_1)$. \par 
        At the same time $G^2_x-G^1_x \geq 0$ on $\partial B_1$ by Corollary \ref{C3}, so that using the maximum principle we conclude.
    \end{proof}
\end{proposition}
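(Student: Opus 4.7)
The plan is to apply the maximum principle to the difference $u := G^2_x - G^1_x$ on $B_1$, after establishing that $u$ is distributionally harmonic on $B_1$ and non-negative on $\partial B_1$. The crucial observation is that on $B_1$ both Green's functions have distributional Laplacian $-\delta_x$, so their difference has vanishing distributional Laplacian: the singularities at the pole cancel, turning $u$ into a globally harmonic object on $B_1$ that is a priori only in $\W^{1,1}_{loc}$.

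To promote this regularity, I would invoke Proposition \ref{P36} (Weyl's lemma) to obtain $u \in \W^{1,2}_{loc}(B_1)$, and then Proposition \ref{P7} to produce a locally Lipschitz representative on the whole of $B_1$. This representative agrees with $u$ on $B_1 \setminus \{x\}$, where $u$ was already continuous, and extends continuously across $x$; combined with the continuity of each $G^i_x$ on $\bar{B}_1 \setminus \{x\}$, this places $u$ in $C(\bar{B}_1)$. To obtain the global $\W^{1,2}(B_1)$-membership required by the $\RCD$ maximum principle, I would split $u = \eta u + (1-\eta) u$ using a cutoff $\eta \in \Lip_c(B_1)$ equal to $1$ in a neighbourhood of $x$: the piece $(1-\eta) u$ lies in $\W^{1,2}(B_1)$ because $(1-\eta) G^1_x \in \W^{1,2}_0(B_1)$ by Proposition \ref{P1} and $(1-\eta) G^2_x$ is Lipschitz on the compact set $\bar{B}_1$ (since $\bar{B}_1 \subset B_2$ stays away from the pole of $G^2_x$ wherever $1-\eta$ is nonzero), while the compactly supported piece $\eta u$ is $\W^{1,2}$ by the local regularity just established.

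For the final comparison, the defining boundary condition forces $G^1_x \equiv 0$ on $\partial B_1$, while Corollary \ref{C3} gives $G^2_x > 0$ on $\partial B_1 \subset B_2$, so $u \geq 0$ on $\partial B_1$. Applying the maximum principle on each connected component of $B_1$ to $-u$, which is distributionally sub-harmonic, continuous on $\bar{B}_1$ and in $\W^{1,2}(B_1)$, yields $-u \leq \max_{\partial B_1}(-u) \leq 0$, hence $G^2_x \geq G^1_x$ throughout $B_1$. I expect the regularity upgrade to be the only non-trivial step: Green's functions on $\RCD$ spaces are a priori only in $\W^{1,1}_{loc}$ with a pointwise singularity, so one must exploit the distributional cancellation $\Delta u = 0$ through Weyl's lemma together with the elliptic regularity result of Proposition \ref{P7} in order to enter the $\W^{1,2} \cap C$ framework in which the maximum principle is formulated.
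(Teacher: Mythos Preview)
Your proposal is correct and follows essentially the same route as the paper: cancel the poles to get a distributionally harmonic difference, upgrade regularity via Weyl's lemma (Proposition~\ref{P36}), and conclude with the maximum principle using $G^2_x \geq 0 = G^1_x$ on $\partial B_1$ from Corollary~\ref{C3}. Your version is simply more detailed---you additionally invoke Proposition~\ref{P7} for continuity at the pole and set up a cutoff to obtain global $\W^{1,2}(B_1)$-membership, whereas the paper is content with $\W^{1,2}_{loc}(B_1)\cap C(\bar{B}_1)$ and applies the maximum principle (implicitly on precompact subdomains).
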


The next proposition shows that we can use the Green's function to construct functions with prescribed Laplacian.

\begin{proposition}[Representation of functions with prescribed Laplacian] \label{P20}
    Let $B \subset \X$ be regular, and let $\phi \in \Lip_c(B)$. Then 
    \[
    f(x):=\int_B G_x(y) \phi(y) \, d\m(y)
    \]
    is the unique function such that $f \in D(\Delta,B) \cap \W^{1,2}_0(B) \cap  C(\bar{B})$ with $\Delta f=-\phi$ and $f=0$ on $\partial B$.
    \begin{proof}
        By Lemma \ref{L11} we have that there exists a function 
        \[
        f \in D(\Delta,B) \cap W^{1,2}_0(B) \cap  C(\bar{B})
        \]
        with $\Delta f=-\phi$ and $f=0$ on $\partial B$. By the maximum principle such function is unique.
        Since $f \in \W^{1,2}_0(B)$ it is easy to check that there exists a sequence $\{f_i\}_{i \in \bb{N}} \subset \Lip_c(B)$ of functions such that $\|f_i-f\|_{\W^{1,2}(B)} \to 0$ and such that for every $K \subset \subset B$ we have $f_i = f$ on $K$ for $i$ sufficiently large.
        In particular, since $f_i \in \Lip_c(B)$ and $G_x$ is a Green's function with pole $x$, we have
        \[
        f_i(x)=\int_B \nabla G_x(y) \cdot \nabla f_i (y)\, d\m(y),
        \]
        so that to conclude it is sufficient to prove that as $i \to + \infty$ we have
        \begin{equation} \label{E14}
        \int_B \nabla G_x(y) \cdot \nabla f_i(y)\, d\m(y) \to -\int_B G_x(y) \Delta f (y)\, d\m(y).
        \end{equation}
        To this aim fix $\epsilon >0$ and let $\eta \in \Lip(B)$ be a function taking value in $[0,1]$ which is equal to zero in a neighbourhood of $x$, equal to $1$ out of a precompact open subset $U$ of $B$ and such that
        \[
        \Big|\int_B G_x(y) \Delta f (y)\, d\m(y)
        -\int_{B }  (\eta G_x)(y) \Delta f (y)\, d\m(y) \Big| < \epsilon,
        \]
        and for every $i \in \bb{N}$ sufficiently large
        \[
        \Big| \int_B \nabla G_x(y) \cdot \nabla f_i (y)\, d\m(y)
        -\int_{B }  \nabla (\eta G_x)(y) \cdot \nabla f_i (y)\, d\m(y) \Big| < \epsilon.
        \]
        Note that such $\eta$ can be constructed because $\Delta f \in \sL^{\infty}(B)$ and, for $i$ sufficiently large, we have that 
        \[
        |\nabla f_i|=|\nabla f| \text{ in } U.
        \]
        In particular, thanks to the arbitrariness of $\epsilon$, to obtain \eqref{E14} it is now sufficient to show that
        \[
        \int_{B }  \nabla (\eta G_x)(y) \cdot \nabla f_i (y)\, d\m(y) \to -\int_{B }  (\eta G_x)(y) \Delta f (y)\, d\m(y) \quad \text{as } i \to + \infty.
        \]
        Since $\eta G_x \in \W^{1,2}_0(B)$ by Proposition \ref{P1}, we can rewrite the difference of the previous quantities as
         \begin{align*}
        \Big|-&\int_{B }  \nabla (\eta G_x)(y) \cdot \nabla f_i (y)\, d\m(y) - \int_{B }  (\eta G_x)(y) \Delta f (y)\, d\m(y)\Big| \\
        &=
        \Big|-\int_{B } \nabla (\eta G_x) \cdot \nabla f_i (y)\, d\m(y) + \int_{B } \nabla (\eta G_x) \cdot \nabla f (y)\, d\m(y) \Big|\\
       & \leq \|\eta G_x\|_{\W^{1,2}(B)}\|f_i-f\|_{\W^{1,2}(B)} \to 0,
        \end{align*}
        concluding the proof.
    \end{proof}
\end{proposition}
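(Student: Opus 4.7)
The plan is to show that $f(x)$ equals the value at $x$ of the unique solution of an auxiliary Dirichlet problem, which by the maximum principle uniquely determines $f$. First, I would apply Lemma \ref{L11} with source $-\phi \in C(\bar{B})$ and zero boundary datum to produce $u \in D(\Delta,B) \cap \W^{1,2}_0(B) \cap C(\bar{B})$ satisfying $\Delta u = -\phi$ and $u|_{\partial B} = 0$. Uniqueness in this class follows by applying the maximum principle to the difference of two candidate solutions. Moreover, since $\Delta u = -\phi \in \sL^{\infty}(B) \cap C(\bar{B})$, Proposition \ref{P9} gives $|\nabla u| \in \sL^{\infty}_{loc}(B)$, which will be used crucially in the limit passage below. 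It therefore remains to prove $f(x) = u(x)$ for every $x \in B$.

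Fix $x \in B$. Because $u \in \W^{1,2}_0(B)$, there exists a sequence $\{u_i\} \subset \Lip_c(B)$ with $u_i \to u$ in $\W^{1,2}(B)$ and the additional feature that $u_i \equiv u$ on any prescribed compact $K \subset B$ for $i$ sufficiently large; this strengthening is obtained by taking a $\W^{1,2}_0$-approximation and regluing it to $u$ via cutoffs adapted to a compact exhaustion of $B$. Since $u_i \in \Lip_c(B)$ is admissible in the distributional identity $\Delta G_x = -\delta_x$,
\[
u_i(x) = \int_B \nabla G_x \cdot \nabla u_i \, d\m,
\]
and for $i$ large the left-hand side equals $u(x)$. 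It suffices to show the right-hand side converges to $\int_B G_x\,\phi\,d\m$.

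Since $G_x$ fails to be in $\W^{1,2}(B)$ near its pole, one cannot feed the $\W^{1,2}$-convergence of $\{u_i\}$ directly into the integral; the singularity has to be localized. Given $\epsilon > 0$, choose a Lipschitz cutoff $\eta \colon B \to [0,1]$ vanishing on a small ball around $x$ and equal to $1$ outside a precompact set $U \ni x$. By Proposition \ref{P1}, $\eta G_x \in \W^{1,2}_0(B)$, so Cauchy--Schwarz and $\W^{1,2}$-convergence yield
\[
\int_B \nabla(\eta G_x) \cdot \nabla u_i\,d\m \longrightarrow \int_B \nabla(\eta G_x) \cdot \nabla u\,d\m = -\int_B \eta G_x\,\Delta u\,d\m = \int_B \eta G_x\,\phi\,d\m.
\]
For the complementary piece $\int_B \nabla((1-\eta) G_x) \cdot \nabla u_i\,d\m$, the hypothesis $u_i \equiv u$ on $\supp(1-\eta) \subset U$ for $i$ large makes this integral coincide with $\int_U \nabla((1-\eta) G_x) \cdot \nabla u\,d\m$, a quantity independent of $i$; symmetrically, the remainder $\int_B (1-\eta) G_x\,\phi\,d\m$ is bounded by $\|\phi\|_{\sL^\infty}\cdot\|G_x\|_{\sL^1(U)}$.

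The main obstacle is the quantitative control of these two remainder terms as $U$ shrinks to $\{x\}$: the $\sL^\infty$ norm of $\nabla \eta$ grows like the reciprocal of the scale, so one must match it against the rate at which $\|G_x\|_{\sL^1(U)}$ and $\|\nabla G_x\|_{\sL^1(U)}$ vanish. Writing $G_x = G^T_x + g^T_x$ with $g^T_x \in \W^{1,2}(B) \cap C(\bar{B})$ harmless, the $G^T_x$-piece is controlled by feeding the heat-kernel gradient estimate of Proposition \ref{Hestim} into the time-integral defining $G^T_x$, combined with the Bishop--Gromov bound of Proposition \ref{P6}---exactly the strategy already deployed in Lemma \ref{L7}. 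Taking a radial cutoff $\eta$ of scale $r$ supported in $B_{2r}(x)$, letting first $i \to \infty$ and then $r \downarrow 0$ closes the argument and gives $u(x) = \int_B G_x\,\phi\,d\m = f(x)$.
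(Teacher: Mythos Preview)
Your proposal follows essentially the same route as the paper: existence and uniqueness via Lemma~\ref{L11} and the maximum principle, the approximating sequence $u_i\in\Lip_c(B)$ agreeing with $u$ on compacts, the identity $u_i(x)=\int_B\nabla G_x\cdot\nabla u_i$, and the cutoff $\eta$ together with $\eta G_x\in\W^{1,2}_0(B)$ (Proposition~\ref{P1}) to pass to the limit in the ``good'' piece by Cauchy--Schwarz.

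The only substantive difference lies in the remainder near the pole. The paper asserts tersely that $\eta$ can be chosen so that both remainder terms are $<\epsilon$, citing $\Delta f\in\sL^\infty$ and $|\nabla f_i|=|\nabla f|$ on $U$. You correctly flag that this is the delicate point---the $\sL^\infty$ norm of $\nabla\eta$ competes against the vanishing of $\int_U G_x$---and you propose to resolve it with explicit heat-kernel and Bishop--Gromov estimates in the style of Lemma~\ref{L7}. That works, but it is heavier than necessary. A shorter route, closer in spirit to the paper but making the step rigorous, is to observe that once $u_i=u$ on $U$ the two remainders combine to
\[
\int_B \nabla\big((1-\eta)G_x\big)\cdot\nabla u \;+\; \int_B (1-\eta)G_x\,\Delta u,
\]
and this quantity is \emph{exactly} zero: approximate $(1-\eta)G_x\in\W^{1,1}_0(B)$ by $(1-\eta)G_x^\epsilon\in\W^{1,2}_0(B)$ (with $G_x^\epsilon=\int_\epsilon^T p_t(x,\cdot)\,dt+g_x^T$ as in the proof of Corollary~\ref{C3}), use $u\in D(\Delta,B)$ on each approximant, and pass to the limit using $|\nabla u|,\,\Delta u\in\sL^\infty(U)$ and the $\W^{1,1}$-convergence established in Lemma~\ref{L7}. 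This avoids any scale-matching altogether.
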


We now turn our attention to constructing a global Green's function for $\X$.

\begin{lemma} \label{L8}
Let $(\X,\sd,\m)$ be non parabolic. Let $S \subset \subset \X$ and $x \in \X$. There exists a sequence $t_i \to + \infty$ such that $p_{t_i}(x,\cdot)$ and $\int_1^{t_{i}+1}p_t(x,\cdot ) \, dt$ are uniformly bounded in $S$.
\begin{proof}
    The non parabolicity of $\X$ implies that there exists a sequence $t_i \to + \infty$ such that $p_{t_i}(x,x) \leq 1$. By Proposition \ref{Harnack} there exists a constant $C>0$ such that $p_{t}(x,y) \leq Cp_{t+1}(x,x)$ for every $y \in S$ and $t>1$. Putting these facts together it is easy to check that the sequence $\{t_i\}_{i \in \bb{N}}$ has the desired properties.
    \end{proof}
\end{lemma}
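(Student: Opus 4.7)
The strategy is to reduce both uniform bounds on $S$ to statements about the on-diagonal heat kernel $p_t(x,x)$ via the Harnack inequality of Proposition \ref{Harnack}, and then to extract the desired sequence $\{t_i\}$ from the summability of $t \mapsto p_t(x,x)$ guaranteed by non-parabolicity together with Proposition \ref{P10}.

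First, I would apply Proposition \ref{Harnack} with its parameter $s$ equal to our $t$, its parameter $t$ equal to $t+1$, its second argument equal to $x$, and its third argument equal to $y$. This yields
\[
p_{t+1}(x,x) \geq p_t(x,y)\,\exp\Bigl\{-\tfrac{\sd(x,y)^2}{2e^{2K/3}}\Bigr\}\Bigl(\tfrac{1-e^{K/3}}{1-e^{2K/3}}\Bigr)^{N/2}\Bigl(\tfrac{1-e^{2Kt/3}}{1-e^{2K(t+1/2)/3}}\Bigr)^{N/2}.
\]
Since $S$ is precompact, $\sd(x,y)$ is uniformly bounded for $y \in S$, and the last factor is bounded below by a positive constant uniformly for $t \in [1,+\infty)$; this extracts a constant $C_S > 0$, depending only on $x, S, K, N$, such that
\[
p_t(x,y) \leq C_S\, p_{t+1}(x,x) \qquad \text{for all } t \geq 1,\ y \in S.
\]
If $K \geq 0$, one first views the space as $\RCD(K',N)$ for some chosen $K' < 0$ before invoking Proposition \ref{Harnack}.

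By non-parabolicity combined with Proposition \ref{P10}, we have $\int_1^{+\infty} p_t(x,x)\,dt < +\infty$, hence there exists a sequence $s_j \to +\infty$ with $p_{s_j}(x,x) \leq 1$. Setting $t_i := s_i - 1$, the displayed Harnack bound yields, uniformly for $y \in S$,
\[
p_{t_i}(x,y) \leq C_S\, p_{t_i+1}(x,x) = C_S\, p_{s_i}(x,x) \leq C_S,
\]
which is the first required uniform bound. For the second, integrating the Harnack estimate and changing variables,
\[
\int_1^{t_i+1} p_t(x,y)\,dt \leq C_S \int_1^{t_i+1} p_{t+1}(x,x)\,dt = C_S \int_2^{t_i+2} p_s(x,x)\,ds \leq C_S \int_1^{+\infty} p_s(x,x)\,ds < +\infty.
\]

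The only real content is the Harnack inequality; the rest is bookkeeping. The minor point to verify is that the $t$-dependent factor in Proposition \ref{Harnack} stays bounded below by a positive constant on $[1,+\infty)$ so that $C_S$ can be chosen independently of $t$, but this is a short explicit computation using $K < 0$.
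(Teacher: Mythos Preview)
Your proof is correct and follows essentially the same approach as the paper's: both use the Harnack inequality of Proposition \ref{Harnack} to reduce to the on-diagonal kernel $p_t(x,x)$, and then exploit the integrability $\int_1^{+\infty} p_t(x,x)\,dt < +\infty$ coming from non-parabolicity. Your version is simply more explicit, in particular the shift $t_i := s_i - 1$ and the remark about passing to $K' < 0$ when $K \geq 0$ make precise what the paper leaves implicit in the phrase ``it is easy to check.''
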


\begin{proposition}[Existence of global Green's functions on non-parabolic spaces] \label{P11}
    Let $(\X,\sd,\m)$ be non parabolic and let $x \in \X$. Then the function $G_x(y):=\int_0^{+ \infty}p_t(x,y) \, dt$ is a Green's function for $\X$ with pole $x$.
    \begin{proof}
        First of all we prove that $G_x \in \sL^1_{loc}(\X)$. Let $S \subset \Omega$ be a compact set. By Proposition \ref{Harnack} we deduce that there exists $C>0$ depending on $K,N,S$ such that $Cp_t(x,x) \geq p_{t-1}(x,y)$ for every $t>1$ and every $y \in S$. As a consequence for every $y \in S$ we have
        \begin{equation} \label{E3}
    \int_1^{+\infty}p_t(x,y) \, dt \leq C\int_2^{+\infty}p_t(x,x) \, dt < + \infty,
        \end{equation}
        so that by Dominated Convergence Theorem and the Hölder continuity of the heat kernel the function $\int_1^{+\infty}p_t(x,\cdot) \, dt$ is continuous in $S$.
        Using such continuity and the stochastic completeness of the heat kernel we then get
       \[
       \int_S \int_0^{+ \infty}p_t(x,y) \, dt \, d\m(y)
       \leq 
       \int_0^1 \int_\X p_t(x,y) \, d\m(y) \, dt + \m(S) \max_{y \in S}\int_1^{+\infty}p_t(x,y) \, dt 
       \leq 1+\m(S) \max_{y \in S}\int_1^{+\infty}p_t(x,y) \, dt,
       \] 
       showing that $G_x \in \sL^1_{loc}(\X)$, as desired. \par 
       We now claim that for every $\phi \in \Test(\X,\sd,\m) \cap \Lip_c(\X)$ we have 
       \begin{equation} \label{E31}
       \int_\X G_x(y) \Delta \phi(y) \, d\m(y)
        =-\phi(x).
       \end{equation}
       To this aim fix such a $\phi$ and observe that
       \begin{align*}
       \int_\X G_x(y) \Delta \phi(y) \, d\m(y) &=\int_0^{+ \infty} \int_\X p_t(x,y)\Delta \phi(y)\, d\m(y) \, dt
       \\
       &=\int_0^{+ \infty} P_t\Delta \phi(x) \,dt
       =\int_0^{+ \infty} \Delta P_t \phi(x)\, dt= \int_0^{+ \infty} \frac{d}{dt} P_t \phi(x)\, dt
       \\
       &= \lim_{t \to + \infty} \int_\X p_t(x,y) \phi(y) \, d\m(y)-\phi(x).
       \end{align*}
       We now claim that the limit appearing in the last expression is zero (we know that the limit exists since the integral appearing at the beginning of the computation is well defined). To prove the claim we note that since $\phi$ is supported in a compact set $S$ we have
       \begin{equation} \label{E10}
       \int_1^{+ \infty} \Big|\int_\X p_t(x,y) \phi(y) \, d\m(y) \Big| \, dt \leq c_1 \m(S) \sup_{y \in S}\int_1^{+\infty}p_t(x,y)\, dt
       \leq c_2 \m(S) \int_1^{+\infty}p_t(x,x)\, dt < + \infty,
       \end{equation}
       where in the last inequality we used Proposition \ref{Harnack}. As a consequence there exists a sequence $\{t_i\}_{i \in \bb{N}}$ increasing to $+ \infty$ such that $\int_\X p_{t_i}(x,y) \phi(y) \, d\m(y)  \to 0$, as claimed. \par 
       To conclude the proof we only need to prove that 
       \begin{equation} \label{E32}
       G_x \in \W^{1,1}_{loc}(\X) \cap C(\X \setminus \{x\}).
       \end{equation}
       Indeed if this holds, we can combine equation \eqref{E31} and Lemma \ref{L6} (which can be used in our setting via a cut off argument) to obtain that $G_x$ is a Green's function for $\X$ with pole $x$.
        \par
       To prove \eqref{E32} it is sufficient to prove that $\int_1^{+\infty}p_t(x,\cdot) \, dt \in \W^{1,1}_{loc}(\X) \cap C(\X \setminus \{x\})$ since $\int_0^{1}p_t(x,\cdot) \, dt$ lies in such space by Lemmas \ref{L16} and \ref{L7}.
       Consider the family of functions indexed by $T$ given by $\int_1^{T}p_t(x,\cdot) \, dt$ and observe that these are in $\sL^1(\X) \cap \W^{1,1}_{loc}(\X)$ and converge to $\int_1^{+\infty}p_t(x,\cdot) \, dt$ in $\sL^1_{loc}(\X)$ by repeating the argument of \eqref{E10}, so that if we prove that their Lipschitz constants are locally uniformly bounded we are done. \par 
       To this aim we recall that for every $T>0$
       we have the distributional equation
       \[
       \Delta \int_1^{T}p_t(x,\cdot) \, dt=p_T(x,\cdot)-p_1(x,\cdot).
       \]
       Fix then a compact set $S$, by Lemma \ref{L8} there exists a sequence $\{t_i\}_{i \in \bb{N}}$ increasing to $+ \infty$ such that
       \[
       \Delta \int_1^{t_{i}+1}p_t(x,\cdot ) \, dt=p_{t_i+1}(x,\cdot)-p_1(x,\cdot) \quad \text{and} \quad 
       \int_1^{t_{i}+1}p_t(x,\cdot ) \, dt
       \]
       are uniformly bounded in the compact $S$. 
    As a consequence, by the a priori estimates on the gradient of solutions of the Poisson problem given in Proposition \ref{P9}, we deduce that the functions $\int_1^{t_i+1}p_t(x, \cdot ) \, dt$ have locally uniformly bounded Lipschitz constants as desired.
    \end{proof}
\end{proposition}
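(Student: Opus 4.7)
The plan is to decompose $G_x = \int_0^1 p_t(x,y)\,dt + \int_1^{+\infty} p_t(x,y)\,dt$. The short-time part is already understood: by Lemmas \ref{L16} and \ref{L7} it lies in $\Lip_{loc}(\X \setminus \{x\}) \cap \W^{1,1}_{loc}(\X)$. So the whole argument reduces to controlling the tail $H_x(y) := \int_1^{+\infty} p_t(x,y)\,dt$ and identifying the distributional Laplacian of $G_x$.

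First I would prove $H_x \in \sL^1_{loc}(\X) \cap C(\X \setminus\{x\})$. The Harnack estimate (Proposition \ref{Harnack}) gives a constant $C = C(K,N,S)>0$ such that $p_t(x,y) \leq C p_{t-1}(x,x)$ for every $y$ in a fixed compact $S$ and $t>1$. Non-parabolicity together with Lemma \ref{L8} ensures $\int_1^{+\infty} p_t(x,x)\,dt < +\infty$, so by dominated convergence $H_x$ is finite and continuous on $S$ (using Hölder continuity of $p_t(x,\cdot)$ inherited from the heat kernel estimates). Local integrability then follows from stochastic completeness applied to the $\int_0^1$ portion and pointwise boundedness on compacta for the tail.

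Next I would identify $\Delta G_x = -\delta_x$. For $\phi \in \Test_c(\X) \cap \Lip_c(\X)$, Fubini and the semigroup identity $P_t \Delta \phi = \Delta P_t \phi = \tfrac{d}{dt} P_t \phi$ give
\[
\int_\X G_x \Delta\phi \, d\m = \int_0^{+\infty} \tfrac{d}{dt} P_t\phi(x) \, dt = \lim_{t \to +\infty} P_t\phi(x) - \phi(x).
\]
The step I expect to be delicate is showing $\lim_{t \to +\infty} P_t\phi(x) = 0$. Since $\phi$ has compact support $S$, I would estimate $|P_t\phi(x)| \leq \|\phi\|_\infty \int_S p_t(x,y)\,d\m(y)$ and, via the Harnack inequality, bound this by $c\, \m(S)\, p_{t+1}(x,x)$. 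The non-parabolicity condition $\int_1^{+\infty} p_t(x,x)\,dt < +\infty$ forces $p_t(x,x)$ to converge to $0$ along a sequence $t_i \to +\infty$, and since the integral of $|P_t\phi(x)|$ is finite such a subsequence suffices to conclude the limit is zero. Combined with Lemma \ref{L6} (after a cut-off to move from $\Test_c$ to $\Lip_c$ test functions), this yields the distributional identity.

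Finally I would upgrade $G_x$ to lie in $\W^{1,1}_{loc}(\X)$ by the a priori gradient estimate. Consider the truncations $F^T := \int_1^T p_t(x,\cdot)\,dt$, which satisfy the pointwise Poisson equation
\[
\Delta F^T = p_T(x,\cdot) - p_1(x,\cdot).
\]
Applying Lemma \ref{L8} to choose $T = t_i + 1$, both $\Delta F^{t_i+1}$ and $F^{t_i+1}$ are uniformly bounded on any fixed compact set. Proposition \ref{P9} then supplies locally uniform Lipschitz bounds on the $F^{t_i+1}$, which together with $\sL^1_{loc}$ convergence $F^{t_i+1} \to H_x$ shows $H_x$ is locally Lipschitz on $\X \setminus \{x\}$ and belongs to $\W^{1,1}_{loc}(\X)$. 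The main obstacle throughout is extracting convergent subsequences from the pure summability information $\int^{+\infty} p_t(x,x)\,dt < +\infty$; Lemma \ref{L8} combined with Harnack is the right tool for transporting pointwise control on the diagonal to uniform control on compact sets.
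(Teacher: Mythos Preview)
Your proposal is correct and follows essentially the same approach as the paper: the same decomposition $G_x=\int_0^1+\int_1^{+\infty}$, the same use of Harnack (Proposition~\ref{Harnack}) and Lemma~\ref{L8} to control the tail and its Laplacian uniformly on compacta, and the same appeal to Proposition~\ref{P9} for the locally uniform Lipschitz bound on the truncations. One small tightening: when you argue that the subsequence with $P_{t_i}\phi(x)\to 0$ ``suffices to conclude the limit is zero'', the reason is not the finiteness of $\int_1^{+\infty}|P_t\phi(x)|\,dt$ per se, but rather that the limit $\lim_{t\to+\infty}P_t\phi(x)$ is already known to exist (since $\int_\X G_x\,\Delta\phi\,d\m$ is a finite number and equals this limit minus $\phi(x)$); the paper makes this explicit.
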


\begin{remark}
    The function $G_x$ that we constructed is not only continuous  but also locally Lipschitz in $\X \setminus \{x\}$.
\end{remark}

\begin{definition}[Capacity]
    Let $(\X,\sd,\m)$ be an $\RCD(K,N)$ space. Let $ A \subset \X$ be open and let $K \subset A$ be compact. The capacity of $K$ in $A$ is defined as
    \[
    \ssf{Cap}(K,A):=\inf \Big\{ \int_A |\nabla u|^2 \, d\m \,: u \in \Lip_c(A), \, u=1 \text{ on K} \Big\}.
    \]
    The capacity of $K$ in $\X$ is denoted $\ssf{Cap}(K)$.
\end{definition}

The proof of the next result follows the lines of the corresponding one in the smooth setting (see, for instance, \cite{grig}). We remark that, if $A \subset \X$ is a regular set, $x \in A$, and $G_x$ is the Green's function on $A$ with pole $x$, then for every $c>0$ the set $\{G_x \geq c\}$ is compact in $A$ by Corollary \ref{CcontA}.

\begin{lemma} \label{LCap}
    Let $A \subset \X$ be a regular set, let $x \in A$ and let $G_x$ be the corresponding Green's function. Let $c>0$, let $F_c:=\{G_x \geq c\}$, and assume that $F_c$ has finite perimeter in $\X$. Then, $\ssf{Cap}(F_c,A)=c^{-1}$.
    \begin{proof}
        The function $u:=1 \wedge (c^{-1}G_x)$ is harmonic in $A \setminus F_c$, it is equal to $0$ on $\partial A$, and it is equal to $1$ on $\partial F_c$. Hence, since harmonic functions minimize the Dirichlet energy, it holds 
        \[
        \ssf{Cap}(F_c,A)=\int_{A \setminus F_c} |\nabla u|^2 \, d\m=c^{-2} \int_{A \setminus F_c} |\nabla G_x|^2 \, d\m.
        \]
        Let $0<\epsilon<c$ be such that $\{G_x < \epsilon\}$ has finite perimeter.
        The vector field $\nabla G_x$ is defined in a neighbourhood of $\{\epsilon < G_x < c\}$, where it is bounded and admits bounded divergence, so that we can apply Proposition \ref{P30} to get
    \[
    \int_{\{\epsilon < G_x < c\}}|\nabla G_x|^2 \, d\m
    =-c \int_{\partial \{G_x < c \} } (\nabla G_x \cdot  \nu_{\{G_x < c\}})_{ext} \, d \Per
    -\epsilon \int_{\partial \{G_x > \epsilon \}} (\nabla G_x \cdot  \nu_{\{\epsilon < G_x \}})_{ext} \, d \Per.
    \]
    Using the definitions, it is then easy to check that
    \[
    \int_{\partial \{G_x < c \} } (\nabla G_x \cdot  \nu_{\{G_x < c\}})_{ext} \, d \Per=
    -\int_{\partial \{ G_x \geq c \} } (\nabla G_x \cdot  \nu_{\{G_x \geq c \}})_{int} \, d \Per,
    \]
    so that, taking into account that $\Delta G_x=-\delta_x$ and using again Proposition \ref{P30}, we obtain
    \[
    \int_{\{\epsilon < G_x < c\}}|\nabla G_x|^2 \, d\m
    =c \int_{\partial \{ G_x \geq c\}} (\nabla G_x \cdot  \nu_{\{G_x \geq c \}})_{int} \, d \Per
    -\epsilon \int_{\partial \{G_x > \epsilon \}} (\nabla G_x \cdot  \nu_{\{G_x >\epsilon \}})_{ext} \, d \Per
    =c-\epsilon.
    \]
    Letting $\epsilon \to 0$, we obtain $ \int_{A \setminus F_c} |\nabla G_x|^2 \, d\m=c$, so that $\ssf{Cap}(F_c,A)=c^{-1}$ as claimed.
    \end{proof}
\end{lemma}

We are now in position to prove Theorem \ref{CT4}, that we recall below.

\begin{customthm}{4} 
    Let $(\X,\sd,\m,\ssf{x})$ be an $\RCD(K,N)$ metric measure space with infinite diameter. The following are equivalent:
    \begin{enumerate}
        \item There is no positive Green's function on $\X$ with pole $\ssf{x}$.
        \item For every $(x,y) \in \X \times \X$ we have $\int_1^{+ \infty} p_t(x,y) \, dt =+ \infty$.
        \item There exists $(x,y) \in \X \times \X$ such that $\int_1^{+ \infty} p_t(x,y) \, dt =+ \infty$.
        \item Let $\{B_i\}_{i \in \bb{N}}$ be an exhaustion of regular sets containing $\ssf{x}$. Let $G_\ssf{x}^i$ be the Green's function on $B_i$ with pole $\ssf{x}$. Then for every $y \neq \x$ we have $G_\ssf{x}^i(y) \to + \infty$ as $i \to + \infty$.
        \item 
        For every compact set $K \subset X$, it holds $\ssf{Cap}(K)=0$.
    \end{enumerate}
    \end{customthm}
    \begin{proof}
    The equivalence $2 \Leftrightarrow3$ has been shown in Proposition \ref{P10}, while $1 \Rightarrow 2$ was proved in Proposition \ref{P11}. So we only need to show that $2 \Rightarrow 4 \Rightarrow 5 \Rightarrow 1$. \par 
    \textbf{$(2 \Rightarrow 4)$} 
    By Proposition \ref{P1}, for every $i \in \bb{N}$ the Green's function $G^i_\x$ can be written as $G^i_\x(y)=\int_0^{T_i}p_t(\x,y)dt+g^{T_i}_\x(y)$ where $g^{T_i}_\x \in \W^{1,2}(B_i) \cap C(\bar{B_i})$ satisfies
        \[
       \Delta g^{T_i}_\x=-p_{T_i}(\x,\cdot) \quad \text{on } B_i
        \quad 
       \text{and} 
       \quad
       g^{T_i}_\x=-\int_0^{T_i}p_t(\x,\cdot) \quad\text{on } \partial B_i.
       \]
       Taking into account the parabolicity assumption, it is sufficient to prove that we can choose a sequence $T_i$ increasing to $+ \infty$ in such a way that $g^{T_i}_\x(y_0)$ is bounded from below uniformly in $i$. \par 
       Since $\Delta g^{T_i}_\x \leq 0$, by the maximum principle it is sufficient to consider the restriction of $g^{T_i}_\x$ to the boundary of $B_i$, where the minimum is attained. So the problem reduces to choosing $T_i$ increasing to $+ \infty$ in such a way that $\int_0^{T_i}p_t(\x,\cdot)\, dt$ is uniformly bounded when $y \in \partial B_i$. \par 
       This is clearly possible if we can show that for $T$ fixed, the quantity 
       \[
       \sup_{y \in \partial B_i}\int_0^{T}p_t(\x,y)\, dt
       \]
       converges to zero as $i \to + \infty$, and this follows from Proposition \ref{Hestim} and the hypothesis on the diameter. \par
    
       \textbf{$(4 \Rightarrow 5)$} Let $B \subset \X$ be a ball, and let $x \in B$. Let $\{B_i\}_{i \in \bb{N}}$ be an exhaustion of regular sets containing $B$, 
       let $G^i_x$ be the Green's functions relative to $B_i$, and let $c_i:=\min_{\partial B} G^i_x$. 
       By our assumption (combined with Proposition \ref{P16} and Dini's Lemma), it holds $c_i \to + \infty$. Set $F_i=\{G_x^i \geq c_i\}$. 
       By the maximum principle, it holds $B \subset F_i$. 
       Hence, $\ssf{Cap}(\bar{B}) \leq \ssf{Cap}(F_i)$. Combining with Lemma \ref{LCap}, it follows $\ssf{Cap}(\bar{B}) \leq c_i^{-1}$, concluding the proof of this implication. \par
            {\textbf{$(5 \Rightarrow 1)$}
       Assume by contradiction that there exists a positive Green's function $G_x$ on $\X$. Let $B \subset \X$ be a ball, and let $x \in B$. Let $\{B_i\}_{i \in \bb{N}}$ be an exhaustion of regular sets containing $B$, let $G^i_x$ be the Green's functions relative to $B_i$, and let $c_i:=\max_{\partial B} G^i_x$. Set $F_i=\{G_x^i \geq c_i\}$. 
       By the maximum principle, it holds $B \supset F_i$. 
       Hence, by Lemma \ref{LCap},
       \[
       \ssf{Cap}(\bar{B},B_i) \geq \ssf{Cap}(F_i,B_i)=c_i^{-1}.
       \]
        Since $\ssf{Cap}(\bar{B})=0$ by assumption, it follows 
        \[
        \lim_{i \to + \infty} c_i=+ \infty.
        \]
         Hence, $\max_{\partial B} G^i_x \to + \infty$. At the same time, it holds $G_x \geq G_x^i$ on $B_i$ by the maximum principle, a contradiction.}
    \end{proof}

Now we turn our attention to proving Theorem \ref{CT5}. To this aim we need the following technical lemma.

\begin{lemma} \label{L15}
    Let $(\X,\sd,\m)$ be a non parabolic $\RCD(K,N)$ space with infinite diameter and essential dimension $k \geq 2$. Let $x \in \X$ be a regular point such that
    \[
    \lim_{r \to 0^+}\frac{\m(B_r(x))}{r^k} \in (0,+\infty).
    \]
    Consider an exhaustion $\{\Omega_i\}_{i \in \bb{N}}$ of regular sets of $\X$ containing $x$, and for each of these sets consider the corresponding Green's function $G^i_x$. 
    \par 
    These functions admit continuous representatives with values in $[0,+\infty]$ such that $G_x^i(x)=+\infty$. Using these representatives, there exist $T>1$ 
    and a sequence $\{c_i\}_{i \in \bb{N}} \in (1,T)$  such that
    \[
    x \in \{y \in \Omega_i:G^i_x (y)> c_i\} \cap B_1(x) \subset \subset B_1(x) \quad \text{ for every } i \in \bb{N}
    \]
    and each of the sets $\{y \in \Omega_i:G^i_x (y)> c_i\}$ is open, has finite perimeter in $\X$ and satisfies 
    \[
    \partial\{y \in \Omega_i:G^i_x (y)> c_i\}=\partial \{y \in \Omega_i:G^i_x (y) < c_i\}=\{y \in \Omega_i:G^i_x (y) = c_i\}.
    \]
    \begin{proof}
    We will write $\{G^i_x > c_i\}$ instead of $\{y \in \Omega_i:G^i_x (y)> c_i\}$.
        Consider the Green's function $G_x$ given by Proposition \ref{P11}. Since $x$ is a regular point and $\X$ has essential dimension $k \geq 2$, using the bounds of Proposition \ref{Hestim} and the explicit expressions for $G^i_x$ and $G_x$, we get
        \begin{equation} \label{E25}
        \lim_{y \to x} G_x(y)=\lim_{y \to x} G^i_x(y)=+\infty \quad \text{for every } i \in \bb{N}.
        \end{equation}
        Hence, we have the desired continuous representatives.
        Moreover, since $G_x$ is bounded on compacts of $\X \setminus \{x\}$ we can pick $t>0$ such that $\{G_x > t \} \cap B_1(x)$ contains $x$ and
        \[
        \{G_x > t \} \cap B_1(x) \subset \subset B_1(x).
        \]
        At the same time, using again \eqref{E25} and the fact that $G_x \geq G_x^i$, we get that for every $t' \in [t,t+1]$ we have 
        \[
        \{G^i_x > t' \} \cap B_1(x) \subset \{G_x > t \} \cap B_1(x) \subset \subset B_1(x),
        \]
        and that $\{G^i_x > t' \}$ is open and contains $x$.
        Moreover, since $G^i_x$ is locally Lipschitz on compact subsets of $\X \setminus \{x\}$ (and tends to infinity in $x$, so that the level sets stay away from $x$), for $\lambda^1$-a.e. $t' \in [t,t+1]$ we have that $\{G^i_x > t' \}$ has finite perimeter and $\m(\{G^i_x = t' \})=0$. We claim that this implies 
        \begin{equation} \label{E26}
    \partial \{G^i_x > t'\}=\partial \{G^i_x < t'\}=\{G^i_x = t'\}.
    \end{equation}
    For the first equality we only show that $\partial \{G^i_x > t'\} \subset \partial \{G^i_x < t'\}$, the other case being analogous. Let $y \in \partial \{G^i_x > t'\}$ and consider a small ball $B_\epsilon(y)$ which is precompact in $X \setminus \{x\}$. 
    Observe that $G_x^i(y)=t'$ and that the ball cannot be contained in $\{G^i_x = t' \}$, because otherwise this set would have positive measure.
    At the same time it cannot be contained in $\{G^i_x \geq t' \}$, otherwise the maximum principle would imply that $G_x^i$ is constant in the ball. As a consequence such ball intersects $\{G^i_x < t' \}$, and since $\epsilon$ was arbitrarily small, we have that $y \in \partial \{G^i_x < t' \}$, as claimed. \par 
    The second equality in \eqref{E26} follows because $\{G^i_x = t'\}$ cannot contain any balls since it has zero measure, and so it is contained in $\partial \{G^i_x \neq t'\}$, but since $\partial \{G^i_x > t'\}=\partial \{G^i_x < t'\}$ we get
    \[
    \partial \{G^i_x \neq t'\}=
    \partial \{G^i_x > t'\}
    =\partial \{G^i_x < t'\}.
    \]
    The fact that $\partial \{G^i_x < t'\} \subset \{G^i_x = t'\}$ is trivial.  
    Then we define $T:=t+1$ and for every $i \in \bb{N}$ we pick $c_i \in [t,t+1)$ to be one of the $t'$ with the previously proved properties. 
    \end{proof}
\end{lemma}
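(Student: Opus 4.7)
The plan is to leverage the representation $G_x^i = \int_0^{T_i} p_t(x,\cdot)\,dt + g_x^{T_i}$ from Proposition \ref{P1}, together with the global Green's function $G_x$ provided by Proposition \ref{P11}, to obtain all three assertions simultaneously. The continuous representative and the identity $G_x^i(x) = +\infty$ come directly from this decomposition: the first summand is locally Lipschitz on $\X \setminus \{x\}$ by Lemma \ref{L16}, while $g_x^{T_i}$ is continuous on $\bar{\Omega}_i$. The blow-up at $x$ follows from the lower heat-kernel bound of Proposition \ref{Hestim} combined with the density hypothesis $\m(B_r(x)) \sim r^k$ and $k \geq 2$, which together force $\int_0^{T_i} p_t(x, y)\,dt \to +\infty$ as $y \to x$, while the correction $g_x^{T_i}$ stays bounded.

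To fix $T$ and the $c_i$, I use that the global $G_x$ is continuous on $\X \setminus \{x\}$, blows up at $x$, and lies in $\sL^1_{loc}(\X)$ (as established in the proof of Proposition \ref{P11}), so its super-level sets $\{G_x > s\}$ shrink to $\{x\}$ as $s \to +\infty$. I would pick $t > 1$ large enough that $\{G_x > t\} \cap B_1(x) \subset \subset B_1(x)$ and set $T := t + 1$. A limiting form of Proposition \ref{P16} yields $G_x \geq G_x^i$ on $\Omega_i$: the difference is harmonic on $\Omega_i \setminus \{x\}$ (both have distributional Laplacian $-\delta_x$), lies in $\W^{1,2}_{loc}$ there by Proposition \ref{P36}, and on $\partial \Omega_i$ one has $G_x^i = 0 \leq G_x$; the potential singularity at $x$ contributes only a bounded continuous error to the difference, so the maximum principle gives the inequality. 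Consequently $\{G_x^i > c\} \cap B_1(x) \subset \{G_x > t\} \subset \subset B_1(x)$ for every $c \in (t, t+1)$, and since $G_x^i(x) = +\infty > c$, also $x \in \{G_x^i > c\}$.

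For the choice of $c_i$ yielding finite perimeter and the boundary identities, since $G_x^i$ is locally Lipschitz on $\X \setminus \{x\}$ (its Laplacian vanishes off $x$, so Proposition \ref{P7} applies) and the relevant super-level sets stay away from $x$, the coarea formula gives that $\{G_x^i > c\}$ has finite perimeter for $\lambda^1$-a.e.\;$c \in (t, t+1)$, while only countably many $c$ satisfy $\m(\{G_x^i = c\}) > 0$; I pick $c_i$ outside both exceptional sets. The boundary identity $\partial\{G_x^i > c_i\} = \{G_x^i = c_i\} = \partial\{G_x^i < c_i\}$ then follows from the strong maximum principle applied to $G_x^i$ on $\Omega_i \setminus \{x\}$: no ball can lie inside $\{G_x^i \geq c_i\}$ or $\{G_x^i \leq c_i\}$ without forcing $G_x^i$ to be locally constant and hence constant on a connected component, contradicting either its vanishing trace on $\partial \Omega_i$ or its blow-up at $x$. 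The trickiest step, I expect, is justifying the comparison $G_x \geq G_x^i$ across the pole, where the Sobolev regularity of each summand degenerates; the rest is a direct combination of coarea and the maximum principle.
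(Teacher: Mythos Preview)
Your proposal is correct and follows essentially the same route as the paper: both use the global Green's function $G_x$ from Proposition \ref{P11} to locate a threshold $t$ with $\{G_x>t\}\cap B_1(x)\subset\subset B_1(x)$, invoke the comparison $G_x\ge G_x^i$ to transfer this to the $G_x^i$, and then appeal to coarea plus the maximum principle to pick $c_i\in(t,t+1)$ with the required finite-perimeter and boundary properties. The only noteworthy difference is in the verification of the boundary identity: the paper argues that a ball centered at a point of $\{G_x^i=c_i\}$ contained in $\{G_x^i\ge c_i\}$ would force $G_x^i$ to be constant on that ball, giving the level set positive measure and contradicting the choice of $c_i$; you instead propagate the constancy to a connected component and contradict the boundary/pole behaviour. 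Both are valid, though the paper's is slightly more direct. Your concern about the comparison $G_x\ge G_x^i$ across the pole is handled exactly as you suggest (and as the paper does implicitly, mirroring Proposition \ref{P16} and the $4\to 1$ step of Theorem \ref{CT4}): the difference $G_x-G_x^i=\int_{T_i}^\infty p_t(x,\cdot)\,dt - g_x^{T_i}$ extends continuously across $x$, is harmonic on $\Omega_i$ by Proposition \ref{P36}, and is nonnegative on $\partial\Omega_i$, so the maximum principle applies.
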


The next proposition is the key step to prove Theorem \ref{CT5}. The idea is adapted from the proof of the analogous fact in the smooth setting given in \cite{LTgreen}, but some crucial adjustments have been made to avoid integrating by parts on sets that might not be regular enough. This is achieved by replacing balls in the original proof with level sets of Green's functions.

\begin{proposition}
Let $(\X,\sd,\m)$ be an $\RCD(K,N)$ space with essential dimension $k$. If
    there exists $x \in \X$ such that 
    \[
    \lim_{r \to 0^+}\frac{\m(B_r(x))}{r^k} \in (0,+\infty) \quad 
    \text{and}
    \quad    
    \int_1^{+ \infty} \frac{1}{P(B_t(x),\X)} \, dt=+ \infty,
    \]
    then $\X$ is parabolic.
    \begin{proof} 
    Consider first the case when $\X$ has finite diameter and assume by contradiction that it is non parabolic. Note that the finiteness of the diameter implies that $\X$ is compact.
    Consider then the functions $g_\epsilon(y):=\int_\epsilon^{+\infty}p_t(x,y) \, dt$, which converge in $\sL^1(\X)$ to the Green's function $G_x$ given by Proposition \ref{P11}. Using ideas appearing in previous proofs one can show that these function are Lipschitz (hence in $\W^{1,2}(\X)$), satisfy $\Delta g_\epsilon = -p_\epsilon(x,\cdot) \leq 0$ and have a minimum since $\X$ is compact. Hence by the maximum principle they are all constant, so that also $G_x$ is constant, a contradiction.
    \par 
    So we can suppose that $\X$ has infinite diameter.
    Suppose by contradiction that $\X$ is non parabolic and assume first that the essential dimension of $\X$ is not $1$.
    Pick an exhaustion $\{\Omega_i\}_{i \in \bb{N}}$ of regular sets of $\X$ containing $x$, and for each of these sets consider the corresponding Green's function $G^i_x$. 
    By Lemma \ref{L15} there exist $T>1$
    and a sequence $\{c_i\}_{i \in \bb{N}} \in (1,T)$  such that
    \[
    x \in \{G^i_x > c_i\} \cap B_1(x) \subset \subset B_1(x) \quad \text{ for every } i \in \bb{N}
    \]
    and each of the sets $\{G^i_x > c_i\}$ is open, has finite perimeter and satisfies 
    \[
    \partial \{G^i_x > c_i\}=\partial \{G^i_x < c_i\}=\{G^i_x = c_i\}.
    \]
    We can suppose without loss of generality that $\bar{B}_1(x) \subset \Omega_i$ for every $i \in \bb{N}$.
    \par
    We then define
    \[
     l_i:=\sup_{z \in B_1(x)} \{\sd(x,z): G^i_x(z)> c_i\} 
    \]
    and claim that we have
    \[
    T \geq \int_{l_i}^{\sd(\partial \Omega_i,x)} \frac{1}{P( B_r(x),\X)} \, dr.
    \]
    To prove the claim let $0 <\epsilon <<1$ such that
    \[
    \partial \{G^i_x > \epsilon\}=\partial \{G^i_x < \epsilon\}=\{G^i_x = \epsilon\}
    \]
    and such that, setting
    \[
    s_i^\epsilon:=\inf_{z \in \Omega_i} \{\sd(x,z): G^i_x(z)<\epsilon \},
    \]
    it holds $s_i^\epsilon >l_i$. The first condition can be achieved by the same argument of Lemma \ref{L15}, while the second can be obtained using that $G_x^i>0$ on $\Omega_i$ and $G_x^i=0$ on $\partial \Omega_i$. \par 
    Consider the function $g_i:\X \to \bb{R}$ such that $g(y)$  is equal to $c_i$ in $\bar{B}_{l_i}(x)$,
    is equal to $\epsilon$ in ${^c}B_{s_i^\epsilon}(x)$,
    and in $B_{s_i^\epsilon}(x) \setminus B_{l_i}(x)$ is defined by
    \[
    g_i(y):=
    (c_i-\epsilon) \frac{\int_{\sd(x,y)}^{s_i^\epsilon} {P( B_r(x),\X)}^{-1} \, dr}
        {\int_{l_i}^{s_i^\epsilon} {P( B_r(x),\X)}^{-1} \, dr}+ \epsilon.
    \]
    Let 
    \[
    A_i^\epsilon:=\{y \in B_1(x):  G_x^i(y) < c_i \} \cup \{y \in \Omega_i \cap {^c}B_1(0) : G_x^i(y) > \epsilon \}.
    \]
    Thanks to our conditions on $\{G_x^i>c_i\}$ and $\{G_x^i> \epsilon\}$ we have that the set $A_i^\epsilon$ is open, contains $B_{s_i^\epsilon}(x) \setminus \bar{B}_{l_i}(x)$ and its boundary is made of disjoint connected sets where $G_x^i$ takes either the value $c_i$ or $\epsilon$.
    As a consequence, modulo translations and inversions, we can use $G_x^i$ as a barrier in Proposition \ref{P29} to show that $A_i^\epsilon$ is a regular set.\par 
    Hence there exists a 
    unique function $h \in D(\Delta,A_i^\epsilon) \cap C(\bar{A}_i^\epsilon)$ such that $h-g_i \in \W^{1,2}_0(A_i^\epsilon)$, $\Delta h=0$ and $h-g_i=0$ on $ \partial A_i^\epsilon$.  From this it follows that $h=G_x^i$, which in particular gives $G_x^i-g_i \in \W^{1,2}_0(A_i^\epsilon)$.
    Hence, using the fact that harmonic functions minimize the Dirichlet energy, we get
    \[
    \int_{A_i^\epsilon}|\nabla G_x^i|^2 \, d\m
    \leq \int_{A_i^\epsilon}|\nabla g_i|^2 \, d\m.
    \]
    At the same time, using the coarea formula, one can check that
    \[
    \int_{A_i^\epsilon}|\nabla g_i|^2 \, d\m = (c_i-\epsilon)^2 \Big(\int_{l_i}^{s_i^\epsilon} {P( B_r(x),\X)}^{-1} \, dr\Big)^{-1},
    \]
    so that
    \begin{equation} \label{E24}
    \int_{A_i^\epsilon}|\nabla G_x^i|^2 \, d\m
    \leq
    (c_i-\epsilon)^2 \Big(\int_{l_i}^{s_i^\epsilon} {P( B_r(x),\X)}^{-1} \, dr \Big)^{-1}.
    \end{equation}
    Moreover by construction we have that the vector field $\nabla G_x^i$ is defined in a neighbourhood of $\bar{A}_i^\epsilon$, where it is bounded and admits bounded divergence, so that we can apply Proposition \ref{P30} to get
    \[
    \int_{A_i^\epsilon}|\nabla G_x^i|^2 \, d\m
    =-c_i \int_{\partial \{G_i^\epsilon < c_i \} \cap B_1(x)} (\nabla G_x^\epsilon \cdot  \nu_{A_i^\epsilon})_{ext} \, d \Per
    -\epsilon \int_{\partial \{G_i^\epsilon > \epsilon \}} (\nabla G_x^\epsilon \cdot  \nu_{A_i^\epsilon})_{ext} \, d \Per.
    \]
    Using the definitions it is then easy to check that
    \[
    \int_{\partial \{G_i^\epsilon < c_i \} \cap B_1(x)} (\nabla G_x^\epsilon \cdot  \nu_{A_i^\epsilon})_{ext} \, d \Per=
    -\int_{\partial \{G_i^\epsilon > c_i \} \cap B_1(x)} (\nabla G_x^\epsilon \cdot  \nu_{({^c}A_i^\epsilon)})_{int} \, d \Per,
    \]
    so that, taking into account that $\Delta G_i=-\delta_x$ and using again Proposition \ref{P30}, we obtain
    \[
    \int_{A_i^\epsilon}|\nabla G_x^i|^2 \, d\m
    =c_i \int_{\partial \{G_i^\epsilon > c_i \} \cap B_1(x)} (\nabla G_x^\epsilon \cdot  \nu_{({^c}A_i^\epsilon)})_{int} \, d \Per
    -\epsilon \int_{\partial \{G_i^\epsilon > \epsilon \}} (\nabla G_x^\epsilon \cdot  \nu_{A_i^\epsilon})_{ext} \, d \Per
    =c_i-\epsilon.
    \]
    Putting together this with \eqref{E24} we obtain
    \[
    c_i-\epsilon \geq \int_{l_i}^{s_i^\epsilon} {P( B_r(x),\X)}^{-1} \, dr ,
    \]
    and letting $\epsilon \to 0$ we deduce that
    \[
    T \geq \int_{l_i}^{\sd(\partial \Omega_i,x)} \frac{1}{P( B_r(x),\X)} \, dr
    \]
    for every $i \in \bb{N}$, as claimed. \par 
    Taking into account that $l_i \leq 1$
    and letting $i \to + \infty$ we obtain that
    \[
    T \geq \int_{1}^{+ \infty} \frac{1}{P( B_r(x),\X)} \, dr,
    \]
    a contradiction. \par 
    We now need to consider the case when $\X$ has essential dimension $1$. A space with infinite diameter and essential dimension $1$ must be a weighted version of either $\bb{R}$ or of a closed half line by Proposition \ref{P33}.
    In this case we can repeat an analogous proof, replacing the sets $\{G_x^i>c_i\}$ with a fixed interval containing $x$. This is sufficient because on each endpoint of this interval each $G_x^i$ will trivially have a singleton as image, allowing to construct the analogue of the function $g_i$ in the previous case. Similarly the computation to estimate $\int_{A_i^\epsilon}|\nabla G_x^i|^2 \, d\m$ can be carried out in the same way, replacing $A_i^\epsilon$ with an appropriate union of two intervals and exploiting their regularity.
    \end{proof}
\end{proposition}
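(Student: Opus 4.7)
The plan is to argue by contradiction: assume $\X$ is non-parabolic and derive that $\int_1^{+\infty} P(B_r(x),\X)^{-1}\,dr$ must be finite, contradicting the hypothesis. First I would dispose of the trivial cases: if $\X$ has finite diameter it is compact, so by applying the maximum principle to the truncated Green-type functions $g_\epsilon(y) = \int_\epsilon^{+\infty} p_t(x,y)\,dt$ (which are Lipschitz and $\Delta g_\epsilon \le 0$) we force these to be constant, contradicting existence of a non-trivial Green's function. Similarly the essential dimension $k=1$ case reduces by Proposition \ref{P33} to a weighted interval, where the annular argument below becomes one-dimensional and essentially explicit; I would treat this separately at the end. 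So from now on assume infinite diameter and $k\ge 2$.

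By Theorem \ref{CT4}, non-parabolicity yields a positive Green's function $G_x$ on $\X$, and for an exhaustion $\{\Omega_i\}$ of regular sets the functions $G_x^i$ satisfy $G_x^i \nearrow G_x$. The density hypothesis $\m(B_r(x))/r^k \to c \in (0,\infty)$ combined with the heat kernel estimates (Proposition \ref{Hestim}) ensures that $G_x^i(y) \to +\infty$ as $y \to x$ for every $i$; this is what makes the sub-level sets near $x$ well-behaved. I would then invoke Lemma \ref{L15} to obtain, for each $i$, a level $c_i \in (1,T)$ with $T$ a uniform constant, such that $\{G_x^i > c_i\} \cap B_1(x)$ is a nicely nested open set containing $x$ and compactly contained in $B_1(x)$, with topological boundary equal to $\{G_x^i = c_i\}$ and having finite perimeter.

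The core of the argument is then a Dirichlet energy comparison on an annular region. Let $l_i := \sup\{\sd(x,z): z \in B_1(x),\, G_x^i(z)>c_i\}\le 1$, and pick a small $\epsilon>0$ and an outer radius $s_i^\epsilon$ where $G_x^i \approx \epsilon$, defining $A_i^\epsilon$ as the open set where $\epsilon < G_x^i < c_i$ in the corresponding annular region. Using $G_x^i$ itself as a barrier at the two boundary components (via Proposition \ref{P29}), the set $A_i^\epsilon$ is regular. I would then define a radial competitor $g_i$ on $B_{s_i^\epsilon}(x)\setminus \bar B_{l_i}(x)$ by
\[
g_i(y) := (c_i-\epsilon)\,\frac{\int_{\sd(x,y)}^{s_i^\epsilon} P(B_r(x),\X)^{-1}\,dr}{\int_{l_i}^{s_i^\epsilon} P(B_r(x),\X)^{-1}\,dr} + \epsilon,
\]
extended by the constants $c_i$ and $\epsilon$ outside, so that $G_x^i - g_i \in \W^{1,2}_0(A_i^\epsilon)$. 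By the Dirichlet minimality of $G_x^i$ (which is harmonic on $A_i^\epsilon$) and the coarea formula computation $\int_{A_i^\epsilon}|\nabla g_i|^2\,d\m = (c_i-\epsilon)^2 (\int_{l_i}^{s_i^\epsilon} P(B_r(x),\X)^{-1}\,dr)^{-1}$, I get the upper bound $\int_{A_i^\epsilon}|\nabla G_x^i|^2\,d\m \le (c_i-\epsilon)^2/\int_{l_i}^{s_i^\epsilon}P(B_r(x),\X)^{-1}\,dr$.

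The matching lower bound comes from integration by parts using Proposition \ref{P30} applied to the vector field $\nabla G_x^i$ on the finite-perimeter set $A_i^\epsilon$: the boundary integrals on $\{G_x^i = c_i\}$ and $\{G_x^i = \epsilon\}$ collapse, via $\Delta G_x^i = -\delta_x$ on the interior component and flux cancellation on the outer one, to $\int_{A_i^\epsilon}|\nabla G_x^i|^2\,d\m = c_i - \epsilon$. Combining both bounds and sending $\epsilon \to 0$ and $i \to \infty$ (using $l_i \le 1$ and $\sd(x,\partial\Omega_i)\to\infty$) yields $T \ge \int_1^{+\infty} P(B_r(x),\X)^{-1}\,dr$, which is the desired contradiction. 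The main obstacle I anticipate is the integration-by-parts step: the Green's function is singular at $x$, $A_i^\epsilon$ is merely a set of finite perimeter, and Proposition \ref{P30} requires the vector field to be defined and controlled in a neighborhood of the closure — this is exactly why we need the density hypothesis (to make the inner level sets compactly sit away from $x$) and why we work with level sets of $G_x^i$ rather than metric balls, whose boundaries we cannot control.
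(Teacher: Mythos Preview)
Your proposal is correct and follows essentially the same route as the paper: contradiction via non-parabolicity, disposal of the compact and $k=1$ cases, use of Lemma~\ref{L15} to pick uniform levels $c_i\in(1,T)$, the radial competitor $g_i$ built from $\int P(B_r(x),\X)^{-1}\,dr$, Dirichlet minimality of $G_x^i$ on the annular set $A_i^\epsilon$ for the upper bound, and the Gauss--Green formula (Proposition~\ref{P30}) for the exact lower bound $c_i-\epsilon$. You have also correctly identified the delicate point, namely that one must work with level sets of $G_x^i$ rather than metric balls so that Proposition~\ref{P30} applies and $A_i^\epsilon$ is regular via barriers.
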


\begin{corollary} \label{C11}
    Let $(\X,\sd,\m)$ be an $\RCD(K,N)$ space. If
    there exists $x \in \X$ such that 
    \[
    \int_1^{+ \infty} \frac{t}{\m(B_t(x))} \, dt=+ \infty,
    \]
    then $\X$ is parabolic.
    \begin{proof}
It is clear that if the condition of the statement holds for $x \in \X$, then, thanks to Proposition \ref{P31}, it holds also for a regular point $x' \in \X$ satisfying
    \[
    \lim_{r \to 0^+}\frac{\m(B_r(x'))}{r^k} \in (0,+\infty),
    \]
    where $k$ is the essential dimension of $\X$.
    The statement then follows from the previous proposition repeating the strategy of \cite[Corollary $2$]{GP}.
    \end{proof}
\end{corollary}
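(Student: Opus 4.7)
The plan is to reduce to the previous proposition by exhibiting a point $x' \in \X$ that is (a) regular in the sense that $\lim_{r\to 0^+}\m(B_r(x'))/r^k \in (0,+\infty)$, with $k$ the essential dimension of $\X$, and (b) satisfies $\int_1^{+\infty} P(B_t(x'),\X)^{-1}\, dt=+\infty$. Once both are established, the previous proposition directly yields parabolicity of $\X$.

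For (a), Proposition \ref{P31} provides such a regular $x'$ for $\m$-a.e.\;point when $k\geq 2$; for $k=1$, Proposition \ref{P33} identifies $\X$ with a closed subset of $\bb{R}$ or $S^1$ and one can take $x'=x$. The hypothesis $\int_1^{+\infty} t/\m(B_t(x))\, dt=+\infty$ must then be transferred from $x$ to $x'$. This is routine: the inclusions $B_{t-d}(x)\subset B_t(x')\subset B_{t+d}(x)$ with $d=\sd(x,x')$, combined with the Bishop--Gromov comparison (Proposition \ref{P6}), control the ratio of volumes at comparable scales by a factor depending only on $K$, $N$, $d$ and the scale, allowing the divergence to be carried across base points.

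For (b), the tool is the coarea identity $\m(B_R(x'))=\int_0^R P(B_s(x'),\X)\, ds$, valid since $|\nabla \sd(x',\cdot)|=1$ $\m$-a.e. in $\X$. Applied on each dyadic annulus $[2^k,2^{k+1}]$, Cauchy--Schwarz gives
\[
\int_{2^k}^{2^{k+1}}\frac{ds}{P(B_s(x'),\X)}\geq \frac{(2^{k+1}-2^k)^2}{\m(B_{2^{k+1}}(x'))}=\frac{4^k}{\m(B_{2^{k+1}}(x'))},
\]
while on the same annulus $\int_{2^k}^{2^{k+1}}t/\m(B_t(x'))\, dt\leq 2\cdot 4^k/\m(B_{2^k}(x'))$. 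Summing over $k$ and invoking Bishop--Gromov once more to compare $\m(B_{2^{k+1}}(x'))$ with $\m(B_{2^k}(x'))$ transfers the divergence of $\int_1^{+\infty}t/\m(B_t(x'))\, dt$ to that of $\int_1^{+\infty} P(B_t(x'),\X)^{-1}\, dt$, as required.

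The delicate point is the doubling comparison in the dyadic sum when $K<0$: Bishop--Gromov only yields the scale-dependent ratio $V_{K,N}(2^{k+1})/V_{K,N}(2^k)$, which grows exponentially in $2^k$. The workaround, mirroring the strategy of \cite{GP}, is to observe that a genuinely exponential volume lower bound on $\m(B_t(x))$ would already force $\int_1^{+\infty}t/\m(B_t(x))\, dt<+\infty$; consequently the hypothesis confines us to a subexponential growth regime, in which the doubling factors along the scales that actually contribute to the divergent sums can be controlled uniformly. This is the main obstacle, and once it is handled as in \cite{GP} the proof closes immediately by an application of the previous proposition.
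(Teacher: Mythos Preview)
Your approach matches the paper's: both transfer the hypothesis to a regular point via Proposition~\ref{P31} and then reduce to the preceding proposition by converting the volume condition into the perimeter condition using the coarea/Cauchy--Schwarz dyadic argument of \cite{GP}.

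However, the ``delicate point'' you raise in your final paragraph is a phantom, and the vague workaround you propose for it is unnecessary. After your Cauchy--Schwarz step you need the implication
\[
\sum_{k\geq 0}\frac{4^k}{\m(B_{2^k}(x'))}=+\infty\ \Longrightarrow\ \sum_{k\geq 0}\frac{4^k}{\m(B_{2^{k+1}}(x'))}=+\infty,
\]
and this requires no doubling whatsoever: the substitution $j=k+1$ gives
\[
\sum_{k\geq 0}\frac{4^k}{\m(B_{2^{k+1}}(x'))}=\frac{1}{4}\sum_{j\geq 1}\frac{4^j}{\m(B_{2^{j}}(x'))},
\]
so the two series differ only by one term and a factor of $4$. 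Thus Bishop--Gromov plays no role in this step, the subexponential-growth heuristic is irrelevant, and the argument closes uniformly in $K\in\bb{R}$. (Similarly, the transfer of the hypothesis from $x$ to $x'$ needs only the inclusion $B_t(x')\subset B_{t+d}(x)$ together with $t-d\geq t/2$ for $t\geq 2d$; Bishop--Gromov is again not needed there.)
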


Putting this together with the heat kernel estimates of Proposition \ref{Hestim} we obtain Theorem \ref{CT5}.

\begin{customthm}{5}
    Let $(\X,\sd,\m)$ be an $\RCD(K,N)$ space. If
    there exists $x \in \X$ such that 
    \[
    \int_1^{+ \infty} \frac{t}{\m(B_t(x))} \, dt=+ \infty,
    \]
    then $\X$ is parabolic. This condition is also necessary if $K=0$.
    \begin{proof}
        Thanks to Corollary \ref{C11} we only need to prove that if $K=0$ the condition of the statement is also necessary.
        This follows immediately since by Proposition \ref{Hestim} we have that
        \[
         \int_1^{+ \infty} p_t(x,x)\, dt \leq \int_1^{+ \infty} \frac{c_1}{\m(B_{\sqrt{t}}(x))} \, dt,
        \]
        concluding the proof
    \end{proof}
\end{customthm}

{
\begin{definition}[Spaces with slow volume growth] \label{Dslow}
An $\RCD(K,N)$ space $(\X,\sd,\m)$ is said to have \emph{slow volume growth} if there exists $x \in \X$ such that
\[
    \int_1^{+ \infty} \frac{t}{\m(B_t(x))} \, dt=+ \infty.
    \]
\end{definition}
}

We conclude the section
with a proposition taken from \cite[Example $3.19$]{Heatker} concerning spaces with finite measure.

\begin{proposition}
    Let $(\ssf{M},\sd_g,e^{-V}\m_g)$ be a geodesically complete weighted manifold with finite measure. Then for every $(x,y) \in \ssf{M} \times \ssf{M}$, setting $\m:=e^{-V}\m_g$, we have 
    \[
    p_t(x,y) \to \frac{1}{\m(\ssf{M})} \text{ as } t \to + \infty.
    \]
\end{proposition}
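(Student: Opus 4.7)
The plan is to deduce the pointwise convergence of $p_t(x,y)$ from strong $\sL^2$-convergence of $p_t(x,\cdot)$ to the constant $1/\m(\ssf{M})$, combining the finiteness of the measure, geodesic completeness, the Chapman--Kolmogorov identity $p_{2t}(x,y)=\langle p_t(x,\cdot),p_t(y,\cdot)\rangle_{\sL^2(\m)}$, and spectral theory of $-\Delta$ on $\sL^2(\m)$. Since $\m(\ssf{M})<+\infty$, the constant function $1$ lies in $\sL^2(\ssf{M},\m)$ and, by stochastic completeness (which holds here because geodesic completeness plus finiteness of the weighted volume makes standard volume-growth criteria trivially satisfied), $P_t 1\equiv 1$ for every $t>0$. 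Assuming without loss of generality that $\ssf{M}$ is connected (otherwise one argues componentwise), I would first verify that $0$ is a simple eigenvalue of $-\Delta$ on $\sL^2(\m)$, i.e.\;the only $\sL^2$-harmonic functions are constants; this is a Liouville-type statement proved via integration by parts on geodesic balls of an exhaustion, using the finiteness of $\int|\nabla f|^2\,d\m$ (automatic from $\sL^2$-harmonicity) together with the decay of boundary terms guaranteed by finiteness of $\m$.

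Granted this spectral input, the spectral theorem applied to the non-negative self-adjoint operator $-\Delta$ gives $\|P_t g\|_{\sL^2}\to 0$ as $t\to+\infty$ for every $g\in\sL^2(\m)$ with $\int g\,d\m=0$. Applying this to $g:=p_s(x,\cdot)-1/\m(\ssf{M})$, which has zero mean by stochastic completeness and lies in $\sL^\infty\cap\sL^1\subset\sL^2(\m)$, and using that $P_t$ preserves constants, one obtains $p_{t+s}(x,\cdot)-1/\m(\ssf{M})=P_t g\to 0$ strongly in $\sL^2(\m)$ as $t\to+\infty$, for any fixed $s>0$. Consequently $p_t(x,\cdot)\to 1/\m(\ssf{M})$ strongly in $\sL^2(\m)$, and similarly for $p_t(y,\cdot)$.

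Finally, I would invoke Chapman--Kolmogorov: $p_{2t}(x,y)=\langle p_t(x,\cdot),p_t(y,\cdot)\rangle_{\sL^2(\m)}$. The strong $\sL^2$-convergence of both factors to $1/\m(\ssf{M})$ passes to the inner product, yielding $p_{2t}(x,y)\to\int(1/\m(\ssf{M}))^2\,d\m=1/\m(\ssf{M})$, which is the claimed limit; reparametrising by $t\mapsto t/2$ covers all large $t$. The main obstacle I expect is the Liouville / spectral-gap input, namely rigorously establishing that $0$ is a simple eigenvalue of $-\Delta$ in the weighted setting assuming only geodesic completeness and finiteness of $\m$; once that is in place, the rest is standard semigroup bookkeeping.
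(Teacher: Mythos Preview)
The paper does not give its own proof of this proposition; it is simply quoted from an external reference (Example~3.19 in the cited heat-kernel text). Your spectral-theoretic argument via Chapman--Kolmogorov is the standard route to this result and is correct in outline.

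Two minor remarks. First, you assert $p_s(x,\cdot)\in\sL^\infty$, which is not obvious on a noncompact manifold; but this is unnecessary, since membership in $\sL^2$ is all you use, and that follows directly from $\|p_s(x,\cdot)\|_{\sL^2}^2=p_{2s}(x,x)<\infty$. Second, the simplicity of the zero eigenvalue is more immediate than your exhaustion sketch suggests: any $f$ in the kernel of the self-adjoint $-\Delta$ lies in the form domain $\W^{1,2}$, whence $0=\langle -\Delta f,f\rangle_{\sL^2}=\int|\nabla f|^2\,d\m$, so $f$ is locally constant. With these adjustments the argument is complete.
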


\begin{corollary} \label{C2}
    If $(\X,\sd,\m)$ is parabolic and $(\ssf{M},\sd_g,e^{-V}\m_g)$ is a geodesically complete weighted Riemannian manifold with finite mass then the product space is parabolic.
    \begin{proof}
        It follows immediately from the previous proposition, the definition of parabolicity, and the tensorization properties of heat kernels.
    \end{proof}
\end{corollary}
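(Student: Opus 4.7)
The plan is to verify the defining condition of parabolicity (Definition~\ref{D1}) directly on the product space. Since the heat kernel on a product metric measure space factorises, one has
\[
p_t^{\X\times\ssf{M}}\bigl((x,y),(x',y')\bigr)=p_t^{\X}(x,x')\,p_t^{\ssf{M}}(y,y')
\]
for all $t>0$ and all pairs of points, so it suffices to show that
\[
\int_1^{+\infty} p_t^{\X}(x,x')\,p_t^{\ssf{M}}(y,y')\,dt=+\infty
\]
for every fixed $(x,y),(x',y')\in\X\times\ssf{M}$.

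The first step is to apply the preceding proposition to the factor $\ssf{M}$: since $\ssf{M}$ is geodesically complete with finite mass, $p_t^{\ssf{M}}(y,y')\to \m_g(\ssf{M})^{-1}>0$ as $t\to+\infty$. Consequently, there exist $T\geq 1$ and $\delta>0$ such that $p_t^{\ssf{M}}(y,y')\geq\delta$ for all $t\geq T$. Substituting this lower bound,
\[
\int_1^{+\infty} p_t^{\X}(x,x')\,p_t^{\ssf{M}}(y,y')\,dt
\geq \delta\int_T^{+\infty} p_t^{\X}(x,x')\,dt.
\]

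The second step is to invoke the parabolicity assumption on $\X$: Definition~\ref{D1} gives $\int_1^{+\infty} p_t^{\X}(x,x')\,dt=+\infty$, and since $p_t^\X(x,x')$ is nonnegative and locally bounded on $[1,T]$ (e.g.\ by Proposition~\ref{Hestim}), the tail $\int_T^{+\infty} p_t^{\X}(x,x')\,dt$ is also infinite. This yields the desired divergence and hence parabolicity of the product.

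There is no real obstacle here, and the argument is essentially a one-line consequence of the three ingredients mentioned in the corollary statement; the only point worth flagging is the tensorisation of the heat kernel on the product of an $\RCD(K,N)$ space and a weighted manifold, which follows from the fact that both factors carry stochastically complete heat semigroups (Section~\ref{SS1}) and the Cheeger energy of the product splits as the sum of the factor energies.
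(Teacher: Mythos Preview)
Your proof is correct and follows exactly the approach sketched in the paper: tensorisation of the heat kernel, the previous proposition to bound $p_t^{\ssf{M}}$ from below for large $t$, and the parabolicity of $\X$ to conclude divergence of the integral. One cosmetic slip: the limit in the previous proposition is $\bigl(e^{-V}\m_g(\ssf{M})\bigr)^{-1}$ rather than $\m_g(\ssf{M})^{-1}$, but this does not affect the argument.
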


\section{Proof of Theorem \ref{CT1}} \label{S3}
In this section $(\X,\sd,\m)$ will be a fixed parabolic $\RCD(0,N)$ space and $C \subset \X \times \bb{R}$ will be locally the boundary of a perimeter minimizing set. We assume moreover that $C \subset \X \times \bb{R}_+$. We will denote by $\sd_C:{^c}C \to \bb{R}$ the distance from $C$, by $\sd_0$ the signed distance from $\X \times \{0\}$ with positive sign in the negative half space and negative sign otherwise, and by $\bar{\sd}:\X \times (-\infty,0) \to \bb{R}$ the difference $\sd_C-\sd_0$. Observe that $\bar{\sd} \geq 0$ since $C$ is contained in the upper half space.
\par 
Generic points in $\X \times \bb{R}$ will be usually denoted by $\bar{x}=(x,t_x)$, the canonical distance and the canonical measure in the product space will be $\sd_\times$ and $\m_\times$ respectively. \par 
We now develop the simple machinery needed to prove Propositions \ref{P14} and \ref{P2}, which are crucial steps in the previously outlined argument to prove Theorem \ref{CT1}.

\begin{definition}[Connectors]
    Given $\bar{x} \in \X \times \bb{R}$ we say that a geodesic $\gamma:[0,\sd_C(\bar{x})] \to \X \times \bb{R}$ is a \emph{connector} if it realizes the distance between $\bar{x}$ and $C$. We say that such connector is \emph{vertical} if its image is contained in a set of the form $\{x\} \times \bb{R} \subset \X \times \bb{R}$.
\end{definition}

Connectors will be useful because their behavior is related to the gradient of the function $\bar{\sd}$.
The next lemma is taken from \cite[Lemma $2.6$]{MP}.
\begin{lemma} \label{L2}
    For $\m_\times$-a.e. $\bar{x} \in \X \times (-\infty,0)$ there exists a unique connector.
\end{lemma}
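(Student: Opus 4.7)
The plan is to leverage the essentially non-branching geodesic structure of the product $\RCD(0,N+1)$ space $(\X\times\bb{R},\sd_\times,\m_\times)$ together with Rademacher-type differentiability of the $1$-Lipschitz function $\sd_C$. First I would observe that existence of a connector for every $\bar{x}\in \X\times(-\infty,0)$ is immediate: $C$ is closed (as noted after Definition \ref{D3}), bounded sets in $\X\times\bb{R}$ are precompact, so $\sd_C(\bar{x})$ is attained at some $y\in C$, and any minimizing geodesic between $\bar{x}$ and $y$ furnishes a connector.

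For uniqueness $\m_\times$-a.e., I would consider the set $A\subset \X\times(-\infty,0)$ of points admitting at least two distinct connectors and aim to show $\m_\times(A)=0$. Since $(\X\times\bb{R},\sd_\times,\m_\times)$ is a PI-space (inheriting local doubling and a Poincaré inequality from $(\X,\sd,\m)$), the Cheeger--Rademacher theorem applies to $\sd_C$: its minimal relaxed gradient coincides with $\lip(\sd_C)$ $\m_\times$-a.e., and a standard eikonal-type computation together with the inclusion $C\subset \X\times\bb{R}_+$ gives $|\nabla\sd_C|=1$ $\m_\times$-a.e.\ on $\X\times(-\infty,0)$. At any such differentiability point $\bar{x}$, a connector $\gamma$ parametrized by arc-length must satisfy $\sd_C(\gamma(s))=\sd_C(\bar{x})-s$, so $\gamma$ is a curve of maximal descent for $\sd_C$ and its initial direction is forced to agree with the (unique) direction of steepest descent at $\bar{x}$.

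The hard part will be making rigorous the step ``unique direction of steepest descent forces uniqueness of the geodesic'' in the non-smooth setting. This is achieved by combining the essentially non-branching property of $\RCD(0,N+1)$ spaces (so that two arc-length geodesics issuing from $\bar{x}$ with the same initial behaviour must coincide) with semi-concavity estimates for $\sd_C^2$ along geodesics, both consequences of the synthetic curvature-dimension condition, and this is precisely the contribution of the cited reference. Granted this, the set $A$ is contained, up to $\m_\times$-negligible sets, in the singular set where $\sd_C$ fails to be metrically differentiable with unit gradient, and is therefore $\m_\times$-null, completing the proof.
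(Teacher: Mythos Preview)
Your proposal is correct and in line with the paper, which does not supply its own argument but simply cites \cite[Lemma~2.6]{MP}. Your sketch adds helpful heuristic context (existence from properness, uniqueness via metric differentiability of $\sd_C$ and non-branching) while rightly deferring the substantive uniqueness step to that same reference; one minor terminological point is that what you describe as ``essentially non-branching'' is really the genuine non-branching property, which $\RCD(K,N)$ spaces are known to enjoy.
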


\begin{proposition}[Laplacian bounds for $\sd_C$] \label{P14}
    $\sd_C:{^c}C \to \bb{R}$ admits negative distributional Laplacian.
    \begin{proof}
        From \cite[Lemma $4.16$]{CM18} we get that $\sd_C$ admits distributional Laplacian in its domain and that the singular part of such distributional Laplacian is a negative Radon measure. 
        Hence we only need to show that the regular part of this measure is negative as well.
        To this aim let $\bar{x} \notin C$ be a point with a single 
        connector, and let $\bar{y} \in C$ be such that $\sd_C(\bar{x})=\sd(\bar{x},\bar{y})$. Let $U_{\bar{y}} \ni \bar{y}$ and $E \subset U_{\bar{y}}$ be the sets as in Definition \ref{D3}.
        We claim that there exists a neighbourhood $V_{\bar{x}}$ of $\bar{x}$ such that
        \[
        V_{\bar{x}} \subset \{\bar{z} \in {^c}C: \exists \bar{z}_1 \in U_{\bar{y}}: \sd_C(\bar{z})=\sd(\bar{z},\bar{z}_1) \}.
        \]
        If this were not the case we could find a sequence $\bar{z}_n \to \bar{x}$ whose connectors' endpoints lie outside of $U_{\bar{y}}$. Modulo passing to a subsequence we then have that the endpoints of these connectors converge to a point $\bar{z} \in {^c}U_{\bar{y}}$, and the geodesic connecting $\bar{x}$ and $\bar{z}$ will be a second connector for $\bar{x}$, a contradiction. \par 
        Hence we can apply Proposition \ref{P35} to obtain that the distributional Laplacian of $\sd_C$ is negative in $V_{\bar{x}}$. Repeating the same argument for every point with a single connector and taking the union of the resulting open sets we get an open set of full measure where the distributional Laplacian of $\sd_C$ is negative. This implies that the regular part of the distributional Laplacian of $\sd_C$  is negative on its domain, concluding the proof.
    \end{proof}
\end{proposition}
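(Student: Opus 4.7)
The plan is to decompose the distributional Laplacian of $\sd_C$ into its absolutely continuous and singular parts with respect to $\m_\times$, and show that each is non-positive. The singular part should be handled by general results on distance functions from closed sets in $\RCD$ spaces (for example, via the Laplacian comparison/cut-locus analysis in \cite{CM18}): already at that level of generality, the singular part is a non-positive Radon measure, independently of any minimizing property of $C$. So the substantive work is to prove that the absolutely continuous part is non-positive $\m_\times$-a.e.

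For that I would work locally around generic points. By Lemma \ref{L2}, $\m_\times$-a.e.\ $\bar{x} \in \X \times (-\infty,0)$ admits a unique connector to some $\bar{y} \in C$. Since $C$ is locally the boundary of a locally perimeter minimizing set, Definition \ref{D3} provides an open neighbourhood $U_{\bar{y}} \ni \bar{y}$ and a locally perimeter minimizing set $E \subset U_{\bar{y}}$ with $U_{\bar{y}} \cap C = \partial E \cap U_{\bar{y}}$. The key geometric claim is that there is an open neighbourhood $V_{\bar{x}}$ of $\bar{x}$ such that every $\bar{z} \in V_{\bar{x}}$ has at least one connector whose endpoint lies in $U_{\bar{y}}$. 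This is a short compactness-contradiction argument: if $\bar{z}_n \to \bar{x}$ had connector endpoints $\bar{w}_n \notin U_{\bar{y}}$, then up to a subsequence $\bar{w}_n \to \bar{w} \in C \setminus U_{\bar{y}}$, and the geodesic from $\bar{x}$ to $\bar{w}$ would be a second connector for $\bar{x}$, contradicting uniqueness.

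Once this localization is in place, Proposition \ref{P35} applies in $V_{\bar{x}}$ with ambient set $U_{\bar{y}}$ and gives $\Delta \sd_C \leq 0$ distributionally there. Unioning over all points $\bar{x}$ with a unique connector yields an open set of full $\m_\times$ measure on which $\Delta \sd_C \leq 0$ distributionally; this pins down the sign of the absolutely continuous part, and combined with the sign control on the singular part it finishes the proof. The main obstacle — and really the only delicate step — is the localization argument: one must verify that $V_{\bar{x}}$ is contained in the set $\mathcal{K}$ of Proposition \ref{P35}, that is, that distances to $C$ from points of $V_{\bar{x}}$ are realized by points of $\partial E \cap U_{\bar{y}}$. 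The compactness/uniqueness step above is precisely what delivers this, and one should also be mindful of working with the (relatively closed) representative of $E$ and of the fact that $\partial E$ is $\m_\times$-negligible, so that the choice of representative is immaterial for $\sd_C$.
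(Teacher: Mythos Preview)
Your proposal is correct and follows essentially the same route as the paper: invoke \cite{CM18} to control the singular part, then at $\m_\times$-a.e.\ point use uniqueness of the connector (Lemma \ref{L2}) together with a compactness-contradiction argument to localize into the set $\mathcal{K}$ of Proposition \ref{P35}, thereby obtaining $\Delta \sd_C \le 0$ on a full-measure open set and hence non-positivity of the absolutely continuous part. The paper's proof is structured identically, including the same contradiction step producing a second connector.
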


The next Lemma shows the first link between connectors and the quantity $|\nabla \bar{\sd}|$.
\begin{lemma} \label{L1}
    For $\m_\times$-a.e. $\bar{x} \in \X \times (-\infty,0)$ with a vertical connector we have $|\nabla \bar{\sd}|(\bar{x})=0$. 
    \begin{proof}
        For $\m_\times$-a.e. point in $\bar{x} \in \X \times (-\infty,0)$ we have 
        \[
        |\nabla \bar{\sd}|(\bar{x})^2=2-2\nabla \sd_C \cdot \nabla \sd_0(\bar{x})
        \]
        so that we only need to prove that for $\m_\times$-a.e. point $\bar{x}$ with a unique vertical connector we have $\nabla \sd_C \cdot \nabla \sd_0(\bar{x})=1$. Since distance functions have local Lipschitz constant $1$ everywhere, for $\m_\times$-a.e. point $\bar{x}$ we get 
        \[
        \nabla \sd_C \cdot \nabla \sd_0(\bar{x}) \leq |\nabla \sd_C| (\bar{x})|\nabla \sd_0|(\bar{x})=1.
        \]
        Moreover one can easily check that for $\m_\times$-a.e. $\bar{x}$ we have
        \[
        \nabla \sd_C \cdot \nabla \sd_0(\bar{x})=
        \lim_{\epsilon \to 0}\frac{\lip(\sd_C+\epsilon \sd_0)^2(\bar{x})-\lip(\sd_C)^2(\bar{x})}{2\epsilon}.
        \]
        Then, moving along the vertical connector, we deduce that $\lip(\sd_C+\epsilon \sd_0) \geq 1+\epsilon$, so that for $\m_\times$-a.e. $\bar{x}$
        \[
        \nabla \sd_C \cdot \nabla \sd_0(\bar{x})\geq \lim_{\epsilon \to 0}\frac{(1+\epsilon)^2-1}{2\epsilon}=1,
        \]
        concluding the proof.
    \end{proof}
\end{lemma}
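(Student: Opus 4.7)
The plan is to reduce the statement to a scalar identity $\nabla \sd_C \cdot \nabla \sd_0 \equiv 1$ holding $\m_\times$-a.e.\ on the set of points with a vertical connector, and then prove the two inequalities separately using, respectively, the Cauchy--Schwarz inequality and a first-variation computation along the connector. The starting point is the elementary identity
\[
|\nabla \bar{\sd}|^2(\bar{x}) = |\nabla \sd_C|^2(\bar{x}) + |\nabla \sd_0|^2(\bar{x}) - 2 \nabla \sd_C \cdot \nabla \sd_0(\bar{x}),
\]
valid $\m_\times$-a.e.\ via bilinearity of the gradient product. Since both $\sd_C$ and $\sd_0$ are $1$-Lipschitz distance functions, and since on a PI-space the relaxed gradient of a Lipschitz function coincides with its local Lipschitz constant (which equals $1$ a.e.\ for a non-pathological distance function), we have $|\nabla \sd_C|(\bar{x}) = |\nabla \sd_0|(\bar{x}) = 1$ for $\m_\times$-a.e.\ $\bar{x} \in \X \times (-\infty,0) \setminus C$. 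Consequently the lemma reduces to establishing $\nabla \sd_C \cdot \nabla \sd_0 (\bar{x}) = 1$ a.e.\ on the set of points with a vertical connector.

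The upper bound $\nabla \sd_C \cdot \nabla \sd_0 \leq 1$ is immediate: by Cauchy--Schwarz applied a.e.\ and the bound $|\nabla f| \leq \lip(f) \leq 1$ for $1$-Lipschitz $f$, one has $\nabla \sd_C \cdot \nabla \sd_0 \leq |\nabla \sd_C||\nabla \sd_0| \leq 1$ at a.e.\ point.

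For the matching lower bound, I would invoke the polarization-type identity
\[
\nabla \sd_C \cdot \nabla \sd_0(\bar{x}) = \lim_{\epsilon \to 0^+} \frac{|\nabla(\sd_C + \epsilon \sd_0)|^2(\bar{x}) - |\nabla \sd_C|^2(\bar{x})}{2\epsilon},
\]
which follows from bilinearity by expanding $|\nabla(\sd_C + \epsilon \sd_0)|^2 = |\nabla \sd_C|^2 + 2\epsilon \nabla \sd_C \cdot \nabla \sd_0 + \epsilon^2 |\nabla \sd_0|^2$ a.e. Using once more the Cheeger identification $|\nabla f| = \lip(f)$ a.e.\ for Lipschitz $f$, the numerator is bounded below by $\lip(\sd_C + \epsilon \sd_0)^2(\bar{x}) - 1$. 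The geometric input now comes from the vertical connector assumption: since $\bar{x}$ lies in $\X \times (-\infty,0)$ and its unique geodesic to $C$ travels vertically upward along $\{x\} \times \bb{R}$, both $\sd_C$ and $\sd_0$ decrease at unit speed when one moves infinitesimally along this vertical segment from $\bar{x}$. Therefore $\sd_C + \epsilon \sd_0$ decreases at speed $1+\epsilon$ along that direction, and by the very definition of $\lip$ we get $\lip(\sd_C + \epsilon \sd_0)(\bar{x}) \geq 1+\epsilon$. Plugging this into the limit yields $\nabla \sd_C \cdot \nabla \sd_0(\bar{x}) \geq \lim_{\epsilon \to 0^+}((1+\epsilon)^2 - 1)/(2\epsilon) = 1$, completing the proof.

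The main obstacle I anticipate is the bookkeeping needed to justify that the polarization identity and the identification $|\nabla f| = \lip(f)$ hold simultaneously at almost every point with a vertical connector, rather than only in the usual a.e.\ sense away from this set. Since vertical connectors form a distinguished subset, one must verify that the exceptional null sets appearing in each of these a.e.\ statements do not accidentally exhaust the relevant locus; this is fine, because the union of countably many $\m_\times$-null sets is $\m_\times$-null, and the properties invoked are all standard properties of Sobolev/Lipschitz functions on $\RCD$ spaces holding outside an intrinsically defined null set independent of the connector structure.
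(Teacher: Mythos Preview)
Your proposal is correct and follows essentially the same approach as the paper's proof: reduce to showing $\nabla \sd_C \cdot \nabla \sd_0 = 1$ a.e.\ at points with a vertical connector, obtain the upper bound via Cauchy--Schwarz, and the lower bound via the polarization identity together with the pointwise estimate $\lip(\sd_C + \epsilon \sd_0)(\bar{x}) \geq 1+\epsilon$ coming from moving along the vertical connector. The only cosmetic difference is that the paper writes the polarization directly in terms of $\lip$, whereas you first write it with $|\nabla \cdot|$ and then invoke the Cheeger identification; your extra remark on the null-set bookkeeping is more explicit than the paper but not a new idea.
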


\begin{definition}[Slope of a connector]
    Let $\bar{x} =(x,t_x) \in \X \times (-\infty,0)$, let $\gamma:[0,\sd_C(\bar{x})] \to \X \times \bb{R}$ be a connector and let $\bar{y}=(y,t_y):=\gamma(\sd_C(\bar{x}))$. We define the slope $\ssf{s}(\bar{x},\gamma)>0$ of $\gamma$ by
    \[
    \ssf{s}(\bar{x},\gamma):=1-\frac{|t_x-t_y|}{\sd_C(\bar{x})}.
    \]
\end{definition}

The role played by the slope of a connector is clarified in the next lemma.

\begin{lemma} \label{L3}
Let $\bar{x} \in \X \times (-\infty,0)$ and $\gamma$ be a connector for $\bar{x}$. Then
    \[
    \lip(\bar{\sd})(\bar{x}) \geq \ssf{s}(\bar{x},\gamma).
    \]
    \begin{proof}
        The projection of $\gamma$ to $\bb{R}$ is a geodesic of constant speed $1-\ssf{s}(\bar{x},\gamma)$ so that
        \[
        \sd_{0}(\gamma(s))=
        \sd_{0}(\bar{x})-(1-\ssf{s}(\bar{x},\gamma))s,
        \]
        while, since $\gamma$ is a connector, we have
        \[
        \sd_C(\gamma(s))=\sd_C(\bar{x})-s.
        \]
        Using these facts we obtain
        \[
        \bar{\sd}(\gamma(s))-\bar{\sd}(\bar{x})=
        \sd_C(\gamma(s))-\sd_0(\gamma(s))-\sd_C(\bar{x})+\sd_0(\bar{x})
        =-s+(1-\ssf{s}(\bar{x},\gamma))s=-s \ssf{s}(\bar{x},\gamma).
        \]
        Dividing both sides of the previous equation by $s$ and taking the limit as $s$ goes to zero we immediately obtain that $\lip(\bar{\sd})(\bar{x}) \geq \ssf{s}(\bar{x},\gamma)$.
    \end{proof}
\end{lemma}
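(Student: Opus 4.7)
My plan is to move along the connector $\gamma$ starting from $\bar{x}$ and to estimate $\lip(\bar{\sd})(\bar{x})$ directly from the difference quotient along that curve. Parametrize $\gamma$ by arc length, so $\gamma(0)=\bar{x}=(x,t_x)$ and $\gamma(\sd_C(\bar{x}))=\bar{y}=(y,t_y)$ with $\bar{y}\in C$. Since $\gamma$ realizes the distance to $C$, its initial subarcs are still distance-minimizing to $C$, whence
\[
\sd_C(\gamma(s))=\sd_C(\bar{x})-s\qquad\text{for every }s\in[0,\sd_C(\bar{x})].
\]

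Next I would analyze the evolution of $\sd_0$ along $\gamma$. Because $\sd_\times$ is the product distance on $\X\times\bb{R}$, the projection $s\mapsto t_{\gamma(s)}$ of a constant speed geodesic is itself a constant-speed affine segment from $t_x$ to $t_y$, with speed $|t_y-t_x|/\sd_C(\bar{x})=1-\ssf{s}(\bar{x},\gamma)$. Since $\bar{x}\in\X\times(-\infty,0)$ and $\bar{y}\in C\subset\X\times[0,+\infty)$, we have $t_x<0\leq t_y$, so $t_{\gamma(s)}$ is non-decreasing and $\sd_0(\gamma(s))=-t_{\gamma(s)}$ decreases linearly:
\[
\sd_0(\gamma(s))=\sd_0(\bar{x})-s\bigl(1-\ssf{s}(\bar{x},\gamma)\bigr).
\]

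Combining the two computations gives, for all small $s>0$,
\[
\bar{\sd}(\gamma(s))-\bar{\sd}(\bar{x})=\bigl(\sd_C(\gamma(s))-\sd_C(\bar{x})\bigr)-\bigl(\sd_0(\gamma(s))-\sd_0(\bar{x})\bigr)=-s+s\bigl(1-\ssf{s}(\bar{x},\gamma)\bigr)=-s\,\ssf{s}(\bar{x},\gamma).
\]
Since $\sd_\times(\gamma(s),\bar{x})=s$ (arc-length parametrization), the difference quotient $|\bar{\sd}(\gamma(s))-\bar{\sd}(\bar{x})|/\sd_\times(\gamma(s),\bar{x})$ equals $\ssf{s}(\bar{x},\gamma)$ for every such $s$, and taking the $\limsup$ as $s\to 0^+$ in the definition of $\lip(\bar{\sd})(\bar{x})$ yields the claimed inequality.

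There is no real obstacle: the only point requiring minimal care is the sign of $t_y-t_x$, which is automatic because $\bar{x}$ lies below the horizontal slice $\X\times\{0\}$ while $\bar{y}\in C$ lies on or above it, so that $\sd_0$ is genuinely a linear function of $s$ with the expected sign and one does not have to split into cases.
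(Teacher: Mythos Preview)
Your proof is correct and follows essentially the same route as the paper's: compute $\sd_C$ and $\sd_0$ explicitly along the connector, take their difference, and estimate $\lip(\bar{\sd})(\bar{x})$ from the resulting difference quotient. Your additional remark about the sign of $t_y-t_x$ (ensuring the projection to $\bb{R}$ increases, so that $\sd_0$ genuinely decreases along $\gamma$) is a welcome clarification that the paper leaves implicit.
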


The next lemma shows that the slope is continuous in points with a single connector. It is easy to check that the statement may fail for general points.
\begin{lemma} \label{L4}
Let $\bar{x} \in \X \times (-\infty,0)$ be a point admitting a unique connector $\gamma$.
    Let $\{\bar{x}_n\}_{n \in \bb{N}} \subset \X \times (-\infty,0)$ be a sequence such that $\bar{x}_n \to \bar{x}$, and $\{\gamma_n\}_{n \in \bb{N}}$ a corresponding sequence of connectors. Then 
    \[
    \lim_{n \to + \infty} \ssf{s}(\bar{x}_n,\gamma_n) = \ssf{s}(\bar{x},\gamma).
    \]
    \begin{proof} 
        For every $n \in \bb{N}$ we define 
        \[
        \bar{y}_n:=
        \gamma_n(\sd_C(\bar{x}_n)) \in C
        \]
        and we note that these points have uniformly bounded distance from $\bar{x}$ because of the triangular inequality. 
        We claim that the sequence $\bar{y}_n$ converges to the endpoint of the connector $\gamma$.
        If we are able to prove the claim, the statement of the lemma immediately follows by noting that the slope of a connector depends continuously on the positions of the starting point and the endpoint. \par 
        To prove the claim we observe that, modulo passing to a (non relabeled) subsequence, we have that $\bar{y}_n \to \bar{y} \in C$.
        Moreover, since $\sd_C(\bar{x}_n)=\sd_\times(\bar{x}_n,\bar{y}_n)$, passing to the limit and using the continuity of distance functions we obtain that
        \[
        \sd_C(\bar{x})= \sd_\times(\bar{x},\bar{y}).
        \]
        This implies, by the uniqueness of the connector in $\bar{x}$,  that $\bar{y}$ is the endpoint of $\gamma$. In particular the limit point does not depend on the subsequence that we chose, so that the full sequence $\{\bar{y}_n\}_{n \in \bb{N}}$ converges to $\bar{y}$, as claimed.
    \end{proof}
\end{lemma}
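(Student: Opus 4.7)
The plan is to reduce the convergence of slopes to the convergence of the endpoints of the connectors on $C$. Writing $\bar{x}_n=(x_n,t_{x_n})$ and $\bar{y}_n:=\gamma_n(\sd_C(\bar{x}_n))=(y_n,t_{y_n})\in C$, the definition of slope gives
\[
\ssf{s}(\bar{x}_n,\gamma_n)=1-\frac{|t_{x_n}-t_{y_n}|}{\sd_C(\bar{x}_n)},
\]
so once I know $\bar{y}_n\to \bar{y}:=\gamma(\sd_C(\bar{x}))$, the claim follows from continuity of $\sd_C$ (which is $1$-Lipschitz, hence continuous) together with continuity of the projection onto the $\bb{R}$-factor, noting that $\sd_C(\bar{x})>0$ since $\bar{x}\in\X\times(-\infty,0)$ is disjoint from $C\subset \X\times\bb{R}_+$.

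First I would show that $\{\bar{y}_n\}$ is bounded. The triangle inequality gives $\sd_\times(\bar{x},\bar{y}_n)\leq \sd_\times(\bar{x},\bar{x}_n)+\sd_C(\bar{x}_n)$, and both terms on the right are bounded since $\bar{x}_n\to \bar{x}$ and $\sd_C$ is continuous. Because the product $\X\times\bb{R}$ is proper and $C$ is closed (as noted right after Definition \ref{D3}), every subsequence of $\{\bar{y}_n\}$ admits a sub-subsequence converging to some $\bar{z}\in C$.

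Next I would identify any such subsequential limit $\bar{z}$ with $\bar{y}$ using the uniqueness assumption. Passing to the limit in $\sd_\times(\bar{x}_{n_k},\bar{y}_{n_k})=\sd_C(\bar{x}_{n_k})$ along a subsequence with $\bar{y}_{n_k}\to\bar{z}$, joint continuity of the distance and continuity of $\sd_C$ yield $\sd_\times(\bar{x},\bar{z})=\sd_C(\bar{x})$. Any minimizing geodesic from $\bar{x}$ to $\bar{z}$ (which exists since $\RCD(0,N)$ spaces are geodesic) is then a connector of $\bar{x}$, so by the uniqueness assumption it must coincide with $\gamma$, forcing $\bar{z}=\gamma(\sd_C(\bar{x}))=\bar{y}$. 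Since every subsequential limit equals $\bar{y}$ and the sequence is bounded in a proper space, the full sequence converges to $\bar{y}$, and feeding this into the slope formula above gives the conclusion.

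The main subtlety is the uniqueness step: one must interpret \emph{uniqueness of the connector} as ruling out any other length-minimizing pair (foot point, geodesic), not only a second geodesic to a fixed foot point. This is the natural reading, since by definition a connector is a geodesic realizing $\sd_C(\bar{x})$, and two distinct foot points $\bar{z}\neq \bar{y}$ would produce two distinct connectors. The only other routine point is the existence of a minimizing geodesic between two given points, which is standard in $\RCD(K,N)$ spaces.
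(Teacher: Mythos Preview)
Your proposal is correct and follows essentially the same argument as the paper: both show the endpoints $\bar{y}_n$ are bounded, extract a convergent subsequence, identify the limit with the endpoint of $\gamma$ via the uniqueness assumption, upgrade to full-sequence convergence, and then read off the slope convergence from the explicit formula. Your version is slightly more explicit about properness, geodesic existence, and the positivity of $\sd_C(\bar{x})$, but the strategy and key steps are the same.
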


\begin{proposition} \label{P2}
    If $\bar{\sd}:\X \times (-\infty,0) \to \bb{R}$ is not constant, then there exist a ball $B \subset \X \times (- \infty,0)$ and a real number $\tau >0$ such that $|\nabla \bar{\sd}(\bar{x})| \geq \tau$ for $\m_\times$-a.e. $\bar{x} \in B$.
    \begin{proof}
        Since $\bar{\sd}$ is not constant there exists a set of positive measure in $\X \times (-\infty,0)$ where $|\nabla \bar{\sd}|>0$.
        By Lemmas \ref{L2} and \ref{L1}, this implies that there exists a point $\bar{x} \in \X \times (-\infty,0)$ with a single non vertical connector $\gamma$ such that 
        $\ssf{s}(\bar{x},\gamma) >0$. By lemma \ref{L4} and a standard argument by contradiction, this implies that there exists $\tau>0$ such that for every $\bar{y}$ in a sufficiently small ball centered in $\bar{x}$, and every connector $\gamma_{\bar{y}}$ of $\bar{y}$, we have
        $
        p(\bar{y},\gamma_{\bar{y}})> \tau.
        $
        The conclusion immediately follows by Lemma \ref{L3}.
    \end{proof}
\end{proposition}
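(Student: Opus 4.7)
The plan is to combine the three preceding lemmas in a direct way. The key observation is that Lemmas \ref{L2} and \ref{L1}, together with the hypothesis that $\bar{\sd}$ is not constant, force the existence of at least one point whose unique connector is non-vertical, and hence has strictly positive slope. Once such a point is located, Lemma \ref{L4} upgrades this pointwise positivity to a uniform positivity on a small metric ball, and Lemma \ref{L3} translates the slope bound into a lower bound on the relaxed gradient.

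First I would argue that $\bar{\sd}$ is $2$-Lipschitz (as a difference of two $1$-Lipschitz distance functions), so on the $\RCD$ space $\X \times (-\infty,0)$ we have $|\nabla \bar{\sd}| = \lip(\bar{\sd})$ $\m_\times$-a.e. Since by assumption $\bar{\sd}$ is not constant, the set $\{|\nabla \bar{\sd}| > 0\}$ has strictly positive $\m_\times$-measure. Combining Lemma \ref{L2} (uniqueness of the connector $\m_\times$-a.e.) with Lemma \ref{L1} (points with a vertical connector have $|\nabla \bar{\sd}| = 0$ a.e.), I deduce the existence of at least one point $\bar{x}_0 \in \X \times (-\infty,0)$ admitting a unique connector $\gamma_0$ which is non-vertical; in particular $\ssf{s}(\bar{x}_0,\gamma_0) > 0$.

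Next, I would run a standard contradiction argument based on Lemma \ref{L4}. If for every ball around $\bar{x}_0$ and every $\tau > 0$ there existed a point $\bar{y}$ in the ball admitting some connector of slope smaller than $\tau$, then I could extract a sequence $\bar{y}_n \to \bar{x}_0$ with connectors $\gamma_n$ satisfying $\ssf{s}(\bar{y}_n,\gamma_n) \to 0$. Since $\bar{x}_0$ has a unique connector, Lemma \ref{L4} would force $\ssf{s}(\bar{y}_n,\gamma_n) \to \ssf{s}(\bar{x}_0,\gamma_0) > 0$, a contradiction. Hence there exist a ball $B \subset \X \times (-\infty,0)$ centered at $\bar{x}_0$ and a constant $\tau > 0$ such that every connector of every point of $B$ has slope at least $\tau$.

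Applying Lemma \ref{L3} pointwise inside $B$ yields $\lip(\bar{\sd})(\bar{y}) \geq \tau$ for every $\bar{y} \in B$, which combined with the identification $|\nabla \bar{\sd}| = \lip(\bar{\sd})$ $\m_\times$-a.e.\ gives the desired conclusion. The only mild obstacle is ensuring that every point of $B$ admits at least one connector so that Lemma \ref{L3} may be invoked, but this follows from the closedness of $C$ and the precompactness of balls in the $\RCD$ setting via a standard minimizing-sequence argument. All other steps are direct applications of the previously established lemmas.
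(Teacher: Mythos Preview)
Your proposal is correct and follows essentially the same approach as the paper's proof: locate a point with a unique non-vertical connector via Lemmas \ref{L2} and \ref{L1}, upgrade to a uniform slope bound on a ball via the continuity statement of Lemma \ref{L4} and a contradiction argument, and conclude through Lemma \ref{L3}. You are simply more explicit about the auxiliary details (the identification $|\nabla \bar{\sd}|=\lip(\bar{\sd})$, the contradiction step, and the existence of connectors), all of which are routine.
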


\begin{remark} \label{R4}
    A direct computation shows that
 $
 \Delta e^{-\bar{\sd}} \geq e^{-\bar{\sd}} |\nabla \bar{\sd}|^2.
 $
 Combining this with the previous proposition we deduce that there exists a ball $B \subset \X \times (-\infty,0)$ and $c>0$ such that
 \[
 \Delta e^{-\bar{\sd}} \geq c1_B.
 \]
\end{remark}

The next proposition follows from a standard computation and gives the explicit dependence of the Laplacian on the weight in a weighted $\RCD$ space.

 \begin{proposition}[Formulas for Laplacians in weighted spaces] \label{P3}
 Let $(\ssf{Y},\sd_y,\m_y)$ be an $\RCD(K,N)$ space and let $V \in \W^{1,2}_{loc}(\X)$. Suppose that the space $(\ssf{Y},\sd_y,\tilde{\m}_y)$ with $\tilde{\m}=e^{-V}\m_y$ is $\RCD(K',N')$ for a suitable couple $(K',N') \in \bb{R} \times [1,+\infty)$.
 Then for every $u \in \Lip_{loc}(\ssf{Y})$, denoting by $\tilde{\Delta}$ the Laplacian in $(\ssf{Y},\sd_y,\tilde{\m}_y)$, we have that $
 \tilde{\Delta}u=\Delta u-\nabla u \cdot \nabla V $ in distributional sense.
 \end{proposition}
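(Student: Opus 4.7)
The proof is a direct integration-by-parts computation. I would fix a test function $\phi \in \Lip_c(\ssf{Y})$; by the definition of the distributional Laplacian in the weighted space and the identity $d\tilde{\m}_y = e^{-V}\,d\m_y$,
\[
\langle \tilde{\Delta} u, \phi\rangle \;=\; -\int \nabla\phi \cdot \nabla u \, d\tilde{\m}_y \;=\; -\int e^{-V}\, \nabla\phi \cdot \nabla u\, d\m_y.
\]
Next I would apply the Leibniz/chain rule $\nabla(e^{-V}\phi) = e^{-V}\nabla\phi - e^{-V}\phi\, \nabla V$ (valid $\m_y$-a.e.\ in the infinitesimally Hilbertian setting) to decompose the integrand, obtaining
\[
\langle \tilde{\Delta} u, \phi\rangle \;=\; -\int \nabla(e^{-V}\phi)\cdot \nabla u \, d\m_y \;-\; \int \phi\, (\nabla u \cdot \nabla V)\, d\tilde{\m}_y.
\]

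The first term on the right is exactly the distributional pairing $\langle \Delta u, e^{-V}\phi \rangle$ of the unweighted Laplacian of $u$ against the compactly supported function $e^{-V}\phi$, while the second is the pairing of the $\sL^2_{loc}$ function $-\nabla u \cdot \nabla V$ against $\phi$ in the weighted space. Collecting the two contributions yields the claimed identity $\tilde{\Delta} u = \Delta u - \nabla u \cdot \nabla V$ in distributional sense against $\phi$; since $\phi \in \Lip_c(\ssf{Y})$ is arbitrary this is the desired equality.

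The one technical obstacle is that $V \in \W^{1,2}_{loc}$ is not a priori Lipschitz, so neither the chain rule for $e^{-V}$ nor the admissibility of $e^{-V}\phi$ as a bona fide Lipschitz compactly supported test function for $\Delta u$ is immediate. I would handle both by approximation: choose Lipschitz $V_n \to V$ in $\W^{1,2}_{loc}$ (for instance by truncation of $V$ at levels $\pm n$ together with heat-flow regularisation, both of which are available in the $\RCD$ setting), carry out the computation above with $V_n$ in place of $V$, where every step is classical and $e^{-V_n}\phi\in\Lip_c(\ssf{Y})$, and then pass to the limit. Dominated convergence closes the argument thanks to $|\nabla u|\in \sL^{\infty}_{loc}$ (as $u$ is locally Lipschitz), the $\sL^2_{loc}$ convergence $\nabla V_n \to \nabla V$, and the local boundedness of $e^{-V}$ on $\supp \phi$ guaranteed by local finiteness of $\tilde{\m}_y$ together with the $\RCD(K',N')$ regularity of the weighted space.
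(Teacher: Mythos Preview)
Your proposal is correct and is precisely the ``standard computation'' the paper alludes to without spelling out; the paper gives no proof of this proposition. The one point worth making explicit in your write-up is that the computation uses $\nabla\phi\cdot\nabla u$ to mean the same quantity in both $(\ssf{Y},\sd_y,\m_y)$ and $(\ssf{Y},\sd_y,\tilde{\m}_y)$: this is justified because both spaces are $\RCD$ (hence PI) and thus the minimal relaxed gradient of a locally Lipschitz function coincides with $\lip(\cdot)$ in each, which is measure-independent. The paper notes this fact immediately after the proposition; incorporating it into your argument removes any ambiguity.
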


 The next step to prove Theorem \ref{CT1} is to replace the product measure $\m_\times$ on $\X \times \bb{R}$ with a measure $\tilde{\m}_\times$, such that the resulting space is parabolic and the Laplacian bounds given by Remark \ref{R4} are preserved (modulo reducing $c$) w.r.t. the new measure. 
To this aim consider the modified metric measure space $( \bb{R},\sd_e,e^{-V} \lambda^1)$ where $\sd_e$ is the Euclidean distance and the function $V:\bb{R} \to \bb{R}_+$ is defined as follows. \par 
    Let $\bar{x}=(p_x,t_x) \in \X \times (-\infty,0)$ and $\tau,\epsilon >0$ be such that $|\nabla \bar{d}| \geq \tau$ on $B_\epsilon(\bar{x}) \subset \X \times (-\infty,0)$. 
    Note that this is possible thanks to Proposition \ref{P2}. Let $f:\bb{R} \to \bb{R}$ be an odd increasing function such that $f(0)=0$, $f(t)=1$ for every $t \geq 1$ and $f(t)=-1$ for every $t \leq -1$. Define
    \[
    V(s):=\int_{t_x}^sf(t-t_x) \, dt.
    \]
    Note that $V$ is symmetric w.r.t. $t_x$ and that $V'(s)=1$ if $s \in [t_x+1,+\infty)$, $V'(s) \in (-1,1)$ if $s \in (t_x-1,t_x+1)$ and $V'(s)=-1$ if $s \in (-\infty,t_x-1]$. In particular $\int_\bb{R} e^{-V(s)} \, ds < + \infty$. \par 
    As anticipated, we use such function to modify the measure in the product space $\X \times \bb{R}$. 

\begin{proposition}[Mild regularity for the weighted $\X \times \bb{R}$]
There exists a couple $(K,N)$ such that
    \[
    (\X \times \bb{R},\sd_\times,e^{-V(-\sd_0)}\m_\times),
    \]
    is an $\RCD(K,N)$ space.
    \begin{proof}
    Note that $( \bb{R},\sd_e,e^{-V(-\sd_0)} \lambda^1)$ is an $\RCD(K_0,N_0)$ space for some $K_0 \in (-\infty,0)$ and $N_0\in (1,+\infty)$. Since
    \[
    (\X \times \bb{R},\sd_\times,e^{-V(-\sd_0)}\m_\times)=
    (\X,\sd,\m) \otimes ( \bb{R},\sd_e,e^{-V} \lambda^1),
    \]
    we deduce that also $(\X \times \bb{R},\sd_\times,e^{-V(-d_0)}\m_\times)$ is an
    $\RCD(K_0,N+N_0)$ space, as desired.
    \end{proof}
\end{proposition}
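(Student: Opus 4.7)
The plan is to exhibit the space as a tensor product of two $\RCD$ spaces and invoke the standard tensorization theorem. First, observe that for $(x,t)\in \X\times\bb{R}$ we have $\sd_0((x,t))=-t$, so $V(-\sd_0(x,t))=V(t)$ depends only on the $\bb{R}$ factor. Consequently the weighted product measure factorizes as
\[
e^{-V(-\sd_0)}\,\m_\times \;=\; \m \,\otimes\, e^{-V}\lambda^1,
\]
and the metric $\sd_\times$ is by definition the Pythagorean product of $\sd$ and $\sd_e$. Hence, as a metric measure space, we have the identification
\[
(\X\times\bb{R},\sd_\times,e^{-V(-\sd_0)}\m_\times)\;=\;(\X,\sd,\m)\,\otimes\,(\bb{R},\sd_e,e^{-V}\lambda^1),
\]
which is exactly what is written in the proof.

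The next step is to show that the one-dimensional factor $(\bb{R},\sd_e,e^{-V}\lambda^1)$ is $\RCD(K_0,N_0)$ for some $K_0\in(-\infty,0)$ and $N_0\in(1,+\infty)$. Since the construction of $V$ only specified the qualitative profile of $f$, I may assume without loss of generality that $f$ is chosen $C^\infty$ (still odd, increasing, with $f(0)=0$ and $f\equiv\pm1$ outside $[-1,1]$), so that $V\in C^2(\bb{R})$ with $V'(s)=f(s-t_x)$ and $V''(s)=f'(s-t_x)\ge 0$. In particular $|V'|\le 1$ everywhere, and therefore the one-dimensional Bakry--Emery $N$-Ricci curvature
\[
\mathsf{Ric}_{V}^{N_0}(s)\;=\;V''(s)-\frac{(V'(s))^2}{N_0-1}
\]
satisfies $\mathsf{Ric}_{V}^{N_0}(s)\ge -\tfrac{1}{N_0-1}$ for every $s\in\bb{R}$ and every $N_0>1$. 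Choosing, for instance, $N_0=2$ and $K_0=-1$ (any $N_0>1$ with $K_0=-1/(N_0-1)$ would do) the classical Bakry--Emery characterization of $\CD(K,N)$ for smooth weighted manifolds (together with infinitesimal Hilbertianity, which is automatic in one dimension) guarantees that $(\bb{R},\sd_e,e^{-V}\lambda^1)$ is $\RCD(K_0,N_0)$.

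Finally, to combine the two factors I appeal to the tensorization theorem for $\RCD(K,N)$ spaces: given $(\X_1,\sd_1,\m_1)$ that is $\RCD(K_1,N_1)$ and $(\X_2,\sd_2,\m_2)$ that is $\RCD(K_2,N_2)$, the product is $\RCD(\min\{K_1,K_2\},N_1+N_2)$. Applied to $(\X,\sd,\m)\in\RCD(0,N)$ and $(\bb{R},\sd_e,e^{-V}\lambda^1)\in\RCD(K_0,N_0)$ with $K_0<0$, this yields that $(\X\times\bb{R},\sd_\times,e^{-V(-\sd_0)}\m_\times)$ is $\RCD(K_0,N+N_0)$, which is the required conclusion with $(K,N)=(K_0,N+N_0)$. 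The only mild technical point worth flagging is the freedom to take $f$ smooth in the earlier construction; since $V$ was introduced purely as an auxiliary weight, this refinement costs nothing, and with it the Bakry--Emery computation above is entirely routine.
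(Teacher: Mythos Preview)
Your proof is correct and follows essentially the same approach as the paper: factor the weighted product as a tensor product, verify that the one-dimensional weighted factor is $\RCD(K_0,N_0)$, and apply tensorization. The only difference is that you supply the details the paper leaves implicit, namely the explicit Bakry--Emery computation for the weighted line and the observation that $f$ can be taken smooth; both are routine and in keeping with the paper's intent.
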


\begin{proposition}[Parabolicity of the weighted $\X \times \bb{R}$] \label{P4}
    The space
    \[
    (\X \times \bb{R},\sd_\times,e^{-V(-\sd_0)}\m_\times)
    \]
    is parabolic.
    \begin{proof}
        As we noted earlier the factor $( \bb{R},\sd_e,e^{-V} \lambda^1)$ has finite mass. The statement follows applying Corollary \ref{C2}.
    \end{proof}
\end{proposition}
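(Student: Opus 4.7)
The plan is to recognize that the modified weight $e^{-V(-\sd_0)}$ depends only on the $\bb{R}$-coordinate, so the new measure on $\X \times \bb{R}$ is still a product measure, and then invoke the abstract parabolicity-tensorization result given by Corollary \ref{C2}.

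First I would unpack the weight. Since $\sd_0$ is the signed distance from $\X\times\{0\}$ (positive in the lower half-space), for $(x,t)\in\X\times\bb{R}$ we have $\sd_0(x,t)=-t$, hence $V(-\sd_0(x,t))=V(t)$. Therefore
\[
e^{-V(-\sd_0)}\m_\times \;=\; \m\otimes\bigl(e^{-V(t)}\,\lambda^1\bigr),
\]
so that as a metric measure space
\[
(\X\times\bb{R},\sd_\times,e^{-V(-\sd_0)}\m_\times)\;=\;(\X,\sd,\m)\,\otimes\,(\bb{R},\sd_e,e^{-V}\lambda^1).
\]

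Next I would check that the second factor fits the hypotheses of Corollary \ref{C2}. The real line with the Euclidean metric is trivially a geodesically complete smooth Riemannian manifold; the weight $e^{-V}$ is smooth (indeed $V$ was built by integrating the smooth-enough odd function $f$, so $V\in C^1(\bb{R})$ and this is more than sufficient for the weighted-manifold framework of Corollary \ref{C2}). By the property of $V$ recorded immediately after its definition, namely that $V'\equiv 1$ on $[t_x+1,+\infty)$ and $V'\equiv -1$ on $(-\infty,t_x-1]$, the tails of $e^{-V}$ decay like $e^{-|s|}$, so
\[
\int_{\bb{R}}e^{-V(s)}\,ds<+\infty,
\]
meaning $(\bb{R},\sd_e,e^{-V}\lambda^1)$ has finite mass.

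Finally, the $\RCD(0,N)$ space $(\X,\sd,\m)$ is parabolic by the standing assumption of Section \ref{S3}. I would therefore conclude by applying Corollary \ref{C2} directly to the factorization displayed above: the product of the parabolic $\RCD$ factor with the finite-mass geodesically complete weighted Riemannian factor is parabolic. There is no real obstacle here, the only mild subtlety being to verify that $V$ is regular enough to fall within the weighted-manifold framework used in Corollary \ref{C2}, which is immediate since $V\in C^1$ with bounded derivative.
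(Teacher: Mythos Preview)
Your proof is correct and follows exactly the paper's approach: recognize the product structure, note that the $\bb{R}$-factor has finite mass, and apply Corollary~\ref{C2}. The only extra content you add is the explicit verification that the weight depends only on the $\bb{R}$-coordinate and the brief regularity check on $V$, both of which the paper leaves implicit.
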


In what follows we denote by $\tilde{\m}_\times$ the measure $e^{-V(-\sd_0)}\m_\times$ and by $\tilde{\Delta}$ the laplace operator in $(\X \times \bb{R},\sd_\times,\tilde{\m}_\times)$.
We recall that both in $(\X \times \bb{R},\sd_\times,\tilde{\m}_\times)$ and  $(\X \times \bb{R},\sd_\times,\m_\times)$
the relaxed gradient of a Lipschitz function coincides with the local Lipschitz constant, so that in particular it does not change depending on the space that we consider. The next theorem contains the crucial computation showing that the Laplacian bounds of Proposition \ref{P2} are preserved in the modified product space.

\begin{thm}[Weighted Laplacian bounds for $e^{-\bar{\sd}}$] \label{T2}
    The function $e^{-\bar{\sd}}$ satisfies $\tilde{\Delta}e^{-\bar{\sd}} \geq 0$ in $\X \times (-\infty,0)$. Moreover there exist $\tau>0$ and $B_\epsilon(\bar{x}) \subset \X \times (-\infty,0)$ such that $\tilde{\Delta} e^{-\bar{\sd}} \geq \tau >0$ on $B_\epsilon(\bar{x})$.
    \begin{proof}
        Using the chain rule for Laplacians and taking into account Proposition \ref{P3} we have that 
        \begin{equation} \label{E4}
         \tilde{\Delta} e^{-\bar{\sd}}
        =e^{-\bar{\sd}}(-\tilde{\Delta} \bar{\sd}+|\nabla \bar{\sd}|^2)
        =
        e^{-\bar{\sd}}(-\Delta \bar{\sd}+\nabla \bar{\sd} \cdot \nabla V(-\sd_0)+|\nabla \bar{\sd}|^2)
        =
        e^{-\bar{\sd}}(-\Delta \bar{\sd}- V'(-\sd_0) \nabla \bar{\sd} \cdot \nabla \sd_0+|\nabla \bar{\sd}|^2).
        \end{equation}
        By the fact that $\Delta \sd_0=0$ and Proposition \ref{P14} we have that 
        $
        -\Delta \bar{\sd}=-\Delta \sd_C \geq 0.
        $
        Moreover 
        \[
        |\nabla \bar{\sd}|^2=2-2\nabla \sd_C \cdot \nabla \sd_0 \quad \text{and} \quad
        \nabla \bar{\sd} \cdot \nabla \sd_0=\nabla \sd_C \cdot \nabla \sd_0-1,
        \]
        so that \eqref{E4} is bounded below by
        \begin{align*}
        &e^{-\bar{\sd}} ( V'(-\sd_0)-V'(-\sd_0) \nabla \sd_C \cdot \nabla \sd_0+2-2\nabla \sd_C \cdot \nabla \sd_0)
        \\
        &=e^{-\bar{\sd}}(2+V'(-\sd_0))(1-\nabla \sd_C \cdot \nabla \sd_0)=
        e^{-\bar{\sd}}(2+V'(-\sd_0))\frac{|\nabla \bar{\sd}|^2}{2}.
        \end{align*}
        Summing up we obtain
        \begin{equation} \label{E5}
        \tilde{\Delta} e^{-\bar{\sd}} \geq
        e^{-\bar{\sd}}(2+V'(-\sd_0))\frac{|\nabla \bar{\sd}|^2}{2}.
        \end{equation}
        Since by construction $|V'(-\sd_0)| \leq 1$ the first part of the statement follows. For the second part it is sufficient to apply Proposition \ref{P2} to equation \eqref{E5}. 
    \end{proof}
\end{thm}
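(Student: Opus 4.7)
The plan is to show that $\tilde{\Delta} e^{-\bar{\sd}}$ is bounded below by an explicit non-negative expression of the form $e^{-\bar{\sd}}(2+V'(-\sd_0))|\nabla\bar{\sd}|^2/2$, from which both assertions of the theorem follow almost immediately: global non-negativity from the fact that $|V'|\le 1$ by construction, and strict positivity on a ball from Proposition \ref{P2}. The whole argument is essentially an organised chain-rule computation; the nontrivial geometric and analytic content has already been invested in Propositions \ref{P14}, \ref{P2} and \ref{P3}.

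The steps I would carry out, in order, are the following. First, apply the standard chain rule for the weighted Laplacian to get
\[
\tilde{\Delta} e^{-\bar{\sd}} = e^{-\bar{\sd}}\bigl(-\tilde{\Delta}\bar{\sd}+|\nabla\bar{\sd}|^2\bigr).
\]
Second, invoke Proposition \ref{P3} with weight $V(-\sd_0)$ to convert $\tilde{\Delta}\bar{\sd}$ into the unweighted $\Delta\bar{\sd}$ plus the coupling term $-V'(-\sd_0)\,\nabla\bar{\sd}\cdot\nabla\sd_0$ (keeping careful track of the sign introduced by the composition with $-\sd_0$). Third, note that $\sd_0$ is, up to a sign, the linear coordinate on the $\mathbb{R}$-factor of the product, so $\Delta\sd_0=0$; combined with Proposition \ref{P14} this gives $-\Delta\bar{\sd}=-\Delta\sd_C\ge 0$, and this non-negative piece can simply be dropped to produce a lower bound. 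Fourth, use the a.e.\ identities $|\nabla\sd_C|=|\nabla\sd_0|=1$ to express both $|\nabla\bar{\sd}|^2$ and $\nabla\bar{\sd}\cdot\nabla\sd_0$ in terms of the single scalar $\nabla\sd_C\cdot\nabla\sd_0$: one finds $|\nabla\bar{\sd}|^2 = 2(1-\nabla\sd_C\cdot\nabla\sd_0)$ and $\nabla\bar{\sd}\cdot\nabla\sd_0 = \nabla\sd_C\cdot\nabla\sd_0-1$. This is the decisive algebraic step, as it allows one to factor $(1-\nabla\sd_C\cdot\nabla\sd_0) = |\nabla\bar{\sd}|^2/2$ out of both remaining terms.

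Carrying out the factoring should yield
\[
\tilde{\Delta} e^{-\bar{\sd}} \;\ge\; e^{-\bar{\sd}}\bigl(2+V'(-\sd_0)\bigr)\,\frac{|\nabla\bar{\sd}|^2}{2}.
\]
Since $V$ was engineered precisely so that $|V'|\le 1$, the prefactor is at least $1$, giving the global bound $\tilde{\Delta} e^{-\bar{\sd}}\ge 0$ on $\X\times(-\infty,0)$. For the second, local assertion, I apply Proposition \ref{P2} to produce a ball $B_\epsilon(\bar{x})\subset\X\times(-\infty,0)$ and a constant $\tau_0>0$ such that $|\nabla\bar{\sd}|\ge \tau_0$ on $B_\epsilon(\bar{x})$; since $\bar{\sd}$ is $1$-Lipschitz and the ball is precompact, $e^{-\bar{\sd}}$ is bounded below by some positive constant on $B_\epsilon(\bar{x})$, and combining these bounds with the inequality above yields the desired uniform $\tau>0$.

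The main potential pitfall is purely bookkeeping: making sure that the signs from the chain rule ($\Delta(e^f)=e^f(\Delta f+|\nabla f|^2)$ applied to $f=-\bar{\sd}$), from Proposition \ref{P3}, and from the minus sign inside $V(-\sd_0)$ all combine correctly, and that the resulting coefficient of $(1-\nabla\sd_C\cdot\nabla\sd_0)$ is genuinely of the form $2+V'(-\sd_0)$ rather than $2-V'(-\sd_0)$ or something incompatible with $|V'|\le 1$. There is no deeper analytic obstacle: once the identities for $|\nabla\bar{\sd}|^2$ and $\nabla\bar{\sd}\cdot\nabla\sd_0$ are in place, the statement reduces to a one-line factorisation.
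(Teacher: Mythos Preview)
Your proposal is correct and follows essentially the same route as the paper's proof: chain rule, Proposition~\ref{P3} to pass from $\tilde{\Delta}$ to $\Delta$, dropping the non-negative $-\Delta\bar{\sd}$ via Proposition~\ref{P14}, the two identities expressing $|\nabla\bar{\sd}|^2$ and $\nabla\bar{\sd}\cdot\nabla\sd_0$ in terms of $\nabla\sd_C\cdot\nabla\sd_0$, the factorisation yielding $e^{-\bar{\sd}}(2+V'(-\sd_0))|\nabla\bar{\sd}|^2/2$, and finally $|V'|\le 1$ together with Proposition~\ref{P2}. Your added remark that $e^{-\bar{\sd}}$ is bounded below on the precompact ball is a minor elaboration the paper leaves implicit.
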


The next proposition concerns regularity of sets in the modified product space and is an immediate consequence of Corollary \ref{C5}.

\begin{proposition} \label{P15}
    Let $B \subset \X$ be a regular set in $(\X,\sd,\m)$ and let $I_t:=(t_x-t,t_x+t) \subset (\bb{R},\sd_e,e^{-V}\lambda^1)$. Then $B \times I_t$ is regular in $(\X \times \bb{R},\sd_\times,\tilde{\m}_\times)$.
\end{proposition}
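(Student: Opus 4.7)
The plan is to reduce the statement directly to Corollary \ref{C5}, so the whole task boils down to verifying the two hypotheses of that corollary: that both factor spaces are $\RCD(K,N)$ with $K \leq 0$, and that $B$ and $I_t$ are regular sets inside their respective factors.

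First I would record that $(\X,\sd,\m)$ is assumed throughout the section to be a parabolic $\RCD(0,N)$ space, so $B \subset\subset \X$ is regular in an $\RCD(K,N)$ space with $K=0 \leq 0$. For the second factor, I would recall from the proof of the preceding proposition that $(\bb{R},\sd_e,e^{-V}\lambda^1)$ is an $\RCD(K_0,N_0)$ space for some $K_0 \in (-\infty,0)$ and $N_0 \in (1,+\infty)$, where the weight $V$ is smooth and finite on the bounded interval $I_t$.

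Next I would argue that $I_t = (t_x-t, t_x+t)$ is regular in $(\bb{R},\sd_e,e^{-V}\lambda^1)$. Since $\bar{I}_t \subsetneq \bb{R}$ and the weight $e^{-V}$ is smooth and strictly positive, the weighted Laplacian $\tilde\Delta_{\bb{R}}$ acts on smooth functions on $I_t$ as an ordinary uniformly elliptic second-order operator with bounded coefficients. In particular, for each endpoint $t_x \pm t$ one produces a barrier: e.g.\ take a smooth function which is harmonic for $\tilde\Delta_{\bb{R}}$ on $I_t$, strictly positive on $I_t$, and vanishing at the chosen endpoint (the explicit linear or exponential-type solutions of the corresponding ODE do the job, once rescaled). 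By Corollary \ref{C4}, the existence of such a barrier at every boundary point implies that $I_t$ is regular.

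With both $B$ and $I_t$ verified as regular sets in $\RCD(K,N)$ spaces with non-positive $K$, I would simply invoke Corollary \ref{C5} with $(\X',\sd',\m') = (\bb{R},\sd_e,e^{-V}\lambda^1)$ and $B' = I_t$, concluding that $B \times I_t$ is regular in the product space $(\X \times \bb{R}, \sd_\times, \tilde{\m}_\times)$. I do not expect any significant obstacle here: the only mildly non-trivial point is the verification that the one-dimensional weighted interval $I_t$ admits barriers at its two endpoints, and this is standard since the weight is smooth and bounded away from zero on $\bar{I}_t$.
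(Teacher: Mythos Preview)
Your proposal is correct and follows exactly the approach the paper has in mind: the paper states that the proposition is ``an immediate consequence of Corollary~\ref{C5}'', and your argument simply spells out the one detail the paper leaves implicit, namely that the bounded interval $I_t$ is regular in the weighted real line (which, as you note, is clear from the existence of smooth barriers at the two endpoints, via Corollary~\ref{C4}).
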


The next two lemmas concern some simple geometric properties of $\bar{\sd}$.

\begin{lemma} \label{L17}
    Let $y \in \X$ and $t \in (-\infty,0)$. Then $\bar{\sd}(y,0) \geq \bar{\sd}(y,t)$.
    \begin{proof}
        By triangular inequality we have
        $
        \sd_C(y,t) \leq \sd_C(y,0)+t,
        $
        so that
        \[
        \bar{\sd}(y,t)=\sd_C(y,t) -t \leq \sd_C(y,0)=\bar{\sd}(y,0),
        \]
        as claimed.
    \end{proof}
\end{lemma}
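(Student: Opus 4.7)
The statement is purely a geometric/metric fact about the signed-distance arithmetic in $\X\times\bb{R}$, with no probabilistic or PDE content. The underlying intuition is that along the vertical segment $\{y\}\times[t,0]$, the function $\sd_0$ changes at exactly unit speed (since $\sd_\times((y,0),(y,t))=|t|$ and this equals the change of signed vertical distance), whereas $\sd_C$ changes at speed at most $1$ by the $1$-Lipschitz property of distance functions. Hence the difference $\bar\sd=\sd_C-\sd_0$ can only decrease as we move from $(y,0)$ downward to $(y,t)$.

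To turn this into a one-line argument, I would just apply the triangle inequality in $(\X\times\bb{R},\sd_\times)$:
\[
\sd_C(y,t)\ \leq\ \sd_C(y,0)+\sd_\times\!\big((y,0),(y,t)\big)\ =\ \sd_C(y,0)+|t|.
\]
Next I would unwind the sign convention for $\sd_0$: since $t\in(-\infty,0)$, the point $(y,t)$ lies in the negative half space, so $\sd_0(y,t)=|t|=-t$, while $\sd_0(y,0)=0$. Subtracting $\sd_0(y,t)$ from both sides of the displayed inequality yields
\[
\bar\sd(y,t)\ =\ \sd_C(y,t)-\sd_0(y,t)\ \leq\ \sd_C(y,0)+|t|-|t|\ =\ \sd_C(y,0)\ =\ \bar\sd(y,0),
\]
which is the claim.

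There is no real obstacle here; the only thing to be careful about is bookkeeping the sign of $\sd_0$ (positive in the lower half space), so that the $|t|$ coming out of the triangle inequality exactly cancels the $\sd_0(y,t)$ term. No regularity of $C$ or of the ambient space is needed, only the product metric structure.
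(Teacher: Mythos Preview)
Your proof is correct and follows exactly the same route as the paper's own argument: the triangle inequality for $\sd_C$ along the vertical segment, followed by unwinding the definition of $\sd_0$ to cancel the $|t|$. If anything, your sign bookkeeping is cleaner than the paper's, which writes $\sd_C(y,t)\leq \sd_C(y,0)+t$ and $\bar\sd(y,t)=\sd_C(y,t)-t$ (two apparent sign slips that happen to cancel).
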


The next lemma follows easily by the definition of product distance.

\begin{lemma} \label{L13}
    Suppose that $C$ and $\X \times \{0\}$ have positive distance and that there exists a geodesic $\gamma:[0,\sd_\times(C,\X \times \{0\})] \to \X \times \bb{R}$ realizing such distance. Then $\gamma$ is vertical, i.e.\;its image is contained in a set of the form $\{x\} \times \bb{R}$.
\end{lemma}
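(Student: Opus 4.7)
The plan is to argue by a direct comparison with a vertical competitor, using only the Pythagorean form of the product distance.

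First, I would fix notation: write $\gamma(0)=(x_0,s_0)$ and $\gamma(L)=(x_1,s_1)$, where $L:=\sd_\times(C,\X\times\{0\})>0$. After possibly reversing the parameterization we may assume $(x_0,s_0)\in \X\times\{0\}$ and $(x_1,s_1)\in C$; thus $s_0=0$ and, since $C\subset \X\times\bb{R}_+$ and $L>0$, we have $s_1>0$. Because $(\X\times\bb{R},\sd_\times)$ is the metric product, the distance between the endpoints satisfies the Pythagorean identity
\[
L \;=\; \sd_\times\bigl((x_0,0),(x_1,s_1)\bigr) \;=\; \sqrt{\sd(x_0,x_1)^2+s_1^2}.
\]

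Second, I would produce a competing pair of points realizing a smaller (or equal) separation: namely $(x_1,0)\in \X\times\{0\}$ and $(x_1,s_1)\in C$. Since $L$ is the infimum of $\sd_\times$ over such pairs,
\[
L \;\leq\; \sd_\times\bigl((x_1,0),(x_1,s_1)\bigr) \;=\; s_1.
\]
Combining the two displays yields $\sd(x_0,x_1)^2 \leq 0$, hence $x_0=x_1$. Therefore $\gamma$ joins $(x_0,0)$ to $(x_0,s_1)$ and has length $L=s_1$.

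Third, I would argue that $\gamma$ must stay in $\{x_0\}\times\bb{R}$. Writing $\gamma(t)=(\alpha(t),\beta(t))$, the metric product structure gives the pointwise identity $|\dot\gamma|^2=|\dot\alpha|^2+|\dot\beta|^2$ for the metric speed; integrating,
\[
L \;=\; \int_0^L|\dot\gamma|\,dt \;\geq\; \int_0^L|\dot\beta|\,dt \;\geq\; |\beta(L)-\beta(0)| \;=\; s_1 \;=\; L.
\]
Equality throughout forces $|\dot\alpha|\equiv 0$ almost everywhere, so $\alpha$ is constant, equal to $x_0=x_1$, and the image of $\gamma$ is contained in $\{x_0\}\times\bb{R}$, as claimed.

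The argument is essentially a one-line Pythagorean comparison; there is no real obstacle, only the need to invoke the product-metric identity $\sd_\times((x,s),(y,t))=\sqrt{\sd(x,y)^2+(s-t)^2}$ both for the endpoint distance (step one) and for the decomposition of the speed of a curve (step three), which is standard for the canonical distance on $\X\times\bb{R}$.
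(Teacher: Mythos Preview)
Your proof is correct and is exactly the argument the paper has in mind: the lemma is stated with the remark that it ``follows easily by the definition of product distance,'' and your Pythagorean comparison together with the speed decomposition $|\dot\gamma|^2=|\dot\alpha|^2+|\dot\beta|^2$ is precisely that computation spelled out.
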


The next theorem is the key step to prove the Half Space Property. In the proof we follow the argument outlined in the Introduction.

\begin{thm}[Key step: constancy of $\bar{\sd}$] \label{T14}
    The function $\bar{\sd}: \X \times (-\infty,0) \to \bb{R}$ is constant.
    \begin{proof}
        Suppose that the statement does not hold. \par 
        Assume first that $\X$ has finite diameter. In this case $\X$ is compact, so that the restriction $\sd_C:\X \times \{-1\} \to \bb{R}$ has a minimum on a point $x \in \X \times \{-1\}$.
        Let $\gamma$ be the connector of $x$ to $C$ and note that it has to be vertical by Lemma \ref{L13}. 
        In particular $\bar{\sd}$ attains its minimum on all the points in its domain that are in the image of $\gamma$. By Proposition \ref{P14} and the maximum principle this implies that such function is constant.
        \par 
        Assume now that $\X$ has infinite diameter.
        Then by Theorem \ref{T2} there exist $\bar{x}=(x,t_x)$ and $\epsilon,\tau>0$ such that $\tilde{\Delta}e^{-\bar{\sd}} \geq \tau>0$ on $B_\epsilon(\bar{x})$. Modulo replacing $x$ with a sufficiently close point we may also suppose that
        $x$ is regular for $\X$ and that
    \[
    \lim_{r \to 0^+}\frac{\m(B_r(x))}{r^k} \in (0,+\infty),
    \]
    where $k \in \bb{N}$ is the essential dimension of $\X$. \par
        Let $\phi \in \Lip_c(B_\epsilon(\bar{x}))$ be a positive function, which is equal to $\tau$ on $B_{\epsilon/2}(\bar{x})$, takes values in $[0,\tau]$, and is symmetric with respect to $\X \times \{t_x\}$. It is clear that such a function exists. Let also $\{B_i\}_{i \in \bb{N}}$ be an exhaustion of $(\X,\sd,\m)$ with regular sets containing $x$.
        \par
        Consider now for every $i \in \bb{N}$ and $\bar{z} \in B_i \times [t_x-i,t_x+i]$ the Green's function $G_{\bar{z}}^i$ with pole $\bar{z}$ relative to the set 
        \[
        B_i \times [t_x-i,t_x+i] \subset (\X \times \bb{R},\sd_\times,\tilde{\m}_\times),
        \]
        and note that such function exists thanks to Propositions \ref{P15} and \ref{P1}.
        \par 
        In addition, since the space 
        $(\X \times \bb{R},\sd_\times,\tilde{\m}_\times)$ is parabolic by Proposition \ref{P4}, then for every $\bar{y} \neq \bar{x}$ (and $i$ sufficiently large) we get by Theorem \ref{CT4} that
\[
G_{\bar{x}}^i(\bar{y}) \to \infty \quad \text{as } i \uparrow + \infty.
\]
We claim that this implies that
for every $S \subset \subset \X \times \bb{R}$ we have
\begin{equation} \label{E6}
    \inf_{\bar{y} \in S} G_{\bar{x}}^i(\bar{y}) \to \infty \quad \text{as } i \uparrow + \infty.
\end{equation}
This follows because for every $\bar{y} \neq \bar{x}$ the sequence  $\{G_{\bar{x}}^i(\bar{y})\}_{i \in \bb{N}}$ increases as $i$ increases by Proposition \ref{P16} while, considering the $[0,+\infty]$-valued continuous representative given by Lemma \ref{L15}, we have that
$G_{\bar{x}}^i(\bar{x})=+\infty$ (here we are using our particular choice of the point $x \in \X$ and the fact that the essential dimension of the product space $\X \times \bb{R}$ is greater than or equal to $2$). 

Consider now for every $i \in \bb{N}$ the function $g_i:B_i \times [t_x-i,t_x+i] \to \bb{R}$ defined by
\[
g_i(\bar{z}):=\int_{\X \times \bb{R}}G_{\bar{z}}^i(\bar{y}) \phi(\bar{y}) \, d\tilde{\m}_\times (\bar{y}).
\]
By Proposition \ref{P20} each of these functions, in its domain of definition, satisfies $\tilde{\Delta}g_i=-\phi$, is locally Lipschitz and continuous up to the boundary, and takes value zero on the boundary itself.
\par
Moreover using \eqref{E6} and the fact that $\phi=\tau$ on $B_{\epsilon/2}(\bar{x})$ it is easy to check that 
\[
g_i(\bar{x}) \to + \infty \quad \text{as } i \to +\infty.
\]
In particular we can pick $i_0$ large enough so that $g_{i_0}(\bar{x})>2$ and $i_0 > |t_x|$. Consider now the function 
$
e^{-\bar{\sd}}+g_{i_0}.
$
This function must have a maximum in 
\[
B_{i_0} \times [t_x-i_0,t_x+i_0] \cap \X \times (-\infty,0]
\]
since it is continuous and $(e^{-\bar{\sd}}+g_{i_0})(\bar{x}) \geq 2 $ while $(e^{-\bar{\sd}}+g_{i_0})(\bar{y}) \leq 1$ for every 
\begin{equation} \label{E11}
\bar{y} \in \partial (B_{i_0} \times [t_x-i_0,t_x+i_0] \cap \X \times (-\infty,0]) \setminus \X \times \{0\}.
\end{equation}
Moreover by Theorem \ref{T2} we have
\[
\tilde{\Delta} (e^{-\bar{\sd}}+g_{i_0}) \geq 0 \quad \text{in } B_{i_0} \times [t_x-i_0,t_x+i_0] \cap \X \times (-\infty,0),
\]
so that if we can prove that the maximum lies in 
\[
B_{i_0} \times [t_x-i_0,t_x+i_0] \cap \X \times (-\infty,0),
\]
then by the maximum principle we would obtain that $e^{-\bar{\sd}}+g_{i_0}$ is constant, which contradicts the fact that $(e^{-\bar{\sd}}+g_{i_0})(\bar{x}) \geq 2 $ while $(e^{-\bar{\sd}}+g_{i_0})(\bar{y}) \leq 1$ on the set in \eqref{E11}. \par 
To prove that a maximum lies in the desired set we suppose by contradiction that all the maximum points lie in $\X \times \{0\}$. Let $(y_0,0)$ be such maximum point, and observe that by Lemma \ref{L17} we have 
\[
e^{-\bar{\sd}}(y_0,2t_x) \geq e^{-\bar{\sd}}(y_0,0),
\]
so that if we can prove that
\begin{equation} \label{E12}
g_{i_0}(y_0,2t_x)=g_{i_0}(y_0,0)
\end{equation}
we would obtain that also $(y_0,2t_x)$ is a maximum point, a contradiction. \par 
To this aim observe that the function $g_{i_0}$ is the only function in
\begin{equation} \label{E13}
\W^{1,2}_0(B_{i_0} \times (t_x-i_0,t_x+i_0)) \cap C(\bar{B}_{i_0} \times [t_x-i_0,t_x+i_0])
\end{equation}
taking value zero on the boundary and such that $\tilde{\Delta} g_{i_0}=-\phi$. On the other hand, given that $\phi$ and $\tilde{\m}_\times$ are symmetric w.r.t. $\X \times \{t_x\}$, also the reflected function $g'_{i_0}(a,t_a):=g_{i_0}(a,2t_x-t_a)$ belongs to the set in \eqref{E13}, is zero on the boundary, and satisfies $\tilde{\Delta} g'_{i_0}=-\phi$. In particular $g'_{i_0}=g_{i_0}$, implying \eqref{E12}.
    \end{proof}
\end{thm}

\begin{corollary} \label{C9}
    There exists $a \in (0,+\infty)$ such that $\X \times \{a\} \subset C$ and 
    \[
    C \cap \X \times (-\infty,a)=\emptyset.
    \]
    \begin{proof}
        By the previous proposition the function $\sd_C:\X \to \bb{R}$ is constant. Let $a>0$ be its constant value. It is clear that $C \cap \X \times (-\infty,a)=\emptyset$.
        To show that $\X \times \{a\} \subset C$ suppose by contradiction that there exists $x \in \X$ such that $(x,a) \notin C$. 
        Then there exists a ball $B_\epsilon(x,a) \subset \X \times \bb{R}$ such that $B_\epsilon(x,a) \cap C= \emptyset$ and this implies that $\sd_C(x)>a$, which is a contradiction.
    \end{proof}
\end{corollary}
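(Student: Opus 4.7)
The plan is to unwind the constancy of $\bar\sd$ just established. Let $a\geq 0$ be its constant value on $\X\times(-\infty,0)$. Since $\bar\sd=\sd_C-\sd_0$ and, by the sign convention, $\sd_0((x,t))=-t$ for $t<0$, this immediately gives $\sd_C((x,t))=a-t$ for every $x\in\X$ and $t<0$. The distance function $\sd_C$ is continuous on all of $\X\times\bb{R}$, so sending $t\to 0^-$ yields $\sd_C((x,0))=a$ for every $x\in\X$. This identifies $a$ as the candidate slice level.

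Next, I would prove the upper containment $C\cap\X\times(-\infty,a)=\emptyset$. If some $(y,s)\in C$ had $s<a$, then from $(y,0)\in\X\times\{0\}$ and the fact that $s\geq 0$ (from $C\subset\X\times\bb{R}_+$), one gets $\sd_C((y,0))\leq \sd_\times((y,0),(y,s))=s<a$, contradicting $\sd_C((y,0))=a$. Hence in fact $C\subset\X\times[a,+\infty)$.

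For the inclusion $\X\times\{a\}\subset C$, I argue by contradiction: suppose $(x,a)\notin C$ for some $x\in\X$. Since $C$ is closed (as the boundary of the closed representative of a locally perimeter minimizing set), there exists $\epsilon>0$ with $B_\epsilon((x,a))\cap C=\emptyset$. For any $(y,s)\in C$, the previous step gives $s\geq a$, while disjointness from the ball gives $\sd(x,y)^2+(s-a)^2\geq\epsilon^2$. A direct expansion,
\[
\sd_\times((x,0),(y,s))^2=\sd(x,y)^2+s^2=\sd(x,y)^2+(s-a)^2+2a(s-a)+a^2\geq\epsilon^2+a^2,
\]
where the final inequality uses $s-a\geq 0$, yields $\sd_C((x,0))\geq\sqrt{a^2+\epsilon^2}>a$, contradicting $\sd_C((x,0))=a$. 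The strict positivity $a>0$ then follows from the standing assumption $C\subset\X\times\bb{R}_+$ (read as the open half-space), since $a=0$ would force $\X\times\{0\}\subset C$.

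No step is genuinely hard here: the corollary is essentially the geometric unpacking of the preceding theorem. The one mild subtlety to watch is that the strict inequality $\sd_C((x,0))>a$ in the last paragraph really requires the upper containment $C\subset\X\times[a,+\infty)$ from paragraph two; without it, ball-avoidance alone only gives $\sd_C((x,a))\geq\epsilon$, which is perfectly compatible with $\sd_C((x,0))=a$ and yields no contradiction.
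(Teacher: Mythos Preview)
Your proof is correct and follows essentially the same approach as the paper: identify $a$ as the constant value of $\sd_C$ on the zero slice, deduce $C\subset\X\times[a,+\infty)$, and then argue by contradiction using a small ball around $(x,a)$ disjoint from $C$. The paper's version is terser (it simply asserts that $B_\epsilon(x,a)\cap C=\emptyset$ implies $\sd_C(x)>a$), whereas you spell out the Pythagorean computation and correctly flag that the step requires the upper containment $C\subset\X\times[a,+\infty)$ already established.
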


The remaining part of the section is dedicated to showing that $C$ is actually a union of horizontal slices. The idea is that, using the previous corollary, we can prove that $C \setminus \X \times \{a\}$ is locally a boundary of a locally perimeter minimizing set. Then, reapplying Corollary \ref{C9} to the aforementioned set, we obtain that $C \setminus \X \times \{a\}$ contains a set of the form $\X \times \{b\}$ for some $b>a$ and $\X \times (-\infty,b) \cap C=\X \times \{a\}$. Finally, iterating this procedure, we obtain that $C$ is made of horizontal slices. \par 
To follow such a plan we first need a few intuitive results whose proofs are quite technical. These are presented in the next lemmas. We will repeatedly use the fact that $P(\X \times (-\infty,c),A \times \bb{R})=\m(A)$ for every $c \in \bb{R}$.

\begin{lemma} \label{LP28}
    Let $x \in \X$ be a regular point for $\X$ and let $U_{(x,a)}$ and $E \subset U_{(x,a)}$ be the sets given by Definition \ref{D3} applied to $C$ at $x$. Modulo passing to the complement of $E$ we can suppose that $E \subset \X \times (a,+\infty) \cap U_{(x,a)}$. Then there exists $\epsilon_0>0$ such that for every $\epsilon \in (0,\epsilon_0)$
    \[
    B_\epsilon(x) \times (a+\epsilon,a+2\epsilon) \subset E.
    \]
    \begin{proof}
    We first claim that there exists $\epsilon_0>0$ and an open set 
        \[
        B_{2\epsilon_0}(x) \times (a-2\epsilon_0,a+2\epsilon_0) \subset \X \times \bb{R}
        \]
        such that for every $(y,t) \in \X \times (a,+\infty) \cap \partial E \cap  B_{2\epsilon_0}(x) \times (a-2\epsilon_0,a+2\epsilon_0)$ we have
        \[
        \frac{|t-a|}{\sd(x,y)} < 1.
        \]
        To prove the claim we suppose by contradiction that there exists a sequence $\{(x_n,t_n)\} \subset \X \times (a,+\infty) \cap \partial E$ converging to $(x,a)$ and satisfying 
        \begin{equation} \label{E15}
        \frac{|t_n-a|}{\sd(x,x_n)} \geq 1 \quad \text{for every } n \in \bb{N}.
        \end{equation}
        Let $k$ be the essential dimension of $\X$. Let $(\ssf{Z},\sd_Z)$ be the space realizing the blow up of $\X$ at $x$ along the scales $\{|t_n-a|^{-1}\}_{n \in \bb{N}}$, and observe that $(\ssf{Z} \times \bb{R},\sd_e \times \sd_Z)$ 
        naturally realizes the blow up of $\X \times \bb{R}$ at $(x,a)$ along the same scales. When doing such blow up the set $E$ converges in $\sL^1_{loc}$ to a perimeter minimizing set $E' \subset \bb{R}^k \times \bb{R}$ such that
        \[
        \bb{R}^k \times \{0\} \subset \partial E', \quad E' \subset \bb{R}^k \times (0,+\infty).
        \]
        Moreover, because of \eqref{E15} and Proposition \ref{P26}, there exists $a'>0$ such that $(0,a') \in \partial E'$.
        This is a contradiction because of the global Bernstein Property on $\bb{R}^{k+1}$. This proves our initial claim. \par 
        Hence the set
        \[
        \Big\{ (y,t) \in B_{2\epsilon_0}(x) \times (a-2\epsilon_0,a+2\epsilon_0) \cap \X \times (a,+\infty): \frac{|t-a|}{\sd(x,y)} > 1 \Big\}
        \]
        is either contained in ${^c}E$ or in $E$. 
        If we manage to prove that it is contained in $E$ we are done. \par 
        So suppose by contradiction that the previous set is contained in ${^c}E$. Then repeating the same blow up argument that we used at the beginning of the proof we obtain a perimeter minimizing set $E' \subset \bb{R}^k \times \bb{R}$ such that
        \[
        \bb{R}^k \times \{0\} \subset \partial E', \quad E' \subset \bb{R}^k \times (0,+\infty)
        \]
        and
        \[
        \{(x,t) \in \bb{R}^k \times \bb{R}: |t| > |x| \} \subset {^c}E'.
        \]
        This is again a contradiction by the global Bernstein Property in Euclidean space.
    \end{proof}
\end{lemma}

\begin{lemma} \label{LP21}
    Let $x \in \X$ and let $U_{(x,a)}$ and $E \subset U_{(x,a)}$ be the sets given by Definition \ref{D3} applied to $C$ at $x$. We can assume w.l.o.g. that $U_{(x,a)}=A \times I$, where $A \subset \X$ is open and $I$ is an interval, and that $E$ lies above the slice $\X \times \{a\}$. Then
    for every $\delta>0$ there exist $0< \epsilon < \delta/2$ and an open set $A' \subset A$ such that
    \[
    \m(A \setminus A') < \delta, \quad \text{and } 
    A' \times (a+\epsilon,a+2\epsilon) \subset E.
    \]
    \begin{proof}
        For every regular $x \in A$ consider the neighbourhood $V_x \subset A $ and the number $\epsilon_x>0$ given by the previous lemma. Define then
        \[
        A_t:=\{x \in A: \epsilon_x >t \}
        \]
        and, choosing $\epsilon$ small enough, we have that $\m(A \setminus A_{\epsilon})< \delta$. Setting 
        \[
        A':=\bigcup _{x \in A_\epsilon} V_x
        \]
        we conclude.
    \end{proof}
\end{lemma}

In the next lemmas the symbols $U_{(x,a)}$, $A$, $I$ and $E$ will denote the objects introduced in the statement of the previous lemma.

\begin{lemma} \label{LP22}
    We have
    \[
    P(E,A \times \{a\}) \geq \m(A).
    \]
    \begin{proof}
    Fix $\delta >0$ and consider the open set $A' \subset A$ and the number $0 < \epsilon <\delta/2$ such that
    \[
    \m(A \setminus A') < \delta, \quad \text{and } 
    A' \times (a+\epsilon,a+2\epsilon) \subset E.
    \]
    We claim that
    \begin{equation} \label{E16}
    P(E,A' \times (a-\delta,a+\delta)) \geq \m(A').
    \end{equation}
    If the claim holds we then obtain that
    \[
    P(E,A \times (a-\delta,a+\delta)) \geq \m(A)-\delta,
    \]
    and passing to the limit on both sides as $\delta$ goes to zero we conclude. \par 
    To prove \eqref{E16} consider a sequence $\{f_i\}_{i \in \bb{N}} \in \Lip_{loc}(A' \times (a-\delta,a+\delta))$ such that 
    \[
    f_i \to 1_{E} \quad \text{in } \sL^1(A' \times (a-\delta,a+\delta))
    \]
    and
    \begin{equation} \label{E17}
    P(E,A' \times (a-\delta,a+\delta))= \lim_{i \to +\infty}
    \int_{A' \times (a-\delta,a+\delta)} \lip(f_i) \, d\m_\times.
    \end{equation}
     We claim that we can replace each $f_i$ with functions $g_i$ mantaining the same properties and such that $g_i=0$ on $A' \times \{a-3\epsilon/2\}$ and $g_i=1$ on $A' \times \{a+3\epsilon/2\}$. This is possible because \eqref{E17} implies, though a lower semi-continuity argument and the fact that $A' \times (a+\epsilon,a+2\epsilon) \subset E$ and $A' \times (a-2\epsilon,a-\epsilon) \subset {^c}E$, that
     \[
     \lim_{i \to +\infty}
    \int_{A' \times (a+5\epsilon/4,a+5\epsilon/3)} \lip(f_i) \, d\m_\times=
     \lim_{i \to +\infty}
    \int_{A' \times (a-5\epsilon/3,a-5\epsilon/4)} \lip(f_i) \, d\m_\times=0.
     \]
These identities allow, though a cut off argument, to construct $g_i \in \Lip_{loc}(A' \times (a-\delta,a+\delta))$ such that
\begin{align*}
    g_i \to 1_{E} & \quad \text{in } \sL^1(A' \times (a-\delta,a+\delta)),\\
    P(E,A' \times (a-\delta,a+\delta))&= \lim_{i \to +\infty}
    \int_{A' \times (a-\delta,a+\delta)} \lip(g_i) \, d\m_\times
    \end{align*}
    and
    \[
    g_i=0 \text{ on } A' \times \{-3\epsilon/2\}; \quad  g_i=1 \text{ on } A' \times \{3\epsilon/2\}.
    \]
    In particular, denoting by $\lip_t$ the local Lipschitz constant of a function restricted to the vertical component of the product space, we obtain
     \begin{align*}
    \int_{A' \times (a-\delta,a+\delta)} \lip(g_i) \, d\m_\times 
    \geq 
    \int_{A' \times (a-\delta,a+\delta)}
    \lip_t(g_i) \, d\m_\times =\int_{A'} \int_{(a-\delta,a+\delta)} \lip_t(g_i) \, ds \, d\m,
    \end{align*}
    and by the condition on the values of $g_i$ on the horizontal slices the  quantity $\int_{(a-\delta,a+\delta)} \lip_t(g_i) \, ds$ is greater than $1$ for every $x \in A'$, so that the last integral is greater than or equal to $\m(A')$. Summing up we obtained
    \[
    P(E,A' \times (a-\delta,a+\delta))= \lim_{i \to +\infty}
    \int_{A' \times (a-\delta,a+\delta)} \lip(g_i) \, d\m_\times \geq \m(A'),
    \]
    as desired.
    \end{proof}
\end{lemma}

\begin{lemma} \label{LP23}
    For every $\delta_1,\delta_2,\delta_3>0$ there exists $0< \epsilon < \delta_1$ such that
    \[
    P(E \cap \X \times (-\infty,a+\epsilon),A \times (a,a+\delta_2)) \geq \m(A) - \delta_3.
    \]
    \begin{proof}
        Let $\delta :=\min \{\delta_1,\delta_2,\delta_3\}$. 
        By Lemma \ref{LP21} there exist
        $0< \epsilon' < \delta/2$ and an open set $A' \subset A$ such that
    \[
    \m(A \setminus A') < \delta, \quad \text{and } 
    A' \times (a+\epsilon',a+2\epsilon') \subset E.
    \]
    Setting $\epsilon:=3\epsilon'/2$, and taking into account that  we have
    \[
    E \cap \X \times (-\infty,a+\epsilon) \cap A' \times (a+\epsilon',a+2\epsilon')
    =\X \times (-\infty,a+\epsilon) \cap A' \times (a+\epsilon',a+2\epsilon'),
    \]
    we get that
    \[
    P(E \cap \X \times (-\infty,a+\epsilon),A \times (a,a+\delta)) \geq \m(A') \geq \m(A) - \delta,
    \]
    which immediately implies the statement.
    \end{proof}
\end{lemma}

In what follows we will use the notation $E':=E \cup \X \times (-\infty,a)$.
The next lemma is the key step to show that $C \setminus \X \times \{a\}$ is locally a boundary of a locally perimeter minimizing set.

\begin{lemma} \label{L12}
    We have that
    \[
    P(E,A \times I)-P(\X \times (-\infty,a),A \times I) \geq
    P(E',A \times I).
    \]
    \begin{proof}
        By Lemma \ref{LP22} we have that
        \[
        P(E,A \times I)-P(\X \times (-\infty,a),A \times I) \geq 
        P(E,A \times (a,b))
        \]
        and this last quantity can be rewritten as
        \[
        P(E,A \times (a,b))=P(E',A \times (a,b))
        =P(E',A \times I)
        -
        P(E',A \times \{a\}).
        \]
        In particular to conclude we only need to show that
        \begin{equation} \label{E19}
        P(E',A \times \{a\})=0.
        \end{equation}
        To this aim fix $\delta>0$ and, using Lemma \ref{LP23}, pick a sequence decreasing to zero $\{\epsilon_i\}_{i \in \bb{N}}$ such that
        \begin{equation} \label{E18}
    P(E \cap \X \times (-\infty,a+\epsilon_i),A \times (a,a+\delta)) \geq \m(A) - 1/i.
    \end{equation}
        So, by the lower semi-continuity of perimeters,
        \begin{align*}
        P(E',A \times (a-\delta,a+\delta))
        & \leq \liminf_{i \to + \infty}
        P(E \cup \X \times (-\infty,a+\epsilon_i),A \times (a-\delta,a+\delta))\\
        & =
        \liminf_{i \to +\infty}
        P(E \cup \X \times (-\infty,a+\epsilon_i),A \times (a,a+\delta)).
        \end{align*}
        Using \eqref{E18} we get
        \begin{align*}
        P(E \cup \X& \times (-\infty,a+\epsilon_i), A \times (a,a+\delta)) \\
       & \leq P(E,A \times (a,a+\delta))+
        \m(A)-P(E \cap \X \times (-\infty,a+\epsilon_i),A \times (a,a+\delta))\\
       & \leq 
        P(E,A \times (a,a+\delta)) + 1/i.
        \end{align*}
       Summing up we obtained that
       \[
       P(E',A \times (a-\delta,a+\delta)) \leq P(E,A \times (a,a+\delta)),
       \]
       which implies \eqref{E19}.
    \end{proof}
\end{lemma}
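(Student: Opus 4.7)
The plan is to reduce the claimed inequality to the single assertion $P(E', A \times \{a\}) = 0$, and then to establish that assertion via a submodularity-based approximation argument anchored on Proposition \ref{P23}. Write $I = (\alpha, \beta)$ with $\alpha < a < \beta$; I will freely use the identity $P(\X \times (-\infty, a), A \times I) = \m(A)$ on a vertical cylinder, which is recorded just before Proposition \ref{P28}.

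For the reduction step, recall that inside $U_{(x,a)}$ the set $E$ is contained in $\X \times (a, +\infty)$, whereas $E' = E \cup \X \times (-\infty, a)$ coincides with $\X \times (-\infty, a)$ on $A \times (\alpha, a)$ and with $E$ on $A \times (a, \beta)$. By locality of the perimeter measure,
\[
P(E, A \times I) = P(E, A \times \{a\}) + P(E, A \times (a, \beta)),
\]
and analogously $P(E', A \times I) = P(E', A \times \{a\}) + P(E, A \times (a, \beta))$. Subtracting $\m(A)$ from the first expression and invoking $P(E, A \times \{a\}) \geq \m(A)$ from Proposition \ref{P22}, the target inequality collapses to $P(E', A \times \{a\}) \leq P(E, A \times \{a\}) - \m(A)$, for which it suffices to prove $P(E', A \times \{a\}) = 0$.

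To obtain the slice-vanishing, I fix a small $\delta > 0$ and use Proposition \ref{P23} to pick a sequence $\epsilon_i \downarrow 0$ with
\[
P\bigl(E \cap \X \times (-\infty, a + \epsilon_i), A \times (a, a + \delta)\bigr) \geq \m(A) - 1/i.
\]
Setting $E_i := E \cup \X \times (-\infty, a + \epsilon_i)$, these sets converge to $E'$ in $L^1_{loc}$. Applying strong submodularity $P(X \cup Y, U) + P(X \cap Y, U) \leq P(X, U) + P(Y, U)$ with $X = E$, $Y = \X \times (-\infty, a + \epsilon_i)$, and $U = A \times (a, a + \delta)$ yields $P(E_i, U) \leq P(E, U) + 1/i$. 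Since $E_i \supset A \times (\alpha, a + \epsilon_i)$, its perimeter on the enlarged strip $A \times (a - \delta, a + \delta)$ is concentrated in $A \times [a + \epsilon_i, a + \delta)$ and in particular equals $P(E_i, U)$. Lower semicontinuity of perimeter then gives $P(E', A \times (a - \delta, a + \delta)) \leq P(E, A \times (a, a + \delta))$, and decomposing the left-hand side into below-slice, on-slice, and above-slice contributions as in the reduction step produces $P(E', A \times \{a\}) \leq 0$, as required.

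The main technical obstacle is the bookkeeping at the slice: one must legitimately split the perimeter measures $P(E, \cdot)$, $P(E', \cdot)$, and $P(E_i, \cdot)$ across both $A \times \{a\}$ and $A \times \{a + \epsilon_i\}$, and verify that $E_i$ contributes no perimeter in $A \times (\alpha, a + \epsilon_i)$ simply because it fills that strip. These are routine consequences of locality of the perimeter measure, but they are precisely where the geometric cancellation between $P(E, A \times \{a\}) \geq \m(A)$ and $P(\X \times (-\infty, a), A \times I) = \m(A)$ actually materializes.
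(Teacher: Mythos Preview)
Your proof is correct and follows essentially the same route as the paper: both reduce the inequality to showing $P(E',A\times\{a\})=0$, and both establish this by combining Proposition~\ref{P23} with the submodularity inequality and lower semicontinuity of perimeter applied to the approximating sets $E\cup \X\times(-\infty,a+\epsilon_i)$. The only cosmetic difference is that you carry out the reduction by decomposing both sides of the target inequality at the slice, whereas the paper subtracts $\m(A)$ directly via Proposition~\ref{P22} and then rewrites $P(E,A\times(a,\beta))$ in terms of $P(E',\cdot)$; the analytic content is identical.
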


\begin{proposition}[Key Step: $E'$ is locally perimeter minizing] \label{P25}
    $E'$ is perimeter minimizing in $U_{(x,a)}$.
    \begin{proof} 
        Let $B \subset \X \times \bb{R}$ be a Borel set such that $B \Delta E' \subset \subset U_{(x,a)}$ and note that $B \cap \X \times (a,+\infty)$ is a competitor for $E$ in $U_{(x,a)}$.
        In particular, using subadditivity of perimeters first and the fact that $E$ minimizes the perimeter in $U_{(x,a)}$ later, we get
        \[
        P(B,U_{(x,a)}) \geq 
        P(B \cap \X \times (a,+\infty),U_{(x,a)})-P( \X \times (a,+\infty),U_{(x,a)})
        \]
        \[
        \geq P(E,U_{(x,a)})-P( \X \times (a,+\infty),U_{(x,a)}).
        \]
        By Proposition \ref{L12} this last quantity is greater than or equal to $P(E',U_{(x,a)})$, so that summing up we obtained
        \[
        P(B,U_{(x,a)}) \geq P(E',U_{(x,a)}),
        \]
        concluding the proof.
    \end{proof}
\end{proposition}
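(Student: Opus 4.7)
The plan is to reduce the problem to the already-established minimality of $E$ and to the key Lemma \ref{L12}, by a short combinatorial argument at the level of perimeters. Since $E'=E\cup \X\times(-\infty,a)$ differs from $\X\times(-\infty,a)$ only above the slice at height $a$, and differs from $E$ only below that slice (where it agrees with the half space), the natural idea is that any competitor $B$ for $E'$ in $U_{(x,a)}$ yields a competitor for $E$ by intersecting with the upper half space.

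I would start by fixing a Borel set $B\subset\X\times\bb{R}$ with $B\Delta E'\subset\subset U_{(x,a)}$. I set $B^+:=B\cap\X\times(a,+\infty)$ and observe that $B^+\Delta E\subset\subset U_{(x,a)}$, since $E\subset U_{(x,a)}\cap \X\times(a,+\infty)$ and $E'$ agrees with $E$ on $\X\times(a,+\infty)$; in particular $B^+$ is an admissible competitor for the locally perimeter minimizing set $E$ in $U_{(x,a)}$. By subadditivity of the perimeter applied to the identity $B^+=B\cap \X\times(a,+\infty)$, together with $P(\X\times(a,+\infty),U_{(x,a)})=P(\X\times(-\infty,a),U_{(x,a)})$ (since the perimeter of a horizontal half space is insensitive to flipping which side is taken), I obtain
\[
P(B,U_{(x,a)})\ \geq\ P(B^+,U_{(x,a)})-P(\X\times(-\infty,a),U_{(x,a)}).
\]

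Next I invoke the minimality of $E$ in $U_{(x,a)}$, which gives $P(B^+,U_{(x,a)})\geq P(E,U_{(x,a)})$. Chaining the two inequalities yields
\[
P(B,U_{(x,a)})\ \geq\ P(E,U_{(x,a)})-P(\X\times(-\infty,a),U_{(x,a)}).
\]
Now Lemma \ref{L12} is exactly the statement that the right hand side is at least $P(E',U_{(x,a)})$, and therefore $P(B,U_{(x,a)})\geq P(E',U_{(x,a)})$, which is the desired local minimality. The whole proposition is genuinely a two-line bookkeeping argument once Lemma \ref{L12} is in hand.

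Consequently I do not expect any real obstacle in this proposition itself; the substantive content has already been absorbed into Lemma \ref{L12} (and hence into the blow-up/Bernstein argument of Propositions \ref{P28}--\ref{P23}). The only small point to be careful about is the use of subadditivity when subtracting the perimeter of a half space: one must check that $B^+$ and $\X\times(-\infty,a)$ form an admissible pair for the inequality $P(B^+\cup(\X\times(-\infty,a)))\leq P(B^+)+P(\X\times(-\infty,a))$, which is automatic since both are sets of locally finite perimeter inside the open set $U_{(x,a)}$ and their union equals a modification of $B$ that agrees with $B$ outside $U_{(x,a)}$.
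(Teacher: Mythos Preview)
Your proof is correct and follows essentially the same line as the paper: intersect the competitor with the upper half space to get a competitor for $E$, apply subadditivity of perimeter and the minimality of $E$, and then invoke Lemma~\ref{L12}. The only (harmless) slip is in your final remark, where you phrase the subadditivity check in terms of the union $B^+\cup(\X\times(-\infty,a))$; the inequality actually used is the intersection form $P(B\cap \X\times(a,+\infty))\leq P(B)+P(\X\times(a,+\infty))$, which is immediate.
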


\begin{lemma} \label{L18}
    Let $y \in A$ be a regular point of $\X$, then $(y,a) \notin  \partial E'$. In particular there exists a small ball $B \subset \X \times \bb{R}$ containing $(y,a)$ such that $B \cap \partial E'=\emptyset$.
    \begin{proof}
        Suppose by contradiction that $y \in \partial E'$. 
        Now let $(\ssf{Z},\sd_Z)$ be the space realizing the blow up of $\X$ at $y$ and observe that $(\ssf{Z} \times \bb{R},\sd_e \times \sd_Z)$ naturally realizes the blow up of $\X \times \bb{R}$ at $(y,a)$ along the same scales. 
        When doing such blow up the set $E'$ converges in $\sL^1_{loc}$ to a perimeter minimizing set $E'' \subset \bb{R}^k \times \bb{R}$ such that
        \[
        \bb{R}^k \times (-\infty,0) \subset E''  
        \quad \text{and} \quad 0 \in \partial E''.
        \]
        Moreover $0 \in \partial (E'' \cap \bb{R}^k \times (0,+\infty))$ since $E'' \cap \bb{R}^k \times (0,+\infty)$ is the blow up of $E$ at $(y,a)$. In particular the complement ${^c}E''$ cannot be a half space, contradicting the global Bernstein Property in Euclidean space.
    \end{proof}
\end{lemma}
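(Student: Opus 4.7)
The plan is to argue by contradiction via a blow-up at $(y,a)$, using the global Bernstein Property (Theorem \ref{T10}) to identify the tangent set, very much in the spirit of the proof of Proposition \ref{P28}. As a preliminary I record that $(y,a) \in \partial E$: by Corollary \ref{C9} we have $\X \times \{a\} \subset C$, and by the definition of $U_{(x,a)}$ we have $C \cap U_{(x,a)} = \partial E \cap U_{(x,a)}$, whence $A \times \{a\} \subset \partial E$. Suppose for contradiction that also $(y,a) \in \partial E'$.

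Since $y$ is a regular point of $\X$ with essential dimension $k$, the tangent of $\X \times \bb{R}$ at $(y,a)$ is $\bb{R}^{k+1}$. Pick rescalings $\epsilon_i \downarrow 0$ realizing this blow-up and apply Proposition \ref{P26} along a (non-relabeled) subsequence: the rescaled copies of $E$ and $E'$ converge in $\sL^1_{loc}$ to perimeter minimizing sets $F,E'' \subset \bb{R}^{k+1}$, and by Kuratowski convergence of the rescaled boundaries we have $0 \in \partial F$ and $0 \in \partial E''$. The geometric inclusions pass to the limit: from $E \subset \X \times (a,+\infty)$ inside $U_{(x,a)}$ one gets $F \subset \bb{R}^k \times [0,+\infty)$; from $\X \times (-\infty,a) \subset E'$ one gets $\bb{R}^k \times (-\infty,0) \subset E''$; and from $E \subset E'$ one gets $F \subset E''$ (all inclusions modulo measure zero sets).

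Now Theorem \ref{T10} applied to the perimeter minimizing set ${^c}E''$, which is contained in the half space $\bb{R}^k \times [0,+\infty)$, forces it to be empty (ruled out by $0 \in \partial E''$) or a half space; combined with $\bb{R}^k \times (-\infty,0) \subset E''$, this means $E'' = \bb{R}^k \times (-\infty,c]$ for some $c \geq 0$, and then $0 \in \partial E''$ pins down $c=0$. The same argument applied to $F$ yields $F = \bb{R}^k \times [s,+\infty)$ for some $s \geq 0$, and $0 \in \partial F$ pins down $s=0$. But then $F \subset E''$ forces $\bb{R}^k \times [0,+\infty) \subset \bb{R}^k \times (-\infty,0]$ up to measure zero, which is absurd. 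Since $\partial E'$ is closed, the second assertion of the lemma follows at once.

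The main obstacle is the careful bookkeeping of the blow-up: extracting $F$ and $E''$ as simultaneous limits along one subsequence, and verifying that the three set-theoretic relations between $E$, $E'$, and the two halves $\X \times (\pm\infty, a)$ survive passage to the $\sL^1_{loc}$ limit, together with the corresponding Kuratowski statements at $0$. Once these are in place, the rigidity supplied by the global Bernstein Property reduces the rest to elementary arithmetic on Euclidean half spaces.
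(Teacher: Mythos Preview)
Your argument is correct and follows essentially the same blow-up strategy as the paper: pass to the tangent at $(y,a)$, use that both $E$ and $E'$ are locally perimeter minimizing (the latter by Proposition \ref{P25}) so their blow-ups are global minimizers in $\bb{R}^{k+1}$, and then invoke the global Bernstein Property (Theorem \ref{T10}) to force a contradiction. The only cosmetic difference is in the bookkeeping: the paper blows up $E'$ alone to obtain $E''$ and observes that $E'' \cap (\bb{R}^k \times (0,+\infty))$ is the blow-up of $E$, so ${^c}E''$ cannot be a half space; you instead extract the two blow-ups $F$ and $E''$ simultaneously, identify each via Bernstein as the half spaces $\bb{R}^k \times [0,+\infty)$ and $\bb{R}^k \times (-\infty,0]$, and reach the contradiction through the inclusion $F \subset E''$. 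Both packagings encode the same obstruction.
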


\begin{corollary} \label{C14}
    $\partial E' \cap \X \times \{a\} \cap U_{(x,a)}= \emptyset$.
    \begin{proof}
        Suppose by contradiction that $(y,a) \in \partial E' \cap \X \times \{a\} \cap U_{(x,a)}$. Consider then the function $\sd_{\partial E'}+|\sd_0|$ defined in a small neighbourhood of $(y,a)$ in $\X \times (-\infty,a)$. If the neighbourhood is small enough this function is super-harmonic (since $E'$ is locally perimeter minizing) and attains its minimum on each point of the vertical line connecting $(y,a)$ and $(y,0)$ (since $(y,a) \in \partial E'$). Hence the aforementioned function has to be constant, contradicting the previous lemma. 
    \end{proof}
\end{corollary}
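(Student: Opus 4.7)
The plan is to argue by contradiction along the lines sketched in the surrounding text. Suppose $(y,a) \in \partial E' \cap (\X \times \{a\}) \cap U_{(x,a)}$. Since $a>0$, I can choose a small precompact product neighborhood $W$ of $(y,a)$ contained in $U_{(x,a)} \cap (\X \times (0,a))$. Shrinking $W$ further, I will arrange that every point of $\overline{W}$ realizes its distance to $\partial E'$ at some point of $\partial E' \cap U_{(x,a)}$; this is possible because $(y,a) \in \partial E'$ is itself a valid target of vanishing distance from points near $(y,a)$, so the reach condition in Proposition \ref{P35} is automatic for $W$ sufficiently small. Then I define $h := \sd_{\partial E'} + |\sd_0|$ on $W$.

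The next step is to show that $h$ is super\-harmonic on $W$. Since $W \subset \X \times (0,a)$, we have $|\sd_0| = -\sd_0$, and $\sd_0$ has vanishing distributional Laplacian (it is the signed distance from a horizontal slice in a product space). By Proposition \ref{P25} the set $E'$ is locally perimeter minimizing in $U_{(x,a)}$; hence so is ${^c}E'$, with the same boundary. Applying Proposition \ref{P35} to ${^c}E'$ on $W$ yields $\Delta \sd_{\partial E'} \leq 0$ in distributional sense, and summing gives $\Delta h \leq 0$ on $W$.

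I then identify the minimum of $h$. Because $A \times (I \cap (-\infty,a)) \subset E'$ is relatively open, $\partial E' \cap U_{(x,a)} \subset A \times (I \cap [a,+\infty))$. Thus for $(y',t') \in W$ the vertical gap to this slab gives $\sd_{\partial E'}(y',t') \geq a - t'$, while $|\sd_0(y',t')|=t'$, so $h \geq a$ on $W$. Conversely, along the vertical segment $\{y\} \times (a-\epsilon,a) \subset W$ the estimate $\sd_{\partial E'}(y,a-s) \leq \sd_\times((y,a-s),(y,a)) = s$ and $|\sd_0(y,a-s)|=a-s$ give $h(y,a-s) \leq a$. Hence $h \equiv a$ along this vertical segment.

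Since the super\-harmonic $h$ attains its minimum $a$ at interior points of the connected open set $W$, the strong minimum principle (the maximum principle of Subsection \ref{SS3} applied to $-h$, using that $h$ is $1$-Lipschitz and hence in $\W^{1,2}_{\mathrm{loc}}(W)$) forces $h \equiv a$ on $W$. In particular $\sd_{\partial E'}(y',t') = a - t'$ for every $(y',t') \in W$; letting $t' \uparrow a$ and invoking the $1$-Lipschitz continuity of $\sd_{\partial E'}$, we conclude $(y',a) \in \partial E'$ for every $y'$ in some neighborhood of $y$ in $A$. Choosing such a $y'$ to be a regular point of $\X$, which is possible by density of the regular set, contradicts Lemma \ref{L18}. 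The step I expect to be most delicate is the verification that $W$ can be chosen so that the reach hypothesis of Proposition \ref{P35} is satisfied throughout $W$; this amounts to a continuity/compactness argument exploiting that the candidate projection onto $\partial E'$ near $(y,a)$ stays within $U_{(x,a)}$.
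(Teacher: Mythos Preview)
Your proof is correct and follows essentially the same approach as the paper's: argue by contradiction, consider $h=\sd_{\partial E'}+|\sd_0|$ on a one-sided neighbourhood of $(y,a)$ below the slice, observe that $h$ is super-harmonic by Proposition~\ref{P35} applied to ${}^cE'$ (via Proposition~\ref{P25}), that $h$ attains its minimum along the vertical segment through $y$, and then invoke the strong maximum principle to force $h$ constant, which puts a whole neighbourhood of $(y,a)$ in $\partial E'$ and contradicts Lemma~\ref{L18} at a regular point. One wording nit: $W$ cannot literally be a neighbourhood of $(y,a)$ \emph{contained in} $\X\times(0,a)$ since $(y,a)\notin \X\times(0,a)$; you mean a one-sided neighbourhood of the form $B_\delta(y)\times(a-\epsilon,a)$, and your reach argument (take $W\subset B_r(y,a)$ with $B_{2r}(y,a)\subset U_{(x,a)}$) is fine once this is said.
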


\begin{lemma} \label{L22}
    $\overline{(C \setminus \X \times \{a\})} \cap \X \times \{a\}= \emptyset$. In particular $C \setminus \X \times \{a\}$ is still locally a boundary of a locally perimeter minizing set. 
    \begin{proof}
        It is immediate to check that for every $x \in \X$ we have
        \[
        U_{(x,a)} \cap \overline{(C \setminus \X \times \{a\})} \cap \X \times \{a\} \subset
        U_{(x,a)} \cap \partial E' \cap \X \times \{a\}
        \]
        and this last set is empty by the previous lemma.
    \end{proof}
\end{lemma}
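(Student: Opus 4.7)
The plan is to reduce the first claim to the previous corollary (asserting that $\partial E' \cap \X \times \{a\} \cap U_{(x,a)} = \emptyset$) by a purely set-theoretic inclusion, and then to verify the ``in particular'' clause by unpacking Definition \ref{D3} at each point of $\X \times \bb{R}$.

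First I would fix an arbitrary point $(x,a) \in \X \times \{a\}$ and work inside the neighborhood $U_{(x,a)}$ with its associated local perimeter minimizer $E$, so that $U_{(x,a)} \cap C = \partial E \cap U_{(x,a)}$ and $E' = E \cup \X \times (-\infty, a)$ is locally perimeter minimizing in $U_{(x,a)}$ by Proposition \ref{P25}. The core observation is the inclusion
\[
U_{(x,a)} \cap \overline{(C \setminus \X \times \{a\})} \cap \X \times \{a\} \subset U_{(x,a)} \cap \partial E' \cap \X \times \{a\}.
\]
Indeed, any point of $(C \setminus \X \times \{a\}) \cap U_{(x,a)}$ lies in $\partial E$ and has vertical coordinate strictly greater than $a$, using $C \subset \X \times [a,\infty)$ from Corollary \ref{C9}. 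On the open half space $\X \times (a,\infty)$ we have $E' = E$, hence such points belong to $\partial E'$ as well; closing up in $\X \times \bb{R}$ and intersecting with the slice preserves this. Since the right-hand side above is empty by the previous corollary and $(x,a)$ was arbitrary, the first claim follows.

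For the ``in particular'' clause I would check the hypothesis of Definition \ref{D3} for $C \setminus \X \times \{a\}$ at every point of $\X \times \bb{R}$. At a point $(y,t)$ with $t \neq a$, the original neighborhood and local minimizer witnessing the property for $C$ continue to work after shrinking to a neighborhood disjoint from $\X \times \{a\}$, since there $C$ and $C \setminus \X \times \{a\}$ coincide. At a point $(y,a) \in \X \times \{a\}$, the closure statement just proved supplies an open neighborhood $V$ disjoint from $C \setminus \X \times \{a\}$, so inside $V$ the set $C \setminus \X \times \{a\}$ is vacuously the boundary of the empty (trivially locally perimeter minimizing) set. I expect no real obstacle here: the substantive content is entirely carried by the previous corollary, and the present lemma merely repackages that information in the form required to iterate the ``peel off one slice'' procedure outlined at the start of the section.
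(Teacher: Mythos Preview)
Your proof is correct and follows essentially the same approach as the paper: you establish the very same set-theoretic inclusion $U_{(x,a)} \cap \overline{(C \setminus \X \times \{a\})} \cap \X \times \{a\} \subset U_{(x,a)} \cap \partial E' \cap \X \times \{a\}$ and then invoke the previous corollary. The paper leaves the inclusion and the ``in particular'' clause to the reader, whereas you spell out both carefully; the substance is identical.
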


The next theorem gives the explicit description of $C$ and this immediately implies the Half Space Property for $\RCD(0,N)$ spaces, i.e. Theorem \ref{CT1}.

\begin{thm}[$C$ is made of horizontal slices] \label{T11}
    There exists a sequence $\{a_i\}_{i \in \bb{N}} \subset (0,+\infty]$ increasing to $+ \infty$ and $N \in \bb{N} \cup \{+ \infty \}$ such that
    \[
    C=\bigcup_{i \in \bb{N}, \, i \leq N} \X \times \{a_i\}.
    \]
    \begin{proof}
        We know that there exists $a_1:=a>0$ such that
        \[
        \X \times (-\infty,a_1) \cap C= \emptyset \quad \text{and} \quad \X \times \{a_1\} \subset C.
        \]
        If $C=\X \times \{a_1\}$ we have nothing left to prove, so suppose that this is not the case.
        Then the set $C_1:=C \setminus \X \times \{a_1\}$ is still locally a boundary of a perimeter minimizing set, so that
        there exists $a_2 > a_1$ such that
        \[
        \X \times (-\infty,a_2) \cap C_1= \emptyset
        \quad \text{and} \quad 
        \X \times \{a_2\} \subset C_1.
        \]
        If $C_2:=C_1 \setminus \X \times \{a_2\}=\emptyset$ the proof is concluded, otherwise we apply the same argument to $C_2:=C_1 \setminus \X \times \{a_2\}$ and we keep iterating this process as long as $C_k \neq \emptyset$. If there exists $N \in \bb{N}$ such that $C_{N} = \emptyset$ we set $a_i=+\infty$ for every $i \geq N+1$ and the proof is concluded.
    If this is not the case, and $a_i \to + \infty$ we set $N:=+\infty$ and the proof is again concluded. \par 
    We claim that the previous two cases are the only possible. Indeed if $C_k \neq \emptyset$ for every $k$ and there exists $b \in \bb{R}$ such that $a_i \uparrow b$, then in any neighbourhood of a point in $\X \times \{b\}$ the set $E$ whose boundary locally coincides with $C$ would have infinite perimeter, a contradiction.
    \end{proof}
\end{thm}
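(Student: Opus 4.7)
The plan is to run the iterative scheme that the paper has already set up: Corollary \ref{C9}, together with the lemma showing that $C\setminus \X\times\{a\}$ inherits the property of being locally the boundary of a locally perimeter minimizing set, immediately gives a recursion that peels off one horizontal slice at a time. So the main work is organizing this inductive construction and ruling out the only pathological case, namely that the sequence of heights accumulates at a finite value.

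More precisely, I would first apply Corollary \ref{C9} directly to $C$ to produce $a_1\in(0,+\infty)$ with $\X\times\{a_1\}\subset C$ and $C\cap \X\times(-\infty,a_1)=\emptyset$. If $C=\X\times\{a_1\}$ the theorem holds with $N=1$. Otherwise, set $C_1:=C\setminus \X\times\{a_1\}$. Because the previous lemma (together with its corollary) shows that $\overline{C\setminus \X\times\{a_1\}}\cap \X\times\{a_1\}=\emptyset$ and that $C_1$ is again locally the boundary of a locally perimeter minimizing set, the whole machinery applies to $C_1$ in place of $C$: all the hypotheses of Corollary \ref{C9} hold, with the lower bound for the half space now being $a_1$ instead of $0$. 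This yields $a_2>a_1$ with $\X\times\{a_2\}\subset C_1$ and $C_1\cap \X\times(-\infty,a_2)=\emptyset$. One then iterates, setting $C_k:=C_{k-1}\setminus \X\times\{a_k\}$, and either the process terminates at some step $N$ (in which case set $a_i=+\infty$ for $i>N$), or it continues indefinitely producing an increasing sequence $\{a_i\}_{i\in \bb{N}}$.

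The only step that requires genuine argument is showing that the sequence $\{a_i\}$ cannot have a finite accumulation point. Suppose by contradiction that $a_i\uparrow b<+\infty$ and pick $x\in \X$. Around $(x,b)$ the set $C$ is locally the boundary of a locally perimeter minimizing set $E\subset U_{(x,b)}$. By construction $\X\times\{a_i\}\subset C$ for every $i$, so the slices $\{a_i\}_{i\in\bb{N}}$ accumulate at $b$ from below, and each intersects $U_{(x,b)}$ in a set of positive $\m$-measure. By Proposition \ref{P22} applied at each level $a_i$ (after shrinking $U_{(x,b)}$ if necessary to an open product $A\times I$ with $\m(A)>0$), each slice $\X\times\{a_i\}\cap U_{(x,b)}$ contributes at least $\m(A)>0$ to the perimeter of $E$ in $U_{(x,b)}$. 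Summing over the infinitely many $i$ with $a_i\in I$ gives $P(E,U_{(x,b)})=+\infty$, contradicting local finiteness of the perimeter of $E$.

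The main obstacle is this accumulation argument: one must make sure that each horizontal slice really carries an honest chunk of perimeter of the local minimizer, and that the slices are disjoint from $\partial E$ in a uniform sense so their perimeter contributions add rather than cancel. This is exactly what Proposition \ref{P22} (lower bound $P(E,A\times\{a\})\geq \m(A)$) guarantees when applied separately at each height $a_i$ inside a common local product chart $A\times I$. Once this is in place, the inductive construction is straightforward and the conclusion $C=\bigcup_{i\leq N} \X\times\{a_i\}$ is forced, because at each step we have removed everything of $C$ lying strictly below $a_{k+1}$ and $\X\times\{a_{k+1}\}\subset C_k$.
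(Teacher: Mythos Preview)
Your proposal is correct and follows essentially the same iterative scheme as the paper: peel off slices one at a time using Corollary \ref{C9} and the lemma guaranteeing that $C\setminus\X\times\{a\}$ is again locally the boundary of a locally perimeter minimizing set, then rule out finite accumulation by an infinite-perimeter contradiction.

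The only difference is that you flesh out the accumulation step by invoking Proposition \ref{P22}, whereas the paper simply asserts that the local $E$ would have infinite perimeter near $\X\times\{b\}$. Your added detail is welcome, but note that Proposition \ref{P22} is formulated for the \emph{lowest} slice $a$ and the corresponding local minimizer from that specific setup; to apply it at each $a_i$ inside a single chart $A\times I$ around $(x,b)$ you should remark that the same proof goes through verbatim at height $a_i$ once you know $\X\times\{a_i\}\subset\partial E$ and $E$ lies locally on one side of that slice (which the iteration supplies). Alternatively, the standard density estimates for local perimeter minimizers already give a uniform lower bound $P(E,A\times(a_i-\delta,a_i+\delta))\geq c>0$ for each $i$, which is all you need. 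Either way the argument closes, and your proof is a faithful (slightly more explicit) version of the paper's.
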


\section{Proof of Theorem \ref{CT2Mod}} \label{SGlobal}

The goal of this section is to prove Theorem \ref{T|GlobalMin}, which implies Theorem \ref{CT2Mod} from the Introduction. The proof of this result is similar in spirit to the proofs of the Half Space Property on manifolds given in \cite{RSS,CMMR,DingCap}. The idea is the following. 
Let $\X$ be a parabolic $\RCD(K,N)$ space and let $E \subset \X \times \bb{R}$ be a perimeter minimizing set contained in $\X \times (0,+\infty)$. 
If $\partial E$ is not made of horizontal slices, one is able to construct functions $u_i \in \BV(\X)$ which solve an obstacle problem and have large oscillations. This is due to the fact that $\partial E$ acts as a barrier, so that the graph of each $u_i$ lies at the same time above the obstacle and below $E$, forcing $u_i$ to oscillate. 
In \cite{RSS,CMMR, DingCap}, $\partial E$ can be used as a barrier since, in a Riemannian manifold, minimal hypersurfaces that are tangent to each other must coincide due to the maximum principle. We recall that the maximum principle can be applied thanks to the ellipticity of the minimal surface equation, which crucially relies on the smoothness of the ambient space.
In our setting, we can use $\partial E$ as a barrier thanks to a comparison argument that relies instead on the assumption that $E$ is perimeter minimizing.

Then, by using the fact that $\X$ is parabolic, one obtains that the functions $u_i$ have to converge to a constant function, violating the previous condition on the oscillation. In \cite{RSS,CMMR, DingCap}, this is obtained by working on the graphs of the functions $u_i$ and exploiting the regularity properties of minimal graphs (e.g.,\ harmonicity of the height function on the  graph and second variation formulas). In our setting, we only rely on integration by parts arguments which require no additional regularity for the functions $u_i$ (which are only $\BV$ in our setting).

We now recall the definition of area functional on $\RCD$ spaces (see \cite{Tens}, \cite{Brena22}, \cite{relax}, \cite{HKLreg}, and \cite{AreaFormula}) and a few basic properties that are needed to prove Theorem \ref{T13}.

\begin{definition} [Area functional]\label{D|Area functional}
    Let $(\X,\sd,\m)$ be an $\RCD(K,N)$ space, let $\Omega \subset \X$ be an open set and let $u \in \BV(\Omega)$. For every measurable $E \subset \Omega$ we define the area of $u$ on $E$ as
    \[
    \A(u,E):=\int_E \sqrt{1+|\nabla u|^2} \, d\m + |D^s u|(E).
    \]
\end{definition}

Let $\Omega \subset \X$ and let $u:\Omega \to \bb{R}$. We set
\[
\Hyp(u):=\{(x,t) \subset \Omega \times \bb{R}:t \leq u(x)\}, \quad \Epi(u):=\{(x,t) \subset \Omega \times \bb{R}:t \geq u(x)\}.
\]
The next result can be found in \cite{Brena22} (see also \cite{Tens} and \cite{AreaFormula}).

\begin{thm}[Area Formula]
Let $(\X,\sd,\m)$ be an $\RCD(K,N)$ space.
    Let $\Omega \subset \X$ be an open bounded set and let $u \in \BV(\Omega)$. For every Borel set $E \subset \Omega$ it holds
\[
P(\ssf{Hyp}(u),E \times \bb{R})=\A(u,E).
\]
\end{thm}


\begin{definition}[Symmetrized function]
    Let $(\X,\sd,\m)$ be an $\RCD(K,n)$ space. Let $\Omega \subset \X$ be open.
Let $E \subset \X \times \bb{R}$ be such that $\Omega \times (-\infty,0) \subset E$ and $ \Omega \times (-\infty,0) \Delta E \subset \subset \Omega \times \bb{R}$. We then define $w(E):\Omega \to \bb{R}$ by
    \[
    w(E)(x):=\int_0^{+\infty} 1_E(x,s) \, ds.
    \]
\end{definition}

The proof of the next proposition follows by adapting arguments from \cite[Section $3.2$]{C2} and we report it for the sake of completeness.

\begin{proposition} \label{P40}
Let $(\X,\sd,\m)$ be an $\RCD(K,n)$ space. Let $\Omega \subset \X$ be open.
Let $E \subset \X \times \bb{R}$ be such that $\Omega \times (-\infty,0) \subset E$ and $ \Omega \times (-\infty,0) \Delta E \subset \subset \Omega \times \bb{R}$.  Then, $w(E) \in \BV(\Omega)$ and $\A(w(E),\Omega) \leq P(E,\Omega \times \bb{R}).$
\begin{proof}
Let $c>0$ be such that $ \Omega \times (-\infty,0) \Delta E \subset \subset \Omega \times (-\infty,c)$ and let $\epsilon>0$. 
    Consider a sequence $f_n \in \Lip(\Omega \times (-\epsilon,c))$ converging in $\sL^1(\Omega \times (-\epsilon,c))$ to $1_E$ and such that
    \[
    \lim_{n \to + \infty} |D f_n|(\Omega \times (-\epsilon,c))=P(E,\Omega \times \bb{R}).
    \]
    Modulo truncating, we can assume that $f_n \equiv 1$ on $\Omega \times \{-\epsilon\}$ and $f_n \equiv 0$ on $\Omega \times \{c\}$.
    We then define $w_{\epsilon}(f_n): \Omega \to \bb{R}$ and $w_\epsilon(E):\Omega \to \bb{R}$ as
    \[
    w_\epsilon(f_n)(x):=\int_{-\epsilon}^c f_n(x,s) \, ds, \quad 
    w_{\epsilon}(E)(x):=\int_{-\epsilon}^{+\infty} 1_E(x,s) \, ds.
    \]
    The functions $w_\epsilon(f_n)$ are Lipschitz since
    \[
    \frac{|w_{\epsilon}(f_n)(x)-w_{\epsilon}(f_n)(y)|}{\sd(x,y)} \leq \int_{-\epsilon}^c \frac{|f_n(x,s)-f_n(y,s)|}{\sd(x,y)} \, ds \leq (c+\epsilon) \ssf{L}(f_n).
    \]
    We now use the notation defined before Proposition \ref{P38}.
    By reverse Fatou Lemma and the fact that each $f_n$ is Lipschitz, for $\m$-a.e. $x \in \X$ it holds
    \begin{equation} \label{E30}
    |\nabla w_{\epsilon}(f_n)|(x)=\lip (w_{\epsilon}(f_n))(x) \leq \int_{-\epsilon}^c |\nabla f_n^s|(x) \, ds.
    \end{equation}
    Moreover, using the tensorization property of Proposition \ref{P38}, it holds
    \begin{align*}
    |D f_n|(\Omega \times (-\epsilon,c))
    & =
    \int_{\Omega \times (-\epsilon,c)} |\nabla f_n |(x,s) \, d\m \, ds =
    \int_{\Omega \times (-\epsilon,c)} \sqrt{|\nabla f_n^s|^2(x)+|\nabla f_n^x|^2(s)} \, d\m \, ds \\
    & \geq 
    \sup_{\substack{(a,b) \in C(\Omega) \times C(\Omega) \\ a^2+b^2 \leq 1 \\ a,b \geq 0}}
    \int_{\Omega \times (-\epsilon,c)} a(x) |\nabla f_n^s|(x)+b(x)|\nabla f_n^x|(s) \, d\m \, ds.
    \end{align*}
    Combining with \eqref{E30} and the fact that $f_n=1$ on $\Omega \times \{-\epsilon\}$ and $f_n=0$ on $\Omega \times \{c\}$, we deduce
    \begin{align*}
        |D f_n|(\Omega \times (-\epsilon,c)) \geq 
    \sup_{\substack{(a,b) \in C(\Omega) \times C(\Omega) \\ a^2+b^2 \leq 1 \\ a,b \geq 0}}
    \int_\Omega a(x) |\nabla w_{\epsilon}(f_n)|(x)+b(x) \, d\m=\A(w_{\epsilon}(f_n),\Omega).
    \end{align*}
    Since
    \[
    \int_\Omega |w_{\epsilon}(f_n) - w_{\epsilon}(E)| \, d\m \leq \int_{\Omega \times (-\epsilon,c)}|f_n(x,s)-1_E(x,s)| \, d\m \, ds,
    \]
    it holds $w_\epsilon(f_n) \to w_\epsilon(E)$ in $\sL^1(\Omega)$ as $n \to + \infty$. 
    These facts imply that $w_\epsilon(E) \in \BV(\Omega)$. Concerning the area of $w_\epsilon(E)$, we get
    \[
    \A(w_\epsilon(E),\Omega) \leq \liminf_{n \to + \infty} \A(w_\epsilon(f_n),\Omega) \leq \liminf_{n \to + \infty} |D f_n|(\Omega \times (-\epsilon,c))=P(E,\Omega \times \bb{R}).
    \]
    Since $w(E)=w_\epsilon(E)-\epsilon$, the statement follows.
    \end{proof}
    \end{proposition}

\begin{lemma} \label{L23}
Let $(\X,\sd,\m)$ be an $\RCD(K,N)$ space.
    Let $B \subset \X$ be a perimeter minimizing set. Let $A \subset \X$ be a set of finite perimeter such that $A \cap B \subset \subset \Omega$ for some open set $\Omega \subset \subset \X$. Then, $P(A \cap {^c}B,\Omega) \leq P(A,\Omega)$.
    \begin{proof}
        By the usual perimeter identities (see \cite{AmbPeri}),
        \[
        P(A \cap {^c}B,\Omega) \leq  P(A,\Omega)+P(B,\Omega)-P(A \cup {^c}B,\Omega).
        \]
        Since $B$ is perimeter minimizing, it holds $P(B,\Omega)-P(A \cup {^c}B,\Omega) \leq 0$, concluding the proof.
    \end{proof}
\end{lemma}

The next theorem is the key result to prove Theorem \ref{T|GlobalMin}.

\begin{thm}[Key Step for Theorem \ref{CT2Mod}] \label{T13}
    Let $(\X,\sd,\m)$ be a parabolic $\RCD(K,N)$ space. Let $E \subset \X \times \bb{R}$ be a perimeter minimizing set such that $\partial E \subset \X \times (0,+\infty)$. Then, there exists $a>0$ such that $\partial E \cap \X \times (-\infty,a]=\X \times \{a\}$. 
\end{thm}
\begin{proof}
    In this proof, $E$ will denote the open representative. Modulo passing to the complement, we assume that $E \subset \X \times (0,+\infty)$. If the statement fails, then there exist $x \in \X$, $s,h>0$ such that $B_s(x) \times [0,h] \cap E=\emptyset$, while $\X \times [0,h] \cap E \neq \emptyset$. For simplicity, we assume $h=1$.

    For every $i \in \bb{N}$ with $i>s$ consider functions $u_i \in \BV(\X)$ such that
    \begin{equation} \label{E28}
    \ssf{A}(u_i,B_{i+1}(x))=\min \{\A(u,B_{i+1}(x)): u \in \BV(\X) , \, u=1 \text{ on }B_s(x), \, u=0 \text{ on }\X \setminus B_{i}(x)\}.
    \end{equation}
    These functions exist since the area functional is lower semicontinuous. Note that each $u_i$ only takes values in $[0,1]$. We now divide the proof in steps
    
    \textbf{Step 1}: We set $v_i:=w(\ssf{Hyp}(u_i)\setminus E)$ and we claim that 
    \[
    \A(v_i,B_{i+1}(x))=\ssf{A}(u_i,B_{i+1}(x)).
    \]  
    To prove the claim, note that by Proposition \ref{P40}, it holds
    \[
    \ssf{A}(v_i,B_{i+1}(x)) \leq P(\ssf{Hyp}(u_i) \setminus E,B_{i+1}(x) \times \bb{R}).
    \]
    Since $E$ is perimeter minimizing, by Lemma \ref{L23} it holds
    \[
    P(\ssf{Hyp}(u_i) \setminus E,B_{i+1}(x) \times \bb{R})
    \leq P(\ssf{Hyp}(u_i),B_{i+1}(x) \times \bb{R})=\ssf{A}(u_i,B_{i+1}(x)).
    \]
    Since $w(\ssf{Hyp}(u_i) \setminus E)$ is equal to $1$ on $B_s(x)$ and to $0$ on $\X \setminus B_i(x)$, we also have $\ssf{A}(u_i,B_{i+1}(x)) \leq \ssf{A}(v_i,B_{i+1}(x))$.
    This concludes the proof of Step $1$.
    
    \textbf{Step 2}: We claim that, modulo passing to a subsequence, the functions $v_i$ converge to $1$ in $\sL^1_{loc}(\X)$.
    
    To prove the claim, it suffices to show that for every compact $K \subset \X$, it holds $|D v_i|(K) \to 0$ as $i \uparrow + \infty$. If this holds, the claim follows by lower semicontinuity of total variations.
    
    Fix then $K \subset \subset \X$ and let $i \in \bb{N}$ be large enough. Let $\phi \in \Lip_c(\X)$ be a non-negative function compactly supported in $B_i(x)$ which is equal to $1$ on $K$. Observe that $(1-v_i)\phi^2$ is nonnegative. 
    By the chain rule (see \cite[Proposition 4.35]{BrenaLeibniz}), for $t$ small enough it holds
    \[
    |D^s v_i(1-t\phi^2)|(\X)=\int_\X(1-t\phi^2) \, d|D^s v_i|.
    \]
    
    Combining this with standard arguments involving Dominated Convergence Theorem (see \cite[Theorem 2.27]{Folland}), we obtain that the function $f(t):=\ssf{A}(v_i+t(1-v_i)\phi^2,B_{i+1}(x))$ is differentiable in $t=0$ with
    \begin{equation} \label{E35}
    f'(0)=\int_\X \frac{\nabla v_i \cdot  \nabla ((1-v_i)\phi^2)}{\sqrt{1+|\nabla v_i|^2}} \, d\m-\int_\X \phi^2 \, d|D^s v_i|.
    \end{equation}
    By Step $1$, $v_i$ is also a minimizer of the problem \eqref{E28}. Combining with \eqref{E35}, it holds
    \begin{align*}
    0 \leq -\int_\X \phi^2 \frac{|\nabla v_i|^2}{\sqrt{1+|\nabla v_i|^2}} \, d\m+
    \int_\X (1-v_i) \frac{\nabla v_i \cdot \nabla \phi^2}{\sqrt{1+|\nabla v_i|^2}} \, d\m
    -\int_\X \phi^2 \, d|D^s v_i|.
    \end{align*}
    Rearranging and using Young's inequality, it holds
    \begin{align*}
        \int_\X \phi^2 \frac{|\nabla v_i|^2}{\sqrt{1+|\nabla v_i|^2}} \, d\m+\int_\X \phi^2 \, d|D^s v_i| \leq \frac{1}{2} \int_\X \phi^2 \frac{|\nabla v_i|^2}{\sqrt{1+|\nabla v_i|^2}} \, d\m+8\int_\X |\nabla \phi|^2 \, d\m.
    \end{align*}
    
    Hence, using that $\ssf{Cap}(K)=0$ by Theorem \ref{CT4}, it holds $|D v_i|(K) \to 0$ as $i \uparrow + \infty$ as claimed, concluding the proof of Step $2$.
 
    We reach a contradiction since $\X \times [0,1] \cap E \neq \emptyset$, so that, by definition of $v_i$, it holds $v_i \leq 1-\epsilon$ on an open set for $\epsilon>0$ small enough.
\end{proof}

Let $E \subset \X$ be a perimeter minimizing set such that $\partial E \subset \X \times (0,+\infty)$ and $E \subset \X \times (0,+\infty)$. Let $a>0$ be given by Theorem \ref{T13} and set $E':=E \cup \X \times (-\infty,a)$.
    Note that all the results from Lemma \ref{LP28} to Lemma \ref{L18} hold for this choice of $E$ and $E'$ with $U_{(x,a)}=V \times \bb{R}$, where $V$ is any open bounded set of $\X$ (the arguments are formally the same replacing Corollary \ref{C9} with Theorem \ref{T13} and using our stronger assumptions on $E$).

    The next lemma shows that an analogue of Corollary \ref{C14} holds as well. This does not follow automatically, since in the proof of Corollary \ref{C14} we use the assumption that $\X$ is $\RCD(0,N)$. In the next lemma, $\partial E'$ is the boundary of the closed (or open) representative of $E'$, which exists by Proposition \ref{P25}.

\begin{lemma} \label{L19}
Let $(\X,\sd,\m)$ be a parabolic $\RCD(K,N)$ space. Let $E \subset \X \times \bb{R}$ be a perimeter minimizing set such that $\partial E \subset \X \times (0,+\infty)$ and $E \subset \X \times (0,+\infty)$. Let $a>0$ be given by Theorem \ref{T13} and set $E':=E \cup \X \times (-\infty,a)$. Then, $\partial E' \cap \X \times \{a\} = \emptyset$.
\begin{proof}
    Observe that by Lemma \ref{L18}, $\partial E' \cap \X \times \{a\} \neq \X \times \{a\}$. By Proposition \ref{P25} and Theorem \ref{T13} (applied to the complement of $E'$), it holds $\partial E' \cap \X \times \{a\} = \emptyset$.
    \end{proof}
\end{lemma}

\begin{thm}[General version of Theorem \ref{CT2Mod}] \label{T|GlobalMin}
    Let $(\X,\sd,\m)$ be a parabolic $\RCD(K,N)$. Let $E \subset \X \times \bb{R}$ be a perimeter minimizing set such that $E \subset \X \times (0,+\infty)$. 
    There exists a sequence $\{a_i\}_{i \in \bb{N}} \subset (0,+\infty]$ increasing to $+ \infty$ and $N \in \bb{N} \cup \{+ \infty \}$ such that
    \[
    \partial E=\bigcup_{i \in \bb{N}, \, i \leq N} \X \times \{a_i\}.
    \]
    In particular, if $\partial E$ is connected, then it is a horizontal slice.
    \begin{proof}
    The proof is now similar the one of Theorem \ref{T11}.
         We know that there exists $a_0>0$ such that
        \[
        \X \times (-\infty,a_0) \cap \partial E= \emptyset \quad \text{and} \quad \X \times \{a_0\} \subset \partial E.
        \]
        If $\partial E=\X \times \{a_0\}$ we have nothing left to prove, so suppose that this is not the case.
        Then, the set $E_0:=E \cup \X \times (-\infty,a_0)$ is still a perimeter minimizing set by Proposition \ref{P25} and it has $\partial E_0 \cap \X \times \{a_0\} = \emptyset$ by Lemma \ref{L19}. Hence, by Theorem \ref{T13} (applied to ${^c}E_0$),
        there exists $a_1 > a_0$ such that
        \[
        \X \times (-\infty,a_1) \cap \partial E_0= \emptyset
        \quad \text{and} \quad 
        \X \times \{a_1\} \subset \partial E_0 \subset \partial E.
        \]

        If $\partial E_0 \setminus \X \times \{a_1\}=\emptyset$ the proof is concluded, otherwise we set $E_1:={^c}E_0 \cup \X \times (-\infty,a_1)$. Hence,
        we find $a_2 >a_1$ such that
         \[
        \X \times (-\infty,a_2) \cap \partial E_1= \emptyset
        \quad \text{and} \quad 
        \X \times \{a_2\} \subset \partial E_1 \subset \partial E.
        \]
        We then define $E_2:={^c}E_1 \cup \X \times (-\infty,a_2)$
        and we keep iterating this process, setting $E_k:={^c}E_{k-1} \cup \X \times (-\infty,a_k)$ as long as $\partial E_{k-1} \setminus \X \times \{a_k\} \neq \emptyset$. If there exists $N \in \bb{N}$ such that $\partial E_{N-1} \setminus \X \times \{a_N\}
        = \emptyset$ we set $a_i=+\infty$ for every $i \geq N+1$ and the proof is concluded.
    If this is not the case, and $a_i \to + \infty$ we set $N:=+\infty$ and the proof is again concluded. 
    
    We claim that the previous two cases are the only possible. Indeed if $\partial E_{k-1} \setminus \X \times \{a_k\} \neq \emptyset$ for every $k$ and there exists $b \in \bb{R}$ such that $a_i \uparrow b$, then $E$ would have infinite perimeter, a contradiction.
    \end{proof}
\end{thm}

The next theorem deals with the case where the space $(\X,\sd,\m)$ has slow volume growth (see Definition \ref{Dslow}). We remark that if $X$ is an $\RCD(0,N)$ space, having slow volume growth is equivalent to being parabolic by Theorem \ref{CT5}.

\begin{thm}[Variant of Theorem \ref{CT2Mod} for spaces with slow volume growth] \label{T|MinPerSlow}
    Let $(\X,\sd,\m)$ be an $\RCD(K,N)$ space with slow volume growth. Let $E \subset \X \times \bb{R}$ be a perimeter minimizing set such that $E \subset \X \times (0,+\infty)$. Then, there exists $a \geq 0$ such that $E=\X \times [a,+\infty)$. 
    \begin{proof}
        By the previous theorem, the boundary of $E$ is made of horizontal slices. We assume by contradiction that $\partial E$ has more than $1$ connected component. 
        
    Suppose first that $\X$ is compact. In this case subtracting from $E$ its lowest connected component we obtain a competitor for $E$ with strictly less perimeter, a contradiction. So we can suppose that $\X$ has infinite diameter. 
    
    Let $C>0$ be the distance between the lowest and the second lowest connected component of $\partial E$. We claim that there exists $s>0$ such that
    \begin{equation} \label{E34}
    CP(B_s(\x),\X) < 2 \m(B_s(\x)).
    \end{equation}
    Recall that, by coarea formula, the function $ s \mapsto \m(B_s(\x))$ is absolutely continuous and satisfies $\frac{d}{ds} \m(B_s(\x)) = P(B_s(\x))$ for a.e. $s>0$.
    Hence, if \eqref{E34} fails for every $s>0$, it would follow that  $\frac{d}{ds} \m(B_s(\x)) \geq 2C^{-1} \m(B_s(\x))$ and thus $\m(B_s(\x))$ grows exponenentially in $s$. This contradicts the slow volume growth assumption and proves the claimed inequality \eqref{E34}. 
    
    So let $s_0>0$ be a value satisfying \eqref{E34}, let $\X \times (a,a+C) \subset E$ be the lowest connected component of $E$ (up to replacing $E$ with its complement, we may assume that the lowest connected component has this form), let $\epsilon>0$ be small enough so that $\X \times (a,a+C+\epsilon) \cap E=\X \times (a,a+C)$ and consider the set
    \[
    A:= (\X \setminus B_{s_0}(\x)) \times (a,a+C).
    \]
    Observe that $A \Delta E \subset \subset B_{s_0+1}(\x) \times (a-\epsilon,a+C+\epsilon)$, while at the same time
    we have that
    \begin{align*}
    P &(A,B_{s_0+1}(\x) \times (a-\epsilon,a+C+\epsilon))
    \\
    &=C P(B_{s_0}(\x),\X)+P(\X \times (a,a+C),(B_{s_0+1}(\x) \setminus \bar{B}_{s_0}(\x)) \times (a-\epsilon,a+C+\epsilon))
    \\
    &< 2 \m(B_{s_0}(\x))+
    P(\X \times (a,a+C),(B_{s_0+1}(\x) \setminus \bar{B}_{s_0}(\x)) \times (a-\epsilon,a+C+\epsilon))
    \\
    &= P(E,B_{s_0+1}(\x) \times (a-\epsilon,a+C+\epsilon)),
    \end{align*}
    contradicting the fact that $E$ is a perimeter minimizer and proving that $\partial E$ has only $1$ connected component, which then implies the statement.
    \end{proof}
\end{thm}

We conclude the section with an example showing that there exists an infinitesimally Hilbertian metric measure space which is parabolic (meaning that it admits a point $x$ with no positive Green's function with pole $x$) and does not have the Half Space Property.

\begin{example} \label{Ex1}
    Consider the metric measure space $(\X,\sd,\m)$ obtained by gluing $([0,+\infty),\sd_e,(x \wedge 1) \lambda^1)$ and $(\bb{R},\sd_e,\lambda^1)$ in the respective origins.
    Using the fact that weighted manifolds are infinitesimally Hilbertian (see \cite{LPinf}), and that such space is a weighted manifold outside of the gluing point, we obtain that $\X$ is infinitesimally Hilbertian as well. We now show that $\X$ is parabolic in $2$ steps. \par
    \textbf{Step 1}:
    Denote by $(a_{\bb{R}},b_{\bb{R}})$ intervals in $(\bb{R},\sd_e,\lambda^1)$ and by $(a_{\bb{\ssf{Y}}},b_{\bb{\ssf{Y}}})$  intervals in $([0,+\infty),\sd_e,(x \wedge 1) \lambda^1)$. We claim that, given $c_{\bb{R}} \in (0,1)$, if a function $u \in \W^{1,1}((-c_{\bb{R}},c_{\bb{R}}) \cup (0_\ssf{Y},c_\ssf{Y}))$ is harmonic in its domain, then the restriction of $u$ to $(-c_{\bb{R}},c_{\bb{R}})$ is affine. \\
    To this aim note that the restriction of $u$ to $(-c_{\bb{R}},c_{\bb{R}})$ belongs to $\W^{1,1}((-c_{\bb{R}},c_{\bb{R}}))$ and in particular it is continuous. \par 
    Moreover by restricting $u$ to open sets of its domain that do not contain the gluing point we obtain, by harmonicity, that $u$ is affine in $(-c_{\bb{R}},0_{\bb{R}})$ and $(0_{\bb{R}},c_{\bb{R}})$ (possibly with different slopes) and that $u$ is smooth in $(0_\ssf{Y},c_\ssf{Y})$. Hence, if we manage to prove that the restriction of $u$ to $(-c_{\bb{R}},c_{\bb{R}})$ is $C^1$ we conclude the proof of Step $1$. 
    \par 
    To this aim let $\phi \in \Lip_c((-c_{\bb{R}},c_{\bb{R}}) \cup (0_\ssf{Y},c_\ssf{Y}))$ and consider the harmonicity condition (we are using that relaxed gradients coincide with local Lipschitz constants in our space and we are denoting by $u'$ and $\phi'$ standard derivatives):
    \[
    0=\int_{(-c_{\bb{R}},c_{\bb{R}}) \cup (0_\ssf{Y},c_\ssf{Y})} \nabla u \cdot \nabla \phi \, d\m=\int_{-c_{\bb{R}}}^0 u'(x) \phi'(x) \, d\lambda^1+\int_{0}^{c_{\bb{R}}} u'(x) \phi'(x) \, d\lambda^1+\int_{0_\ssf{Y}}^{c_\ssf{Y}} u'(x) \phi'(x)x \, d\lambda^1(x).
    \]
    Integrating by parts and using the previously found regularity we obtain
    \[
    0=\phi(0)\Big(\lim_{\substack{x \in (-c_{\bb{R}},0) \\ x \to 0}} u'(x)-\lim_{\substack{x \in (0,c_{\bb{R}}) \\ x \to 0}}u'(x)\Big),
    \]
    which shows that the restriction of $u$ to $(-c_{\bb{R}},c_{\bb{R}})$ is $C^1$ and concludes the proof of Step $1$. \par 
    \textbf{Step 2}: We now prove that $\X$ is parabolic. \\
    By step $1$ we deduce that if $x \in \bb{R} \setminus \{0\}$ and $G_x$ is the corresponding Green's function on $\X$, then the restriction of $G_x$ to $\bb{R}$ is a Green's function on the standard real line. This in particular implies that $G_x$ cannot be positive, proving that $\X$ is parabolic. \par 
    We finally show that the Half Space Property fails on $\X$. This is done again in two steps. \par 
    \textbf{Step 1}.
    Consider the function $1_{\bb{R}}:\X \to \bb{R}$ defined as $1_{\bb{R}}\equiv 1$ on $\bb{R}$ and $1_{\bb{R}}\equiv 0$ on $ \X\setminus \bb{R}$.
    We claim that its epigraph $\Epi(1_{\bb{R}}) \subset \X \times \bb{R}$ satisfies $P(\Epi(1_{\bb{R}}),\{0 \} \times \bb{R})=0$. \\
    Since $P(\Epi(1_{\bb{R}}),\{0 \} \times \{0,1\})=0$, to prove the above claim
     it is sufficient to show that
    $P(\Epi(1_{\bb{R}}),\{0 \} \times [\epsilon,1-\epsilon])=0$ for every $\epsilon>0$ small enough. This would follow if we can show that for $\delta>0$ fixed we have
    \[
    P(\Epi(1_{\bb{R}}),B^\X_\delta(0) \times [\epsilon,1-\epsilon]) =0.
    \]
    So let $\delta>0$ be fixed and consider the sequence $f_n$ given by
    \[
    f_n((x,t)):=1_{(0_\ssf{Y},1_\ssf{Y})}(x)((1-n\sd(0,x)) \vee 0)+1_\bb{R}(x).
    \]
    Using this as approximating sequence in the definition of perimeter we get
    \[
    P(\Epi(1_{\bb{R}}),B^\X_\delta(0) \times [\epsilon,1-\epsilon]) \leq \liminf_{n \to + \infty} (1-2\epsilon) n\int_0^{1/n} x \, dx  = 0,
    \]
    concluding the proof of Step $1$. \par 
    \textbf{Step 2}: We now show that $\Epi(1_{\bb{R}})$ is a perimeter minimizer. \\
    By Step $1$ we obtain that in any set of the form $B^\X_R(0) \times (-1,2) \subset \X \times \bb{R}$ we have
    \[
    P(\Epi(1_{\bb{R}}),B^\X_R(0) \times (-1,2))=P(\X \times [0,+\infty),B^\X_R(0) \times (-1,2)),
    \]
    while one can check that if $E \Delta \Epi(1_{\bb{R}}) \subset \subset B^\X_R(0) \times (-1,2)$, then 
    \[
    P(E,B^\X_R(0) \times (-1,2)) \geq P(\X \times [0,+\infty),B^\X_R(0) \times (-1,2)).
    \]
    Putting these facts together we deduce that $E$ is a perimeter minimizer, as claimed.
\end{example}

\section{Applications} \label{S1}
In this section, whenever we consider a manifold $\M$ and a hypersurface $S \subset \M$, we implicitly mean that the hypersurface is properly embedded.
Given a manifold $(\M,g)$ with boundary we say that $\partial \M$ is convex if its second fundamental form $\Pi$ w.r.t. the inward pointing normal is positive. Moreover, given a hypersurface $S \subset \M$, we denote by $\mathbf{H}$ its mean curvature vector. 
The next proposition can be obtained repeating an argument in \cite[Theorem $2.4$]{H20}.

\begin{proposition} \label{P37}
    Let $(\M^n,\sd_g,e^{-V} d \m_g)$ be a  weighted manifold with convex boundary such that for a number $N>n$
    \[
    \Ric_\M+\Hess_V-\frac{\nabla V \otimes \nabla V}{N-n} \geq K \quad \text{ on } \M \setminus \partial \M,
    \]
    then $(\M^n,\sd_g,e^{-V} d \m_g)$ is an $\RCD(K,N)$ space.
\end{proposition}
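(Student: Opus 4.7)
The plan is to verify separately the two conditions defining an $\RCD(K,N)$ space: infinitesimal Hilbertianity and the synthetic curvature-dimension condition $\CD(K,N)$. Infinitesimal Hilbertianity is essentially automatic, since on the smooth weighted manifold $(\M,\sd_g,e^{-V}\m_g)$ the Cheeger energy coincides with $f\mapsto\tfrac12\int|\nabla f|^2\,e^{-V}d\m_g$, a quadratic form on $\W^{1,2}(\M)$. The presence of the boundary is harmless here, because $\nabla f$ is still defined almost everywhere on $\M$ and the parallelogram identity for the Riemannian gradient holds pointwise. All the substantive content is therefore in verifying $\CD(K,N)$.

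I would first recall the boundaryless case. On a smooth weighted Riemannian manifold without boundary, it is classical (Sturm, Lott--Villani, after Cordero-Erausquin--McCann--Schmuckenschl\"ager) that the Bakry-\'Emery $N$-Bochner bound
\[
\Ric_\M+\Hess_V-\frac{\nabla V\otimes\nabla V}{N-n}\geq K
\]
is equivalent to the synthetic $\CD(K,N)$ condition; the proof proceeds via the second variation formula for the weighted Jacobian along the $W_2$-geodesics $t\mapsto\exp_x(t\nabla\phi(x))$ produced by Brenier's theorem.

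To incorporate the convex boundary, the plan is to follow the approximation/doubling strategy of \cite{H20}: one realizes $\M$ as a geodesically convex subset of a weighted manifold \emph{without} boundary whose Bakry-\'Emery curvature still satisfies the required lower bound. Concretely, one isometrically doubles $\M$ along $\partial\M$ to obtain a weighted metric measure space $(\tilde{\M},\tilde{\sd},\tilde{\m})$. The convexity $\Pi\geq 0$ of the second fundamental form is precisely what ensures that $\tilde{\M}$ inherits a synthetic $\CD(K,N)$ structure, since the metric singularity along the doubled copy of $\partial\M$ contributes in a way compatible with a lower Ricci bound. Because $\M$ sits inside $\tilde{\M}$ as a geodesically convex closed subset, it inherits $\CD(K,N)$, and together with the already verified infinitesimal Hilbertianity this gives $\RCD(K,N)$.

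I expect the main technical obstacle to be the rigorous verification that $\tilde{\M}$ is $\CD(K,N)$, since the doubled metric is only $C^0$ (not $C^1$) along the seam $\partial\M$ and the usual smooth second variation arguments do not apply there. The way to circumvent this, carried out in detail in \cite{H20}, is to approximate the doubled weighted metric by a sequence of genuinely smooth weighted metrics whose Bakry-\'Emery $N$-Ricci tensors are bounded below by $K-\epsilon_i$ with $\epsilon_i\to 0$, and then to conclude by the stability of $\RCD(K,N)$ under pointed measured Gromov--Hausdorff convergence. The convexity of $\partial\M$ is used crucially here: it is what allows the smooth approximations to preserve the curvature lower bound in the limit, rather than introducing a large negative contribution concentrated near the boundary.
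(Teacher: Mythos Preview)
Your proposal is correct and follows precisely the approach the paper has in mind: the paper does not give its own proof but simply refers to repeating the argument of \cite[Theorem~2.4]{H20}, which is exactly the doubling-plus-smooth-approximation scheme you outline, with boundary convexity used to control the curvature of the approximants near the seam and stability of $\RCD$ under pmGH convergence used to pass to the limit.
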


\begin{definition}[Minimal hypersurfaces in weighted manifolds]
    Let $(\M^n,\sd_g,e^{-V} d \m_g)$ be a weighted manifold (possibly with boundary). We say that a hypersurface $S \subset \M$ is minimal if its mean curvature vector satisfies $\mathbf{H}=\nabla V^{\perp}$ on $S \setminus \partial \M$, where $\nabla V^{\perp}$ is the projection of $\nabla V$ on the normal bundle of $S$.
\end{definition}

As in the classical case, one can characterize minimality in terms of variations of the weighted area functional. In \cite{G12} it is shown that in a weighted manifold $(\M^n,\sd_g,e^{-V} d \m_g)$ without boundary a hypersurface $S$ is minimal if and only if the first variation of the weighted area functional vanishes at $S$. Moreover the same paper shows that $S$ is minimal in a weighted manifold without boundary $(\M^n,\sd_g,e^{-V} d \m_g)$ if and only if it is minimal in the manifold $(\M,g')$, where $g':=e^{-\frac{2}{n-1}V}g$. It is convenient to notice that the weighted area in $(\M^n,\sd_g,e^{-V} d\m_g)$ and the area with respect to the conformal metric $g'$ coincide, and that orthogonality of vectors is preserved under conformal changes of the metric. \par 
Now we state two results that are needed to obtain Corollary \ref{C13}, which shows that minimal hypersurfaces are locally boundaries of locally perimeter minimizing sets.
The next lemma is a consequence of the general properties of tubular neighbourhoods and Fermi coordinates in smooth manifolds (see for instance \cite[Lemma $2.7$]{tubes}).

\begin{lemma} \label{L21}
Let $(\M,g)$ be a manifold with boundary and let $\gamma:[0,1] \to \M$ be a geodesic intersecting $\partial \M$ orthogonally in $\gamma(1)$. Then there exists an interval $I=[1-\epsilon,1)$ such that for every $t \in I$ the curve $\gamma$ realizes the distance between $\partial \M$ and $\gamma(t)$. In particular, in an appropriate neighbourhood of $\gamma(1)$, any other curve branching from $\gamma$ and ending on $\partial \M$ has length greater than or equal to the one of $\gamma$.
\end{lemma}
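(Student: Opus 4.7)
The plan is to reduce the statement to the standard tubular neighbourhood theorem for $\partial\M$ viewed as an embedded hypersurface of $\M$. Concretely, I would build Fermi coordinates along the inward normal of $\partial\M$ near $\gamma(1)$ and recognise $\gamma$, traversed backwards from $\gamma(1)$, as a normal exponential geodesic. In Fermi coordinates the distance to $\partial\M$ is simply the normal parameter, which is exactly what the statement needs.

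First I would invoke the existence of a one-sided tubular neighbourhood: there exist an open neighbourhood $W\subset\partial\M$ of $\gamma(1)$ and $\delta>0$ such that the map
\[
\Phi:W\times[0,\delta)\to\M,\qquad \Phi(p,s):=\exp_p(s\nu_p),
\]
where $\nu_p$ denotes the inward unit normal to $\partial\M$ at $p$, is a smooth diffeomorphism onto an open neighbourhood $V$ of $\gamma(1)$ in $\M$. This is standard; one either builds it directly using smoothness of the exponential map and the inverse function theorem, or extends $(\M,g)$ across its boundary to a boundaryless manifold and intersects the usual tubular neighbourhood of $\partial\M$ with $\M$. From the first variation formula (or a direct computation in Fermi coordinates), for every $q=\Phi(p,s)\in V$ one has $\sd_g(q,\partial\M)=s$, and this distance is realized by the normal segment $r\mapsto \Phi(p,r)$, $r\in[0,s]$.

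Next I would use the orthogonality assumption together with uniqueness of geodesics with prescribed initial data. Since $\gamma$ meets $\partial\M$ orthogonally at $\gamma(1)$ with unit speed, the reversed geodesic $s\mapsto\gamma(1-s)$ has initial point $\gamma(1)\in\partial\M$ and initial velocity $\nu_{\gamma(1)}$, so it coincides with $\Phi(\gamma(1),s)$ for all sufficiently small $s\ge 0$. Pick $\epsilon>0$ small enough that $\gamma([1-\epsilon,1])\subset V$ and $1-\epsilon\in[0,\delta)$. Then for every $t\in[1-\epsilon,1)$ one has $\gamma(t)=\Phi(\gamma(1),1-t)$, whence $\sd_g(\gamma(t),\partial\M)=1-t=L(\gamma|_{[t,1]})$, proving the first assertion. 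The "in particular" statement is then immediate: any curve $\eta$ starting at $\gamma(t)$ (with $t\in[1-\epsilon,1)$) and ending on $\partial\M$ has $L(\eta)\ge \sd_g(\gamma(t),\partial\M)=L(\gamma|_{[t,1]})$.

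I do not expect a genuine obstacle here; the content is essentially a restatement of the tubular neighbourhood theorem. The only point deserving attention is the presence of the boundary, which prevents directly quoting the classical statement for hypersurfaces in boundaryless manifolds. This is a purely technical nuisance and is handled by either of the two reductions indicated above (extending $\M$ across $\partial\M$, or verifying the diffeomorphism property of $\Phi$ by hand on the one-sided slab $W\times[0,\delta)$), both of which are standard in the theory of manifolds with boundary.
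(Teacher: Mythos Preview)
Your proposal is correct and matches the paper's own treatment: the paper does not give a detailed proof but simply observes that the lemma is a consequence of the general properties of tubular neighbourhoods and Fermi coordinates in smooth manifolds, citing a reference. Your argument is precisely the spelled-out version of that remark.
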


The proof of the next lemma can be performed adapting the one in \cite[Theorem $2.1$]{LG96} and is just sketched, while the subsequent corollary follows from standard approximation arguments.

\begin{lemma} \label{L20}
    Let $(\M^n,\sd_g,e^{-V} d \m_g)$ be a weighted manifold (possibly with boundary). Let $S \subset \M$ be a minimal hypersurface intersecting $\partial \M$ orthogonally. Then $S$ locally minimizes the weighted area among smooth competitors.
    \begin{proof}
        Modulo considering the manifold $(\M,g')$, where $g':=e^{-\frac{2}{n-1}V}g$, we can suppose that the weight $V$ is identically zero.
        We then repeat the argument of \cite[Theorem $2.1$]{LG96}. \par 
        We first consider $x \in S \setminus \partial \M$ and we claim that $S$ minimizes the area w.r.t. smooth competitors in a neighbourhood of $x$. Let $U$ be a sufficiently small neighbourhood of $x$ in $\M$. To prove the claim let $C \subset S$ be an $n-2$ dimensional submanifold of $S$ containing $x$, let $P_0:S \cap U \to C$ be the nearest point projection from $S \cap U$ to $C$, and let $P_1: U \to S$ be nearest point projection from $U$ to $S$, and let $P:=P_0 \circ P_1$. Using the coarea formula we can rewrite the area of $S \cap U$ as
        \[
        \aH^{n-1} (S \cap U)=\int_C \int_{U \cap S_c} 1/J_{n-2}(P_{|S}) \, d\aH^{1} \,d \aH^{n-2}(c) 
        \]
        where $S_c:=P^{-1}(\{c\}) \cap S$ and $J_{n-2}(P_{|S})$ is the appropriate coarea factor. Since $P^{-1}(\{c\})$ intersects $S$ orthogonally, we have that $J_{n-2}(P_{|S})=J_{n-2}(P)$ on $S_c$ (while in general we would have
        $J_{n-2}(P_{|S}) \leq J_{n-2}(P)$), so that we can rewrite
        \[
        \aH^{n-1} (S \cap U)=\int_C \int_{U \cap S_c} 1/J_{n-2}(P) \, d\aH^{1} \,d \aH^{n-2}(c) 
        =
        \int_C \text{length}(S_c \cap U) \, d \aH^{n-2}(c),
        \]
        where the length of each $S_c \cap U$ is computed w.r.t. the metric obtained multiplying the standard metric on $\{P=c\}$ by $1/J_{n-2}(P)$. The minimality of $S$ implies, through the previous equation, that each curve $S_c$ is a geodesic in $\{P=c\}$ with the modified metric. In particular, modulo restricting $U$, we have that each $S_c \cap U$ is also length minimizing in $\{P=c\}$ with the modified metric. \par 
        With this in mind we have that if $S'$ is a competitor of $S$, denoting $S'_c:=S' \cap \{P=c\}$, then
        \begin{equation} \label{E33}
        \aH^{n-1} (S' \cap U)
        =
        \int_C \text{length}(S'_c \cap U) \,d \aH^{n-2}(c)
        \geq \int_C \text{length}(S_c \cap U) \,d \aH^{n-2}(c)
        = \aH^{n-1} (S \cap U).
        \end{equation}
        This proves the statement for points lying outside of $\partial \M$. \par 
        For boundary points, one can repeat the previous argument, replacing the fact that geodesics locally minimize the length with Lemma \ref{L21} to justify the inequality in \eqref{E33}.
    \end{proof}
\end{lemma}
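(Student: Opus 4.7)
My plan is to argue in two stages: first reduce to the unweighted setting by a conformal change of metric, and then prove the resulting unweighted statement by a Lawson--Gulliver type foliation/calibration argument, treating interior and boundary points separately. Set $g' := e^{-2V/(n-1)} g$ on $\M$. As recalled in the paragraph preceding the lemma, the $g'$-area of any hypersurface equals its weighted area in $(\M,\sd_g,e^{-V}\m_g)$, minimality in the weighted sense becomes classical minimality in $(\M,g')$, and orthogonality of $S$ to $\partial \M$ is preserved since conformal changes leave angles invariant. Hence it suffices to prove local area minimality for a classical minimal hypersurface $S \subset (\M,g')$ meeting $\partial \M$ orthogonally.

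For an interior point $x \in S \setminus \partial \M$, I would pick a small $(n-2)$-dimensional submanifold $C \subset S$ through $x$ and, on a sufficiently small neighbourhood $U$ of $x$, consider the nearest-point projections $P_1 \colon U \to S$ and $P_0 \colon S \to C$, so that $P := P_0 \circ P_1$ has $2$-dimensional fibres $\{P = c\}$ intersecting $S$ orthogonally. The coarea formula then rewrites
\[
\aH^{n-1}(S \cap U) = \int_C \mathrm{length}(S_c \cap U) \, d\aH^{n-2}(c),
\]
with $S_c := S \cap \{P = c\}$ and the length computed in the conformally rescaled metric on $\{P = c\}$ induced by $1/J_{n-2}(P)$; the orthogonality of the fibres is exactly what forces the equality $J_{n-2}(P_{|S}) = J_{n-2}(P)$ on $S_c$. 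Minimality of $S$ then forces each $S_c$ to be a geodesic in its fibre with the rescaled metric, hence locally length-minimizing, and slicing any smooth competitor $S'$ along the same fibres into curves $S'_c$ yields the inequality $\aH^{n-1}(S \cap U) \le \aH^{n-1}(S' \cap U)$.

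For a boundary point $x \in S \cap \partial \M$ the same foliation argument applies, provided one can set up $P$ consistently up to $\partial \M$ and replace the step ``geodesics are locally length-minimizing'' by its free-boundary counterpart. Here the orthogonality of $S$ to $\partial \M$ plays the decisive role: it lets one choose $C$ compatibly with $\partial \M$ so that the fibres $\{P = c\}$ remain orthogonal to $S$ up to the boundary and so that the curves $S_c$ hit $\partial \M$ orthogonally. Lemma \ref{L21} then guarantees that such orthogonally-incident curves are locally length-minimizing among curves in $\{P = c\}$ ending on $\partial \M$, which is precisely what the coarea comparison needs. The main obstacle I expect is exactly this boundary set-up: one must build the projection $P$ and Fermi-type coordinates cleanly across $\partial \M$ and check that the orthogonality identity $J_{n-2}(P_{|S}) = J_{n-2}(P)$ survives on $S_c$ up to the boundary, so that Lemma \ref{L21} can legitimately replace the interior geodesic argument in the final comparison.
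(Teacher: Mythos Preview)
Your proposal is correct and follows essentially the same route as the paper: reduce to the unweighted case by the conformal change, run the Lawson--Gulliver coarea/foliation argument with $P=P_0\circ P_1$ and the orthogonality identity $J_{n-2}(P_{|S})=J_{n-2}(P)$ at interior points, and for boundary points replace the ``geodesics are locally length-minimizing'' step by Lemma~\ref{L21}. The only cosmetic difference is the exponent in the conformal factor (you write $e^{-2V/(n-1)}g$, matching the paragraph before the lemma, while the paper's proof writes $e^{-2V/n}g$); this does not affect the argument.
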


\begin{corollary} \label{C13}
    Let $(\M^n,\sd_g,e^{-V} d\m_g)$ be a weighted manifold (possibly with boundary). Let $S \subset \M$ be a minimal hypersurface intersecting $\partial \M$ orthogonally, then $S$ is locally a boundary of a locally perimeter minimizing set.
\end{corollary}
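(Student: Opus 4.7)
The plan is to upgrade the smooth minimality established in Lemma \ref{L20} to the metric measure space notion of local perimeter minimality by a standard smooth approximation argument. First I would fix an arbitrary $x \in S$. Since $S$ is properly embedded and, when $x \in \partial\M$, intersects $\partial \M$ orthogonally, I can find an open neighbourhood $U \subset \M$ of $x$ such that $S \cap U$ is connected, two-sided in $U$, and separates $U \setminus S$ into two open pieces; when $x \in \partial \M$ I would work in Fermi coordinates so that $\partial \M \cap U$ and $S \cap U$ are both described as graphs meeting orthogonally. Let $E \subset U$ denote one of the two pieces, so that $U \cap \partial E = S \cap U$. By further shrinking $U$, Lemma \ref{L20} guarantees that $S \cap U$ realises the smallest weighted area among all smooth hypersurfaces in $\bar{U}$ with the same trace on $\partial U$.

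Next I would show that $E$ is locally perimeter minimizing in $U$ in the sense of $(\M,\sd_g,e^{-V}\m_g)$. Picking any Borel set $F \subset U$ of finite perimeter with $F \Delta E \subset\subset U$, I would approximate $F$ by a sequence $\{F_j\}_j$ of sets whose boundaries are smooth hypersurfaces in $\M \setminus \partial \M$, chosen so that $F_j = E$ in a neighbourhood of $\partial U$, $F_j \to F$ in $L^1$, and $P(F_j,U) \to P(F,U)$. Such an approximation is obtained by mollifying $\mathbf{1}_F$ against a smooth kernel, using Sard's theorem to extract smooth superlevel sets for a.e. level, and inserting a cut-off near $\partial U$ to preserve the boundary data. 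Each $\partial F_j \cap U$ is then an admissible smooth competitor for $S \cap U$ in the sense of Lemma \ref{L20}, so its weighted area is at least that of $S \cap U$. Identifying the weighted BV-perimeter of $E$ and of each $F_j$ with $\int e^{-V} \, d\aH^{n-1}_g$ over the respective smooth interior boundaries, and passing to the limit $j \to +\infty$, one obtains $P(E,U) \leq P(F,U)$, as desired.

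The main obstacle I anticipate is the smooth approximation step in the presence of $\partial \M$: one needs the sets $F_j$ to agree with $E$ near $\partial U$ while not introducing spurious perimeter contribution along $\partial \M \cap U$, so that the identification of the metric measure space perimeter with the weighted Hausdorff measure of the smooth interior boundary remains valid. This is handled by the fact that $\partial \M$ is $\m_g$-negligible and that free boundary hypersurfaces meeting $\partial \M$ orthogonally admit local cut-off constructions compatible with both boundaries, which is exactly the setting in which Lemma \ref{L20} applies. Away from $\partial \M$ the approximation and identification are entirely standard.
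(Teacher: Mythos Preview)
Your proposal is correct and is precisely the ``standard approximation argument'' the paper invokes in lieu of a written proof: the paper states that Corollary \ref{C13} follows from Lemma \ref{L20} by standard approximation, without further details. Your write-up supplies those details---localising $S$ as the two-sided boundary of a set $E$, invoking the identification of the weighted BV-perimeter with the weighted $(n-1)$-Hausdorff measure on smooth boundaries, approximating an arbitrary finite-perimeter competitor $F$ by smooth sets $F_j$ agreeing with $E$ near $\partial U$, and applying Lemma \ref{L20} to each $\partial F_j$---and correctly flags the only delicate point, namely handling the free boundary along $\partial\M$, which is exactly where the orthogonality hypothesis and the setup of Lemma \ref{L20} are used.
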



Next, we deduce Theorem \ref{CT2} from the Introduction.

\begin{thm} \label{T12}
Let $N \in (1,\infty)$. Let $(\M^n,\sd_g,e^{-V} d\m_g)$ be a parabolic weighted manifold with convex boundary such that $N>n$ and
    \[
    \Ric_\M+\Hess_V-\frac{\nabla V \otimes \nabla V}{N-n} \geq 0 \quad \text{ on } \M \setminus \partial \M.
    \]
    Let $S \subset \M \times (0,+\infty)$ be a connected minimal hypersurface intersecting $\partial \M \times \bb{R}$ orthogonally, then $S$ is a horizontal slice.
    \begin{proof}
        The manifold $(\M^n,\sd_g,e^{-V} d \m_g)$ is an $\RCD(0,N)$ space by Proposition \ref{P37}. 
       The manifold $S$ is locally a boundary of a locally perimeter minimizing set and it is a slice by the Half Space Property.
    \end{proof}
\end{thm}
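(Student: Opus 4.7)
The plan is to reduce the theorem to a direct application of the assumed Half Space Property for $\RCD(K,N)$ spaces, combined with Corollary \ref{C13} to convert minimality of $S$ into the local perimeter-minimizing boundary condition that the Half Space Property requires.

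First, I would verify the hypotheses of the Half Space Property with $\X=\M$. By Proposition \ref{P37}, the Bakry--Émery condition together with convexity of $\partial\M$ ensures that $(\M,\sd_g,e^{-V}\m_g)$ is an $\RCD(K,N)$ space; parabolicity of $\M$ is given. Hence $\M$ satisfies all the assumptions on the base space in the assumed Half Space Property, and the target set $S\subset\M\times\bb{R}$ is the correct kind of subset to feed into it.

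Next, I would show that $S$ is locally the boundary of a locally perimeter-minimizing set inside $\M\times\bb{R}$. View the product as a weighted Riemannian manifold carrying the product metric, boundary $\partial\M\times\bb{R}$, and weight $V$ extended to be independent of the $\bb{R}$-coordinate; the boundary of the product is again convex since its second fundamental form agrees with that of $\partial\M$ in the $\M$-directions and vanishes in the $\bb{R}$-direction. By hypothesis $S$ is a minimal hypersurface (in this weighted product) meeting $\partial\M\times\bb{R}$ orthogonally, so Corollary \ref{C13} applies and delivers exactly the local perimeter-minimizing boundary property for $S$.

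Finally, since $S\subset\M\times(0,+\infty)$ it lies in a half space, so the assumed Half Space Property provides a discrete set $D\subset[0,+\infty)$ with
\[
S=\bigcup_{d\in D}\M\times\{d\}.
\]
These slices are pairwise disjoint relatively closed subsets of $S$, so connectedness of $S$ (together with connectedness of $\M$, implicit in the statement) forces $|D|=1$, and $S$ is a single horizontal slice.

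The only non-trivial bookkeeping is the second step: one must carefully check that the minimality of $S$ in $\M\times\bb{R}$ (with respect to the weighted area built from the extended weight $V$) is the precise notion that makes Corollary \ref{C13} applicable, and that extending $V$ constantly in the vertical direction preserves both convexity of the boundary and the orthogonal intersection hypothesis. I do not expect a deeper obstacle beyond this reduction; once both ingredients are in place the theorem follows immediately from the Half Space Property.
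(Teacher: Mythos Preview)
Your proposal is correct and follows essentially the same approach as the paper: invoke Proposition~\ref{P37} to get the $\RCD(K,N)$ structure, use Corollary~\ref{C13} to recognize $S$ as locally the boundary of a local perimeter minimizer, and then apply the assumed Half Space Property. You are slightly more explicit than the paper in justifying the use of Corollary~\ref{C13} on the product and in deducing a single slice from connectedness, but there is no substantive difference.
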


\begin{definition}[Modulus of parabolicity]
    Let $(\X,\sd,\m,\x)$ be an $\RCD(K,N)$ space. We say that a function
    $P:[0,+\infty) \to (0,+\infty)$ is a modulus of parabolicity if
    \[
    \int_1^{+\infty} \frac{t}{P(t)} \, dt = + \infty.
    \]
    We say that $\X$ has modulus of parabolicity $P$ if $\m(B_r(\x)) \leq \m(B_1(\x)) P(r)$ for every $r\geq 0$.
\end{definition}

By Theorem \ref{CT5}, if an $\RCD(K,N)$ space $(\X,\sd,\m,\x)$ has a modulus of parabolicity, then it is parabolic, and this condition is also necessary if $K=0$. In general, there are manifolds that are parabolic but do not have a modulus of parabolicity, as shown, for example, in \cite[Example $7.2$]{grig}. \par 
We say that a set $S \subset \X$ is an area minimizing boundary in an open set $A \subset \X$ if the exists a set $E \subset A$ minimizing the perimeter in $A$ such that $\partial E \cap A=S$. Let $S \subset \X \times \bb{R}$ be such that $(x,0) \in S$. We define
    \[
    \Osc_{x,r}(S):=\sup \{|t|:(y,t) \in S \cap B_r(x) \times (-r,r)\};
    \]
    \[
    \Osc^+_{x,r}(S):=\sup \{|t|:(y,t) \in S \cap B_r(x) \times (0,r)\};
    \]
    and
    \[
    \Osc^-_{x,r}(S):=\sup \{|t|:(y,t) \in S \cap B_r(x) \times (-r,0)\}.
    \]

The next result implies Theorem \ref{CT3} from the Introduction.

\begin{thm}[General version of Theorem \ref{CT3}] \label{T9}
    Fix $K \in \bb{R}$, $N \in (1,+\infty)$.
    Let $P$ be a modulus of parabolicity.
    For every $t,r,T>0$ there exists $R(K,N,P,t,r,T)>0$ such that if $(\X,\sd,\m,\x)$ is an $\RCD(K,N)$ space with modulus of parabolicity $P$ and $E \subset B_R (\x) \times (-R,R)$ is a perimeter minimizer in $B_R (\x) \times (-R,R)$ such that $(\x,0) \in \partial E$ and $\Osc_{\x,r}(\partial E) \geq t$, then
     \[
    \Osc^+_{\x,R}(\partial E) \geq T \quad \text{and} \quad \Osc^-_{\x,R}(\partial E) \geq T.
    \]
    \begin{proof}
        Suppose by contradiction that the statement is false.  
        Then, there exist $t,r,T>0$, a sequence $\{R_i\}_{i \in \bb{N}} \subset (0,+\infty)$ increasing to $+\infty$, a sequence of $\RCD(K,N)$ spaces $\{(\X_i,\sd_i,\m_i,\x_i)\}_{i \in \bb{N}}$ with modulus of parabolicity $P$,
        and sets $E_i \subset \X_i \times (-R_i,R_i)$ such that $(\x_i,0) \in \partial E_i$, $\Osc_{\x_i,r}(\partial E_i) \geq t$ and
     \[
    \Osc^+_{\x_i,R_i}(\partial E_i) < T \quad \text{or} \quad \Osc^-_{\x_i,R_i}(\partial E_i) < T.
    \]
    Moreover, if we replace each measure $\m_i$ with its normalized version $\m_i(B_1(\x_i))^{-1}\m_i$, we have that each set $E_i$ is still perimeter minimizing in the normalized product space and that the spaces $(\X_i,\sd_i,\m_i(B_1(\x_i))^{-1}\m_i,\x_i)$ still have modulus of parabolicity $P$.
    Hence it is not restrictive to assume that the measures $\m_i$ satisfy $\m_i(B_1(\x_i))=1$. \par 
    Without loss of generality and passing to a (non relabeled) subsequence we may then suppose that for every $i \in \bb{N}$ we have 
    \[
    \Osc^-_{\x_i,R_i}(\partial E_i) < T,
    \]
    so that modulo replacing $E_i$ with its complement we may also suppose that
    \begin{equation} \label{E22}
    E_i \cap B_{R_i}(\x_i) \times [-R_i,R_i] \subset B_{R_i}(\x_i) \times [-T,+\infty).
    \end{equation}
    Up to passing to another subsequence we have that the spaces $(\X_i,\sd_i,\m_i,\x_i)$ converge in pmGH sense to an $\RCD(K,N)$ space $(\X,\sd,\m,\x)$. By the continuity of the mass under pmGH convergence, $\X$ has modulus of parabolicity $P$ and, in particular, it is parabolic. \par 
    Passing to yet another subsequence we have that the sets $E_i$ converge in $\sL^1_{loc}$ sense to a  perimeter minimizing set $E \subset \X \times \bb{R}$. Observe moreover that if $(\ssf{Z},\sd_z)$ is the metric space realizing the convergence of the sets $\X_i$ to $\X$, then $(\ssf{Z} \times \bb{R},\sd_z \times \sd_e)$ realizes the convergence of the corresponding product spaces. \par 
    Combining this with the Kuratowski convergence in $\ssf{Z} \times \bb{R}$ of the boundaries of the sets $E_i$ to the boundary of the set $E$ given by Proposition \ref{P26}, we deduce that
    \begin{equation} \label{Eosc}
    \Osc_{\x,r}(\partial E) \geq t
    \end{equation}
    and
    \[
    E \subset \X \times [-T,+\infty).
    \]
    By Theorem \ref{T|MinPerSlow}, $E$ is a perimeter minimizing set whose boundary is an horizontal slice, contradicting the oscillation \eqref{Eosc}.
    \end{proof}
\end{thm}



In Theorem \ref{T9} asking for the parabolicity of the spaces involved without requiring the existence of a common modulus of parabolicity is not enough, as the next example shows.

\begin{example}
    Consider a bounded non constant solution $u$ of the minimal surface equation in the hyperbolic space $(\bb{H}^2,g,x)$. Such solution exists thanks to \cite{NR}. We claim that for every $i \in \bb{N}$ there exists a metric $g_i$ on $B^g_{2i}(x)$ which coincides with $g$ on $B^g_i(x)$, and such that $(B^g_{2i}(x),g_i)$ is complete, parabolic and satisfies $\Ric_{(B^g_{2i}(x),g_i)} \geq -1$. \par 
    If we are able to do so, then $\Graph(u) \cap B^g_{i}(x) \times \bb{R} \subset (B^g_{i}(x),g_i)$ will be area minimizing in $B_i^{g_i}(x)$ for every $i \in \bb{N}$, but these minimal submanifolds will have uniformly bounded oscillation, showing that Theorem \ref{T9} fails in this setting.
    \par 
    To construct the metric $g_i$ we first consider a smooth surjective function $f_i:[0,+ 2i) \to [0,+ \infty)$ which is constant if $t \leq i$ and has positive first and second derivative. Then we consider the graph of the function $y \mapsto f_i(\sd_g(x,y))$ on $B^g_{2i}(x)$ and we define $g_i$ to be the pullback metric of such graph on $B^g_{2i}(x)$. The fact that $\lim_{r \to 2i}f(r)=+\infty$ makes $g_i$ a complete metric and implies that there exists a constant $c_i$ such that
    \[
    \m_{g_i}(B^{g_i}_r(x)) \leq c_i r,
    \]
    making $(B^g_{2i}(x),g_i)$ parabolic by Theorem \ref{CT5}. 
    At the same time, using the convexity of $f_i$ together with the Gauss-Codazzi equations one can check that $\Ric_{(B^g_{i}(x),g_i)} \geq -1$.
\end{example}

\footnotesize
\printbibliography
\end{document}